\theoremstyle{plain}
\newtheorem{theorem}{Theorem}[section]
\newtheorem{corollary}[theorem]{Corollary}
\newtheorem{lemma}[theorem]{Lemma}
\newtheorem{proposition}[theorem]{Proposition}
\newtheorem{fact}[theorem]{Fact}
\newtheorem*{theorem*}{Theorem}
\newcounter{MainCorollaryCounter}
\newtheorem{MainCorollary}[MainCorollaryCounter]{Corollary}
\newcounter{MainTheoremCounter}
\newtheorem{MainTheorem}[MainTheoremCounter]{Theorem}
\newtheorem{MainSubTheorem}{Theorem}[MainTheoremCounter]
\theoremstyle{definition}
\newtheorem{definition}[theorem]{Definition}
\newtheorem*{setup}{Setup}
\newtheorem{example}[theorem]{Example}
\newtheorem*{conjecture*}{Conjecture}
\newcounter{MainConjectureCounter}
\newtheorem{MainConjecture}[MainConjectureCounter]{Conjecture}
\newtheorem{remark}[theorem]{Remark}
\newcommand{\modu}[1]{\overline{#1}}
\newcommand{\textdef}[1]{\textit{#1}}
\newcommand{\setdiff}{-}
\newcommand{\chevA}{\textbf{A}}
\newcommand{\chevB}{\textbf{B}}
\newcommand{\chevC}{\textbf{C}}
\newcommand{\chevD}{\textbf{D}}
\newcommand{\chevE}{\textbf{E}}
\newcommand{\chevF}{\textbf{F}}
\newcommand{\chevG}{\textbf{G}}
\newcommand{\complex}{\mathbb{C}}
\newcommand{\orbit}{\mathcal{O}}
\newcommand{\li}[1]{\ell_{#1}}
\DeclareMathOperator{\sym}{Sym} 
\DeclareMathOperator{\alt}{Alt} 
\DeclareMathOperator{\aff}{AGL} 
\DeclareMathOperator{\proj}{\mathbb{P}}
\DeclareMathOperator{\emorph}{End} 
\DeclareMathOperator{\Mouf}{\mathbb{M}} 
\DeclareMathOperator{\fp}{Fix} 
\DeclareMathOperator{\points}{\mathcal{P}} 
\DeclareMathOperator{\lines}{\mathcal{L}}
\DeclareMathOperator{\gtd}{gtd}
\DeclareMathOperator{\psl}{PSL} 
\DeclareMathOperator{\psp}{PSp} 
\DeclareMathOperator{\pssl}{(P)SL} 
\DeclareMathOperator{\ssl}{SL}
\DeclareMathOperator{\pgl}{PGL} 
\DeclareMathOperator{\agl}{AGL} 
\DeclareMathOperator{\gl}{GL} 
\DeclareMathOperator{\rk}{rk}
\begin{document}
\title[Recognizing $\pgl_3$ via generic $4$-transitivity]{Recognizing $\pgl_3$ via generic $4$-transitivity}
\author{Tuna Alt\i{}nel}
\address{Universit\'e de Lyon \\
Universit\'e Claude Bernard Lyon 1\\
CNRS UMR 5208\\
Institut Camille Jordan\\
43 blvd du 11 novembre 1918\\
F-69622 Villeurbanne cedex\\
France}
\email{altinel@math.univ-lyon1.fr}
\author{Joshua Wiscons}
\address{Department of Mathematics\\
Hamilton College\\
Clinton, NY 13323, USA}
\email{jwiscons@hamilton.edu}
\thanks{This material is based upon work supported by the National Science Foundation under grant No. OISE-1064446.}
\begin{abstract}
We show that the only transitive and generically $4$-transitive action of a group of finite Morley rank on a set of Morley rank $2$ is the natural action of $\pgl_3$ on the projective plane. 
\end{abstract}
\maketitle

\section{Introduction}\label{sec.Intro}
The notion of generic $n$-transitivity was first introduced in the context algebraic groups by Popov in \cite{PoV07} and later by Borovik and Cherlin in \cite{BoCh08} for groups of finite Morley rank. Morley rank is a model theoretic notion of dimension inspired by algebraic geometry, and for an algebraically closed field (considered in the language of fields), the Morley rank of a Zariski closed set is equal to its Zariski dimension. As such, the algebraic groups over algebraically closed fields are the primary examples of groups of finite Morley rank. In fact, it was conjectured  by Cherlin and Zil'ber over three decades ago that every \emph{simple} group of finite Morley rank is  an algebraic group over an algebraically closed field; the conjecture is still open. 

We will work in the finite Morley rank category and defer to \cite{PoB87}, \cite{BoNe94}, and \cite{ABC08} for the necessary background. The question we address is interesting even when restricted to the algebraic context, and the reader without knowledge of groups of finite Morley rank is encouraged to, if necessary, translate ``rank'' to ``dimension'' and ``definable'' to ``constructible.''  The main point is that, although we have a notion of dimension, we have no topology. In what follows, rank  always refers to Morley rank.

\begin{definition}\label{def.GenericTrans}
A definable action of a group of finite Morley rank $G$ on a definable set $X$ is said to be \textdef{generically $n$-transitive} if $G$ has an orbit $\orbit$ on $X^n$ such that the rank of $X^n - \orbit$ is strictly less than the rank of $X^n$. In this case, the action is generically \textdef{sharply} $n$-transitive if $G$ acts regularly on $\orbit$, i.e. if the stabilizer of any $n$-tuple from $\orbit$ is trivial.
\end{definition}

Note that by \cite{TiJ52} and \cite{HaM54} \emph{ordinary} sharp $4$-transitivity does not exist on any infinite set, and if one works in the algebraic category, \emph{ordinary} $4$-transitivity does not exist either, see \cite{KnF83}. In contrast, the action of $\gl_n(\complex)$ on $\complex^n$ is generically sharply $n$-transitive for each $n$, and one has the feeling that this is the right notion of transitivity in this setting. We address the following conjecture about natural limits to generic $n$-transitivity.

\begin{MainConjecture}[\protect{\cite[Problem~9]{BoCh08}}]\label{conj.ProbPGL}
If $(X,G)$ is a transitive and generically $(n+2)$-transitive permutation group of finite Morley rank with $G$ connected and $\rk X = n$, then
$(X,G)$ is equivalent to  $(\proj^{n}(K),\pgl_{n+1}(K))$ for some algebraically closed field $K$.
\end{MainConjecture}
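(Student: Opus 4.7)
The overall plan is induction on $n$. The base case $n=1$ is the classical recognition of $\pgl_2(K)$ acting on $\proj^1(K)$ from a transitive generically $3$-transitive action on a rank-$1$ set (a result from the general theory of small-rank permutation groups of finite Morley rank, as discussed in \cite{BoCh08}), and the base case $n=2$ is the main theorem of the present paper. For $n \geq 3$, assume the conjecture with parameter $n-1$ and take $(X,G)$ satisfying the hypotheses. After replacing $G$ by its quotient modulo the kernel of the action, we may assume $G$ acts faithfully. Fix a generic point $x \in X$ and set $H := G_x\degrees$; the $(n+2)$-transitivity of $G$ on $X$ forces $H$ to be generically $(n+1)$-transitive on (an open definable subset of) $X \setdiff \{x\}$. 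But this set has rank $n$, which is too large to apply the inductive hypothesis directly.

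The pivotal step---and in my view the main obstacle---is to construct intrinsically a definable family $\mathcal{L}_x$ of ``lines through $x$'', with $\rk \mathcal{L}_x = n-1$, on which $H$ acts generically $(n+1)$-transitively. Modelled on the projective geometry of $\pgl_{n+1}$ on $\proj^n$, for generic $y \in X$ I would define the candidate line $\ell_{xy}$ from the orbit structure of the connected two-point stabilizer $G_{x,y}\degrees$: for example, as a minimal proper closed $G_{x,y}\degrees$-invariant subset of $X$ containing both $x$ and $y$, or equivalently as the locus where $G_{x,y,z}\degrees$ agrees up to finite index with $G_{x,y}\degrees$. Verifying that $\ell_{xy}$ is well-defined, symmetric in the pair $\{x,y\}$, and of rank~$1$, and that the resulting family $\mathcal{L}_x$ has rank $n-1$ and carries the required generically $(n+1)$-transitive $H$-action, demands careful rank accounting; the hypothesis of generic $(n+2)$-transitivity must be used essentially here, as it is what forces the orbit structure of $G_{x,y}\degrees$ to be rigid enough to recover ``collinearity'' definably.

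Granted this construction, the inductive hypothesis applied to $(\mathcal{L}_x, H/N)$---with $N$ the kernel of the action of $H$ on $\mathcal{L}_x$---yields an algebraically closed field $K$ together with identifications $H/N \cong \pgl_n(K)$ and $\mathcal{L}_x \cong \proj^{n-1}(K)$. For global reconstruction, pick an independent generic $x' \in X$, set $H' := G_{x'}\degrees$, and use transitivity together with standard generation arguments in the finite Morley rank category to show $G = \langle H, H' \rangle$. Viewing $H$ and $H'$ as parabolic subgroups meeting in the stabilizer of the flag $(x, \ell_{xx'})$, one then extracts a BN-pair of rank $n$ in $G$ with Weyl group $\sym(n+1)$. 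The second significant difficulty---matching the field $K$ obtained from different generic point stabilizers so as to produce a coherent global field structure---I would handle via a rigidity argument on $H \cap H'$ using the overlapping identifications already in hand. Once coherence of the field is established, Tits' classification of spherical buildings of rank $\geq 3$ (or direct recognition of simple algebraic groups from split BN-pairs) yields $G \cong \pgl_{n+1}(K)$; identifying $X$ with the vertices of one type in the associated building completes the equivalence $(X,G) \cong (\proj^n(K), \pgl_{n+1}(K))$.
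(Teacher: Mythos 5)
This statement is not proved in the paper: it is Conjecture~\ref{conj.ProbPGL}, an open problem quoted from \cite{BoCh08}, and the paper itself only establishes the case $\rk X = 2$ (Theorem~\ref{thm.A}), noting that beyond rank $1$ (where it follows from Fact~\ref{fact.Hru}) ``relatively little is known.'' So there is no proof in the paper to compare yours against, and what you have written is a program for an open conjecture rather than a proof. The inductive frame itself is coherent---the residual geometry of $\proj^n$ at a point is indeed $\proj^{n-1}$ with $\pgl_n$ acting generically $(n+1)$-transitively, and Lemma~\ref{lem.QuotientGenericTrans} would transfer the degree of generic transitivity to a line pencil if one existed---but the two steps you flag as ``obstacles'' are not deferred routine verifications; they are the substance of the open problem.

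Concretely: (i) ``minimal proper closed $G_{x,y}\degrees$-invariant subset'' is not meaningful here, since the finite Morley rank category has no topology (the paper stresses exactly this in the introduction); the workable definition is the paper's own---$z$ lies on $\ell_{xy}$ iff $x,y,z$ are not in general position---but even for $n=2$ establishing that these lines have rank $1$, that two points determine a unique line, and that the pencil $\lines(x)$ is strongly minimal occupies all of Section~\ref{sec.TargetRecognition} and requires as \emph{inputs} the $2$-transitivity, non-$3$-transitivity, and the Fixed Point Assumption, which in turn consume Sections~\ref{sec.Aone}--\ref{sec.Atwo} and lean heavily on rank-specific classification results (Cherlin's rank $\le 3$ groups, Fact~\ref{fact.rankFourGroups}, Fact~\ref{fact.genericSharpTwoTransRankTwo}, the Moufang set classification). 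None of these tools is available for general $n$. (ii) Your induction silently assumes $\gtd(X,G)$ is exactly $n+2$ rather than larger; bounding the degree of generic transitivity is itself a major open piece of the conjecture (the paper derives the bound $n\le 4$ for rank $2$ from the $\Sigma$-groups acting on rank-$2$ stabilizers, via Lemma~\ref{lem.symmOnRankTwo}, again a low-rank argument). (iii) The global reconstruction---generation $G=\langle H,H'\rangle$, extraction of a split BN-pair with Weyl group $\sym(n+1)$, and coherence of the field across point stabilizers---is asserted, not argued; the paper's rank-$2$ recognition instead goes through an explicit projective-plane construction and the Tent--Van Maldeghem theorem (Fact~\ref{fact.TentVM}), and even hints (Subsection~\ref{subsec.Sigma}) that identifying the Weyl group via $\Sigma$-groups is the intended route for future work, not an established one. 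As written, the proposal restates the conjecture's difficulty rather than resolving it.
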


The conjecture holds when $X$ has rank $1$ as a consequence of the much stronger Fact~\ref{fact.Hru}, but relatively little is known when $X$ has rank $2$. If the action is by \emph{automorphisms} on an abelian group of rank $2$, then \cite{DeA09} shows that one does not exceed generic $2$-transitivity. The main result in the general rank $2$ setting is due to Gropp in \cite{GrU92} where the author shows that if one assumes generic \emph{sharp} $n$-transitivity then $n$ is at most $5$. Note that in the algebraic setting one should get a lot of mileage out of \cite{PoV07} (regardless of the rank of $X$ -- though only in characteristic $0$) when combined with the O'Nan-Scott type theorem of Macpherson and Pillay in \cite{MaPi95}. Here, we prove the conjecture in full generality for sets of rank $2$.

\begin{MainTheorem}\label{thm.A}
If $(X,G)$ is a transitive and generically $4$-transitive permutation group of finite Morley rank with $\rk X = 2$, then $(X,G)$  is equivalent to $(\proj^2(K),\pgl_{3}(K))$ for some algebraically closed field $K$.
\end{MainTheorem}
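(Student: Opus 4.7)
The strategy is to recover a projective plane structure on $X$ by identifying canonical rank $1$ subsets as ``lines,'' coordinatizing them via the rank $1$ case of the conjecture (Fact~\ref{fact.Hru}), and assembling the full incidence geometry with $G$ acting by collineations. We pass to $G\degrees$ and assume $G$ connected. Fix a generic pair $(x_0, y_0) \in X^2$ and set $K := G_{x_0, y_0}$. A basic rank count yields $\rk G \geq 8$, and generic $n$-transitivity for $n = 2, 3$ implies that $G_{x_0}$ and $K$ each have a generic rank $2$ orbit on $X$.

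The first and most delicate step is to extract, from generic $4$-transitivity, a canonical rank $1$ orbit $\ell^\ast$ of $K$ on $X$, so that $\ell := \ell^\ast \cup \{x_0, y_0\}$ serves as the line through $x_0$ and $y_0$. For a generic $z_0 \in X$, the further stabilizer $G_{x_0, y_0, z_0}$ retains a rank $2$ generic orbit on $X$; this pressure forces a trichotomy of $K$-orbits on $X$ into fixed points, rank $1$ orbits, and rank $2$ orbits, from which one extracts the existence and appropriate uniqueness of $\ell^\ast$. This is expected to be the main obstacle, since the bare permutation hypotheses must be squeezed hard enough to produce the rank $1$ structure, ruling out degenerate possibilities in which $K$ has only fixed points together with the rank $2$ generic orbit on $X$.

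Next, set $N := N_G(\ell)\degrees$ and study the induced action on $\ell$. Transitivity on $\ell^\ast$ is automatic; generic $2$-transitivity of $G$ on $X^2$ places $(y_0, x_0)$ in the generic orbit and so supplies an element of $G$ swapping $x_0$ with $y_0$, which normalizes $\ell$ by canonicity. Pushing further, one uses generic $3$- and $4$-transitivity of $G$ on $X$ to extract enough transitivity of $N$ on $\ell$ to invoke Fact~\ref{fact.Hru} on the quotient by the kernel $C_N(\ell)$, yielding $(\ell, N/C_N(\ell))$ equivalent to $(\proj^1(F), \pgl_2(F))$ for some algebraically closed field $F$.

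Finally, we upgrade the line structure to a full projective plane. Using canonicity of $\ell(x,y)$ on pairs of distinct points, any two distinct points of $X$ lie on a unique line; a rank-and-orbit analysis on $\ell \cap \ell'$ shows that any two distinct lines meet in exactly one point. This endows $X$ with a projective plane structure over $F$, with $G$ acting faithfully by collineations. The classical characterization of $\pgl_3(F)$ as the definable collineation group of $\proj^2(F)$, together with the rank bound $\rk G \geq 8 = \rk \pgl_3(F)$, then yields that $(X, G)$ is equivalent to $(\proj^2(F), \pgl_3(F))$.
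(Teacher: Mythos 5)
Your outline correctly identifies the geometric endgame --- building a projective plane whose lines are canonical rank~$1$ subsets attached to generic pairs --- and this matches the paper's Section~\ref{sec.TargetRecognition}. But the proposal has a genuine gap exactly where you flag ``the main obstacle'': you assert that generic $4$-transitivity ``forces a trichotomy of $K$-orbits'' and that one ``extracts the existence and appropriate uniqueness of $\ell^\ast$,'' but you give no argument, and this is where essentially all of the work lies. Concretely, to get a rank~$1$ orbit of $G_{x_0,y_0}$ at all one must rule out $3$-transitivity of the action (if every triple of distinct points were in general position, the only non-generic $K$-orbits would be the fixed points $x_0,y_0$); the paper does this via the Moufang set classification (Fact~\ref{fact.Moufang}) combined with the nonexistence of infinite sharply $4$-transitive groups, and that in turn rests on first pinning down the full structure of generic $2$- and $3$-point stabilizers (Propositions~\ref{prop.transBound} and~\ref{prop.PSStructure}), proving $2$-transitivity (Proposition~\ref{prop.Transitivity}), proving the Fixed Point Assumption $\fp((G_{x,y,z})^\circ)=\{x,y,z\}$ (Proposition~\ref{prop.FixedPoints}, needed for your ``uniqueness of $\ell^\ast$'' and for the rank~$1$ claim itself), and separately showing that a generic $4$-point stabilizer cannot have rank~$1$ (all of Section~\ref{sec.Atwo}). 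None of these follow from ``pressure'' or a bare rank count; they occupy two long sections of structural group theory using the classification of connected groups of rank at most~$4$ and the $\Sigma$-groups.

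There is also a flaw in your final step. Having assembled a projective plane with strongly minimal point rows, you cannot conclude by citing ``the classical characterization of $\pgl_3(F)$ as the definable collineation group of $\proj^2(F)$'': at that stage you only have an abstract projective plane interpretable in a structure of finite Morley rank, not $\proj^2(F)$, and no field $F$ has yet been produced (your earlier coordinatization via Fact~\ref{fact.Hru} gives a field acting on a single line pencil, not coordinates for the plane). The identification of the plane as $\proj^2(K)$ and of $G$ as $\pgl_3(K)$ requires a coordinatization theorem; the paper invokes the Tent--Van Maldeghem result (Fact~\ref{fact.TentVM}), whose hypotheses --- transitivity on ordered ordinary $3$-gons and strong minimality of both point rows and line pencils --- are precisely what the geometric analysis is arranged to verify. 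The rank inequality $\rk G\ge 8$ plays no role in that identification.
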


Combining Theorem~\ref{thm.A} with Fact~\ref{fact.Hru} and Lemma~\ref{lem.PrimitiveBoundG}, one immediately obtains the following corollary.
\begin{MainCorollary}
If $G$ is a simple group of finite Morley rank  with a definable subgroup of corank $2$, then $\rk G \le 8$, and if $\rk G = 8$, then $G\cong \pgl_{3}(K)$ for some algebraically closed field $K$.
\end{MainCorollary}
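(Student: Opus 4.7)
The plan is to convert the algebraic hypothesis into a permutation-theoretic setup, reduce to a primitive action, and then split on the rank of the resulting coset space so that either Fact~\ref{fact.Hru}, Lemma~\ref{lem.PrimitiveBoundG}, or Theorem~\ref{thm.A} applies.

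First, I would set up the primitive action. Let $H$ be a definable subgroup of $G$ with $\rk(G/H) = 2$. Since $G$ is simple of finite Morley rank, it is connected, so any proper definable subgroup of $G$ has positive corank. Choose a maximal definable subgroup $M$ of $G$ with $H \le M$ (which exists by the descending chain condition on definable subgroups), and set $X = G/M$. The coset action of $G$ on $X$ is transitive by construction and primitive by maximality of $M$; it is also faithful, because the kernel is a proper normal subgroup of the simple group $G$. Since $H \le M \lneq G$, we get $1 \le \rk X \le 2$.

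Next, I would split on $\rk X$. If $\rk X = 1$, the pair $(X, G)$ is a faithful transitive action on a strongly minimal set, so Fact~\ref{fact.Hru} (Hrushovski's classification) identifies it with one of a very short list of actions; the largest possibility has $G \cong \pgl_2(K)$ with $\rk G = 3$, in particular $\rk G \le 8$. If $\rk X = 2$, Lemma~\ref{lem.PrimitiveBoundG} provides a bound on $\rk G$ in terms of $\rk X$ and the generic transitivity degree $\gtd(X, G)$; the shape of this bound should give $\rk G \le 8$ outright whenever the action is not generically $4$-transitive. In the remaining case, $(X, G)$ is a transitive, generically $4$-transitive permutation group of finite Morley rank with $\rk X = 2$, so Theorem~\ref{thm.A} forces $(X, G) \cong (\proj^2(K), \pgl_3(K))$ for some algebraically closed field $K$. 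Faithfulness then yields $G \cong \pgl_3(K)$, and hence $\rk G = 8$.

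Assembling the cases, we obtain $\rk G \le 8$ in every situation, and equality can occur only through the generically $4$-transitive branch, giving $G \cong \pgl_3(K)$. The hard work has already been done by the time one reaches this argument: the genuine obstacle is proving Theorem~\ref{thm.A}, whereas the corollary itself is a clean packaging step. The only subtlety to verify is that Lemma~\ref{lem.PrimitiveBoundG} is strong enough to handle all primitive rank-$2$ actions outside the generically $4$-transitive regime without leaving a gap below the threshold $\rk G = 8$; given that, the reduction via a maximal subgroup and the case split on $\rk(G/M)$ is routine.
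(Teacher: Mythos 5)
Your proof is correct and is essentially the paper's intended argument: the corollary is proved there in one line by combining Theorem~\ref{thm.A}, Fact~\ref{fact.Hru}, and Lemma~\ref{lem.PrimitiveBoundG}, which are exactly the three ingredients you assemble, with the same case split on $\rk(G/M)$ and on $\gtd(X,G)$. The only wobble is your appeal to a descending chain condition to produce a \emph{maximal} definable subgroup $M\supseteq H$; the cleaner route, and all that Lemma~\ref{lem.PrimitiveBoundG} actually requires, is to take $M$ of maximal rank among the proper definable overgroups of $H$, which already makes $(G/M,G)$ virtually definably primitive.
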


We mention that although many parts of our analysis appear to be tightly tied to the case of rank $2$, several pieces seem rather general (or at least generalizable), e.g.  our preliminary recognition theorem, Proposition~\ref{prop.Recognition}. Further, we illustrate the importance of the so-called $\Sigma$-groups, 
see Subection~\ref{subsec.Sigma}. These groups allow one to identify the prospective Weyl group, and as such, they should be an integral piece to future work on Conjecture~\ref{conj.ProbPGL}.

Focusing on the case where $G$ is connected, we prove Theorem~\ref{thm.A} in two steps. After some preparation, we establish the theorem under the additional restriction of generic sharp$^\circ$ $n$-transitivity; this is Theorem~\ref{thm.Aone}. By generic \textdef{sharp$^\circ$} $n$-transitivity, we mean a generically $n$-transitive action for which $\rk G = \rk X^n$, i.e.  the \emph{connected component} of the stabilizer of a generic $n$-tuple in $X^n$ is trivial. It then more-or-less remains to show that generically $4$-transitive actions for which a generic $4$-point stabilizer has rank less than $2$ are in fact generic sharply$^\circ$ $4$-transitive; this is Theorem~\ref{thm.Atwo}. Everything is properly glued together in Section~\ref{sec.A} where we also give the reduction of Theorem~\ref{thm.A} to the case where $G$ is connected.

The paper is organized as follows. Section~\ref{sec.Examples} gives some basic examples of generic multiple transitivity, and Section~\ref{sec.Background} lays out a handful of    definitions and results on groups of small Morley rank. Both sections are brief. The general permutation group-theoretic terminology and tools for our analysis are given in Section~\ref{sec.PermGroups}. Section~\ref{sec.TargetRecognition} is where our proof of Theorem~\ref{thm.A} really begins. This section is devoted to proving a natural approximation to Theorem~\ref{thm.A}, namely Proposition~\ref{prop.Recognition}; the approach is very geometric. In Section~\ref{sec.Aone}, we prove Theorem~\ref{thm.Aone}. It seems worth noting that our analysis in Section~\ref{sec.Aone} first establishes the bound of $4$ on the degree of generic transitivity before showing that Proposition~\ref{prop.Recognition} applies. The bound comes entirely from the presence of a large $\Sigma$-group. Section~\ref{sec.Atwo} contains the proof of Theorem~\ref{thm.Atwo}, and Section~\ref{sec.A} tidies everything up. 

\section{Examples}\label{sec.Examples}
We briefly give some examples of generic multiple transitivity. Throughout this section $K$ denotes an algebraically closed field definable in some ambient structure of finite Morley rank. This includes the algebraic setting where the ambient structure is just $K$ considered in the language of fields. Fix a positive integer $n$, and let $V$ be the vector space $K^n$.  

\begin{example}
The natural action of $\gl_n(K)$ on $V$ is generically sharply $n$-transitive; the generic orbit in $V^n$ consists of the ordered bases for $V$. 
\end{example}

Of course, similar statements hold for $\agl_n$ and $\pgl_n$.

\begin{example}
The natural actions of $\agl_n(K)$ on $V$ and $\pgl_n(K)$ on $\proj(V)$ are both generically sharply $(n+1)$-transitive.
\end{example}

The situation changes dramatically if one moves to, say, $\psp_n$ where the degree of generic transitivity becomes bounded independent of $n$. This is just one instance of Fact~\ref{fact_Popov}, which provides a bit of evidence for Conjecture~\ref{conj.ProbPGL}.

\begin{example}
If $n=2m$, then the natural action of $\psp_n(K)$ on $\proj(V)$ is generically $3$-transitive but not generically $4$-transitive. To see this, fix a symplectic basis $e_1,f_1,\ldots,e_m,f_m$ for $V$, i.e. $[e_i,e_j] = [f_i,f_j] = 0$ and $[e_i,f_j] = \delta_{ij}$  for all $1\le i,j \le m$. Now, one can compute that the orbit of $(\langle e_1\rangle, \langle f_1\rangle, \langle e_1+f_1+e_2\rangle)$ is generic in $(\proj(V))^3$ and that the stabilizer of the triple has no chance to have a generic orbit  on $\proj(V)$. 

To better see why generic $4$-transitivity fails, let $H$ be the stabilizer of $\langle e_1\rangle$ and $\langle f_1\rangle$ in $\psp_n(K)$, and let $\orbit$ be the generic orbit of $H$ on $\proj(V)$. If the action of $\psp_n(K)$ on $\proj(V)$ is generically $4$-transitive, then the action of $H$ on $\orbit$ must be generically $2$-transitive. Setting $W:=\langle e_1\rangle\oplus \langle f_1\rangle$, we have an $H$-invariant decomposition $V=W\oplus W^\perp$, so $\langle e_1\rangle$ and $\langle f_1\rangle$ determine an $H$-invariant  projection $\pi:\proj(V)\rightarrow \proj(W)$. Thus, the action of $H$ on $\pi(\orbit)$ is a quotient of the action of $H$ on $\orbit$, so the degree of generic transitivity of the former action is at least as large as that of the latter, see Lemma~\ref{lem.QuotientGenericTrans}. As $\proj(W)$ is a $1$-dimensional projective space and $H$ fixes two points, $H$ acts on $\proj(W)$ as a torus, so the action of $H$ on $\pi(\orbit)$ cannot be generically $2$-transitive. 
\end{example}

\begin{fact}[{\cite[Theorem~1]{PoV07}}]\label{fact_Popov}
If $G$ is a simple algebraic group over an algebraically closed field of characteristic $0$, then the maximum degree of generic transitivity among all nontrivial actions of $G$ on  irreducible algebraic varieties, denoted $\gtd(G)$, is given by the following table.
\begin{center}\small 
\begin{tabular}{c||c|c|c|c|c|c|c|c|c}
\textnormal{type of} $G$ & $\chevA_n$ & $\chevB_n, n \ge 3$ & $\chevC_n, n \ge 2$ & $\chevD_n, n \ge 4$ & $\chevE_6$ & $\chevE_7$ & $\chevE_8$ & $\chevF_4$ & $\chevG_ 2$\\ \hline
$\gtd{G}$ & $n +2$ & $3$ & $3$ & $3$ & $4$ & $3$ & $2$ & $2$ & $2$
\end{tabular}
\end{center}
\end{fact}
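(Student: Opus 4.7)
The plan is to handle each Dynkin type case by case, translating the question into one about open orbits on homogeneous spaces. For a connected simple algebraic group $G$ acting nontrivially on an irreducible variety $X$, the action has a dense $G$-orbit, so up to restricting to it we may assume $X = G/H$ for some proper closed subgroup $H$. Generic $n$-transitivity of $G$ on $G/H$ is then equivalent to $H$ having a dense orbit on $(G/H)^{n-1}$, so $\gtd(G) - 1$ equals the largest $m$ for which some proper closed subgroup of $G$ admits a dense orbit on $(G/H)^m$. The whole task thus becomes a joint analysis of closed subgroups $H < G$ and of the diagonal $H$-action on powers of $G/H$.

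First I would extract a crude dimension inequality: if $H$ has a dense orbit on $(G/H)^m$, then $\dim H \geq m\dim(G/H)$, so $(m+1)\dim H \geq m\dim G$ and $m \leq \dim H / \dim(G/H)$. Maximizing over proper $H$ already gives a first loose upper bound. To sharpen it, I would restrict attention to \emph{maximal} closed subgroups, so that $G/H$ is a minimal-dimensional projective $G$-variety; for classical $G$ these are flag varieties, quadrics, or Grassmannians, and the Borel--Tits classification of parabolics plus the Dynkin classification of maximal reductive subgroups gives a concrete, finite list to inspect.

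Second, for each type I would identify the extremal action and verify the claimed lower bound. For type $\chevA_n$, the natural action of $\pgl_{n+1}$ on $\proj^n$ is generically $(n+2)$-transitive because any two $(n+2)$-tuples in general linear position are projectively equivalent. For types $\chevB_n, \chevC_n, \chevD_n$, the computation carried out for $\psp_n$ in the excerpt generalizes: the invariant symplectic or orthogonal form induces a cross-ratio-type invariant obstructing any degree beyond $3$. For the exceptional groups, one takes the relevant minuscule or adjoint variety as $G/H$ and uses the automorphism group of that variety to recover the tabulated entries ($4$ for $\chevE_6$, $3$ for $\chevE_7$, and $2$ for $\chevE_8$, $\chevF_4$, $\chevG_2$).

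Third, the upper bound is the real obstacle, as it requires ruling out \emph{every} homogeneous $G$-space. For this I would combine the dimension inequality with the list of maximal closed subgroups and then examine, for the few remaining candidates $G/H$, the $G$-invariant geometric structures they carry. The point is that for non-$\chevA$ types $G/H$ supports additional invariant data (a form, a contact structure, a line bundle) which forces any generic stabilizer in $H$ of a tuple on $(G/H)^m$ to preserve further structure, giving it a nontrivial invariant on a subsequent copy of $G/H$ and thus blocking transitivity of the next degree. Converting this structural obstruction into uniform inductive bookkeeping across all types is where I expect the bulk of the work to lie, and it is the step that Popov's original argument carries out most delicately.
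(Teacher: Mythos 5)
First, a point of reference: the paper does not prove this statement at all --- it is quoted as Popov's theorem and the only ``proof'' on record is the citation to \cite{PoV07} --- so your proposal has to be judged as an attempted reconstruction of that external argument. As such it has the right overall shape (reduce to a dense orbit $G/H$, translate generic $(m+1)$-transitivity into $H$ having a dense orbit on $(G/H)^m$, pass to maximal subgroups via the quotient lemma, run a type-by-type analysis), and this is roughly how Popov proceeds. But what you have written is a plan, not a proof: the entire substantive content of the theorem is the exhaustive verification, for every maximal closed subgroup $H$ of every simple $G$, that the tabulated degree is both attained and never exceeded, and you explicitly defer exactly that step (``where I expect the bulk of the work to lie''). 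Nothing in the proposal explains why $\chevE_6$ gets $4$ rather than $3$ or $5$, or why the answer for $\chevB_n$ is $3$ independently of $n$; those numbers come out of concrete computations of generic stabilizers on specific homogeneous spaces, and no such computation is carried out or even set up.

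Second, several of the supporting assertions are wrong or misleading. A nontrivial action of a simple group on an irreducible variety need not have a dense orbit; the reduction to $G/H$ is legitimate only because actions without one have $\gtd=0$ and may be discarded, which is not what you wrote. For $H$ maximal closed, $G/H$ need not be projective: in characteristic $0$ a maximal closed subgroup is parabolic or reductive, and in the reductive case $G/H$ is affine, so the ``flag varieties, quadrics, or Grassmannians'' list omits an entire class of candidates that the upper bound must also rule out. The claim that the $\psp_n$ computation ``generalizes'' to $\chevB_n$ and $\chevD_n$ is false as stated: for the orthogonal groups the function $B(x,y)^2/\bigl(B(x,x)B(y,y)\bigr)$ is already a nonconstant invariant of pairs of points of $\proj^{m-1}$, so $\mathrm{SO}_m$ is not even generically $2$-transitive on projective space, and the witness for degree $3$ must be the quadric of isotropic lines --- a different variety requiring a separate verification. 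Finally, $\chevE_8$, $\chevF_4$ and $\chevG_2$ have no minuscule weights, so ``the relevant minuscule variety'' does not exist for them. None of these slips dooms the strategy, but together with the deferred case analysis they mean the proposal does not establish the table.
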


For more on the algebraic setting, we refer the reader  to \cite{PoV07}. Finally, we consider products of known actions; here we find it rather satisfying that we are not restricted to the algebraic category.

\begin{example}\label{exam.Products}
If $G_1$ acts generically $t_1$-transitively on $X_1$ and $G_2$ acts generically $t_2$-transitively on $X_2$, then the coordinatewise action of $G_1\times G_2$ on $X_1\times X_2$ is generically $t$-transitive with $t = \min(t_1,t_2)$. In particular, if $L$ is an algebraically closed field, possibly of characteristic different from that of $K$, with $K$ and $L$ definable in some ambient structure of finite Morley rank, then the action of $\gl_n(K) \times \gl_m(L)$ on $K^n \times L^m$ is generically $\min(n,m)$-transitive.
\end{example}

\section{Groups of small rank}\label{sec.Background}
As mentioned in the introduction, background on groups of finite Morley rank can be found in  \cite{PoB87}, \cite{BoNe94}, and \cite{ABC08}. In this section, we collect some specialized results about groups of small Morley rank. We first need a few definitions. It should be mentioned that our definition of a unipotent group is definitely not standard.

\begin{definition}
Let $G$ be a group of finite Morley rank. Then
\begin{enumerate}
\item $G$ is called a \textdef{decent torus} if $G$ is divisible, abelian, and equal to the definable hull of its torsion subgroup, 
\item $G$ is called a \textdef{good torus} if every definable subgroup of $G$ (including $G$) is a decent torus, and
\item $G$ is said to be \textdef{unipotent} if $G$ is connected, nilpotent, and does not contain a nontrivial decent torus. 
\end{enumerate}
\end{definition}

\subsection{Groups of rank at most $3$}
Most of the results in this subsection can be found in Cherlin's paper \cite{ChG79} though the first is due to Reineke.

\begin{fact}[\cite{ReJ75}]\label{fact.rankOneGroups}
If $A$ is a connected group of rank $1$, then $A$ is either a divisible abelian group or an elementary abelian $p$-group for some prime $p$.
\end{fact}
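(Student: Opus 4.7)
My plan is to handle this in two stages: first establish that $A$ is abelian, and then show that an abelian, connected, rank-$1$ group must be either divisible or elementary abelian of prime exponent. The one structural input used throughout is the finite-or-full dichotomy for definable subgroups of $A$: because $A$ is connected of rank $1$, any definable subgroup of $A$ is either finite or equal to $A$.

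For the abelianness step, I pick a nontrivial $a \in A$ and consider the definable hull $\dc(\langle a \rangle)$, which is abelian because the definable hull of an abelian subset in the finite Morley rank category is abelian. If $a$ has infinite order, then $\dc(\langle a \rangle)$ is an infinite definable subgroup of $A$, hence equals $A$ by the dichotomy; thus $A$ is abelian. If instead every element of $A$ has finite order, then $A = \bigcup_{n \ge 1} A[n]$ and by additivity of rank some $A[n]$ is generic in $A$; from this bounded-exponent generic set one extracts an infinite abelian subgroup and applies the dichotomy as above.

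Assuming $A$ abelian, the power maps $\pi_n \colon x \mapsto x^n$ are definable endomorphisms with kernel $A[n]$ and image $A^n$, each a definable subgroup. If $A^n = A$ for every $n$, then $A$ is divisible. Otherwise fix $n$ with $A^n$ finite; then $A/A[n] \cong A^n$ is finite, forcing $A[n] = A$ by the dichotomy, so $A$ has exponent dividing $n$. The primary decomposition $A = \bigoplus_i A[p_i^{a_i}]$ of a bounded-exponent abelian group, combined with the finite-or-full dichotomy on each definable summand, collapses $A$ onto $A[p^a]$ for a single prime $p$. Finally, if $A[p]$ were proper in $A$, then $A^p \cong A/A[p]$ would be infinite, hence equal to $A$ by the dichotomy; iterating yields $A^{p^a} = A$, contradicting the exponent bound $x^{p^a} = 1$. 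Thus $A = A[p]$ is elementary abelian of exponent $p$.

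The main obstacle I anticipate is the periodic case of the abelianness step: without an element of infinite order there is no automatic source of an infinite definable abelian subgroup, and one must invoke either the general theorem that every infinite group of finite Morley rank contains an infinite abelian subgroup, or argue directly from the generic bounded-exponent set $A[n]$. Everything afterward is an orderly application of the rank-$1$ finite-or-full dichotomy to the kernels and images of the power endomorphisms.
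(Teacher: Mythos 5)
The paper offers no proof of this statement---it is quoted as a Fact with a citation to Reineke---so the comparison is against the standard argument in the literature. Your second stage (an abelian connected rank~$1$ group is divisible or elementary abelian $p$) is correct and is essentially the standard argument: the power maps, the finite-or-full dichotomy applied to $A^n$ and $A[n]$, the collapse of the primary decomposition onto one prime, and the final divisibility contradiction all go through as you describe.

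The genuine gap is in the first stage, in the periodic case, and unfortunately that case is the real content of Reineke's theorem. Two problems. First, ``by additivity of rank some $A[n]$ is generic'' is not a valid inference: rank is not countably additive over a countable union of definable sets, and before abelianness is established $A[n]$ is merely a definable subset, not a subgroup; to get bounded exponent you need a compactness/saturation argument in an elementary extension (where, alternatively, an element of infinite order would appear and abelianness, being first-order, would transfer back down). Second, and more seriously, ``from this bounded-exponent generic set one extracts an infinite abelian subgroup'' is not an argument---producing an infinite abelian subgroup of an arbitrary periodic connected rank~$1$ group is exactly what has to be proved. Your proposed fallback, citing the theorem that every infinite group of finite Morley rank contains an infinite abelian subgroup, would formally close the gap (since then $C(C(H))$ is an infinite definable abelian subgroup, hence all of $A$), but that theorem is proved in the literature by precisely the argument you are missing, applied to a minimal infinite definable subgroup; so the hard step has been deferred rather than carried out. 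The classical route is quite different from your definable-hull approach: if $Z(A)$ is finite, then for each non-central $a$ the centralizer $C(a)$ is a proper definable subgroup, hence finite, so the class $a^A$ is generic; connectedness (degree $1$) forces all non-central elements into a single conjugacy class; hence they all have the same order, and a short case analysis on that common order (using that $a$ is conjugate to $a^{-1}$ and to suitable powers of itself) yields a contradiction. Some version of that conjugacy-class analysis, or an explicit appeal to the ``minimal groups are abelian'' theorem, is needed to make your first stage complete.
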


\begin{fact}[\cite{ChG79}]\label{fact.rankTwoGroups}
Let $B$ be a connected group of rank $2$. Then $B$ is solvable. If $B$ is nilpotent and nonabelian, then $B$ has exponent $p$ or $p^2$ for some prime $p$. If $B$ is nonnilpotent, then
\begin{enumerate}
\item $B = B' \rtimes T$ with $T$ a good torus containing $Z(B)$,  
\item $B/Z(B) \cong K^+ \rtimes K^\times$ for some algebraically closed field $K$, and 
\item every definable automorphism of $B$ of finite order is inner.
\end{enumerate}
\end{fact}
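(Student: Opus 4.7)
The plan is to attack the statement in three stages --- solvability, the nilpotent nonabelian case, and the nonnilpotent case including (1)--(3) --- using Fact~\ref{fact.rankOneGroups} repeatedly.

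For solvability, I would argue by contradiction. If $B$ is not solvable, the iterated derived subgroup $B^{(\infty)}$ is a definable connected perfect subgroup; since rank-$1$ connected groups are abelian by Fact~\ref{fact.rankOneGroups}, $B^{(\infty)}$ has rank $2$ and so $B = B^{(\infty)}$ is perfect. Then $Z(B)$ must be finite (otherwise $B/Z(B)$ would be abelian of rank $\le 1$, contradicting perfectness), and $B/Z(B)$ would be a simple connected group of rank $2$. In such a group, every nontrivial centralizer is a proper connected subgroup of rank $\le 1$, hence abelian by Fact~\ref{fact.rankOneGroups}; a Borel-subgroup analysis (distinct Borels meet trivially, yielding a generically sharp $2$-transitive action on the coset space of a Borel) then contradicts the classical nonexistence of simple connected groups of rank $2$. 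For the nilpotent nonabelian part, I would use the decomposition of a connected nilpotent group of finite Morley rank as a central product $B = D * U$, with $D$ the maximal divisible abelian (central) subgroup and $U$ a definable subgroup of bounded exponent. Nonabelianness forces $U \not\le Z(B)$, and within rank $2$ the lower central series of $U$ has at most two nontrivial factors, each of exponent $p$, which pins the exponent at $p$ or $p^2$.

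The heart of the argument is the nonnilpotent case. Since $B$ is solvable and nonnilpotent of rank $2$, $(B')^\circ$ is a proper nontrivial connected subgroup of rank exactly $1$, abelian by Fact~\ref{fact.rankOneGroups}, and it cannot lie in $Z(B)$ (else $B$ would be nilpotent of class $\le 2$). Hence $Q := B/C_B((B')^\circ)$ is a nontrivial connected group acting faithfully on the rank-$1$ abelian group $(B')^\circ$, and Zilber's field theorem produces an interpretable algebraically closed field $K$ with $(B')^\circ \cong K^+$ and $Q \hookrightarrow K^\times$; rank considerations force $Q \cong K^\times$ and $B' = (B')^\circ$, yielding (2). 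To obtain (1) I would invoke the Schur--Zassenhaus theorem for groups of finite Morley rank (or a Carter-subgroup argument) to produce a complement $T$ with $B = B' \rtimes T$ and $T \cong K^\times$; that $T$ is a good torus is immediate, and the faithful action of $T$ on $B'$ yields $Z(B) \cap B' = 1$, so $Z(B) \le T$.

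The main obstacle is (3). Given a definable automorphism $\sigma$ of $B$ of finite order, it preserves the characteristic subgroups $B'$ and $Z(B)$, so it induces $\bar\sigma$ on the centerless quotient $B/Z(B) \cong K^+ \rtimes K^\times \cong \agl_1(K)$. Every automorphism of $\agl_1(K)$ is the composition of a field automorphism of $K$ with an inner automorphism, and since an algebraically closed field interpretable in a finite Morley rank structure admits no nontrivial definable field automorphism (a classical result of Poizat), $\bar\sigma$ is inner. After composing $\sigma$ with an appropriate inner automorphism of $B$, I may assume $\sigma$ is trivial modulo $Z(B)$. The map $b \mapsto \sigma(b) b^{-1}$ is then a definable $1$-cocycle from $B$ into the finite group $Z(B)$; combined with the fact that $\sigma$ must be the identity on $B' \cong K^+$ (as $B' \cap Z(B) = 1$ and the $T$-action pins the action down), divisibility and connectedness of the good torus $T$ force this cocycle to be a coboundary, and a final inner adjustment makes $\sigma$ itself trivial. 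The cohomological bookkeeping in this last step is where I expect the main technical difficulty to lie.
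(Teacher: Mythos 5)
First, a point of comparison: the paper does not prove this statement at all --- it is quoted as a Fact from Cherlin's paper \cite{ChG79} --- so there is no in-paper argument to measure yours against. Your outline for the nonnilpotent case is essentially the standard one (Zilber's field theorem applied to the faithful action of the infinite abelian group $B/C_B(B')$ on the minimal rank-one group $B'$, followed by a Carter/complement argument), and that part is sound. Note also that your feared ``cohomological bookkeeping'' at the end of (3) evaporates: once $\sigma$ has been adjusted to be trivial modulo $Z(B)$, the map $b\mapsto b^{-1}\sigma(b)$ is a definable homomorphism from the connected group $B$ into $Z(B)$, and $Z(B)$ is finite here (since $Z(B)\cap B'=1$ and $C_B(B')^\circ=B'$), so the map is trivial and $\sigma$ is literally the inner automorphism you composed with.

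Two steps, however, have genuine gaps. (a) In the nilpotent nonabelian case, the decomposition $B=D*U$ only bounds the exponent of $U$; the actual content of the claim is that the divisible part $D$ is trivial, i.e.\ that $B$ has bounded exponent at all, and nothing in your sketch addresses this. (The standard route: $B$ has class $2$, the commutator induces a definable alternating biadditive map on $B/Z(B)$ whose nondegeneracy at a generic element forces that element to be torsion, so $B/Z(B)$ is elementary abelian $p$ by Fact~\ref{fact.rankOneGroups}, and one then pushes this down to $B'$.) (b) Your appeal to ``no nontrivial definable field automorphism of an interpretable algebraically closed field'' is false in positive characteristic: the Frobenius $x\mapsto x^p$ is definable and nontrivial. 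What saves (3) is precisely the finite-order hypothesis, which you never deploy at that point: the outer part of $\bar\sigma$ is a finite-order field automorphism, and by Artin--Schreier such an automorphism is trivial or of order $2$ with real closed fixed field, the latter impossible in a structure of finite Morley rank. Finally, the solvability claim is the deepest part of Cherlin's theorem, and ``a Borel-subgroup analysis then contradicts the classical nonexistence of simple connected groups of rank $2$'' is circular as written --- the nonexistence of simple rank-$2$ groups is exactly what must be proved there.
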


The following fact follows almost immediately from the previous one, but a proof can be found in \cite{WiJ14a}.

\begin{fact}\label{fact.RankTwoAbelian}
If $B$ is a connected group of rank $2$, then any one of the following implies that $B$ is abelian:
\begin{enumerate}
\item $B$ normalizes a nontrivial decent torus,
\item $B$ contains two distinct unipotent subgroups of rank $1$, or
\item $B$ is nilpotent and contains two distinct definable connected subgroups of rank $1$.
\end{enumerate} 
\end{fact}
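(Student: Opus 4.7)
The plan is to prove each part by contradiction, starting from the dichotomy in Fact~\ref{fact.rankTwoGroups}. Assume $B$ is nonabelian; then $B$ is solvable and falls into one of two cases: (a) $B$ is nilpotent of exponent $p$ or $p^2$ for some prime $p$, or (b) $B = B' \rtimes T$ is nonnilpotent with $T$ a good torus containing $Z(B)$ and $B/Z(B) \cong K^+ \rtimes K^\times$. In case (b), rank comparison forces $Z(B)$ to be finite and $(B')^\circ$ and $T$ each to have rank $1$. In case (a), $Z(B)^\circ$ is a rank-$1$ connected subgroup of $B$, using that a connected nilpotent group of finite Morley rank has infinite center and that $B$ is nonabelian.

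For (1), assume $B$ contains a nontrivial decent torus $D$. In case (a), divisibility of $D$ is incompatible with the finite exponent of $B$, a quick contradiction. In case (b), I would invoke the standard rigidity of decent tori in connected groups of finite Morley rank to conclude that $B$ centralizes $D$; then $D \le Z(B)$, which is finite, contradicting the infinitude of a nontrivial decent torus. For (2) and (3), the unifying observation is that any connected rank-$1$ definable $A \le B$ is abelian by Fact~\ref{fact.rankOneGroups}, and if $A$ commutes with $Z(B)^\circ$ without lying in it, then $A \cdot Z(B)^\circ$ is a connected subgroup of rank $2$, equal to $B$, and abelian---a contradiction. In the nilpotent case this observation applies to every connected rank-$1$ subgroup of $B$, forcing them all to lie in and hence equal $Z(B)^\circ$. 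This immediately settles (3) and reduces (2) to case (b).

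To finish (2) in case (b), I would show that the only unipotent rank-$1$ connected subgroup of $B$ is $(B')^\circ$. Given such a $U$, consider its image under the projection $B \to B/B' \cong T$. This image is a connected subgroup of the good torus $T$, hence a decent torus. If it were nontrivial, then by Fact~\ref{fact.rankOneGroups} either $U$ would be an elementary abelian $p$-group with a nontrivial divisible image, which is impossible; or $U$ would be torsion-free divisible, in which case the projection is injective on $U$, embedding $U$ into the good torus and making $U$ itself a decent torus, contradicting unipotence. So the image is trivial, $U \le (B')^\circ$, and the ranks match. The main obstacle I expect is precisely this step: ruling out that a unipotent rank-$1$ subgroup in a nonnilpotent rank-$2$ group reaches the torus complement; everything else follows quickly from Fact~\ref{fact.rankTwoGroups} and the rigidity of decent tori.
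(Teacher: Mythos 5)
The paper does not actually prove this Fact: it is imported from \cite{WiJ14a} with the remark that it ``follows almost immediately'' from Fact~\ref{fact.rankTwoGroups}, and that is exactly the route you take. Your arguments for parts (2) and (3) --- the only parts this paper ever invokes --- are correct and complete. The reduction via $Z^\circ(B)$ in the nilpotent case is sound (an infinite nilpotent group of finite Morley rank has infinite center, and $A\cdot Z^\circ(B)=B$ would be abelian for any definable connected rank-$1$ subgroup $A\neq Z^\circ(B)$), and in the nonnilpotent case your claim that a rank-$1$ unipotent subgroup $U$ must have trivial image in $B/B'\cong T$, hence equal $B'$, is right: a nontrivial divisible image is incompatible with exponent $p$, and in the torsion-free case the kernel $U\cap B'$ is finite hence trivial, so $U$ would be isomorphic to a nontrivial decent torus and in particular would have torsion.

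Part (1), however, is handled with a hypothesis slip that matters. You assume ``$B$ contains a nontrivial decent torus $D$'' and then invoke rigidity to get $B\le C(D)$ and finally $D\le Z(B)$. Rigidity ($N^\circ(D)=C^\circ(D)$) applies only once you know $B\le N(D)$, which does not follow from $D\le B$; and the conclusion $D\le Z(B)$ requires $D\le B$, which does not follow from $B\le N(D)$. So your argument really needs $D$ to be a \emph{normal subgroup of} $B$. That is in fact the only reading under which the statement is true: ``contains'' alone fails ($K^+\rtimes K^\times$ contains the nontrivial decent torus $K^\times$ but is nonabelian), and ``normalizes'' alone, with $D$ external, also fails (consider $B\times L^\times$ acting on $D=1\times L^\times$). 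Under the reading that $D$ is a nontrivial normal decent subtorus of $B$, your two-case argument (bounded exponent versus finite center) does close out part (1). Since the paper never uses part (1), nothing downstream is affected, but you should state and use the hypothesis precisely.
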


Recall that the \textdef{Fitting subgroup} of a group $G$ is the subgroup $F(G)$ generated by all normal nilpotent subgroups, and in a group of finite Morley rank, the Fitting subgroup is nilpotent and definable. A group is called \textdef{quasisimple} if it is perfect, and modulo its center, it is simple. Also, we say that a group of finite Morley rank is a \textdef{bad group} if it is a connected nonsolvable group such that every proper definable subgroup is nilpotent. It is unknown if bad groups exist.

\begin{fact}[\cite{ChG79}]\label{fact.rankThreeGroups}
Let $G$ be a connected group of rank $3$.
\begin{enumerate}
\item If $\rk F(G) \ge 1$, then $G$ is solvable.
\item If $\rk F(G) = 0$, then either
\begin{enumerate}
\item $G$ is a quasisimple bad group, or
\item $G\cong \pssl_2(K)$ for some algebraically closed field $K$. 
\end{enumerate}
\end{enumerate}
\end{fact}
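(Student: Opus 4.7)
The proof splits according to the stated dichotomy. For (1), assume $\rk F(G) \ge 1$, and set $Q := G/F(G)\degrees$. Then $Q$ is connected of rank at most $2$. By Fact~\ref{fact.rankOneGroups}, a connected group of rank $1$ is abelian, and by Fact~\ref{fact.rankTwoGroups}, a connected group of rank $2$ is solvable. Thus $Q$ is solvable, and since $F(G)$ is nilpotent, $G$ itself is solvable.

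For (2), assume $\rk F(G) = 0$, so $F(G)$ is finite. First I would check that $G$ is nonsolvable: otherwise $[G,G] \le F(G)$ by the standard structure theory for connected solvable groups of finite Morley rank, so $[G,G]$ is finite; but then each centralizer $C_G(y)$ has finite index in $G$, and connectedness of $G$ forces $C_G(y) = G$ for every $y$. Hence $G$ would be abelian, equal to $F(G)$, contradicting $\rk G = 3$.

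Now suppose every proper definable connected subgroup of $G$ is nilpotent. Then $G$ is bad by definition, and it remains to upgrade to quasisimple. First, $G$ must be perfect: otherwise $[G,G]\degrees$ is a proper definable connected normal subgroup, hence nilpotent and contained in $F(G)$, so $[G,G]$ is finite, forcing $G$ abelian as above — a contradiction. A parallel argument shows that any proper definable normal subgroup $N$ has $N\degrees \le F(G)$ finite, so $N$ is finite; then $G/C_G(N)$ is finite and connectedness of $G$ gives $N \le Z(G)$. Hence $G/Z(G)$ is simple and $G$ is quasisimple, placing us in (2)(a).

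Otherwise, some proper definable connected subgroup $H \le G$ is nonnilpotent. Connected rank $1$ groups are abelian by Fact~\ref{fact.rankOneGroups}, so $\rk H = 2$, and Fact~\ref{fact.rankTwoGroups} provides the structure $H = H' \rtimes T$ with $T$ a good torus and $H/Z(H) \cong K^+ \rtimes K^\times$ for some algebraically closed field $K$. To reach (2)(b), I would study the transitive action of $G$ on the rank $1$ coset space $G/H$: the plan is to locate a ``Weyl element'' $w \in N_G(T) \setminus H$ and exploit a Bruhat-type decomposition $G = H \cup HwH$ (dense up to lower rank) in order to show this action is generically $3$-transitive. Hrushovski's classification of transitive actions on sets of rank $1$ (Fact~\ref{fact.Hru}) would then identify $G/\core_G(H)$ with $\pgl_2(K)$ acting on $\proj^1(K)$, and a rank comparison with the explicit structure of $H$ forces $\core_G(H)$ to be at most central, giving $G \cong \pssl_2(K)$. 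The main obstacle is producing the Weyl element $w$: when $\charac K \ne 2$ one expects it from an involution inverting $T$, reducing the task to finding a suitable $2$-element in $G \setminus H$ via Sylow analysis; in characteristic $2$ the argument must be reorganized, since $T$ contains no involutions and one must instead exploit unipotent elements normalizing $T$.
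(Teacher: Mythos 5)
This statement is imported as a black box from Cherlin's paper \cite{ChG79}; the paper contains no proof of it, so there is no internal argument to compare yours against. Judged on its own terms, your part (1), your reduction to the nonsolvable case, and your treatment of the quasisimple bad group alternative are all essentially sound (the passage from ``every proper definable normal subgroup is finite and central'' to abstract simplicity of $G/Z(G)$ still needs the standard definable-hull and indecomposability argument to handle non-definable normal subgroups, but that is routine).

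The genuine gap is in case (2)(b), and it is the one you flag yourself: you never produce the Weyl element, so the generic $3$-transitivity of $G$ on $G/H$ --- the engine of your identification --- is not established; this is in fact the hard core of Cherlin's original argument, which runs through the theory of split Zassenhaus groups. But the detour through multiple transitivity is unnecessary given the tools already quoted in this paper: Fact~\ref{fact.Hru} requires only a faithful transitive action on a connected rank $1$ set together with the rank of the acting group. So it suffices to check that $N:=\core_G(H)$ is finite. If $N^\circ\neq 1$, then either $N^\circ$ is nilpotent (automatic when $\rk N^\circ=1$, by Fact~\ref{fact.rankOneGroups}), or $\rk N^\circ=2$ and $N^\circ$ is nonnilpotent, in which case $F^\circ(N^\circ)$ is an infinite characteristic nilpotent subgroup by Fact~\ref{fact.rankTwoGroups}; either way $G$ acquires an infinite normal nilpotent subgroup, contradicting $\rk F(G)=0$. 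Hence $G/N$ has rank $3$ and acts faithfully and transitively on the connected rank $1$ set $G/H$, so Fact~\ref{fact.Hru} forces $G/N\cong\psl_2(K)$. Simplicity of $\psl_2(K)$ gives $G=G'N$ with $N$ finite, so $G$ is perfect, and $G$ is then a perfect central extension of $\psl_2(K)$ with finite (hence central) kernel, whence $G\cong\pssl_2(K)$ by the standard theory of such extensions. The one logical caution with this shortcut is that Fact~\ref{fact.Hru} must not itself depend on the rank $3$ classification; the proof in \cite{BoNe94} goes through split Zassenhaus groups and does not, but your write-up should verify and record this to avoid circularity.
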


\subsection{Groups of rank $4$}
Fact~\ref{fact.genericSharpTwoTransRankTwo}, which is a corollary of the following fact, will be used in Subsection~\ref{subsec.TransBound} where it is more-or-less responsible for our proof of Theorem~\ref{thm.Aone} getting off the ground. Note that for a group $G$ with subgroups $A$ and $B$, we write $G=A*B$ if $A$ and $B$ commute and generate $G$, i.e. $G$ is the \textdef{central product} of $A$ and $B$.

\begin{fact}[{\cite[Corollary~A]{WiJ14a}}]\label{fact.rankFourGroups}
Let $G$ be a connected group of rank $4$.
\begin{enumerate}
\item If $\rk F(G) \ge 2$, then $G$ is solvable.
\item If $\rk F(G) = 1$, then either
\begin{enumerate}
\item $G$ is a quasisimple bad group, or
\item $G = F(G) * Q$ for some quasisimple subgroup $Q$ of rank $3$.
\end{enumerate}
\item If $\rk F(G) = 0$, then either
\begin{enumerate}
\item $G$ is a quasisimple bad group, or
\item $G$ has a normal quasisimple bad subgroup of rank $3$. 
\end{enumerate}
\end{enumerate}
In particular, $\rk F(G) \ge 1$ whenever $G$ has an involution.
\end{fact}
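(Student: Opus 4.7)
The plan is to argue by cases on $r := \rk F$ where $F := F(G)$, combining the classifications in Facts~\ref{fact.rankOneGroups}, \ref{fact.rankTwoGroups}, and \ref{fact.rankThreeGroups} with the standard fact that in a connected solvable group of finite Morley rank the Fitting subgroup is self-centralizing. When $r \geq 2$, the quotient $G/F$ has rank at most two and is solvable by Fact~\ref{fact.rankOneGroups} or Fact~\ref{fact.rankTwoGroups}; since $F$ is nilpotent, $G$ is then solvable, giving (1).

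For $r = 1$: by Fact~\ref{fact.rankOneGroups}, $F$ is connected abelian of rank $1$. I would first exclude $G$ being solvable --- otherwise $G/F$ would embed in $\aut(F)$, whose rank is too small to accommodate the rank-$3$ group $G/F$. With $G$ non-solvable, Fact~\ref{fact.rankThreeGroups} forces $G/F$ to be quasisimple, either bad or $\pssl_2(K)$. Next I would show $F \leq Z(G)$: the definable subgroup $C_G(F)^\circ$ contains $F$ (as $F$ is abelian), so $G/C_G(F)^\circ$ is simultaneously a quotient of the quasisimple $G/F$ and embeds in the small group $\aut(F)$, and must therefore be trivial. Setting $Q := [G,G]$, which is definable and connected and surjects onto the perfect $G/F$, one obtains $G = QF$ with $Q \cap F \leq Z(Q)$, making $Q$ quasisimple. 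A rank count then yields either $Q = G$ --- so $G$ itself is quasisimple with $Z(G) = F$, giving (a), and here the $\pssl_2$ alternative is ruled out because it admits no positive-rank central extensions --- or $\rk Q = 3$ and $G = F * Q$, giving (b).

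For $r = 0$: $F^\circ = 1$. I would take a minimal definable connected normal subgroup $M$ of $G$. Since $[M,M]$ is characteristic in $M$ and hence normal in $G$, minimality forces $M$ to be either abelian (impossible, as then $M \leq F^\circ = 1$) or perfect. Facts~\ref{fact.rankOneGroups} and \ref{fact.rankTwoGroups} then exclude $\rk M \leq 2$, so $\rk M \in \{3,4\}$. In the rank-$3$ case, Fact~\ref{fact.rankThreeGroups} makes $M$ quasisimple bad or $\pssl_2(K)$; the latter carries involutions that would force $F^\circ \neq 1$, so we obtain (b). In the rank-$4$ case, $G = M$ is already quasisimple, and it remains only to show that $G$ is bad, yielding (a). The final clause follows once one observes that rank-$3$ and rank-$4$ quasisimple bad groups contain no involutions, so case (3) and sub-case (2)(b) can be reached only in the absence of involutions.

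I expect the main obstacle to be the rank-$4$ subcase of (3): proving that a quasisimple group of rank $4$ with $F^\circ = 1$ is necessarily bad. This requires ruling out every non-nilpotent proper definable connected subgroup, which must be carried out by a careful descent back to the rank-$\leq 3$ classifications, exploiting the absence of involutions. A secondary subtlety is the exclusion of the $\pssl_2$ alternative in Case~(2)(a), which leans on input about central extensions beyond the purely rank-theoretic toolkit.
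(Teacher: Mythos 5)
This statement is not proved in the paper at all: it is imported verbatim as Corollary~A of \cite{WiJ14a}, so there is no internal argument to compare against, only the external one. Your skeleton is the right one and disposes of the genuinely easy parts correctly: case (1) is nilpotent-by-(rank $\le 2$, hence solvable); case (2) follows from the self-centralizing Fitting subgroup of a connected solvable group, the fact that a connected group of rank $3$ cannot act definably and faithfully on a rank-$1$ abelian group, and the theory of perfect central extensions to kill the $\pssl_2$ alternative when $Q=G$. One local flaw: in case (3) you exclude a rank-$3$ minimal normal subgroup $M\cong\pssl_2(K)$ because ``the latter carries involutions that would force $F^\circ\neq1$.'' That is an appeal to the final clause of the very statement being proved, which is a \emph{consequence} of the classification, so as written the step is circular. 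The non-circular route: if $C_G(M)$ were infinite, $C_G(M)^\circ$ would be a nontrivial connected normal nilpotent (rank-$1$) subgroup, contradicting $F^\circ(G)=1$; so $C_G(M)$ is finite and $G/C_G(M)$ embeds into the group of definable automorphisms of $\pssl_2(K)$, which is $\pgl_2(K)$ of rank $3<4$.

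The genuine gap is the one you flag yourself: the rank-$4$ quasisimple subcase of (3) (together with the badness of $G$ in (2)(a) and the absence of involutions in quasisimple bad groups needed for the final clause, which you also leave as observations). Showing that a quasisimple group of rank $4$ with $F^\circ(G)=1$ is bad is not ``a careful descent back to the rank-$\le3$ classifications''; it is the main content of \cite{WiJ14a} and occupies most of that paper. One must suppose a nonnilpotent proper connected definable subgroup exists and reach a contradiction, which requires a full analysis of Borel subgroups (their ranks, generic covering of $G$, self-normalization and intersections), Weyl-group and torsion arguments, and in particular an analysis of $2$-torsion: the ``absence of involutions'' that you propose to exploit is a conclusion to be earned there, not an available hypothesis. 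Substantial external machinery (unipotence theory, central extension theory, results on degenerate-type groups) enters as well. So your proposal is a sound outline of the outer reductions, but it defers rather than supplies the proof at exactly the point where all of the difficulty is concentrated.
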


\begin{fact}[{\cite[Corollary~B]{WiJ14a}}]\label{fact.genericSharpTwoTransRankTwo}
If $G$ is a connected nonsolvable group of rank $4$ acting faithfully, definably, transitively, and generically $2$-transitively on a definable set of rank $2$, then there is an algebraically closed field $K$ for which $G =Z(G) \cdot Q$ with  $Z(G)\cong K^\times$ and $Q\cong \pssl_2(K)$. 
\end{fact}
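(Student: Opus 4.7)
The plan is to apply Fact~\ref{fact.rankFourGroups} to pin down the structure of $G$, rule out the bad-group alternatives it allows, and then identify $Z(G)$ as a torus. First I record that transitivity and $\rk X = 2$ give $\rk G_x = 2$, while generic $2$-transitivity forces the stabilizer of a generic pair $G_{xy}$ to have rank $0$, so the action is essentially generically sharply$\degrees$ $2$-transitive. I would then produce an involution in $G$ by the standard swapping argument: since both $(x,y)$ and $(y,x)$ are generic pairs, they lie in the unique generic orbit, so some $g \in G$ swaps them; then $g^2 \in G_{xy}$ is finite, $g$ has finite even order, and a suitable power of $g$ is an involution.

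With an involution secured, the last assertion of Fact~\ref{fact.rankFourGroups} gives $\rk F(G) \ge 1$, and nonsolvability of $G$ forces $\rk F(G) = 1$. The fact then leaves two possibilities: either $G$ is a quasisimple bad group, or $G = F(G) * Q$ with $Q$ quasisimple of rank $3$. Ruling out the quasisimple bad alternative is the first main obstacle; the argument should exploit that in a bad group every proper connected definable subgroup is nilpotent, so $G_x\degrees$ (rank $2$) is heavily constrained, and this nilpotent point stabilizer is incompatible with the generic $2$-transitivity together with the swapping involution produced above. Assuming this is handled, we are in the central-product case, and applying Fact~\ref{fact.rankThreeGroups} to $Q$ gives $Q \cong \pssl_2(K)$ once the bad case is again excluded — a matter I would address by tracing the involutions of $G$ into $Q$ (the $2$-torsion available in $F(G)$, being rank $1$ connected abelian, is too thin to absorb them all) and invoking the standard absence of involutions in the simple quotient of a bad group.

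For the identification of $Z(G)$, the inclusion $F(G) \le Z(G)\degrees$ is immediate from $G = F(G) * Q$, and the reverse inclusion follows because $Z(G)\degrees$ is a connected normal abelian (hence nilpotent) subgroup of $G$; thus $Z(G)\degrees = F(G)$. Since $Z(G) = F(G) \cdot Z(Q)$ with $Z(Q)$ finite of order at most $2$ in $\pssl_2(K)$, a short argument should force $Z(Q) \le F(G)$, yielding $Z(G) = F(G)$ connected. The remaining — and second main — obstacle is showing $F(G) \cong K^\times$: Fact~\ref{fact.rankOneGroups} a priori allows an elementary abelian $p$-group or a divisible abelian group of vector-group type like $K^+$, and these must be excluded. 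I would aim to identify $F(G)$ definably with a maximal torus of $Q \cong \pssl_2(K)$, for instance by examining the action of a Borel of $Q$ on the $F(G)$-orbits in $X$ or by realizing $F(G)$ as the centralizer in $G$ of a well-chosen semisimple element of $Q$; the field structure of $K$ supplied by $Q$ then transports to $F(G)$ and yields $Z(G) \cong K^\times$.
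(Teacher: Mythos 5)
A point of orientation first: this statement is not proved in the paper at all --- it is imported verbatim as Corollary~B of \cite{WiJ14a}, exactly as Fact~\ref{fact.rankFourGroups} is Corollary~A of that same paper --- so there is no internal proof to compare against, and your sketch has to be judged on its own. Its skeleton is sound and is surely close to the real one: for a generic pair $(x,y)$ the stabilizer is finite because $\rk G=\rk X^2$, the swapping element yields an involution, the last clause of Fact~\ref{fact.rankFourGroups} together with nonsolvability then gives $\rk F(G)=1$, and the target decomposition is the central-product alternative of that fact.

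The two steps you yourself flag as ``main obstacles'' are, however, genuine gaps, and the mechanisms you propose for them do not work as stated. For the bad-group alternatives: a nilpotent $G_x^\circ$ is not by itself incompatible with generic $2$-transitivity (Proposition~\ref{prop.TransBoundSolvable} constrains $G$, not a point stabilizer), and the ``$F(G)$ is too thin to absorb the involutions'' idea collapses when $F^\circ(G)$ is an infinite elementary abelian $2$-group, a case Fact~\ref{fact.rankOneGroups} permits and which you have not yet excluded at that stage. The robust route, used repeatedly elsewhere in this paper, is to observe that $F^\circ(G)$ (resp.\ $Z^\circ(G)$ in the quasisimple case) must have rank-$1$ orbits by faithfulness, pass to the resulting rank-$1$ quotient, which is still generically $2$-transitive by Lemma~\ref{lem.QuotientGenericTrans}, and apply Fact~\ref{fact.Hru}: perfectness of the would-be bad group forces it to surject onto some $\psl_2(L)$, contradicting badness. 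For the identification $Z(G)\cong K^\times$: your plan to transport a field structure onto $F(G)$ by letting something act on it cannot succeed, because $F^\circ(G)=Z^\circ(G)$ is \emph{central} --- nothing acts on it nontrivially by conjugation, and indeed $G_x$ fixes the $Z^\circ(G)$-orbit of $x$ pointwise, so a Borel of $Q$ acting on the space of $F(G)$-orbits only sees $Q$. What actually does the work is the structure of $G_x^\circ$: it is connected of rank $2$, meets $Z^\circ(G)$ trivially and $Q$ in a rank-$1$ unipotent subgroup, and covers a Borel of $\psl_2(K)$ with finite kernel, so it is nonnilpotent and by Fact~\ref{fact.rankTwoGroups} contains a rank-$1$ good torus $T$; since $T$ lies in $Z^\circ(G)\cdot B_Q$ but has finite intersection with each factor, projecting $T$ identifies $Z^\circ(G)$, up to finite kernels, with a maximal torus of $Q$. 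Without an argument of this kind the possibilities $Z^\circ(G)\cong K^+$ or elementary abelian remain open, and your conclusion is not yet reached.
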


\section{Permutation groups}\label{sec.PermGroups}
We now give the essential definitions and some permutation group theoretic background for our study of generically $n$-transitive actions. Our main references for this material are \cite{BoCh08} and \cite{WiJ14a}.

\begin{definition}
A pair $(X,G)$ is called a \textdef{permutation group} if $G$ is a group with a (fixed) faithful action on the set $X$. We say that $(X,G)$ has \textdef{finite Morley rank} if $G$, $X$, and the action of $G$ on $X$ are all definable in some ambient structure of finite Morley rank. Additionally, if $(X,G)$ is a permutation group of finite Morley rank that is generically $n$-transitive but not generically $(n+1)$-transitive, we say that $n$ is the \textdef{degree of generic transitivity} and write $\gtd(X,G)=n$.
\end{definition}

Before going further, it will be helpful to make some basic remarks about generic $n$-transitivity. These remarks will frequently be used without explicit 
mention or reference.

\begin{remark}\label{generalremarksongenerictransitivity} Let $(X,G)$ be a generically $n$-transitive permutation group of finite Morley rank.
\begin{enumerate}
\item As a consequence of the additivity of Morley rank in the context of groups, $(X,G)$ is easily seen to be generically $(n-1)$-transitive. 
\item If $x$ is in the generic orbit of $G$ on $X$, then $G_x$ is generically $(n-1)$-transitive on $X$, again using the additivity of the rank.
\item\label{generalremarksongenerictransitivity.ps} If $X$ is infinite and $n\geq2$, then the previous point implies that the $1$-point stabilizers, for points in the generic orbit, are infinite.
\item By restricting the action to the generic orbit of $G$ on $X$, one obtains a transitive action that is still generically $n$-transitive. This yields a natural reduction of many arguments to the transitive case, but when we want to understand the structure imposed on the original $X$,  we will explicitly assume transitivity at the outset.
\end{enumerate}
\end{remark}

\subsection{Connectedness results}
We tend to focus on transitive permutation groups $(X,G)$ for which $G$ is connected. In this case, $X$ is also connected, i.e. of degree $1$, by general principles. If $G$ is not connected, we may still be able to use the following fact to establish connectedness of $X$. 

\begin{fact}[{\cite[Lemma~1.8(3)]{BoCh08}}]\label{fact.GenericTwoImpliesConnected}
If  an infinite  permutation group of finite Morley rank $(X,G)$ is generically $2$-transitive, then $X$ is connected. 
\end{fact}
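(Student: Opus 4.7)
The strategy is to show directly that $X$ has Morley degree $1$ by translating that assertion into a transitivity statement for the $G$-action on the set of $G^\circ$-orbits of $X$. Let $\mathcal{O}\subseteq X^2$ be the generic $G$-orbit, so $\rk(X^2\setminus\mathcal{O})<2\rk X$. Uniqueness of $\mathcal{O}$ among $G$-orbits of rank $2\rk X$ is immediate, and since the diagonal $G$-action on $X^2$ commutes with the swap $(x,y)\mapsto(y,x)$, the swap must stabilise $\mathcal{O}$ setwise. Setting $X_0:=\pi_1(\mathcal{O})=\pi_2(\mathcal{O})$, one checks that $X_0$ is a single $G$-orbit and that $\rk(X\setminus X_0)<\rk X$ (using $(X\setminus X_0)\times X\subseteq X^2\setminus\mathcal{O}$). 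So $X$ and $X_0$ share the same Morley degree, and it suffices to prove that $G^\circ$ acts transitively on $X_0$.

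Write $\Omega$ for the finite set of $G^\circ$-orbits of $X_0$; then $|\Omega|$ equals the Morley degree of $X_0$, and $G$ acts transitively on $\Omega$ through $G/G^\circ$. The heart of the proof is a $G$-equivariant bijection
\[
\{G^\circ\text{-orbits on }\mathcal{O}\}\longleftrightarrow\Omega\times\Omega,\qquad \mathcal{O}'\longmapsto(\pi_1(\mathcal{O}'),\pi_2(\mathcal{O}')),
\]
where $G$ acts on the right-hand side diagonally. The map is well defined because $\pi_1,\pi_2$ are $G^\circ$-equivariant. For bijectivity, note first that $\mathcal{O}$ is generic in $X_0\times X_0$ (its complement there is contained in $X^2\setminus\mathcal{O}$), so the Morley degree of $\mathcal{O}$ equals that of $X_0\times X_0$, namely $|\Omega|^2$. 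On the other hand, the $G^\circ$-orbits of the $G$-orbit $\mathcal{O}$ are precisely its connected components, all of rank $2\rk X$, hence also number $|\Omega|^2$. Each such $\mathcal{O}'$ is connected and therefore sits inside a unique product $A\times B$ with $A,B\in\Omega$. Since $A\times B$ is connected, it can contain at most one $G^\circ$-orbit of rank $2\rk X$ (two disjoint generic subsets of a connected set cannot coexist), giving injectivity; surjectivity then follows from the matching cardinalities.

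Finally, since $\mathcal{O}$ is a single $G$-orbit and $G^\circ\triangleleft G$, the group $G$ permutes its $G^\circ$-suborbits transitively, and the bijection above transports this to a transitive diagonal $G$-action on $\Omega\times\Omega$. However, the diagonal $\Delta_\Omega=\{(A,A):A\in\Omega\}$ is a $G$-invariant subset, so transitivity forces $\Omega\times\Omega=\Delta_\Omega$, i.e.\ $|\Omega|=1$, which completes the argument. The delicate point is the bijection of the middle paragraph, which relies on the multiplicativity of Morley degree under products and on the uniqueness of the top-rank subset of a connected definable set; once these are in hand, the concluding diagonal argument is essentially combinatorial.
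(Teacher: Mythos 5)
Your argument is correct. Note that the paper does not prove this statement at all: it is imported verbatim as a Fact from Borovik--Cherlin \cite[Lemma~1.8(3)]{BoCh08}, so there is no in-paper proof to compare against. Your derivation is a valid self-contained one, and every step checks out: the uniqueness of the top-rank orbit $\mathcal{O}$ in $X^2$, its invariance under the coordinate swap, the reduction to the transitive set $X_0=\pi_1(\mathcal{O})$, the identification of $\deg X_0$ with the number $|\Omega|$ of $G^\circ$-orbits, the degree count $\deg\mathcal{O}=|\Omega|^2$ via multiplicativity of degree, the injectivity of $\mathcal{O}'\mapsto(\pi_1(\mathcal{O}'),\pi_2(\mathcal{O}'))$ from the fact that a degree-one set cannot contain two disjoint full-rank definable subsets, and the final diagonal-invariance argument. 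One remark: the full bijection is more machinery than is strictly needed. Since $\mathcal{O}$ is generic in $X_0\times X_0$, it must meet both $A\times A$ and $A\times B$ generically for any two $G^\circ$-orbits $A\neq B$; but an element $g\in G$ carrying a point of $\mathcal{O}\cap(A\times A)$ to a point of $\mathcal{O}\cap(A\times B)$ would have to send the orbit $A$ simultaneously to $A$ (first coordinate) and to $B$ (second coordinate), which is absurd. This shortcut bypasses the counting and surjectivity steps entirely, while resting on the same core idea --- the $G$-invariance of the diagonal in $\Omega\times\Omega$.
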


Note that when $X$ is connected the definition of generic $n$-transitivity reduces to $G$ having an orbit $\orbit$ on $X^n$ with $\rk \orbit = \rk X^n$. 
One frequently used consequence of $X$ being connected is that, in this case, fixing a generic subset of $X$ is equivalent to fixing all of $X$.

\begin{fact}[{\cite[Lemma~1.6]{BoCh08}}]\label{fact.FixGenericSet}
If $(X,G)$ is an infinite transitive permutation group of finite Morley rank with $X$ connected, then only the identity fixes a generic subset of $X$. 
\end{fact}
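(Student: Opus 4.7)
The plan is to show that $N := \{h \in G : \rk \fp(h) = \rk X\}$, the set of elements of $G$ pointwise fixing a generic subset of $X$, is trivial. First I will verify that $N$ is a definable normal subgroup of $G$. Definability is immediate from the fact that rank is uniformly definable in the family $\{\fp(h) : h \in G\}$. Closure under products uses the inclusion $\fp(h_1) \cap \fp(h_2) \subseteq \fp(h_1 h_2)$, combined with the observation that, since $X$ has Morley degree $1$, the intersection of two generic definable subsets of $X$ is again generic. Normality follows from $\fp(k h k^{-1}) = k\fp(h)$ together with the fact that $k$ is a rank-preserving bijection.

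Next I will argue that $N^\circ = 1$ via a rank count on $S := \{(h,x) \in N^\circ \times X : hx = x\}$. Fibering over $N^\circ$, each fiber $\fp(h)$ is generic in $X$, so $\rk S = \rk N^\circ + \rk X$. Fibering over $X$, the generic fiber has rank $\rk N^\circ$, meaning $\rk(N^\circ \cap G_x) = \rk N^\circ$ for generic $x$; since $N^\circ$ is connected, this forces $N^\circ \le G_x$. Hence $\fp(N^\circ)$ contains a generic subset of $X$, so is itself generic, and by the normality of $N^\circ$ in $G$ it is $G$-invariant. Transitivity then gives $\fp(N^\circ) = X$, and faithfulness yields $N^\circ = 1$. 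So $N$ is finite.

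Finally I will rule out nontrivial finite $N$. The conjugation map $G^\circ \to \aut(N)$ is a definable homomorphism from a connected group to a finite group, hence trivial, so $G^\circ \le C_G(N)$. For $g \in N$, the set $\fp(g)$ is then $G^\circ$-invariant. But $G^\circ$ is itself transitive on $X$: its finitely many orbits on $X$ are permuted transitively by $G/G^\circ$, so all have the same rank, and since $X$ has Morley degree $1$ there can be only one such orbit of full rank. Thus the nonempty $G^\circ$-invariant set $\fp(g)$ equals $X$, and $g = 1$ by faithfulness, so $N = 1$.

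The main obstacle I anticipate is the rank count of the second paragraph, which relies on the fact that a definable subgroup of the connected group $N^\circ$ with the same rank as $N^\circ$ must equal $N^\circ$, together with the step of upgrading a pointwise-fixed generic subset of $X$ to a $G$-invariant subset via the normality of $N^\circ$. The transfer from $G$-transitivity to $G^\circ$-transitivity in the final step is the other critical use of the connectedness of $X$.
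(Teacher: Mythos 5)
The paper does not actually prove this statement: it is imported verbatim as \cite[Lemma~1.6]{BoCh08}, so there is no in-paper argument to compare yours against, and I have assessed your proof on its own terms. It is correct. The three stages all hold up: $N$ is definable because rank is definable in uniformly definable families (one of the Borovik--Poizat axioms), and it is a subgroup because two definable subsets of full rank in the degree-one set $X$ must meet in a set of full rank; the double count on $S$ correctly yields $\rk(N^\circ\cap G_x)=\rk N^\circ$ for generic $x$, whence $N^\circ\le G_x$ by connectedness, and then normality, transitivity, and faithfulness kill $N^\circ$; and the finite residue is disposed of using that a connected group centralizes a finite normal subgroup and that $G^\circ$ remains transitive on the connected set $X$. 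One small simplification is available in the last stage: once $N$ is known to be finite and normal, $\fp(N)=\bigcap_{h\in N}\fp(h)$ is a \emph{finite} intersection of generic definable subsets of the degree-one set $X$, hence generic and in particular nonempty; it is $G$-invariant by normality of $N$, hence equals $X$ by transitivity, and faithfulness gives $N=1$ without invoking the centralizer of $N$ or the transitivity of $G^\circ$. (The passage through $N^\circ$ in your second stage is still needed, since for infinite $N$ the intersection over all of $N$ is not obviously generic.)
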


This fact appears in a variety of disguises. We now highlight an important one, which will be used frequently in the sequel.

\begin{lemma}\label{lem.HKlemma}
Let $(X,G)$ be an infinite transitive permutation group of finite Morley rank with $X$ connected. 
Assume that $H$ and $K$ are definable subgroups of $G$ such that $H$ that fixes some $x\in X$ while the orbit of $K$ on $x$ is generic. 
\begin{enumerate} 
\item If $K \le N(H)$, then $H=1$.
\item If $H\le N(K)$, then $H$ acts faithfully on $K$.
\end{enumerate}
\end{lemma}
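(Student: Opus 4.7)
The plan is to reduce both parts to a direct application of Fact~\ref{fact.FixGenericSet}: if some $g \in G$ can be shown to fix every point of the $K$-orbit $K\cdot x$, then, since that orbit is by hypothesis a generic subset of the connected set $X$, we must have $g = 1$. So in each part it suffices to produce, for a given candidate element, a short computation showing it fixes $K\cdot x$ pointwise. I expect no serious obstacle; the only care needed is with the direction of conjugation in part~(1) and with identifying ``faithful on $K$'' with triviality of $C_H(K)$ in part~(2).

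For part~(1), fix $h \in H$ and $k \in K$. The assumption $K \le N(H)$ gives $k^{-1}hk \in H$, and since $H$ fixes $x$ we get $(k^{-1}hk)\cdot x = x$. Applying $k$ to both sides yields $h\cdot(k\cdot x) = k\cdot x$. As $k$ ranges over $K$, this shows that $h$ fixes every point of the generic set $K\cdot x$, so Fact~\ref{fact.FixGenericSet} forces $h = 1$. Since $h \in H$ was arbitrary, $H = 1$.

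For part~(2), the statement that $H$ acts faithfully on $K$ (by conjugation) is precisely that $C_H(K) = 1$, so I would fix $c \in C_H(K)$ and show $c = 1$. Since $c \in H$, it fixes $x$; and since $c$ commutes with every $k \in K$, for any $k \in K$ we have
\[
c\cdot(k\cdot x) \;=\; (ck)\cdot x \;=\; (kc)\cdot x \;=\; k\cdot(c\cdot x) \;=\; k\cdot x.
\]
Thus $c$ fixes $K\cdot x$ pointwise, and a second appeal to Fact~\ref{fact.FixGenericSet} yields $c = 1$. Hence $C_H(K) = 1$, as required.
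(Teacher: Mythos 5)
Your proposal is correct and follows essentially the same route as the paper: part~(1) is the observation that $H$ fixes the generic set $K\cdot x$ pointwise plus Fact~\ref{fact.FixGenericSet}, and part~(2) reduces to showing $C_H(K)=1$ (the paper does this by invoking part~(1) with $C_H(K)$ in place of $H$, which is exactly your commuting computation unrolled). No gaps.
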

\begin{proof}
First suppose that $K \le N(H)$. Then $H$ fixes all of the $K$-conjugates of $x$. Since the orbit of $K$ on $x$ is generic, $H$ fixes a generic subset of $X$, so by Fact~\ref{fact.FixGenericSet}, $H = 1$. Next assume that $H\le N(K)$. Certainly $K \le N(C_H(K))$, so applying the first point with $C_H(K)$ in place of $H$, we find that $C_H(K)=1$.
\end{proof}

We now change the focus to $G$ and mention some connectedness results for point stabilizers.

\begin{fact}[{\cite[Lemma~3.5]{WiJ14a}}]\label{fact.ConnectedPS}
Assume that $(X,G)$ is a transitive permutation group of finite Morley rank with $G$ connected. If some definable subgroup of $G$ has a regular and generic orbit on $X$, then all $1$-point stabilizers are connected.  
\end{fact}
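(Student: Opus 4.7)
My plan is to fix $x_0 \in X$ lying in the regular generic orbit $H x_0$ guaranteed by the hypothesis, set $\Gamma := G_{x_0}$, and show that $\Gamma$ is connected; since transitivity makes all point stabilizers conjugate, this suffices. The overall strategy is a Morley degree count performed inside the connected group $G$, based on how the set $H\Gamma$ sits inside $G$.

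First I would extract the basic rank data. Regularity gives $H \cap \Gamma = 1$, and genericity of $Hx_0$ together with $G$-transitivity yields $\rk H = \rk X$. Because $G$ is connected and transitive on $X$, both $G$ and $X$ have Morley degree $1$, and $\rk G = \rk X + \rk \Gamma$. Combining, $\rk H + \rk \Gamma = \rk G$, and the same equality holds with $\Gamma^\circ$ in place of $\Gamma$. Since $H \cap \Gamma = 1$, the product map $H \times \Gamma \to G$, $(h,\gamma) \mapsto h\gamma$, is injective, so $\rk(H\Gamma) = \rk H + \rk \Gamma = \rk G$; thus $H\Gamma$ is generic in $G$, as is $H\Gamma^\circ$.

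The conclusion comes from tracking Morley degree. Since $G$ has degree $1$, every generic subset has degree at most $1$; in particular $H\Gamma^\circ$, and therefore each right translate $H\Gamma^\circ \gamma_i$, has Morley rank $\rk G$ and Morley degree $1$. Writing $\Gamma = \bigsqcup_i \Gamma^\circ \gamma_i$ for a transversal of $\Gamma^\circ$ in $\Gamma$, we get a decomposition $H\Gamma = \bigsqcup_i H\Gamma^\circ \gamma_i$. I would then verify disjointness: an equality $h_1 \alpha \gamma_i = h_2 \beta \gamma_j$ with $\alpha,\beta \in \Gamma^\circ$ forces $h_2^{-1}h_1 \in H \cap \Gamma = 1$, whence $h_1 = h_2$ and the rest lives in $\Gamma$, pinning down the $\Gamma^\circ$-coset. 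So $H\Gamma$ has Morley degree exactly $[\Gamma : \Gamma^\circ]$, and being a generic subset of the degree-$1$ set $G$, this index must equal $1$, so $\Gamma = \Gamma^\circ$.

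The only step that requires any care is the disjointness just sketched, and it is immediate from $H \cap \Gamma = 1$; the real substance is the opening observation that the hypothesis forces $H$ and $\Gamma$ to have complementary ranks and trivial intersection, so that one is essentially looking at a ``generic factorization'' $G \approx H \cdot \Gamma$. Once that is in place, the conclusion is forced by degree alone and no structural analysis of $G$ or $\Gamma$ is needed.
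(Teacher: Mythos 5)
Your argument is correct, and it is essentially the standard proof of this fact (which the paper imports from \cite{WiJ14a} without proof): the hypotheses force $\rk H+\rk G_{x_0}=\rk G$ with $H\cap G_{x_0}=1$, so $HG_{x_0}^\circ$ and its translates by coset representatives of $G_{x_0}^\circ$ in $G_{x_0}$ are pairwise disjoint generic subsets of the connected (hence degree‑$1$) group $G$, forcing $[G_{x_0}:G_{x_0}^\circ]=1$. Your Morley‑degree count is just a reformulation of the usual ``two generic subsets of a connected group must meet'' step, and all the supporting claims (degree additivity over disjoint unions of equal rank, $\deg X=1$, rank of an injective image) are standard.
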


Our main application of the previous fact requires a definition.

\begin{definition}
Let $(X,G)$ be a permutation group of finite Morley rank. We say that $(x_1,\ldots,x_n) \in X^n$ 
is in \textdef{general position} if the orbit of $G$ on $X^n$ containing $(x_1,\ldots,x_n)$ is of maximal rank among all such orbits. The stabilizers of $n$-tuples in general position are called \textdef{generic $n$-point stabilizers}.
\end{definition}

For generically $n$-transitive actions, the following lemma shows that in the definition of a tuple in general position we can replace the tuple by the set of its coordinates. We will do this frequently in the sequel. Lemma~\ref{lem.PermuteGeneric} also shows that generically $n$-transitive groups contain a section isomorphic to the full symmetric group $\sym(n)$; this will be exploited in Subsection~\ref{subsec.Sigma}. 

\begin{lemma}\label{lem.PermuteGeneric}
Let $(X,G)$ be a generically $n$-transitive permutation group of finite Morley rank, and $(x_1,\ldots,x_n) \in X^n$ in general
position. Then every permutation of $(x_1,\ldots,x_n)$ is in general position.
\end{lemma}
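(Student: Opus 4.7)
The plan is to exploit the fact that coordinate permutations on $X^n$ commute with the diagonal action of $G$, and therefore map $G$-orbits to $G$-orbits while preserving rank. Combined with the uniqueness of the maximal-rank orbit, this forces that orbit to be invariant under all coordinate permutations.

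More concretely, I would first observe that the hypothesis of generic $n$-transitivity together with Definition~\ref{def.GenericTrans} yields a unique $G$-orbit of maximal rank on $X^n$. Indeed, if $\orbit \subseteq X^n$ is a generic orbit with $\rk(X^n \setminus \orbit) < \rk X^n$, then any other orbit $\orbit'$ sits inside $X^n \setminus \orbit$ and hence satisfies $\rk \orbit' < \rk X^n = \rk \orbit$. So a tuple is in general position precisely when it lies in this distinguished orbit $\orbit$.

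Next, for any $\sigma \in \sym(n)$ I would consider the coordinate permutation
\[
\tau_\sigma \colon X^n \to X^n, \qquad (y_1,\ldots,y_n) \mapsto (y_{\sigma(1)},\ldots,y_{\sigma(n)}).
\]
This is a definable bijection of $X^n$ with itself, so it preserves Morley rank. Since $G$ acts diagonally on $X^n$, $\tau_\sigma$ commutes with the $G$-action: for every $g \in G$ and every $\bar{y} \in X^n$, one has $\tau_\sigma(g \cdot \bar{y}) = g \cdot \tau_\sigma(\bar{y})$. Consequently $\tau_\sigma$ permutes the $G$-orbits on $X^n$, and $\tau_\sigma(\orbit)$ is itself a $G$-orbit with the same rank as $\orbit$, namely $\rk X^n$.

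By the uniqueness of the maximal-rank orbit established above, $\tau_\sigma(\orbit) = \orbit$. Applying this to $\bar{x} = (x_1,\ldots,x_n) \in \orbit$ shows that $(x_{\sigma(1)},\ldots,x_{\sigma(n)}) = \tau_\sigma(\bar{x}) \in \orbit$, i.e.\ every permutation of a tuple in general position is again in general position. There is no real obstacle here; the argument is an immediate formal consequence of the diagonal action commuting with coordinate permutations and of Morley rank being invariant under definable bijections.
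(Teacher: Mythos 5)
Your proof is correct and follows essentially the same route as the paper: coordinate permutations are definable, rank-preserving bijections of $X^n$ commuting with the diagonal $G$-action, hence they send the generic orbit to a generic orbit, which must coincide with it. The paper phrases the final step as two orbits with non-generic complements having nonempty intersection, while you phrase it as uniqueness of the maximal-rank orbit; these are the same observation.
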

\begin{proof}
Permuting coordinates induces a rank-preserving definable bijection of $X^n$. As a result, the image under such a bijection of the orbit containing $(x_1,\ldots,x_n)$  also has a non-generic complement. Thus, the image and the initial orbit have nonempty intersection. Since the image is also an orbit, they are in fact equal. 
\end{proof}

\begin{fact}[{\cite[Lemma~3.7]{WiJ14a}}]\label{fact.GenNTransAbelian}
Let $(X,G)$ be an infinite transitive permutation group of finite Morley rank with $G$ connected. If $n:=\gtd(X,G)\ge 2$ and a generic $(n-1)$-point stabilizer is abelian-by-finite, then 
\begin{enumerate}
\item  the generic $k$-point stabilizers are connected for all $k\le n-1$,
\item the generic $(n-1)$-point stabilizers are self-centralizing in $G$, 
\item the action is generically \emph{sharply} $n$-transitive, and
\item $G$ is centerless.
\end{enumerate}
\end{fact}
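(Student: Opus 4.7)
Let $H := G_{x_1,\ldots,x_{n-1}}$ for a tuple in general position, and set $A := H^\circ$, which is abelian as the connected component of an abelian-by-finite group. Note that $X$ is connected by Fact~\ref{fact.GenericTwoImpliesConnected}, since $n \ge 2$ forces generic $2$-transitivity. The plan is to show first that $A$ has a regular generic orbit on $X$; everything else then follows via Fact~\ref{fact.FixGenericSet} and Fact~\ref{fact.ConnectedPS}. Choose $y \in X$ with $(x_1,\ldots,x_{n-1},y)$ in general position; since $A$ has finite index in $H$ and $H$ is generically $1$-transitive on $X$, the orbit $A\cdot y$ is generic in $X$. As $A$ is abelian, $A_y \trianglelefteq A$ fixes $A\cdot y$ pointwise, so Fact~\ref{fact.FixGenericSet} gives $A_y = 1$.

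To prove (1), I would induct on $k$, with $k=0$ given. Assume $K := G_{y_1,\ldots,y_k}$ is connected for some $(y_1,\ldots,y_k)$ in general position, with $k \le n-2$. Extend to $(y_1,\ldots,y_{n-1})$ in general position and set $A' := (G_{y_1,\ldots,y_{n-1}})^\circ \le K$; by the argument of the previous paragraph applied to this conjugate tuple, $A'$ has a regular generic orbit on $X$. Since $X$ is connected of degree $1$, $K$ has a unique generic orbit $\mathcal{O}_K$ on $X$, and the $A'$-orbit sits inside $\mathcal{O}_K$ as a regular generic subset. Fact~\ref{fact.ConnectedPS} applied to the transitive action $(\mathcal{O}_K, K)$ then yields that $G_{y_1,\ldots,y_{k+1}}$, the $K$-stabilizer of a point $y_{k+1} \in \mathcal{O}_K$, is connected. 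This completes the induction; in particular $H = A$ is connected and abelian. Statement (3) is then immediate, as a generic $n$-point stabilizer equals $H_y = A_y = 1$.

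For (2), set $C := C_G(H)$, so $H \le C$. Given $c \in C$ and $y$ in the generic $H$-orbit, $H_{cy} = cH_yc^{-1} = 1$ makes $H\cdot cy$ generic in $X$. Since $X$ is connected of degree $1$, two generic subsets must intersect generically; as distinct $H$-orbits are disjoint, $H\cdot y = H\cdot cy$, so $cy = hy$ for some $h \in H$. Then $h^{-1}c \in C$ fixes $y$ and, centralizing $H$, fixes $H\cdot y$ pointwise, forcing $h^{-1}c = 1$ by Fact~\ref{fact.FixGenericSet}. Hence $c = h \in H$ and $C = H$. Finally, (4) follows because $Z(G) \le C_G(H) = H$ puts $Z(G)$ inside $G_{x_1}$; normality combined with transitivity of $G$ on $X$ propagates this to $Z(G)$ fixing all of $X$, and faithfulness gives $Z(G) = 1$. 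The main obstacle, in my view, is part (2): the trick of exploiting degree-$1$ connectedness of $X$ to force two generic $H$-orbits to coincide is what promotes the trivial inclusion $H \le C_G(H)$ to equality.
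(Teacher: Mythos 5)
Your argument is correct, and it actually does more than the paper does: the paper treats items (1), (2), and (4) as imported directly from \cite[Lemma~3.7]{WiJ14a} and only supplies the short derivation of item (3) (abelian $+$ generic orbit $+$ Fact~\ref{fact.FixGenericSet} $\Rightarrow$ regular), which is exactly your argument for (3). What you add is a self-contained proof of the remaining items using only tools already present here. The engine is your opening observation that $A=H^\circ$ has a \emph{regular} generic orbit (normality of $A_y$ in the abelian group $A$ plus Fact~\ref{fact.FixGenericSet}), which then feeds Fact~\ref{fact.ConnectedPS} inductively for (1), and your degree-one trick identifying the two generic $H$-orbits $H\cdot y$ and $H\cdot cy$ for (2); item (4) follows since $Z(G)\le C_G(H)=H\le G_{x_1}$ together with normality and transitivity forces $Z(G)$ to fix $X$ pointwise. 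Two small points worth making explicit: first, to apply Fact~\ref{fact.ConnectedPS} to $(\orbit_K,K)$ you need this to be a \emph{permutation} group in the paper's sense, i.e.\ faithful, but the kernel fixes the generic set $\orbit_K$ pointwise and so is trivial by Fact~\ref{fact.FixGenericSet}; second, the genericity of $A\cdot y$ inside $H\cdot y$ uses that $A\trianglelefteq H$ has finite index, so $H\cdot y$ is a finite union of $H$-translates of $A\cdot y$, all of the same rank. Neither is a gap, just a line each to add. Your route buys independence from the external reference at essentially no extra cost.
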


Actually, the previous fact has been stated in a slightly expanded form, but it is easy to deduce from the original. Indeed, all but the third point come directly from \cite[Lemma~3.7]{WiJ14a} and imply that a generic $(n-1)$-point stabilizer is abelian. Thus, if $H$ is a generic $(n-1)$-point stabilizer, then $H$ must have a generic orbit on $X$, and as $H$ is abelian, Fact~\ref{fact.FixGenericSet} implies that $H$ must act regularly on the generic orbit. This implies the third point. We now mention a pair of reductions to connected groups. 

\begin{lemma}\label{fact.ConnectedCompNTrans}
If $(X,G)$ is a permutation group of finite Morley rank with $X$ connected. Then $(X,G)$ is $n$-transitive if and only if $(X,G^\circ)$ is $n$-transitive.
\end{lemma}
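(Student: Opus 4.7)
The plan is to verify the contentful direction: assuming $G$ acts $n$-transitively on the connected set $X$, to show that the connected component $G\degrees$ also acts $n$-transitively on $X$. The reverse implication is immediate because $G\degrees \le G$.

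First I would work with the definable set $X^{(n)}$ of ordered $n$-tuples of pairwise distinct elements of $X$, observing that ``$n$-transitive'' says exactly that $G$ (respectively $G\degrees$) has a single orbit on $X^{(n)}$. Since $X$ is connected, $X^n$ has Morley rank $n\cdot \rk X$ and Morley degree $1$. The ``fat diagonal'' $X^n\setminus X^{(n)}$ is a finite union of sets of the form $\{x_i = x_j\}$, each of rank $(n-1)\rk X$, and so has rank strictly less than $\rk X^n$. Therefore $X^{(n)}$ is a definable subset of $X^n$ of maximal rank, which forces its Morley degree to be $1$; in other words, $X^{(n)}$ is connected.

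Next I would analyze the $G\degrees$-orbits on $X^{(n)}$. The finite group $G/G\degrees$ permutes these orbits transitively (since $G$ itself is transitive on $X^{(n)}$), so all $G\degrees$-orbits on $X^{(n)}$ share a common Morley rank, and since their union is $X^{(n)}$ this common rank must equal $\rk X^{(n)}$. Moreover, each $G\degrees$-orbit is the definable image of the connected group $G\degrees$ under an evaluation map $g\mapsto g\cdot\bar x$, so each orbit is itself connected (Morley degree $1$). Thus $X^{(n)}$ decomposes into finitely many disjoint connected subsets of maximal rank, and because $X^{(n)}$ has Morley degree $1$ there can be only one such orbit. This is precisely $n$-transitivity for $G\degrees$.

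The only mild obstacle is the degree bookkeeping when passing from $X^n$ to $X^{(n)}$, and this is handled by the observation that the excluded diagonal has strictly smaller rank than $X^n$. No deeper group-theoretic input is required beyond the finite-index fact $[G:G\degrees]<\infty$ and the connectedness of $G\degrees$.
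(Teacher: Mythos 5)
Your proposal is correct and follows essentially the same route as the paper: pass to the set $X^{(n)}$ of pairwise-distinct tuples, note that its complement in $X^n$ has smaller rank so that $X^{(n)}$ inherits degree $1$ from the connectedness of $X$, observe that all $G^\circ$-orbits on $X^{(n)}$ have full rank since $G$ is transitive there, and conclude from degree $1$ that there is only one such orbit. The additional remark that each $G^\circ$-orbit is itself connected is harmless but not needed for the conclusion.
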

\begin{proof}
Only the forward direction is nontrivial. Assume that $(X,G)$ is $n$-transitive. Let $X^{(n)}$ be the subset of $X^n$ consisting of those tuples for which all entries are pairwise distinct. We need to show that $G^\circ$ acts transitively on $X^{(n)}$. Note that $X^{(n)}$ is a definable subset of $X^n$, and that $X^n -  X^{(n)}$ is a union of a finite number of definable subsets each of rank strictly less than the rank of $X^n$. Thus $X^{(n)}$ and $X^n$ both have the same rank and degree. Most importantly, we find that $X^{(n)}$ has degree $1$, since $X$ was assumed to be connected. Now, $G$ is transitive on $X^{(n)}$, so every orbit of $G^\circ$ on $X^{(n)}$ has full rank in $X^{(n)}$. As $X^{(n)}$ has degree $1$, $G^\circ$ has a single orbit on $X^{(n)}$.
\end{proof}

\begin{fact}[\protect{\cite[Lemma~1.9]{BoCh08}}]\label{fact.ConnectedCompGenericTrans}
If $(X,G)$ is a transitive permutation group of finite Morley rank with $X$ connected, then $\gtd(X,G) = \gtd(X,G^\circ)$.
\end{fact}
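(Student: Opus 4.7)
The plan is to prove the two inequalities $\gtd(X,G^\circ) \le \gtd(X,G)$ and $\gtd(X,G) \le \gtd(X,G^\circ)$ separately. The first direction is essentially immediate: if $(X, G^\circ)$ is generically $n$-transitive, witnessed by a $G^\circ$-orbit $\orbit'$ with $\rk(X^n - \orbit') < \rk X^n$, then the $G$-orbit containing $\orbit'$ has an even smaller complement, so $(X,G)$ is generically $n$-transitive as well.

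For the harder direction, fix $n = \gtd(X,G)$ and a generic $G$-orbit $\orbit \subseteq X^n$. The key step is to exploit Morley degree. Since $X$ is connected, $X^n$ has Morley degree $1$, and hence so does $\orbit$, as its complement in $X^n$ has strictly smaller rank. The plan is to decompose $\orbit$ into its $G^\circ$-orbits $\orbit_1, \ldots, \orbit_k$; this decomposition is finite because $[G : G^\circ] < \infty$, and $G/G^\circ$ permutes the pieces by definable rank-preserving bijections, so each $\orbit_i$ has rank equal to $\rk \orbit = \rk X^n$. Additivity of Morley degree on finite disjoint unions of equal-rank definable sets then forces $\deg \orbit \ge k$, and since $\deg \orbit = 1$, we must have $k = 1$. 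Thus $\orbit$ is already a single $G^\circ$-orbit and $(X, G^\circ)$ is generically $n$-transitive, giving $\gtd(X, G^\circ) \ge n$.

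The main obstacle is really just Morley-rank bookkeeping: one must be comfortable with the facts that (i) a product of connected definable sets is connected, so $X$ connected gives $X^n$ connected; (ii) a full-rank definable subset of a degree-$1$ set whose complement has strictly smaller rank must itself have degree $1$; and (iii) Morley degree is additive on finite disjoint unions of equal-rank definable sets. All three are standard consequences of the basic axioms of Morley rank and degree in the finite Morley rank setting, so no deeper theory beyond the background references \cite{PoB87}, \cite{BoNe94}, and \cite{ABC08} is needed.
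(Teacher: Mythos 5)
Your proof is correct: both inequalities go through, and the key step—that the generic $G$-orbit in $X^n$ has Morley degree $1$ and therefore cannot split into several full-rank $G^\circ$-orbits—is exactly the right idea. The paper itself gives no proof, citing \cite[Lemma~1.9]{BoCh08}, and your degree argument is essentially the standard one underlying that lemma, so there is nothing to add.
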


We close this subsection with a useful lemma that relates multiple generic transitivity to the generation of connected groups.

\begin{lemma}\label{lem.GenericTwoTransPS}
If $(X,G)$ is a generically $2$-transitive permutation group of finite Morley rank with $x,y\in X$  in general position, then $G^\circ = \langle (G_x)^\circ, (G_y)^\circ\rangle$.
\end{lemma}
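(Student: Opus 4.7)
The plan is to set $H := \langle (G_x)^\circ, (G_y)^\circ\rangle$ and prove $\rk H = \rk G^\circ$. Since the subgroup generated by a family of connected definable subgroups of a group of finite Morley rank is itself connected and definable (Zilber's indecomposability theorem), $H$ is a connected definable subgroup of $G^\circ$, and equality of ranks will then force $H = G^\circ$ by connectedness of $G^\circ$.

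To carry this out, I would first dispose of the trivial case where $X$ is finite: faithfulness forces $G^\circ = 1$ and there is nothing to prove. So assume $X$ is infinite, whence $X$ is connected by Fact~\ref{fact.GenericTwoImpliesConnected}. The heart of the argument is the rank computation $\rk H = \rk Hx + \rk H_x$. For the first summand, since $(x,y)$ lies in the generic orbit of $G$ on $X^2$ (of rank $2\rk X$), orbit-stabilizer arithmetic yields $\rk G_y = \rk G - \rk X$ and $\rk G_{x,y} = \rk G - 2\rk X$, hence $\rk(G_y \cdot x) = \rk X$. Because $(G_y)^\circ$ has finite index in $G_y$, its orbit on $x$ has the same rank, so $(G_y)^\circ x \subseteq Hx$ already witnesses $\rk Hx = \rk X$. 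For the second summand, $H_x = H \cap G_x$ contains $(G_x)^\circ$ and is contained in $G_x$, so $\rk H_x = \rk G_x$. Adding gives $\rk H = \rk X + \rk G_x = \rk G = \rk G^\circ$, and connectedness of $G^\circ$ closes the argument.

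The main point requiring care, rather than a serious obstacle, is simply verifying that $H$ is genuinely definable and connected, so that the equal-rank principle applies to yield $H = G^\circ$. Everything else is a direct application of the additivity of Morley rank, fed by the observation that generic $2$-transitivity guarantees the $1$-point stabilizer $G_y$ has a full-rank orbit on $X$ through any $x$ in general position with $y$—precisely the input supplied by the hypothesis.
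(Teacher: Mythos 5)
Your proof is correct and follows essentially the same route as the paper: both arguments reduce to the rank computation showing that $G_y$ (hence $(G_y)^\circ$) has a full-rank orbit on $x$, so that the generated subgroup has rank $\rk G_x + \rk X = \rk G$, and then conclude by connectedness. The paper phrases this via the identification of $X$ with the coset space $G_x\backslash G$ and the product set $G_xG_y$, while you apply orbit--stabilizer directly to $H=\langle (G_x)^\circ,(G_y)^\circ\rangle$ after invoking Zilber's indecomposability theorem; the difference is purely presentational.
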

\begin{proof}
Identifying $X$ with the (right) coset space $G_x\backslash G$, our assumptions imply that the orbit of $G_y$ on the coset $G_x$ has full rank in $G_x\backslash G$, i.e. that $\rk G_x \backslash (G_xG_y) = \rk G_x \backslash G$. Thus, $\rk G = \rk G_xG_y$, so $(G_x)^\circ$ and $(G_y)^\circ$ generate $G^\circ$.
\end{proof}

\subsection{Primitivity, quotients, and covers}
When there is a high degree of generic transitivity relative to the rank of the set being acted upon, one expects to encounter some sort of primitivity. This is, of course, implied by Conjecture~\ref{conj.ProbPGL}. 

\begin{definition}
Assume that a group $G$, a set $X$, and an action of $G$ on $X$ are all definable in some ambient structure. The action is said to be \textdef{definably primitive} if every definable (with respect to the ambient structure) $G$-invariant equivalence relation is either trivial or universal; where as, the action called \textdef{virtually definably primitive} if every definable $G$-invariant equivalence relation either has finite classes or finitely many classes. 
\end{definition}

For permutation groups of finite Morley rank, it turns out that definable primitivity often coincides with ordinary primitivity.

\begin{fact}[\protect{\cite[Proposition~2.7]{MaPi95}}]\label{fact.DefPrimImpliesPrim}
If $(X,G)$ is a definably primitive permutation group of finite Morley rank with infinite point stabilizers, then  $(X,G)$ is primitive.
\end{fact}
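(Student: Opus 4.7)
The plan is to argue by contradiction: I would assume $(X,G)$ is definably primitive with $G_x$ infinite but not primitive, then derive a contradiction. By the standard bijection between blocks of $(X,G)$ containing $x$ and subgroups $G_x \le K \le G$, definable primitivity is equivalent to $G_x$ being maximal among the definable proper subgroups of $G$. Under the non-primitivity assumption, I would select a proper non-trivial block $B \ni x$ with setwise stabilizer $H$; then $G_x < H < G$, with $H$ necessarily non-definable (else definable primitivity would be directly violated).

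The first key step would use the infinite point stabilizer hypothesis to establish $N_G(G_x^\circ) = G_x$. Since $G_x$ normalizes its connected component, $G_x \le N_G(G_x^\circ)$, so the maximality of $G_x$ among definable proper subgroups forces $N_G(G_x^\circ) \in \{G_x, G\}$. In the latter case, $G_x^\circ$ is normal in $G$, and then $G_x^\circ = g G_x^\circ g^{-1} = G_{gx}^\circ \le G_{gx}$ for every $g \in G$; so $G_x^\circ$ fixes $X$ pointwise, and faithfulness forces $G_x^\circ = 1$, contradicting $|G_x| = \infty$.

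For the heart of the argument, I would manufacture a definable subgroup strictly between $G_x$ and $G$ that lies in $H$. Set $\tilde K := \langle G_y^\circ : y \in B\rangle$. Each $G_y$ with $y \in B$ fixes $y$ and hence stabilizes the unique block $B$ containing $y$, so $G_y \le H$; thus $\tilde K \le H$. By Zilber's indecomposability theorem applied to the family $\{G_y^\circ : y \in B\}$ of connected definable subgroups, $\tilde K$ is itself a definable connected subgroup of $G$. Moreover, for any $g \in G_x \le H$ and $y \in B$, $g G_y^\circ g^{-1} = G_{gy}^\circ$ with $gy \in gB = B$, so conjugation by $g$ merely permutes the generating family, and hence $G_x$ normalizes $\tilde K$. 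Consequently $\tilde K G_x$ is a definable subgroup of $G$, contained in $H$ and containing $G_x$. Definable primitivity now yields $\tilde K G_x \in \{G_x, G\}$: if $\tilde K G_x = G$, then $G \le H$, contradicting $H < G$; if $\tilde K G_x = G_x$, then $\tilde K \le G_x$, so $G_y^\circ \le G_x$ for each $y \in B$, and a Morley rank comparison (using that $G_y$ and $G_x$ are conjugate) yields $G_y^\circ = G_x^\circ$, whence any $g_y$ with $g_y x = y$ lies in $N_G(G_x^\circ) = G_x$, forcing $y = x$ and thus $B = \{x\}$, contradicting the non-triviality of $B$.

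The hardest part will be engineering a definable intermediate subgroup inside $H$. Zilber's indecomposability is what makes $\tilde K$ definable even though its generating family is indexed by the non-definable block $B$, and the choice to generate using all of $B$ (rather than a single point) is what ensures $G_x$-normalization and hence the definability of $\tilde K G_x$ as a subgroup. Once that is in hand, the maximality of $G_x$ among definable proper subgroups forces both branches of the dichotomy to collapse.
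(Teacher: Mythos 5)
Your argument is correct. Note, though, that the paper does not prove this statement at all: it is quoted as a Fact from Macpherson--Pillay \cite[Proposition~2.7]{MaPi95}, so there is no in-paper proof to compare against; what you have written is essentially a reconstruction of the Macpherson--Pillay argument. The two pivots are exactly right: first, infinite point stabilizers plus faithfulness and maximality of $G_x$ among definable subgroups force $N_G(G_x^\circ)=G_x$ (if $G_x^\circ$ were normal it would lie in every point stabilizer, hence be trivial); second, Zilber's indecomposability theorem turns the a priori non-definable block $B$ into the definable connected group $\tilde K=\langle G_y^\circ : y\in B\rangle$, which $G_x$ normalizes because it permutes the generating family, so $\tilde K G_x$ is a definable subgroup trapped between $G_x$ and the setwise stabilizer of $B$; the dichotomy $\tilde K G_x\in\{G_x,G\}$ then kills both a proper block ($\tilde K G_x=G$ would put $G$ inside the block stabilizer) and a nontrivial one ($\tilde K\le G_x$ forces $G_y^\circ=G_x^\circ$ by the rank/finite-index comparison, whence $y\in N_G(G_x^\circ)x=\{x\}$). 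The only cosmetic omission is that you use the block--subgroup correspondence, which presupposes transitivity; this is harmless, since the orbit relation is a definable congruence, so definable primitivity together with faithfulness and infinite point stabilizers already forces transitivity.
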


For transitive actions, there is an inclusion-preserving bijection from the set of $G$-invariant equivalence relations on $X$ to the set of subgroups of $G$ containing a fixed point stabilizer $G_x$. Specifically, if $\sim$ is a $G$-invariant equivalence relation on $X$ and $\modu{x}$ is the class of $x$, then the bijection takes $\sim$ to the \emph{setwise} stabilizer $G_{\modu{x}}$. The inverse takes an overgroup $H$ of $G_x$ to the equivalence relation whose classes are the $G$-translates of the orbit $xH$.
Thus, a transitive action is primitive if and only if a point stabilizer is a maximal subgroup of $G$, and one also obtains the following analogous characterizations of definable and virtually definable primitivity. 

\begin{fact}[\protect{\cite[Lemma~1.13]{BoCh08}}]\label{fact.characterizeDefnPrimi}
Let $(X,G)$ be a transitive permutation group definable in some ambient structure, and fix $x\in X$. Then 
\begin{enumerate}
\item $(X,G)$ is definably primitive if and only if $G_x$ is a maximal definable subgroup of $G$, and 
\item\label{fact.characterizeDefnPrimi.vdp} $(X,G)$ is virtually definably primitive if and only if for every definable subgroup $H$ containing $G_x$ either $|G:H|$ or $|H:G_x|$ is finite. 
\end{enumerate}
\end{fact}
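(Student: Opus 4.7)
The plan is to take the inclusion-preserving bijection between $G$-invariant equivalence relations on $X$ and overgroups of $G_x$ that is described just before the statement, check that it restricts to a bijection between \emph{definable} equivalence relations and \emph{definable} overgroups, and then read off (1) and (2) by translating the respective primitivity conditions. For the restriction, if $\sim$ is a definable $G$-invariant equivalence relation, then the class $\overline{x}$ of $x$ is a definable subset of $X$, so its setwise stabilizer $H := G_{\overline{x}}$ is a definable subgroup of $G$. Conversely, if $G_x \le H \le G$ is definable, then the orbit $H\cdot x$ is a definable subset of $X$ and the equivalence relation whose classes are the $G$-translates of $H\cdot x$ is definable by the formula
\[
y \sim z \iff (\exists g \in G)\bigl(y \in g\cdot (H\cdot x) \wedge z \in g\cdot (H\cdot x)\bigr);
\]
since the unrestricted bijection is already known to be inclusion-preserving, this yields a bijection between definable $G$-invariant equivalence relations and definable subgroups $H$ with $G_x \le H \le G$.

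For (1), I would observe that the identity equivalence relation corresponds to $H = G_x$ while the universal relation corresponds to $H = G$. Hence every definable $G$-invariant equivalence relation is either trivial or universal precisely when no definable subgroup lies strictly between $G_x$ and $G$, i.e., when $G_x$ is a maximal definable subgroup of $G$.

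For (2), I would identify $X$ with the coset space $G_x\backslash G$. Then the class $\overline{x}$ is in bijection with $G_x \backslash H$, giving $|\overline{x}| = |H:G_x|$, while $G$ permutes the classes transitively with setwise stabilizer of $\overline{x}$ equal to $H$, giving that the number of classes is $|G:H|$. A definable $\sim$ therefore has finite classes iff $|H:G_x|$ is finite and has finitely many classes iff $|G:H|$ is finite, which is condition (\ref{fact.characterizeDefnPrimi.vdp}). The only non-trivial point in the argument is the definability check in the two directions of the bijection, but this rests solely on the routine observations that setwise stabilizers of definable sets are definable and that orbits of definable subgroups are definable.
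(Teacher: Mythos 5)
Your argument is correct and is exactly the route the paper indicates: the statement is quoted as a Fact from Borovik--Cherlin, and the paragraph preceding it in the paper sketches precisely the inclusion-preserving correspondence between $G$-invariant equivalence relations and overgroups of $G_x$ that you make explicit, with the definability check and the identifications $|\overline{x}| = |H:G_x|$ and (number of classes) $= |G:H|$ being the only details to verify. Nothing is missing.
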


If $(X,G)$ is a permutation group and $\sim$ is a $G$-invariant equivalence relation on $X$, then $G$ acts naturally on $X/{\sim}$, and we call $(X/{\sim},G)$ a \textdef{quotient} of $(X,G)$. Note that a quotient may no longer be faithful. As mentioned above, for $(X,G)$ transitive, the definable quotients of $(X,G)$ correspond to the definable subgroups of $G$ containing $G_x$. By moving to any proper definable overgroup of $G_x$ of maximal rank, one sees that every transitive permutation group of finite Morley rank has a virtually definably primitive quotient, but in fact we can often find a quotient that is definably primitive. 

\begin{fact}[\protect{\cite[Lemma~1.18]{BoCh08}}]\label{fact.primQuotient}
Let $(X,G)$ be a transitive permutation group of finite Morley rank. Then 
\begin{enumerate}
\item $(X,G)$ has a nontrivial virtually definably primitive quotient, and
\item if $(X,G)$ has a nontrivial virtually definably primitive quotient with infinite point stabilizers, then $(X,G)$ has a nontrivial definably primitive quotient.
\end{enumerate}
\end{fact}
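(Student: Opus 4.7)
The proof plan rests on the bijection from Fact~\ref{fact.characterizeDefnPrimi} between $G$-invariant equivalence relations on $X$ and definable subgroups $H$ of $G$ with $G_x \leq H$ (fixing $x \in X$): the associated quotient is nontrivial iff $H \neq G$, and it is virtually definably primitive iff for every definable $K$ with $H \leq K \leq G$, either $|G:K|$ or $|K:H|$ is finite.

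For (1), the plan is to choose $H$ definable with $G_x \leq H \lneq G$ and $\rk H$ maximal among such subgroups. Such $H$ exists because $G_x$ itself is a candidate (assuming $|X|>1$) and ranks are bounded. For any definable $K$ with $H \leq K \leq G$: either $K = G$ and $|G:K| = 1$, or $K \lneq G$, and the maximality of $\rk H$ forces $\rk K = \rk H$, so $|K:H|$ is finite. Hence the quotient is virtually definably primitive by Fact~\ref{fact.characterizeDefnPrimi}(\ref{fact.characterizeDefnPrimi.vdp}), and it is nontrivial since $H \lneq G$.

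For (2), starting from $H$ corresponding to a nontrivial virtually definably primitive quotient with infinite point stabilizers, I would take $H^* := N_G(H\degrees)$ as the candidate and argue the associated quotient is definably primitive. The crucial observation is that $H\degrees$ cannot be normal in $G$: if it were, then $H\degrees$ would fix every $G$-translate of the class of $x$ (as it fixes $x$'s class and is $G$-normal) and hence lie in the kernel $\core_G(H)$ of the quotient action; but then the faithful point stabilizer $H/\core_G(H)$ would be a quotient of the finite group $H/H\degrees$, contradicting the infinite-point-stabilizer hypothesis. So $N_G(H\degrees) \lneq G$, and since $H\degrees$ is characteristic in $H$ we get $H \leq H^* \lneq G$, making the new quotient nontrivial.

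To verify definable primitivity, take any definable $K$ with $H^* \leq K \leq G$ and apply virtual definable primitivity of the $H$-quotient: either $|K:H|$ or $|G:K|$ is finite. In the first case, $K\degrees$ is a connected subgroup of rank $\rk H$ containing $H\degrees$, so $K\degrees = H\degrees$, whence $K \leq N_G(H\degrees) = H^*$ and $K = H^*$. In the second case, $K = G$ whenever $G$ is connected; the general case reduces to the connected one by passing to $G\degrees$ and restricting to one of its orbits on $X$. The hardest step is recognizing the precise role of the infinite-point-stabilizer hypothesis: it is exactly what prevents $H\degrees$ from being normal in $G$, and without it the construction of $H^*$ collapses and passage from virtual to actual definable primitivity would require a quite different idea.
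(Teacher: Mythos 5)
This statement is a black box in the paper: it is quoted from \cite[Lemma~1.18]{BoCh08} without proof, the only hint being the sentence preceding it, which sketches part~(1) exactly as you prove it (pass to a proper definable overgroup of $G_x$ of maximal rank). Your part~(1) is correct. Your part~(2) is also essentially right, and you have correctly isolated the role of the infinite-point-stabilizer hypothesis: if $H^\circ$ were normal in $G$ it would stabilize every class, hence lie in $\core_G(H)$, forcing the faithful point stabilizer $H/\core_G(H)$ to be finite. The verification that a definable $K\ge H^*:=N_G(H^\circ)$ with $|K:H|$ finite satisfies $K^\circ=H^\circ$ and hence $K=H^*$ is also correct.

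The one soft spot is your second case when $G$ is disconnected. The proposed reduction (``pass to $G^\circ$ and restrict to an orbit'') is not spelled out and does not obviously work: a maximal definable subgroup of $G^\circ$ above $G^\circ\cap G_x$ need not produce a $G$-invariant equivalence relation on $X$, i.e.\ a maximal definable subgroup of $G$ above $G_x$. A cleaner repair stays inside $G$: your dichotomy shows that every definable $K$ with $H^*\le K\le G$ either equals $H^*$ or has $|G:K|$ finite, hence contains $G^\circ$ and therefore $H^*G^\circ$. So the definable subgroups lying strictly between $H^*$ and $G$ all correspond to proper subgroups of the finite group $G/G^\circ$ containing the image of $H^*G^\circ$. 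If there are none, $H^*$ itself is a maximal definable subgroup; otherwise this finite collection has a maximal element $M$, and any definable $K$ with $M<K\le G$ strictly contains $H^*$, so is either $G$ or lies in the collection, whence $K=G$. In either case one obtains a maximal definable subgroup of $G$ that is proper and contains $G_x$, i.e.\ a nontrivial definably primitive quotient. With that patch your argument is complete.
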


We will often pass to definably, or at least virtually definably, primitive quotients, and when we are in a context of generic $n$-transitivity, the following point is essential. 

\begin{lemma}[\protect{\cite[Lemma~6.1]{BoCh08}}]\label{lem.QuotientGenericTrans}
Let $(X,G)$ be a transitive permutation group of finite Morley rank with $n:=\gtd(X,G)$. If $x_1,\ldots,x_n\in X$ are in general position and $(\modu{X},G)$ is any infinite definable quotient, then $\gtd(\modu{X},G)\ge n$, and the images of $x_1,\ldots,x_n$ in $\modu{X}$ are in general position.
\end{lemma}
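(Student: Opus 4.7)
The plan is to exploit the $G$-equivariant surjection $\pi^n : X^n \to \modu{X}^n$ induced by the quotient map $\pi : X \to \modu{X}$. Overall, I want to show (a) the image $\modu{\orbit} := \pi^n(\orbit)$ of the generic $G$-orbit $\orbit$ on $X^n$ is exactly the $G$-orbit of $(\bar x_1, \ldots, \bar x_n)$ in $\modu{X}^n$, and (b) the complement of this image in $\modu{X}^n$ has strictly smaller rank than $\modu{X}^n$.

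The first thing I would verify is that all fibres of $\pi^n$ have a common rank. Since $G$ acts transitively on $X$ and preserves the equivalence relation defining $\pi$, any two equivalence classes are related by an element of $G$ and hence are in definable bijection; thus every class has the same rank $c := \rk X - \rk \modu{X}$, and every fibre of $\pi^n$ has rank $nc$. Equivariance of $\pi^n$ makes $\modu{\orbit}$ the $G$-orbit of $(\bar x_1, \ldots, \bar x_n)$, giving (a).

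For (b), the key observation is the preimage containment $(\pi^n)^{-1}(\modu{X}^n \setminus \modu{\orbit}) \subseteq X^n \setminus \orbit$, which follows immediately from $\pi^n(\orbit) = \modu{\orbit}$ by contraposition. Combining this containment with additivity of Morley rank along the equidimensional map $\pi^n$ yields
\[
\rk(\modu{X}^n \setminus \modu{\orbit}) + nc \;\le\; \rk(X^n \setminus \orbit) \;<\; n\rk X \;=\; n\rk \modu{X} + nc,
\]
whence $\rk(\modu{X}^n \setminus \modu{\orbit}) < \rk \modu{X}^n$. This simultaneously shows that $(\modu{X},G)$ is generically $n$-transitive (witnessed by $\modu{\orbit}$) and that $(\bar x_1, \ldots, \bar x_n) \in \modu{\orbit}$ sits in an orbit of maximal rank, i.e.\ is in general position.

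The only real subtlety is the bookkeeping with equidimensional fibres in the rank computation; the hypothesis that $\modu{X}$ is infinite ensures $\rk \modu{X} \ge 1$, so the concluding inequality is non-vacuous. Everything else is formal manipulation of Morley rank, and no deeper permutation-group input is needed.
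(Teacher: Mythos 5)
Your proposal is correct and follows essentially the same route as the paper: push the generic orbit $\orbit$ forward along the $G$-equivariant map $X^n\to\modu{X}^n$, using transitivity to see that all classes (hence all fibres) have a common rank, so that the image is a $G$-orbit of full rank in $\modu{X}^n$. The only real difference is at the last step: the paper reduces to $n\ge 2$, invokes connectedness of $\modu{X}$ (so $\modu{X}^n$ has degree one and a full-rank orbit automatically has non-generic complement), whereas you bound $\rk(\modu{X}^n\setminus\modu{\orbit})$ directly via the preimage containment and additivity along the equidimensional fibration; your version is marginally more self-contained since it sidesteps the degree argument, but the content is the same.
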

\begin{proof}
The transitivity of $G$ on $X$ easily transfers to $(\modu{X},G)$, so the classes in $\modu{X}$ have constant rank. If $\orbit\subset X^n$ is the orbit of $G$ on  $(x_1,\ldots,x_n)$, then it is not hard to see that the image of $\orbit$ in $\modu{X}^n$ is a $G$-orbit in $\modu{X}^n$ of full rank. As we need only address when $n\ge2$, we can take $X$, hence $\modu{X}$, to be connected, so $(\modu{X},G)$ is generically $n$-transitive.
\end{proof}

On occasion we will want to assume that point stabilizers are connected. This amounts to passing to a finite cover.

\begin{definition}
If $G$ is a definable group acting on definable sets $X$ and $\widehat{X}$, then we call $\widehat{X}$ a \textdef{definable cover} of $X$ if there is a 
definable surjective $G$-invariant map  $\pi:\widehat{X}\rightarrow X$, i.e. $(X,G)$ is equivalent to a quotient of $(\widehat{X},G)$. If the fibers of $\pi$ are finite, we say that $\widehat{X}$ is a \textdef{finite cover} of $X$.
\end{definition}

Moving from a transitive permutation group to a transitive finite cover amounts to moving to a subgroup of finite index in a point stabilizer. In the case that $(X,G)$ is primitive, any nontrivial cover will certainly not be primitive and may even fail to be virtually definably primitive. However, we do have the following. Note that we assume $G$ to be connected. 

\begin{lemma}\label{lem.CoverGenericTrans}
Let $(X,G)$ be an infinite transitive permutation group of finite Morley rank with $G$ connected and $n:=\gtd(X,G)$. If $x_1,\ldots,x_n\in X$ are  in general position and $(\widehat{X},G)$ is any definable transitive finite cover of $(X,G)$, then $\gtd(\widehat{X},G) = n$, and any $\hat{x}_1,\ldots,\hat{x}_n\in\widehat{X}$ that project to $x_1,\ldots,x_n$ are in general position.
\end{lemma}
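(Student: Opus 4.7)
The plan is to verify both directions of the equality $\gtd(\widehat{X},G)=n$, which will simultaneously identify $(\hat{x}_1,\ldots,\hat{x}_n)$ as being in general position. For the upper bound, the cover map $\pi\colon\widehat{X}\to X$ exhibits $(X,G)$ as a definable quotient of $(\widehat{X},G)$ (the equivalence classes being the fibres of $\pi$). Since $X$ is infinite, Lemma~\ref{lem.QuotientGenericTrans} applied to this quotient immediately yields $\gtd(\widehat{X},G)\le\gtd(X,G)=n$.

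For the reverse inequality, I would first observe that since $\pi$ has finite fibres it preserves Morley rank, and hence $\rk\widehat{X}^n=\rk X^n$. Moreover, since $G$ is connected and transitive on $\widehat{X}$, the set $\widehat{X}$ has Morley degree $1$ (otherwise $G$ would permute its top-rank components through a nontrivial finite quotient, contradicting connectedness), and thus so does $\widehat{X}^n$. Now fix $\hat{x}_i\in\pi^{-1}(x_i)$ and let $\widehat{\mathcal{O}}$ be the $G$-orbit of $(\hat{x}_1,\ldots,\hat{x}_n)$. The $G$-equivariant finite-to-one map $\pi^n\colon\widehat{X}^n\to X^n$ sends $\widehat{\mathcal{O}}$ onto the generic orbit $\mathcal{O}$ of $(x_1,\ldots,x_n)$, and by rank preservation $\rk\widehat{\mathcal{O}}=\rk\mathcal{O}=\rk X^n=\rk\widehat{X}^n$. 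Combined with $\widehat{X}^n$ being of degree $1$, this forces $\rk(\widehat{X}^n\setminus\widehat{\mathcal{O}})<\rk\widehat{X}^n$, so $(\widehat{X},G)$ is generically $n$-transitive via $\widehat{\mathcal{O}}$ and $(\hat{x}_1,\ldots,\hat{x}_n)$ is in general position, as required.

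No serious obstacle arises here---the proof is essentially pure rank bookkeeping. The only point that uses more than the finiteness of $\pi$ is the degree $1$ claim for $\widehat{X}^n$, which relies on the connectedness of $G$ (via its transitivity on $\widehat{X}$) and is precisely what promotes the bare rank equality $\rk\widehat{\mathcal{O}}=\rk\widehat{X}^n$ to the strict-rank-drop condition required for generic $n$-transitivity.
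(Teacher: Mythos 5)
Your argument is correct and follows essentially the same route as the paper: rank preservation under the finite-to-one covering map, connectedness (degree $1$) of $\widehat{X}$ coming from the connectedness and transitivity of $G$, and the observation that the lifted orbit surjects onto the generic orbit of $X^n$ with finite fibres, hence has full rank. Your explicit appeal to Lemma~\ref{lem.QuotientGenericTrans} for the upper bound $\gtd(\widehat{X},G)\le n$ is a detail the paper leaves implicit, but the substance is identical.
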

\begin{proof}
As the covering map has finite fibers, $X$ and $\widehat{X}$ have the same rank. Further, both sets are connected since $G$ is connected and the actions are transitive. Now, if $\orbit\subset \widehat{X}^n$ is the orbit of $G$ on $(\hat{x}_1,\ldots,\hat{x}_n)$, then $\orbit$ projects to the generic orbit of $G$ on $X^n$. Since the fibers of the projection from $\widehat{X}$ to $X$, hence from $\widehat{X}^n$ to $X^n$, are finite, we find that $\orbit$ has the desired rank. 
\end{proof}

Early in our proof of Theorem~\ref{thm.Aone}, we will show that the generic $(n-1)$-point stabilizers are abelian-by-finite. We will also be in a virtually definably primitive context, so we will have access to the following result.  

\begin{corollary}\label{cor.GenNTransAbelian}
Let $(X,G)$ be a transitive virtually definably primitive permutation group of finite Morley rank with $G$ connected. If $n:=\gtd(X,G)\ge 2$ and a generic $(n-1)$-point stabilizer is abelian-by-finite, then the action is primitive.
\end{corollary}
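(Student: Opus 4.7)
The plan is to show that $(X,G)$ is definably primitive and then appeal to Fact~\ref{fact.DefPrimImpliesPrim}, using that the generic $1$-point stabilizers are infinite (Remark~\ref{generalremarksongenerictransitivity}(3), noting $X$ must be infinite since $\gtd(X,G)\ge 2$). Fix $x \in X$ and suppose for contradiction that there is a definable $H$ with $G_x < H < G$. Virtual definable primitivity (Fact~\ref{fact.characterizeDefnPrimi}(2)) combined with $G$ being connected forces $|H:G_x|$ to be finite. As $G_x$ is connected (Fact~\ref{fact.GenNTransAbelian}(1)) and $H^\circ$ has a finite orbit on $x$, the connected group $H^\circ$ fixes $x$, giving $H^\circ = G_x$ and $H \le N_G(G_x)$. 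A second application of virtual definable primitivity to $N_G(G_x)^\circ$ rules out $N_G(G_x)^\circ = G$ (which would make $G_x$ a normal subgroup acting trivially on $X$, forcing $G_x = 1$) and forces $|N_G(G_x)^\circ : G_x|$ to be finite; the connected quotient is then trivial, so $N_G(G_x)^\circ = G_x$ and $N_G(G_x)$ is a finite extension of $G_x$. The inclusion $H \le N_G(G_x)$ with $H > G_x$ now gives $N_G(G_x) > G_x$.

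The heart of the argument is to pass to the quotient $\modu X := X/{\sim}$, where the $\sim$-class of $y \in X$ is the finite set $N_G(G_y) \cdot y$, and re-apply Fact~\ref{fact.GenNTransAbelian}. The kernel $N$ of the action of $G$ on $\modu X$ has $N^\circ$ trivial, since $N^\circ$ fixes each finite class pointwise and hence acts trivially on $X$. Being finite and normal in the connected group $G$, $N$ is central, so $N \le Z(G) = 1$ by Fact~\ref{fact.GenNTransAbelian}(4). Thus $(\modu X, G)$ is a faithful, transitive, infinite permutation group of finite Morley rank, and Lemma~\ref{lem.CoverGenericTrans} yields $\gtd(\modu X, G) = n$ with images of tuples in general position still in general position. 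Since the stabilizer of $\modu y$ is $N_G(G_y)$, the generic $(n-1)$-point stabilizer in $(\modu X, G)$ is $\bigcap_i N_G(G_{x_i})$, which contains the abelian (by Fact~\ref{fact.GenNTransAbelian}) generic $(n-1)$-point stabilizer $\bigcap_i G_{x_i}$ of $(X,G)$ with index bounded via the natural injection into $\prod_i N_G(G_{x_i})/G_{x_i}$, and is therefore abelian-by-finite. Fact~\ref{fact.GenNTransAbelian}(1) applied to $(\modu X, G)$ then forces the generic $1$-point stabilizer $N_G(G_x)$ to be connected, and combined with $N_G(G_x)^\circ = G_x$ this yields $N_G(G_x) = G_x$, contradicting $N_G(G_x) > G_x$.

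The main obstacle is the construction of the quotient action and the verification that it retains both the faithfulness and the abelian-by-finite hypothesis needed to reapply Fact~\ref{fact.GenNTransAbelian} at the same degree $n$. Once the kernel of the quotient action is shown to collapse (via centerlessness) and the normalizer intersection is shown to be a finite extension of the abelian $(n-1)$-point stabilizer, the bootstrap closes and the contradiction is immediate.
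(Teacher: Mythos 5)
Your proof is correct and follows essentially the same route as the paper's: both pass to a finite-class quotient (the paper takes a definably primitive one via Fact~\ref{fact.primQuotient}, you take the quotient by $N_G(G_x)$-classes after funnelling any offending overgroup $H$ into $N_G(G_x)$), kill the kernel using centerlessness from Fact~\ref{fact.GenNTransAbelian}, and reapply Fact~\ref{fact.GenNTransAbelian} to the quotient so that connectedness of the class stabilizers collapses them onto the point stabilizers, yielding definable primitivity and then primitivity via Fact~\ref{fact.DefPrimImpliesPrim}. The only difference is bookkeeping; your version makes explicit the finite-index and general-position verifications that the paper leaves implicit.
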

\begin{proof}
If we pass to a definably primitive quotient of $(X,G)$ with kernel $K$, then as the classes in the quotient are finite, $K^\circ = 1$ (using the fact that connected groups act trivially on finite sets). Thus, $K$ is finite and hence central since $G$ is connected. By Fact~\ref{fact.GenNTransAbelian}, $K=1$. Further, Fact~\ref{fact.GenNTransAbelian} applies to the quotient, so the $1$-point stabilizers in the quotient are connected. As there is no kernel, the $1$-point stabilizers from the quotient coincide with the original $1$-point stabilizers, so $(X,G)$ is definably primitive. By Remark \ref{generalremarksongenerictransitivity}\eqref{generalremarksongenerictransitivity.ps}, generic $2$-transitivity 
ensures that the point stabilizers are infinite, so the action is primitive by Fact~\ref{fact.DefPrimImpliesPrim}.
\end{proof}

In the situation where the $1$-point stabilizers are abelian, we obtain the following characterization of the $1$-dimensional affine group. In our proof of Theorem~\ref{thm.Aone}, this can be used to show that a generic $(n-2)$-point stabilizer is \emph{not} virtually definably primitive.

\begin{fact}[{\cite[Proposition~3.8]{WiJ14a}}]\label{fact.VprimRankAbelian}
Let $(X,G)$ be an infinite transitive and virtually definably primitive permutation group of finite Morley rank with abelian point stabilizers. If $G$ is connected and the point stabilizers have rank at least $\rk X$, then $(X,G) \cong (K,\aff_1(K))$ for some algebraically closed field $K$. 
\end{fact}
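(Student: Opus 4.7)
Write $H := G_x$. The hypothesis $\rk H \ge \rk X \ge 1$ forces $H$ to be infinite. The first structural observation uses faithfulness and virtual definable primitivity: if $H^\circ$ were normal in $G$, then the connected group $H^\circ$ would fix every $G$-translate of $x$ and hence all of $X$, contradicting $H^\circ \neq 1$. Thus $N_G(H^\circ)$ is a proper definable overgroup of $H$, and Fact~\ref{fact.characterizeDefnPrimi}\eqref{fact.characterizeDefnPrimi.vdp} forces $|N_G(H^\circ):H|$ to be finite; the same reasoning applied to $C_G(H^\circ) \supseteq H$ (with $H$ abelian) gives $|C_G(H^\circ):H|$ finite, so $H^\circ$ is both nearly self-normalizing and nearly self-centralizing in $G$.

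The main technical step is to establish that $(X,G)$ is generically $2$-transitive, equivalently that $H$ has a generic orbit on $X$. The plan is to examine the definable family of connected subgroups $\{(H \cap G_y)^\circ : y \in X\}$ of $H^\circ$. Using standard finite Morley rank techniques on this definable family, one extracts a definable connected subgroup $H_0 \le H^\circ$ occurring as $(H \cap G_y)^\circ$ on a generic subset $Y \subseteq X$; by construction $H_0$ fixes $Y$ pointwise, and since $X$ is connected, Fact~\ref{fact.FixGenericSet} forces $H_0 = 1$. Consequently, $H \cap G_y$ is finite for generic $y$, so the orbit $yH$ has rank $\rk H$; since $yH \subseteq X$, this gives $\rk H \le \rk X$, and together with the hypothesis $\rk H \ge \rk X$, we obtain $\rk H = \rk X$ with $yH$ generic in $X$.

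Once generic $2$-transitivity is established, Corollary~\ref{cor.GenNTransAbelian} applied with $n=2$ and $H$ abelian upgrades virtual definable primitivity to genuine primitivity, and Fact~\ref{fact.GenNTransAbelian} supplies the rest: $H$ is connected and self-centralizing in $G$, the action is generically sharply $2$-transitive, and $G$ is centerless. In particular, $H$ acts regularly on a generic subset of $X$. To complete the identification, one transfers the abelian group structure of $H$ to $X$ via this regular action (invoking a Hrushovski-style generic group construction) to obtain a connected abelian definable group structure $V$ on $X$, together with a semidirect decomposition $G = V \rtimes H$. Primitivity of $(X,G)$ translates to $V$ having no proper nontrivial $H$-invariant definable subgroup, so Zilber's field theorem yields an algebraically closed field $K$ with $V \cong K^+$ (whence $\rk X = 1$) and an embedding $H \hookrightarrow K^\times$; the rank equality and connectedness of $H$ then force $H = K^\times$, so $(X,G) \cong (K, \aff_1(K))$. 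The main obstacle is this final identification step, where constructing $V$ and the semidirect decomposition requires a careful application of generic group configuration, or alternatively a Macpherson--Pillay O'Nan--Scott style analysis of primitive permutation groups of finite Morley rank to identify the socle as an elementary abelian normal subgroup.
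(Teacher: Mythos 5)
This statement is imported verbatim from \cite{WiJ14a}, so the paper offers no proof to compare against; judging your argument on its own terms, it contains two genuine gaps. The first is the extraction of a single connected subgroup $H_0$ with $H_0=(H\cap G_y)^\circ$ for all $y$ in a generic subset of $X$. No ``standard finite Morley rank technique'' yields generic constancy of a uniformly definable family of connected subgroups of an abelian group (think of the family of lines in $K^+\oplus K^+$). The map $y\mapsto (H\cap G_y)^\circ$ is constant along each $H$-orbit (stabilizers in the abelian group $H$ are normal, hence constant on orbits), but if no $H$-orbit is generic the family may take infinitely many values, and that is precisely the situation you must exclude. The step is repairable, and virtual definable primitivity is what repairs it: since $H$ and $G_y$ are abelian, $N_G\bigl((H\cap G_y)^\circ\bigr)$ contains both $G_x$ and $G_y$, so Fact~\ref{fact.characterizeDefnPrimi}\eqref{fact.characterizeDefnPrimi.vdp} forces either $(H\cap G_y)^\circ$ to be normal in the connected group $G$ (hence trivial, as it fixes a point and $G$ is transitive and faithful), or $G_y^\circ=G_x^\circ$, which confines $y$ to $\fp(G_x^\circ)$; the $G$-invariant equivalence relation $G_y^\circ=G_z^\circ$ cannot have finitely many classes (that would again make $G_x^\circ$ normal and hence trivial), so by virtual definable primitivity its classes are finite and $H\cap G_y$ is finite for all but finitely many $y$. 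With generic $2$-transitivity secured, your appeals to Corollary~\ref{cor.GenNTransAbelian} and Fact~\ref{fact.GenNTransAbelian} go through.

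The second gap is the one you flag yourself, and it is not a removable technicality: nothing in the proposal actually produces the normal abelian subgroup $V$ with $G=V\rtimes H$. A regular generic action of $H$ on $X$ identifies $X$ with $H$ as a set, but a Hrushovski-style group configuration does not manufacture a \emph{normal} subgroup of $G$, and the Macpherson--Pillay analysis has a non-affine branch (socle a product of definably simple groups) that your hypotheses do not visibly exclude. Concretely, what must be ruled out is a (quasi)simple group $G$ of rank $2r$ with a self-normalizing, self-centralizing abelian subgroup $H$ of rank $r$ meeting its distinct conjugates finitely; for $r\ge 2$ this is a bad-group-like configuration, and excluding it is the real content of \cite[Proposition~3.8]{WiJ14a} (note that Fact~\ref{fact.rankFourGroups} does not rule out quasisimple bad groups of rank $4$). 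Only the case $r=1$ falls out formally: there $\rk G=2$, $G$ is nonnilpotent since $H$ is self-normalizing, and Fact~\ref{fact.rankTwoGroups} identifies $G$ as $\aff_1(K)$. Finally, your parenthetical ``whence $\rk X=1$'' is false: the conclusion $(K,\aff_1(K))$ permits $\rk K>1$, and indeed the paper applies this fact with $\rk X=2$ in the proof of Proposition~\ref{prop.PSStructure}; fortunately nothing downstream in your argument depends on that claim.
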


Finally, we mention that, in the presence of primitivity, there is an essential connection between the degree of generic transitivity and the rank of the group. The following fact is stated in a slightly stronger form than the lemma it references; the stated form is clear from the original proof.

\begin{fact}[\protect{\cite[Proposition~2.3]{BoCh08}}]\label{fact.PrimitiveBoundG}
If $(X,G)$ is a primitive permutation group of finite Morley rank with $\rk X = r$, then \[\rk G \le r\cdot \gtd(X,G) + r(r-1)/2.\]
\end{fact}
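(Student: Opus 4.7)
The plan is to reduce the bound to controlling the rank of a generic $n$-point stabilizer, where $n = \gtd(X,G)$, and to extract the triangular correction $r(r-1)/2$ from an extended chain of stabilizers, using primitivity to force strict drops in the orbit rank past the generically $n$-transitive range. First I would fix $x_1,\ldots,x_n \in X$ in general position and set $H = G_{x_1,\ldots,x_n}$. By generic $n$-transitivity together with the orbit-stabilizer formula in the finite Morley rank category, the orbit of $G$ on $(x_1,\ldots,x_n)$ in $X^n$ has rank $nr$, so $\rk G = nr + \rk H$, and it suffices to show $\rk H \le r(r-1)/2$.

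Next I would extend the chain beyond $n$: choose $x_{n+1}, x_{n+2}, \ldots$ iteratively, with each $x_{k+1}$ in general position for the current stabilizer $H_k := G_{x_1,\ldots,x_k}$, and set $o_k := \rk H_k - \rk H_{k+1}$, the rank of the generic orbit of $H_k$ on $X$. Then $\rk H = \sum_{k \ge n} o_k$ (a finite sum, since $o_k = 0$ once $H_k$ is finite, and infinite $H_k$ forces a positive-rank orbit by faithfulness). For $k \ge n$ one has $o_k \le r - 1$, since $o_k = r$ would promote the $G$-orbit on $X^{k+1}$ to full rank $(k+1)r$ and hence give generic $(k+1)$-transitivity, contradicting $\gtd(X,G) = n$. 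Moreover, $(o_k)_{k \ge n}$ is non-increasing because $H_{k+1} \le H_k$ forces orbits of $H_{k+1}$ to refine those of $H_k$, so the generic orbit rank can only drop.

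The heart of the argument is the key claim, which is where primitivity enters: for each $k \ge n$ with $o_k > 0$ one has $o_{k+1} < o_k$. Granting strict decrease, the positive terms of $(o_k)_{k \ge n}$ constitute a strictly descending subsequence of $\{1,2,\ldots,r-1\}$, so $\rk H \le 1 + 2 + \cdots + (r-1) = r(r-1)/2$, as required. To attack the key claim, I would suppose for contradiction that $o_k = o_{k+1} = s > 0$, set $H = H_k$ with generic orbit $\orbit = Hy$ of rank $s$, and observe that the stabilizer $K = H_y$ again has a generic orbit on $X$ of rank $s$ lying inside some generic $H$-orbit. Combined with the fact that $K$ fixes $y \in \orbit$, this surplus symmetry should allow one to manufacture a proper, nontrivial definable $G$-invariant equivalence relation on $X$ (for instance by grouping points according to the orbit structure of the generic $k$-point stabilizer that still agrees on a rank-$s$ piece), contradicting primitivity. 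The main obstacle is making this equivalence relation $G$-invariant and independent of the chosen base tuple $(x_1,\ldots,x_k)$; this will likely require expressing the relation via orbits of $G$ on $X^{k+2}$ rather than in terms of fixed representatives of $H$, and is the principal technical hurdle.
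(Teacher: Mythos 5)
This statement is quoted in the paper as a fact from \cite[Proposition~2.3]{BoCh08} and is not reproved there, so your attempt can only be measured against the cited source. Your skeleton is the right one (and, as far as I can tell, the same one used in the original): orbit--stabilizer gives $\rk G = nr + \rk H$; the greedy stabilizer chain telescopes $\rk H$ into the generic orbit ranks $o_k$; each $o_k \le r-1$ for $k\ge n$ by the definition of $\gtd$; and the sequence $(o_k)$ is non-increasing. All of that is correct. But the entire content of the proposition beyond routine rank counting sits in your ``key claim'' (strict decrease of the $o_k$ once they fall below $r$), and you have not proved it --- you have only expressed the hope that primitivity lets one ``manufacture'' an invariant equivalence relation, and you yourself flag the $G$-invariance of that relation as the unresolved technical hurdle. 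As written, this is a gap, not a proof.

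Two concrete points show the gap is not cosmetic. First, the key claim is carrying the full weight of the primitivity hypothesis: for $G=\psl_2(K)\times K^+$ acting coordinatewise on $X=\proj^1(K)\times K$ one has $r=2$, $\gtd(X,G)=1$ and $\rk G=4>3=rn+r(r-1)/2$; here the chain gives $o_1=o_2=1$, so strict decrease (and the bound) genuinely fail for this transitive but imprimitive action. Second, the natural local form of your claim --- that in a primitive group a definable subgroup $H$ all of whose orbits have rank $s<r$ has point stabilizers at generic points of a rank-$s$ orbit whose orbits all have rank $<s$ --- is simply false: in $\pgl_3(K)$ on $\proj^2(K)$, the rank-$2$ unipotent group $U=\{I+aE_{12}+bE_{13}\}$ has all orbits of rank $\le 1$, yet the stabilizer in $U$ of a point in a rank-$1$ orbit still has rank-$1$ orbits. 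So any correct proof must use that $H$ is a \emph{generic} $k$-point stabilizer and that the whole tuple is in general position, not just primitivity plus local orbit data; this is exactly why the equivalence relation you want cannot be read off from a single fixed $H$ and must be encoded via $G$-orbits on $X^{k+2}$, as you suspect. Until that step is carried out, the argument establishes only the trivial bound $\rk G \le nr + (r-1)\cdot(\text{length of the tail of the chain})$, which is not the statement.
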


We will need the following very slight modification of the previous fact.

\begin{lemma}\label{lem.PrimitiveBoundG}
If $(X,G)$ is a transitive virtually definably primitive permutation group of finite Morley rank with $G$ connected and $\rk X = r$, then \[\rk G \le r\cdot \gtd(X,G) + r(r-1)/2.\]
\end{lemma}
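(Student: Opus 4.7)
The strategy is to reduce to Fact~\ref{fact.PrimitiveBoundG} by constructing a primitive quotient of $(X,G)$ that has the same rank. Before doing so, we dispense with trivialities: if $r = 0$ then $X$ is finite, hence so is $G$, and the inequality is vacuous; and if a point stabilizer $G_x$ is finite, then $\rk G = r$, and the inequality follows from $\gtd(X,G) \ge 1$. So assume $r \ge 1$ and that point stabilizers are infinite.

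Fix $x \in X$, and let $H$ be a definable subgroup of $G$ that is maximal among definable subgroups satisfying $G_x \le H \lneq G$; such an $H$ exists by the descending chain condition on definable subgroups. By construction, $H$ is a maximal definable subgroup of $G$. Virtual definable primitivity, through Fact~\ref{fact.characterizeDefnPrimi}\eqref{fact.characterizeDefnPrimi.vdp}, forces $|G:H|$ or $|H:G_x|$ to be finite. Since $G$ is connected and $H$ is proper, $|G:H|$ must be infinite, so $|H:G_x|$ is finite.

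Let $\sim$ denote the $G$-invariant equivalence relation on $X$ whose class of $x$ is the orbit $xH$ (so that the setwise stabilizer of this class is $H$). The classes are finite, so $\rk(X/{\sim}) = r$. Let $K$ be the kernel of the induced $G$-action on $X/{\sim}$. Since $K^\circ$ is connected and fixes each finite class setwise, it fixes $X$ pointwise; by faithfulness of $(X,G)$ we conclude $K^\circ = 1$, so $K$ is finite. Hence $(X/{\sim}, G/K)$ is a faithful permutation group with $\rk(G/K) = \rk G$, and its point stabilizer $H/K$ is maximal definable in $G/K$ by the correspondence theorem. Because $H$ is infinite, Fact~\ref{fact.DefPrimImpliesPrim} makes $(X/{\sim}, G/K)$ primitive.

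Since $X \to X/{\sim}$ has finite fibers, a direct rank computation gives $\gtd(X/{\sim}, G/K) = \gtd(X, G)$: a generic $G$-orbit on $X^n$ projects to a generic orbit on $(X/{\sim})^n$, and conversely a generic orbit on $(X/{\sim})^n$ pulls back to a set of full rank in $X^n$ that must contain a $G$-orbit of full rank. Applying Fact~\ref{fact.PrimitiveBoundG} to $(X/{\sim}, G/K)$ then yields $\rk G = \rk(G/K) \le r \cdot \gtd(X,G) + r(r-1)/2$. The only real subtlety is upgrading virtual definable primitivity to primitivity while preserving the rank of $X$; choosing $H$ maximal definable above $G_x$ does exactly this, modulo a finite kernel.
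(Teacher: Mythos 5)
Your overall strategy is the same as the paper's: reduce to Fact~\ref{fact.PrimitiveBoundG} by passing to a definably primitive quotient, observe that virtual definable primitivity plus connectedness of $G$ forces the classes to be finite and hence the kernel finite, and transfer rank and degree of generic transitivity across the quotient. The trivial cases, the finiteness of $K$, the identification of the quotient point stabilizer, the use of Fact~\ref{fact.DefPrimImpliesPrim}, and the preservation of $\gtd$ (which is Lemma~\ref{lem.QuotientGenericTrans} in one direction and Lemma~\ref{lem.CoverGenericTrans} in the other) are all handled correctly and match the paper.

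There is, however, one genuine gap: the existence of your subgroup $H$, maximal among definable subgroups with $G_x \le H \lneq G$, does \emph{not} follow from ``the descending chain condition on definable subgroups.'' The DCC controls descending chains; ascending chains of definable subgroups in a group of finite Morley rank can be infinite (the finite subgroups $\{g : p^n g = 0\}$ of a Pr\"ufer $p$-group already give an example in a connected rank~$1$ group, and there a maximal proper definable subgroup containing the trivial group does not exist at all). So maximal definable overgroups of a given definable subgroup need not exist in general, and this existence is precisely the nontrivial content of Fact~\ref{fact.primQuotient}(2), which is why that fact carries the hypothesis of infinite point stabilizers. Since you have already reduced to the case where $G_x$ is infinite, the repair is immediate -- invoke Fact~\ref{fact.primQuotient}(2) to obtain a nontrivial definably primitive quotient (equivalently, a maximal definable overgroup of $G_x$), which is exactly what the paper does -- but as written the key existence step is unjustified.
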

\begin{proof}
If $r = 0$, there is nothing to show, so assume that $X$ is infinite. If $(X,G)$ has finite point stabilizers, then $\rk G = r$, and we are done since $\gtd(X,G)\ge 1$. Thus, we assume that $(X,G)$ has infinite point stabilizers, so Fact~\ref{fact.primQuotient} yields  an infinite definably primitive quotient $(\modu{X},G)$ with kernel $K$. As the classes in the quotient are finite, the kernel is also finite, so the ranks of $X$ and $G$ coincide with those of $\modu{X}$ and $G/K$. Further, Lemma~\ref{lem.CoverGenericTrans} shows that $\gtd(\modu{X},G/K) = \gtd(X,G)$, so we may assume that $(X,G)$ is definably primitive. Again, if $(X,G)$ has finite point stabilizers, then we are done, so by Fact~\ref{fact.DefPrimImpliesPrim}, $(X,G)$ is primitive. The previous fact now applies. 
\end{proof}

\subsection{Actions of solvable groups}
Theorem~\ref{thm.A} and, more generally, Conjecture~\ref{conj.ProbPGL} are about finding limits on the degree of generic transitivity; the following is in a similar vein but certainly in a  different direction.   

\begin{proposition}[{cf. \cite[Proposition~1]{PoV07}}]\label{prop.TransBoundSolvable}
Let $(X,G)$ be an infinite transitive permutation group of finite Morley rank.
\begin{enumerate}
\item If $G^\circ$ is nilpotent, then $\gtd(X,G) = 1$.
\item If $G^\circ$ is solvable, then $\gtd(X,G) \le 2$.
\end{enumerate} 
\end{proposition}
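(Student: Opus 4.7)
The plan is to prove both parts by induction on $\rk X$. After using Fact~\ref{fact.GenericTwoImpliesConnected} to ensure $X$ is connected and Fact~\ref{fact.ConnectedCompGenericTrans} to replace $G$ by $G^\circ$, I would pick a nontrivial connected definable normal abelian subgroup $A\lhd G$: for part (1) take $A = Z(G)^\circ$, which is infinite because the last nontrivial term of the lower central series of a connected nilpotent group of finite Morley rank is connected, central, and nontrivial; for part (2) take $A$ to be the connected component of the last nontrivial term of the derived series of $G$. In either case, the $A$-orbits on $X$ form a $G$-invariant equivalence relation, and everything hinges on whether $A$ is transitive on $X$.

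When $A$ is not transitive, the quotient $X/A$ must be infinite -- a connected group cannot act nontrivially on a finite transitive set -- and has strictly smaller rank than $X$. The induced action factors through a quotient of $G$ that remains connected nilpotent (resp.\ solvable), and by Lemma~\ref{lem.QuotientGenericTrans} it inherits at least the original degree of generic transitivity, so the inductive hypothesis delivers a contradiction.

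When $A$ is transitive, the abelianness of $A$ forces $A\cap G_x = 1$ (any element of the intersection centralizes $A$ and hence fixes $Ax=X$), so $A$ acts regularly, $\rk A = \rk X$, and $G = A\rtimes G_x$ with $G_x\hookrightarrow \aut(A)$. For part (1), centrality of $A$ further gives $G_x=1$, whence $\rk G = \rk X$, contradicting the bound $\rk G\geq 2\rk X$ coming from generic $2$-transitivity.

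The main obstacle lies in the transitive case of part (2). There I would dichotomize on whether $A$ admits a proper nontrivial $G_x$-invariant connected definable subgroup. If it does, the quotient of $X\cong A$ by this subgroup is a proper $G$-invariant quotient of smaller rank and induction applies as before. Otherwise $A$ is $G_x$-irreducible, and I would invoke Zilber's field theorem (see \cite{BoNe94}) to obtain a definable algebraically closed field $K$ realizing $A$ as $K^r$ with $G_x\leq \gl_r(K)$; Lie--Kolchin, applied to the Zariski closure of the connected solvable $G_x$, then yields a $G_x$-fixed line in $A$, and by irreducibility $r=1$. Consequently $G\leq K^+\rtimes K^\times = \aff_1(K)$, which is only generically sharply $2$-transitive on $K$, contradicting generic $3$-transitivity.
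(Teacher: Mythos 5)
Your argument is correct in outline but follows a genuinely different route from the paper's. The paper does not induct on $\rk X$: it passes immediately to a \emph{definably primitive} quotient (via Lemma~\ref{lem.QuotientGenericTrans} and Fact~\ref{fact.primQuotient}), where the orbit equivalence relation of any nontrivial definable normal subgroup must be universal, so the relevant normal abelian subgroup ($Z(G)$ in the nilpotent case, $Z(F(G))$ in the solvable case) is automatically transitive, hence regular. Your induction on $\rk X$, quotienting by the $A$-orbits when $A$ is intransitive, is a hands-on substitute for that primitivity reduction and works for the same reasons --- though note that the rank of $X/A$ drops only because the $A$-orbits are infinite, which you should justify ($A$ is connected and nontrivial, so it cannot have all orbits finite). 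The substantive divergence is in the transitive case of part (2). The paper avoids linearization entirely: regularity of $Z = Z(F(G))$ forces $Z$ to be self-centralizing, hence $F(G) = Z$ is abelian, hence $G_x \cong G/F(G)$ is abelian by \cite[I,~Lemma~8.3]{ABC08}, and part (1) applied to the action of $G_x$ on $X-\{x\}$ finishes. Your route through the field theorem also works but contains one slip: $G_x$ need not be connected (point stabilizers in a connected group are in general disconnected), so you cannot apply Lie--Kolchin to ``the connected solvable $G_x$'' as written. The repair is standard: since $A$ is abelian and normal, a subgroup of $A$ is $G_x$-invariant if and only if it is normal in $G$, so $A$ is a $G$-minimal normal subgroup of the \emph{connected} solvable group $G$, and the field theorem for $G$-minimal sections of connected solvable groups (see \cite{BoNe94}) gives $A\cong K^+$ with $G/C_G(A)$ abelian embedding in $K^\times$ directly, with no need for the rank-$r$ detour; alternatively, once $G_x$ is known to be abelian-by-finite one can simply fall back on part (1), as the paper does. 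What your approach buys is independence from the definable-primitivity machinery; what the paper's buys is a shorter part (2) that reduces to part (1) without invoking any field theorem.
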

\begin{proof}
If there is a counterexample to the first point, then we can produce a counterexample $(X,G)$ for which $G$ is connected and the action is definably primitive. Let us elaborate a bit. If  $(X,G)$ is generically $2$-transitive, then $X$ is connected by Fact~\ref{fact.GenericTwoImpliesConnected}, so by Fact~\ref{fact.ConnectedCompGenericTrans}, we may assume that $G$ is connected. Next, replace $X$ by a nontrivial (hence infinite) virtually definably primitive quotient by moving to a  proper definable subgroup of maximal rank that contains a point stabilizer. By Lemma~\ref{lem.QuotientGenericTrans}, moving to a quotient preserves generic $2$-transitivity, and of course, we can ignore the kernel. Now Fact~\ref{fact.primQuotient} applies since generic $2$-transitivity 
implies that point stabilizers are infinite by Remark \ref{generalremarksongenerictransitivity}\eqref{generalremarksongenerictransitivity.ps}, and we obtain our desired counterexample. 
Now fix an $x\in X$. By primitivity, we find that $G = Z(G)\rtimes G_x$ and that the action of $G_x$ on $X-\{x\}$ is equivalent to the action of $G_x$ on $Z(G)$ by conjugation. Certainly, generic $2$-transitivity is impossible. 

For the second point, we consider, as before, a counterexample for which 
we additionally have that $G$ is connected and the action is definably primitive. 
Fix an $x\in X$, and let $Z$ denote the center of the 
Fitting subgroup of $G$. Now we find that $G = Z\rtimes G_x$. 
As $Z$ acts regularly on $X$, $Z$ must be self-centralizing, so $Z$ contains $F(G)$. 
However, this implies, by \cite[I,~Lemma~8.3]{ABC08}, that $G/Z$ is abelian. 
Thus $G_x$ is abelian, so by the first point, the action of $G_x$ on $X-\{x\}$ 
is not generically $2$-transitive. Thus, it cannot be that $(X,G)$ is generically $3$-transitive. 
\end{proof}

\subsection{The $\Sigma$-groups}\label{subsec.Sigma}
The next definition singles out particular subgroups that are essential to the study of generically $n$-transitive actions. 

\begin{definition}
Let $(X,G)$ be a permutation group of finite Morley rank. If $x_1,\ldots,x_n\in X$ are in general position,  define $\Sigma(x_1,\ldots,x_{k-1};x_k,\ldots,x_n)$ to be the subgroup of $G_{\{x_1,\ldots,x_{n}\}}$ that fixes $x_k,\ldots,x_n$ pointwise; when there is no semicolon, $\Sigma(x_1,\ldots,x_n)$ is defined to be $G_{\{x_1,\ldots,x_{n}\}}$. 
\end{definition}

Let us give a couple of examples to keep in mind; the latter is the most relevant for the sequel. 

\begin{example}
Let $K$ be an algebraically closed field. Set $V:=K^3$, and fix an ordered basis $\mathcal{B}:=(e_1,e_2,e_3)$ for $V$.
\begin{enumerate}
\item Consider $\gl_3(K)$ acting naturally on $V$. This action is generically $3$-transitive, and $\Sigma(e_1,e_2,e_3)$ is the group of the permutation matrices (with respect to $\mathcal{B}$). This recovers the Weyl group.
\item Consider $\pgl_3(K)$ acting naturally on $\proj(V)$. This action is generically $4$-transitive with $\Sigma\left(\langle e_1\rangle,\langle e_2\rangle,\langle e_3\rangle;\langle e_1+e_2+e_3\rangle\right)$ equal to (the image in $\pgl_3(K)$ of) the group of permutation matrices. Thus, we again, but in a slightly different way, recover the Weyl group. 
\end{enumerate}
\end{example}

When $(X,G)$ is generically $n$-transitive, $\Sigma(x_1,\ldots,x_n)/G_{x_1,\ldots,x_n}$ is the full symmetric group on $\{x_1,\ldots,x_{n}\}$, and, as in the above examples, if we have a generically \emph{sharply} $n$-transitive action, then $\Sigma(x_1,\ldots,x_n)$ is  equal to $\sym(x_1,\ldots,x_{n})$. This is extremely useful when combined with the following lemma.

\begin{lemma}\label{lem.SigmaGroupLemma}
Let $(X,G)$ be an infinite permutation group of finite Morley rank with $n:=\gtd(X,G)\ge 2$ and  $x_1,\ldots,x_n\in X$ in general position. Then $\Sigma(x_1,\ldots,x_{n-1};x_n)$ acts faithfully on $(G_{x_1,\ldots,x_{n-1}})^\circ$.
\end{lemma}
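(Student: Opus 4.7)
The plan is to apply Lemma~\ref{lem.HKlemma}(2) with the roles of ``$H$'' and ``$K$'' played by $\Sigma := \Sigma(x_1,\ldots,x_{n-1};x_n)$ and $H := (G_{x_1,\ldots,x_{n-1}})^\circ$ respectively, at the point $x := x_n$. Since $\Sigma$ fixes $x_n$ by definition of the notation, the two things to verify are that $\Sigma$ normalizes $H$ and that the orbit $H\cdot x_n$ is generic in $X$. For the first point, set $L := G_{x_1,\ldots,x_{n-1}}$; any $\sigma \in \Sigma$ permutes $\{x_1,\ldots,x_{n-1}\}$, so $\sigma L \sigma^{-1} = L$, and hence $\Sigma$ normalizes $L$ and also $H = L^\circ$.

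For the genericity of $H\cdot x_n$, first note that the general position of $(x_1,\ldots,x_n)$ forces $L\cdot x_n$ to be the generic $L$-orbit on $X$, so $\rk(L\cdot x_n) = \rk X$. Since $H$ has finite index in $L$, the orbit $H\cdot x_n$ likewise has rank $\rk X$. Now $n \ge 2$, so by Fact~\ref{fact.GenericTwoImpliesConnected}, $X$ is connected, i.e.\ has Morley degree $1$. A direct degree calculation then yields that any full-rank definable subset of a connected set must be generic: if $Z \subseteq X$ satisfies $\rk Z = \rk X$, the decomposition $X = Z \sqcup (X \setminus Z)$ together with $\deg X = 1$ forces $\rk(X \setminus Z) < \rk X$, since otherwise $X$ would decompose into two definable subsets of full rank and have degree at least $2$. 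In particular $H\cdot x_n$ is generic.

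With these pieces in place, Lemma~\ref{lem.HKlemma}(2) immediately gives the desired conclusion. The main (minor) obstacle is that Lemma~\ref{lem.HKlemma} is stated for transitive $(X,G)$, whereas the present lemma does not hypothesize transitivity; this is handled by restricting to the unique generic $G$-orbit $Y := G\cdot x_n$ and passing to the induced faithful quotient action, for which all the relevant data (genericity of $H\cdot x_n$, $\Sigma$ fixing $x_n$, $\Sigma \le N(H)$) are preserved. Once the genericity of $H\cdot x_n$ is established, the rest is a direct invocation of the Hrushovski-style rigidity provided by Fact~\ref{fact.FixGenericSet} through Lemma~\ref{lem.HKlemma}(2).
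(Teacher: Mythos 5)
Your proof is correct and follows the same route as the paper, whose entire argument is to note that $X$ is connected by Fact~\ref{fact.GenericTwoImpliesConnected} and then invoke Lemma~\ref{lem.HKlemma}(2) with exactly your choices of the two subgroups and the point $x_n$. You merely spell out the verifications that the paper leaves implicit (that $\Sigma(x_1,\ldots,x_{n-1};x_n)$ normalizes $(G_{x_1,\ldots,x_{n-1}})^\circ$, that the latter's orbit on $x_n$ is generic, and the restriction to the generic orbit needed to meet the transitivity hypothesis of Lemma~\ref{lem.HKlemma}).
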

\begin{proof}
Since $X$ is connected by Fact~\ref{fact.GenericTwoImpliesConnected}, we may apply 
Lemma~\ref{lem.HKlemma} with $H= \Sigma(x_1,\ldots,x_{n-1};x_n)$ and $K=(G_{x_1,\ldots,x_{n-1}})^\circ$.
\end{proof}

Lemma~\ref{lem.SigmaGroupLemma} leads naturally to the consideration of actions of $\sym(n)$, or rather covers of $\sym(n)$, on connected groups of finite Morley rank. Such actions were also considered in \cite{BoCh08}, but the most relevant result for us, namely \cite[Lemma~4.6]{BoCh08}, is not entirely correct as it omits many representations of symmetric groups over the field with $2$-elements, see \cite{DiL08} or the introduction to \cite{DyR79}. However, we will only be concerned with actions of $\sym(n)$ on connected groups of rank at most $2$, so we can quickly prove the little that we need. We will make frequent use of the basic fact that if $\alpha$ is a definable involutory automorphism of a connected group of finite Morley rank $G$ and $C_G(\alpha)$ is finite, then $\alpha$ inverts $G$, see \cite[I,~Lemma~10.3]{ABC08}.

\begin{lemma}\label{lem.symmOnRankOne}
Assume that  $\sym(n)$ acts definably on a connected group $A$ of Morley rank $1$. Then $\alt(n)$ is contained in the kernel. 
\end{lemma}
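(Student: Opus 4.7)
The plan is to combine the structure of connected rank-$1$ groups with the involution/centralizer dichotomy recalled right before the lemma. By Fact~\ref{fact.rankOneGroups}, $A$ is abelian (either divisible abelian or an elementary abelian $p$-group), so $\sym(n)$ acts on $A$ by group automorphisms of an abelian group; consequently, whether an automorphism is the identity map or the inversion map is preserved under conjugation in $\aut(A)$, since for abelian $A$ any group automorphism $g$ satisfies $g(-a) = -g(a)$. The cases $n \le 2$ are vacuous, so I assume $n \ge 3$.

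Next I would analyze the action of a single transposition $\tau \in \sym(n)$. The centralizer $C_A(\tau)$ is a definable subgroup of the connected rank-$1$ group $A$, so it has rank $0$ or rank $1$, which forces $C_A(\tau)$ to be either finite or all of $A$. If $C_A(\tau) = A$, then $\tau$ acts trivially; if $C_A(\tau)$ is finite, then the cited consequence of \cite[I,~Lemma~10.3]{ABC08}, namely that a definable involutory automorphism with finite fixed subgroup must invert, forces $\tau$ to invert $A$.

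Finally, since all transpositions are conjugate in $\sym(n)$ and the identity/inversion classification is conjugation-invariant in $\aut(A)$, the dichotomy is uniform across all transpositions. Either every transposition acts trivially, in which case $\sym(n)$ itself acts trivially on $A$ and we are done, or every transposition inverts $A$, in which case any product of an even number of transpositions acts trivially on $A$; but $\alt(n)$ consists of exactly such products, so $\alt(n)$ lies in the kernel. There is no real obstacle here: the only delicate point is the conjugation-invariance step, which depends essentially on the abelianness of $A$ provided by Fact~\ref{fact.rankOneGroups}.
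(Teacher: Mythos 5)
Your proof is correct and follows essentially the same route as the paper: the transposition dichotomy (fix all of $A$ or invert it, via the finite-centralizer criterion from \cite[I,~Lemma~10.3]{ABC08}), followed by the observation that the two alternatives are uniform across the conjugacy class of transpositions. The paper phrases the uniformity step slightly more economically via normality of the kernel rather than conjugation-invariance of the identity/inversion property, but the content is the same.
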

\begin{proof}
By rank considerations, every transposition of $\sym(n)$ must either fix or invert all of $A$. If some transposition fixes $A$, the kernel contains a transposition, so it is all of $\sym(n)$. Otherwise, every transposition inverts $A$, so $\alt(n)$  fixes $A$.
\end{proof}

\begin{lemma}\label{lem.symmOnRankTwo}
If $\sym(n)$ acts definably and faithfully on a connected group $A$ of Morley rank $2$, then $n \le 3$.
\end{lemma}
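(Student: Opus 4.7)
I would proceed by contradiction, assuming $n \ge 4$.

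\textbf{Reduction to $A$ abelian.} If $A$ is nonnilpotent, then by Fact~\ref{fact.rankTwoGroups}(3) every definable automorphism of $A$ of finite order is inner, so the action embeds $\sym(n)$ into the metabelian group $A/Z(A) \cong K^+ \rtimes K^\times$. But $\sym(n)' = \alt(n)$ is nonabelian, while the commutator subgroup of a metabelian group is abelian---contradiction. If $A$ is nilpotent nonabelian, then $Z(A)^\circ$ has rank $1$, so both $Z(A)^\circ$ and $A/Z(A)^\circ$ are rank-$1$ connected $\sym(n)$-invariant subgroups. Lemma~\ref{lem.symmOnRankOne} yields that $\alt(n)$ acts trivially on each, and the map $\alpha \mapsto f_\alpha$ with $f_\alpha(a) := \alpha(a)a^{-1} \in Z(A)^\circ$ is a group homomorphism from $\alt(n)$ to the abelian group $\mathrm{Hom}(A/Z(A)^\circ, Z(A)^\circ)$ (centrality of the target justifies both that each $f_\alpha$ is a homomorphism and that $\alpha \mapsto f_\alpha$ respects composition). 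Its kernel contains $\alt(n)'$, which equals $\alt(n)$ for $n \ge 5$ and is the Klein four subgroup for $n = 4$; in either case a nontrivial normal subgroup of $\sym(n)$ acts trivially on $A$, contradicting faithfulness.

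\textbf{Reduction to $n=4$ and analysis of transposition fixed subgroups.} For $n \ge 5$, any nontrivial normal subgroup of $\sym(4) \le \sym(n)$ has normal closure in $\sym(n)$ containing $\alt(n)$, so the restriction of the action to $\sym(4)$ remains faithful. For each transposition $\tau$, set $B_\tau := C_A(\tau)^\circ$; faithfulness forces $\rk B_\tau \in \{0, 1\}$, and rank $0$ means $\tau$ inverts $A$. If both disjoint transpositions $(1,2)$ and $(3,4)$ inverted $A$, their product---a nontrivial element of the Klein four subgroup---would act trivially; so by conjugacy all $B_\tau$ have rank $1$. The equivariance $g \cdot B_\tau = B_{g\tau g^{-1}}$ applied with $g = (1,2)$ to the equation $B_{(1,3)} = B_{(2,4)}$ gives $B_{(2,3)} = B_{(1,4)}$; running this symmetrically, the three ``pairings'' $\{B_\tau, B_{\tau'}\}$ indexed by the nontrivial elements $v_1, v_2, v_3$ of the Klein four subgroup either all coincide or none do. In the ``none'' case each $v_i$ inverts $A$, so the relation $v_1 v_2 = v_3$ forces $v_3$ to act trivially, a contradiction.

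\textbf{All pairings coincide.} Set $V_i := C_A(v_i)^\circ$, each a rank-$1$ connected subgroup. If $V_i = V_j$ for some $i \ne j$, the four transpositions composing the two coinciding pairs all fix this common subgroup pointwise and generate $\sym(4)$, yielding a rank-$1$ $\sym(4)$-invariant subgroup $V$ on which $\sym(4)$ acts trivially; Lemma~\ref{lem.symmOnRankOne} then says $\alt(4)$ acts trivially on the rank-$1$ quotient $A/V$ too, and the homomorphism argument from the first step puts $\alt(4)'$ (the Klein four subgroup) into the kernel, a contradiction. Otherwise $V_1, V_2, V_3$ are pairwise distinct, so $V_1 \cap V_j$ is finite for $j = 2, 3$; the standard involution dichotomy (an involutory automorphism with finite fixed points inverts the connected group) then forces $v_1$ to invert each $V_j$, and since $V_2 + V_3 = A$, $v_1$ inverts $A$---contradicting $C_A(v_1)^\circ = V_1$ having rank $1$. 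The main obstacle will be this final case analysis: setting up the pairing equivariance and verifying the generating-set claims require careful tracking of which transpositions fix which rank-$1$ subgroup.
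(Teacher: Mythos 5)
Your proof is correct and follows essentially the same strategy as the paper's: restrict to a copy of $\sym(4)$, kill the case where $A$ admits an invariant connected rank-$1$ subgroup via Lemma~\ref{lem.symmOnRankOne} together with a commutator/stability argument, and otherwise play the three Klein-four involutions against one another using the inversion dichotomy and the relation $v_1v_2=v_3$. The differences are organizational (the paper splits on $\Sigma$-minimality of $A$ rather than on coincidences among the subgroups $C_A(\tau)^\circ$, and handles nonabelian $A$ inside the non-minimal case), and your closing contradiction, like the paper's, leaves implicit the final step that $v_1$ inverting $A$ forces the pairwise conjugate rank-$1$ subgroups $V_i$, hence $A$ itself, to have exponent $2$, so that the inversion is trivial and faithfulness fails.
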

\begin{proof}
Towards a contradiction, assume that $n\ge 4$. Choose $\Sigma \le \sym(n)$ with $\Sigma \cong \sym(4)$; note that $\Sigma':=[\Sigma,\Sigma]$ is nonabelian. We work in $A\rtimes \Sigma$.

We first treat the case when $A$ has a definable connected $\Sigma$-invariant subgroup $N$ of rank $1$, i.e. when $A$ is not $\Sigma$-minimal. Now, $A$ is solvable by Fact~\ref{fact.rankTwoGroups}, so we may take $N$ to be normal in $A$. By the previous lemma, $\Sigma'$ centralizes $A/N$ and $N$, so $[[A,\Sigma'],\Sigma'] = 1$. An application of the three subgroup lemma for commutators shows that $[[\Sigma',\Sigma'],A] = 1$, so $[\Sigma',\Sigma']$ acts trivially on $A$. The action is faithful, so $\Sigma'$ is abelian,  a contradiction. 

Now assume that $A$ is $\Sigma$-minimal. Thus, as $A$ is solvable, $A$ is abelian. Let $V < \Sigma$ be the normal subgroup of order $4$, and let $i$, $j$, and $k$ denote the three (commuting) involutions of $V$. Note that $C_A(i)$, $C_A(j)$, and $C_A(k)$ are $\Sigma$-conjugate, so they all have the same rank. If one of these centralizers is finite, then we find that $i$, $j$, and $k$ each invert $A$, which is impossible since  $k = ij$. Since the action is faithful,  $C^\circ_A(i)$, $C^\circ_A(j)$, and $C^\circ_A(k)$ each have rank $1$, and because $A$ is $\Sigma$-minimal, these groups are pairwise distinct. As $A$ has rank $2$, $A = C^\circ_A(i) + C^\circ_A(j)$, and since $k$ normalizes both $C^\circ_A(i)$ and $C^\circ_A(j)$, we find that $k$ inverts $A$. This is a contradiction. 
\end{proof}

\subsection{Actions on sets of rank $1$}
It is hard to overstate the importance of the following fact; it underlies almost every aspect of our proof of Theorem~\ref{thm.A}.  

\begin{fact}[Hrushovski, see \protect{\cite[Theorem~11.98]{BoNe94}}]\label{fact.Hru}
Let $(X,G)$ be a transitive permutation group of finite Morley rank with $X$ connected of rank $1$. Then $\rk G \le 3$, and if $\rk G >1$, there is a definable algebraically closed field $K$ such that either
\begin{enumerate}
\item $(X,G)$ is equivalent to $(K,\aff_1(K))$, or
\item $(X,G)$ is equivalent to $(\proj^1(K),\psl_2(K))$.
\end{enumerate}
\end{fact}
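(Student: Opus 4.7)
The approach is to reduce to $G$ connected, interpret an algebraically closed field $K$ via Zilber's field theorem, and then identify $(X,G)$ using the known small permutation groups over $K$.

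First I would reduce to $G$ connected using Lemma~\ref{fact.ConnectedCompNTrans} and Fact~\ref{fact.ConnectedCompGenericTrans}. If the action is regular, then $\rk G = 1$ and there is nothing further to prove. Otherwise, since $G$ is connected and acts transitively on the connected rank-$1$ set $X$, the stabilizer $H:=(G_x)\degrees$ is infinite and, because connected groups have no nontrivial finite orbits, has a unique cofinite orbit $Y\subseteq X\setminus\{x\}$; thus $(X,G)$ is generically $2$-transitive.

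Next I would show $\gtd(X,G)\le 3$. Suppose for contradiction that $(X,G)$ is generically $4$-transitive with $x_1,\ldots,x_4$ in general position. Then $K_0:=(G_{x_1,x_2,x_3})\degrees$ is a connected rank-$1$ group by additivity of rank, while $\Sigma(x_1,x_2,x_3;x_4)/G_{x_1,x_2,x_3,x_4}\cong\sym(3)$ acts faithfully on $K_0$ by Lemma~\ref{lem.SigmaGroupLemma}. By Lemma~\ref{lem.symmOnRankOne}, $\alt(3)$ must centralize $K_0$, so a lift $\sigma\in G$ of a $3$-cycle satisfies $K_0\le N_G(\langle\sigma\rangle)$; since $K_0$ has a rank-$1$ generic orbit through $x_4$ while $\sigma$ fixes $x_4$, Lemma~\ref{lem.HKlemma}(1) yields $\sigma=1$, a contradiction.

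For the two nontrivial cases, the heart of the matter is field interpretation. By Fact~\ref{fact.rankOneGroups}, $H$ is divisible abelian or elementary abelian of prime exponent, and it acts faithfully on $Y$ by Fact~\ref{fact.FixGenericSet}. Zilber's field theorem, applied to this action, interprets a definable algebraically closed field $K$ with $Y\cong K^+$ and $H\cong K^\times$ acting by scalar multiplication. If $\gtd(X,G)=2$, this directly gives $(X,G)\cong(K,\aff_1(K))$ with $\rk G=2$. If $\gtd(X,G)=3$, pick $y$ in general position with $x$; the involution $w\in\Sigma(x,y)\setminus G_{x,y}$ swaps $x$ and $y$ and inverts $(G_{x,y})\degrees\cong K^\times$, so it glues the two affine charts near $x$ and $y$ into the standard $\psl_2(K)$-action on $\proj^1(K)=Y\cup\{x\}$; Lemma~\ref{lem.GenericTwoTransPS} and a rank count then force $G\cong\psl_2(K)$ with $\rk G=3$.

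The main obstacle is the field interpretation itself: to apply Zilber's theorem one must first extract an $H$-equivariant abelian group structure on $Y$ --- equivalently, find a definable regular translation subgroup of $G$ acting on $Y$ --- and then verify that the $H$-action is faithful and $H$-minimal. This is the classical core of Hrushovski's argument, where essentially all the geometric content is concentrated; once $K$ is in hand the remaining identification steps are formal.
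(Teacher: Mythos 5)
The paper does not prove this statement at all: Fact~\ref{fact.Hru} is imported as a black box, cited to \cite[Theorem~11.98]{BoNe94} and attributed to Hrushovski, so there is no internal argument to compare yours against. Judged on its own, your outline reproduces the standard architecture (reduce to $G$ connected, bound the degree of generic transitivity, interpret a field, identify the action), but the gap you flag at the end is not a technical loose end --- it is essentially the entire theorem. To invoke Zilber's field theorem you need a definable infinite abelian group $V$ with a definable, faithful, $V$-minimal action of $H$; here $Y$ is a bare definable set, and manufacturing the group structure (equivalently, in the $\gtd=2$ case, producing a definable normal subgroup of $G$ acting regularly, without which you also cannot conclude $G\cong\aff_1(K)$ even after a field appears) is exactly the sharply $2$-transitive analysis of \cite[Proposition~11.61]{BoNe94}, a long argument with a separate treatment of characteristic $2$. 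In the $\gtd=3$ case there is no normal abelian subgroup at all, so ``gluing two affine charts'' has no precise content; the identification of a connected rank-$3$ group acting generically sharply $3$-transitively on a rank-$1$ set as $\psl_2(K)$ runs through the trichotomy of Fact~\ref{fact.rankThreeGroups} and the split Zassenhaus analysis, and in particular one must exclude the quasisimple bad group alternative, which your sketch never addresses. Deferring all of this to ``the classical core of Hrushovski's argument'' means the proposal does not prove the statement.

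Two smaller problems. Your bound $\gtd(X,G)\le 3$ takes $(G_{x_1,x_2,x_3})^\circ$ to have rank $1$; since $\rk G=\gtd(X,G)$ here (the connected component of a generic $n$-point stabilizer has only finite orbits on the rank-$1$ set $X$, hence is trivial by faithfulness), this holds when $\gtd=4$ but fails for $\gtd\ge5$, so you must first reduce to $\gtd=4$ by passing to point stabilizers. Also, Lemma~\ref{lem.symmOnRankOne} concerns genuine actions of $\sym(n)$, whereas $\Sigma(x_1,x_2,x_3;x_4)$ is only a cover of $\sym(3)$ by the finite group $G_{x_1,x_2,x_3,x_4}$; a lift of a transposition need not be an involution, so concluding that a lift of a $3$-cycle centralizes $K_0$ needs an extra step (choosing $2$-element lifts via \cite[I,~Lemma~2.18]{ABC08}, say), a subtlety the paper itself is careful about elsewhere. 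These are repairable; the field-interpretation gap is not, short of writing out Hrushovski's proof.
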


Upon invoking Lemma~\ref{lem.QuotientGenericTrans}, we obtain the following corollary for actions on sets of rank $2$.

\begin{corollary}\label{cor.FourTransVDP}
Let $(X,G)$ be a transitive permutation group of finite Morley rank with $G$ connected and $\rk X = 2$. If $\gtd(X,G)\ge 4$, then the action is virtually definably primitive. 
\end{corollary}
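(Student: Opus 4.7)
The plan is to argue by contradiction, reducing the hypothesis to a rank bound on faithful transitive actions on connected sets of rank $1$. Assume $(X,G)$ is not virtually definably primitive. By Fact~\ref{fact.characterizeDefnPrimi}\eqref{fact.characterizeDefnPrimi.vdp}, there is a definable $G$-invariant equivalence relation $\sim$ on $X$ having both infinite classes and infinitely many classes. Since $G$ is connected and acts transitively, $X$ is connected, and hence so is the definable image $\modu{X} := X/{\sim}$. By transitivity, all $\sim$-classes have the same rank, so additivity of Morley rank applied to the projection $X \to \modu{X}$ yields $\rk X = \rk \modu{X} + \rk(\text{class})$, with each summand at least $1$. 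Since $\rk X = 2$, both summands equal $1$; in particular $\rk \modu{X} = 1$.

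Because $\modu{X}$ is infinite, Lemma~\ref{lem.QuotientGenericTrans} gives $\gtd(\modu{X}, G) \ge \gtd(X, G) \ge 4$. Let $K$ be the (definable) kernel of the $G$-action on $\modu{X}$. The orbits of $G$ and of $G/K$ on powers of $\modu{X}$ coincide, so $(\modu{X}, G/K)$ is a transitive faithful permutation group of finite Morley rank on a connected set of rank $1$ that is generically $4$-transitive. A generic orbit in $\modu{X}^4$ has rank $4$, forcing $\rk(G/K) \ge 4$. But Fact~\ref{fact.Hru} bounds the rank of any transitive permutation group on a connected set of rank $1$ by $3$, so $\rk(G/K) \le 3$, the desired contradiction.

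The argument is short and I do not expect a serious obstacle; as the paragraph preceding the statement already advertises, the corollary falls out of Lemma~\ref{lem.QuotientGenericTrans} and Fact~\ref{fact.Hru} after noting the rank split forced by $\rk X = 2$. The only technical points that need to be verified are that the definable image $\modu{X}$ inherits connectedness from $X$ (automatic, as any definable image of a connected set is connected) and that $K$ is a definable subgroup of $G$ (automatic, as the kernel of a definable group action).
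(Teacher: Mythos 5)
Your proof is correct and follows essentially the same route as the paper's: both extract a rank-$1$ quotient $\modu{X}$ from the failure of virtual definable primitivity (you via the equivalence-relation side of Fact~\ref{fact.characterizeDefnPrimi} and rank additivity on fibers, the paper via the equivalent overgroup-of-a-point-stabilizer formulation) and then derive a contradiction from Lemma~\ref{lem.QuotientGenericTrans} together with the rank bound in Fact~\ref{fact.Hru}. No gaps.
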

\begin{proof}
Let $(X,G)$ be a transitive permutation group of finite Morley rank with $G$ connected, $\rk X = 2$, and $\gtd(X,G)\ge 4$. 
Assume that the action is not virtually definably primitive. Then by Fact \ref{fact.characterizeDefnPrimi}\eqref{fact.characterizeDefnPrimi.vdp} and the assumption that $\rk X = 2$, 
a point stabilizer $G_x$ is contained in a definable subgroup $H$ of rank $\rk G_x + 1$ and corank $1$ in $G$. 
This implies that there is a (not necessarily faithful) quotient $(\modu{X},G)$ with $\modu{X}$ of rank $1$. As $(X,G)$ is transitive, $(\modu{X},G)$ is also transitive, and $\modu{X}$ has degree $1$. 
Hence, Fact~\ref{fact.Hru} applies. However, Lemma~\ref{lem.QuotientGenericTrans} implies 
that $(\modu{X},G)$ is generically $4$-transitive, but this is impossible. 
\end{proof}

The uninitiated reader may find the proof of Corollary \ref{cor.FourTransVDP} rather heavy. 
It is not. Indeed, unlike ordinary multiple transitivity, generic multiple transitivity does not automatically yield any form of primitivity. Conjecture \ref{conj.ProbPGL}
generalizes the special case illustrated by Corollary \ref{cor.FourTransVDP}
and proposes a setting sufficiently strong to achieve definable primitivity.
We will finish this section with an informative example of a generically $2$-transitive action that is not 
virtually definably primitive. 

\begin{example}
Let $K$ be an algebraically closed field and $G$ the affine group
$K^+\rtimes K^*$. The group $G$ acts on the affine line 
as follows: if $(u,t)\in G$ and $x\in K$, then $(u,t)\cdot\ x = tx+u$. 
Using this action of $G$ on $K$, we now consider the induced coordinatewise action of $G\times G$ on $K\times K$. 
This action is transitive. It is nevertheless not $2$-transitive.
Indeed, the stabilizer of the pair $((x,y),(x',y'))\in K^2\times K^2$, 
is trivial if $x\neq x'$ and $y\neq y'$, while it is isomorphic
to $1\times K^*$ (resp. $K^*\times 1$) if $y=y'$ (resp. $x=x'$). 

On the other hand, the existence of trivial $2$-point stabilizers
shows that the action is generically $2$-transitive. This does not suffice
to conclude that the action is virtually definably primitive. Indeed,
any $1$-point stabilizer in $G\times G$ is conjugate to $K^*\times K^*$,
and this is contained in $G\times K^*$, a proper definable subgroup
of $G\times G$ of rank strictly greater than that of $K^*\times K^*$.

It is worth noting that this example is not an artificial one. 
As the $2$-point stabilizer in the example at the beginning of Section \ref{sec.Aone}
shows, this is in fact a configuration one has to deal with.
\end{example}

\subsection{Moufang sets}\label{subsec.Moufang}
 
We will need a classification result from the theory of Moufang sets of finite Morley rank to put the final touches on the proof of Thoerem~\ref{thm.Aone}. The bare minimum is included below. For more background on Moufang sets, we recommend  \cite{DMeSe09}; specifics on a context of finite Morley rank can be found in \cite{WiJ11}.

\begin{definition}
For a set $X$ with $|X| \ge 3$ and a collection of groups $\{U_x : x\in X\}$ with each $U_x \le \mbox{Sym}(X)$, we say that $(X,\{U_x : x\in X\})$ is a \textdef{Moufang set} if for $G := \langle U_x : x\in X \rangle$ the following conditions hold:
\begin{enumerate}
\item each $U_x$ fixes $x$ and acts regularly on $X\setdiff \{x\}$,
\item $\{U_x : x\in X\}$ is a conjugacy class of subgroups in $G$.
\end{enumerate}
We call $G$ the \textdef{little projective group} of the Moufang set, and each $U_x$ for $x\in X$ is called a \textdef{root group}. The $2$-point stabilizers in $G$ are called the \textdef{Hua subgroups}.
\end{definition}

It is easy to see that $G$ acts $2$-transitively on $X$, and as such, the theory of Moufang sets is about a special class of $2$-transitive permutation groups. This class of $2$-transitive groups includes the sharply $2$-transitive groups, but the motivating example for the subject comes from the natural action of $\psl_2$ on the projective line where the root groups are the unipotent radicals of the Borel subgroups. 

\begin{definition}
We will say that a Moufang set $(X,\{U_x : x\in X\})$ with little projective group $G$ is \textdef{interpretable} in a structure if the root groups, $X$, $G$, and the action of $G$ on $X$ are all interpretable in the structure. Now we define a \textdef{Moufang set of finite Morley rank} to be a Moufang set interpretable in a structure of finite Morley rank. 
\end{definition}

In the end, we will only be interested in Moufang sets of finite Morley rank for which the root groups and the Hua subgroups are both abelian. Using \cite[Proposition 11.61]{BoNe94} when the Hua subgroups are trivial (this is the sharply $2$-transitive case) and using a combination of \cite[Main~Theorem]{SeY09}, \cite[Theorem~6.1]{DMeWe06}, and \cite[Theorem~1.1]{WiJ10} otherwise, one obtains the following classification theorem.

\begin{fact}\label{fact.Moufang}
Let $(X,\{U_x : x\in X\})$ be a Moufang set of finite Morley rank with infinite abelian root groups and abelian Hua subgroups. If $G$ is the little projective group of $\mathbb{M}(U,\tau)$, then there is an algebraically closed field $K$ for which $(X,G)$ is isomorphic to either $(K,\aff_1(K))$ or $(\proj^1(K),\psl_2(K))$.
\end{fact}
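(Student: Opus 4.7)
The plan is to split the argument into two cases depending on whether the Hua subgroups of the Moufang set are trivial, since these two situations call upon quite different classification machinery.

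In the first case, the Hua subgroups are trivial, so the two-point stabilizers in $G$ vanish and $G$ acts sharply $2$-transitively on $X$. Because $G$, $X$, and the action are interpretable in a structure of finite Morley rank, the classification of infinite sharply $2$-transitive permutation groups of finite Morley rank (\cite[Proposition~11.61]{BoNe94}) applies: together with the hypothesis that the root groups are infinite and abelian, it forces $(X,G)$ to be equivalent to $(K,\aff_1(K))$ for some algebraically closed field $K$, with the root groups playing the role of the translation subgroup $K^+$.

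In the second case, the Hua subgroups are nontrivial (and, by hypothesis, abelian), as are the root groups. I would first apply \cite[Main~Theorem]{SeY09} to eliminate the exotic "proper" Moufang sets with abelian root groups, reducing to a Moufang set that is coordinatized by a Jordan-algebraic structure. The abelianness of the Hua subgroups then lets \cite[Theorem~6.1]{DMeWe06} identify the coordinatizing datum as a field (as opposed to a noncommutative Jordan algebra or the like). Finally, \cite[Theorem~1.1]{WiJ10} provides the finite Morley rank identification: it ensures the field obtained is algebraically closed and the coordinatization is definable, so $(X,G)$ is equivalent to $(\proj^1(K),\psl_2(K))$.

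The main obstacle is the second case, since it depends on substantial external classification results; neither the abelianness of the root groups alone, nor the Moufang axioms alone, suffice to force a projective line. What makes the strategy go through is the interaction of three ingredients: the abelian Hua hypothesis (which prevents the structure from degenerating into a non-associative algebra), Segev's reduction to a projective Moufang set, and the finite Morley rank setting (which supplies the dimension theory needed to conclude that the coordinatizing field is algebraically closed and that every step is carried out definably).
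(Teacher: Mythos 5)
Your proposal matches the paper's own justification exactly: the paper proves this fact by the same case split on whether the Hua subgroups are trivial, citing \cite[Proposition~11.61]{BoNe94} for the sharply $2$-transitive case and the combination of \cite{SeY09}, \cite{DMeSe09}-adjacent work \cite{DMeWe06}, and \cite{WiJ10} otherwise. The only quibble is your fine-grained division of labor in the second case --- Segev's theorem shows proper Moufang sets with abelian root groups are \emph{special}, De Medts--Weiss then produce the Jordan-algebraic coordinatization from the abelian Hua hypothesis, and Wiscons supplies the finite Morley rank identification of the field as algebraically closed --- but this does not affect the correctness of the argument.
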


\section{Preliminary recognition of $\pgl_3$}\label{sec.TargetRecognition}
The purpose of this section is to provide a general recognition result for $\pgl_3$ that approximates Theorem~\ref{thm.A}. This is where the geometry occurs; our eventual proof of Theorem~\ref{thm.A}, or rather Theorem~\ref{thm.Aone}, will then ``only'' require us to show that the following proposition applies.   

\begin{proposition}\label{prop.Recognition}
Let $(X,G)$ be a generically $4$-transitive permutation group of finite Morley rank with  $\rk X =2$. Further, suppose that
\begin{enumerate}
\item the action is $2$-transitive,
\item the action is not $3$-transitive, and 
\item if $x,y,z\in X$ are in general position, then $\fp((G_{x,y,z})^\circ) = \{x,y,z\}$.
\end{enumerate}
Then $(X,G) \cong (\proj^2(K),\pgl_3(K))$ for some algebraically closed field $K$. 
\end{proposition}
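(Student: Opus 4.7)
The plan is to extract a projective plane structure on $X$ from the orbit data, identify each line as $\proj^1(K)$ via Fact~\ref{fact.Hru}, and then recognize $(X, G)$ as $(\proj^2(K), \pgl_3(K))$. By Fact~\ref{fact.GenericTwoImpliesConnected}, $X$ is connected, and generic $3$-transitivity gives a unique generic $G_{x,y}$-orbit $\orbit_{x,y}$ of rank $2$ on $X \setminus \{x,y\}$ for every distinct $x, y \in X$. Define
\[
\li{x,y} := \{x,y\} \cup \bigl((X\setminus\{x,y\}) \setminus \orbit_{x,y}\bigr).
\]
Non-$3$-transitivity ensures $\li{x,y} \supsetneq \{x,y\}$; the set $\li{x,y}$ is definable, $G_{x,y}$-invariant, and of rank at most $1$, and the condition ``$z \in \li{x,y}$'' on distinct $x,y,z$ is symmetric, because it is equivalent to $(x,y,z)$ lying in a non-generic $G$-orbit of $X^{(3)}$. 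By $2$-transitivity of $G$, the ``lines'' $\li{x,y}$ form a single $G$-orbit of subsets of $X$.

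The crucial combinatorial step is to show \emph{point-determination}: $\li{x,y} = \li{x,z}$ for any $z \in \li{x,y} \setminus \{x,y\}$. This is where hypothesis~(3) is essential. For a generic $v$ with $(x, y, v)$ in general position, hypothesis~(3) gives $\fp((G_{x,y,v})^\circ) = \{x,y,v\}$; combined with $(G_{x,y,v})^\circ \le (G_{x,y})^\circ$ and rank considerations on the non-generic $G$-orbits on $X^{(3)}$, this forces a unique ``collinearity'' orbit and pins down each line as a point-determined set. With point-determination in hand, two distinct lines meet in at most one point by construction, and in at least one point by a rank count on the set of lines (which must have rank $2$); so $(X, \{\li{x,y}\})$ is an abstract projective plane on which $G$ acts flag-transitively.

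For a fixed line $\ell$, let $K_\ell$ be the pointwise stabilizer of $\ell$ in $G_\ell$. The quotient $G_\ell/K_\ell$ acts faithfully and $2$-transitively on the connected rank-$1$ set $\ell$ (the $2$-transitivity coming from $2$-transitivity of $G$ combined with point-determination), with rank at least $3$ (by generic $4$-transitivity, a $2$-point stabilizer inside $G_\ell$ acts nontrivially on $\ell$). Fact~\ref{fact.Hru} then identifies $(\ell, G_\ell/K_\ell) \cong (\proj^1(K), \psl_2(K))$ for some algebraically closed field $K$ (the $\aff_1$ alternative being ruled out by $2$-transitivity). A coherence argument based on the transitivity of $G$ on lines gives the same $K$ for every line.

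With $\psl_2(K)$ acting on every line of our plane, the plane carries enough perspectivities to be a Moufang -- hence Desarguesian -- projective plane over $K$, so $X \cong \proj^2(K)$. Consequently $G$ is a definable connected subgroup of $\aut(\proj^2(K)) = \pgl_3(K) \rtimes \aut(K)$ acting generically $4$-transitively and containing $\psl_2(K)$-sections of rank $3$ on each line; Lemma~\ref{lem.PrimitiveBoundG} (available via Corollary~\ref{cor.FourTransVDP}) then bounds $\rk G$ and forces $G = \pgl_3(K)$. The main obstacle is point-determination: leveraging hypothesis~(3) to extract uniqueness of the collinearity orbit from a purely fixed-point analysis. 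Secondary difficulties are the coherent extraction of the field across lines, and ruling out a field-automorphism twist in the final step.
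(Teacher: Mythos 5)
Your plane-building strategy---define $\li{xy}$ as the complement of the generic $G_{x,y}$-orbit, prove point-determination from hypothesis (3), and assemble a projective plane---is the same as the paper's, but the two steps you yourself identify as critical are asserted rather than proved, and one of them is argued incorrectly. For point-determination, saying that hypothesis (3) together with ``rank considerations'' forces a ``unique collinearity orbit'' is not an argument (and a line is a union of possibly several non-generic orbits, so uniqueness of the orbit is not even the right target). The actual mechanism is a specific computation: if $z\in\li{xy}\setminus\{x,y\}$ and some $a\in\li{xz}-\li{xy}$ existed, then $2$-transitivity gives $\rk(G_{x,y}/G_{x,y,z,a})=\rk(G_{x,z}/G_{x,y,z,a})\le 2$ (since $y,a\in\li{xz}$ each cost at most rank $1$), while $\rk(G_{x,y}/G_{x,y,a})=2$ because $x,y,a$ are in general position; hence $z$ lies in a finite orbit of $G_{x,y,a}$, contradicting $\fp((G_{x,y,a})^\circ)=\{x,y,a\}$. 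Worse, your claim that any two lines meet ``by a rank count on the set of lines'' does not work: a rank count shows only that a \emph{generic} line meets a given one, not that every line does. The paper gets this by first showing $G_x$ induces $\psl_2$ on the strongly minimal pencil $\lines(x)$ (Fact~\ref{fact.Hru} applied to the generically $3$-transitive action of $G_x$ on $\lines(x)$), deducing that $G_{x,y,z}$ is transitive on $\lines(x)-\{\li{xy},\li{xz}\}$, and then moving a line through $x$ that visibly meets $\ell'$ onto an arbitrary one while fixing $\ell'$ setwise.

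The endgame is also a genuine gap. The paper finishes by verifying the hypotheses of the Tent--Van Maldeghem theorem (Fact~\ref{fact.TentVM}): strongly minimal line pencils and point rows (the latter via the definable bijection $y\mapsto\li{xy}$ from a point row to a pencil) plus transitivity on ordered $3$-gons. You instead assert that $\psl_2(K)$ acting on each point row supplies ``enough perspectivities'' to make the plane Moufang. That inference is unsupported: an action of $G_\ell/K_\ell$ on the single point row $\ell$ is not a perspectivity of the plane (a perspectivity fixes a line pointwise \emph{and} a point linewise and acts on all of $X$), and your construction produces no such collineations. Two further slips in the same passage: $\aff_1(K)$ acting on $K$ \emph{is} $2$-transitive, so that branch of Fact~\ref{fact.Hru} is not eliminated by $2$-transitivity but would need the rank-$\ge 3$ claim, which you have not secured; and your final identification assumes $G$ is connected, which Proposition~\ref{prop.Recognition} does not---the paper applies it to possibly disconnected $G$ in Section~\ref{sec.A}.
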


We remark that this proposition should generalize in an obvious way to higher dimensional projective groups, but we have not taken this up here. Our proof builds a projective plane and ultimately relies on the following theorem of Tent and Van Maldeghem, of which we only give a partial account. 

\begin{fact}[{\cite[1.3.~Theorem]{TeVMa02}}]\label{fact.TentVM}
Let $G$ be a group of finite Morley rank acting faithfully on a projective plane $\mathcal{X}$ with strongly minimal point rows and lines pencils. If $G$ acts transitively and definably on the set of ordered ordinary $3$-gons, i.e. ordered triples of noncollinear points, then $(\mathcal{X},G) \cong (\proj^2(K),\pgl_3(K))$ for some algebraically closed field $K$. 
\end{fact}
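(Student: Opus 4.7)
The plan is to endow $X$ with the structure of a projective plane on which $G$ acts by automorphisms, and then to invoke Fact~\ref{fact.TentVM}. Call three distinct points $a,b,c \in X$ \emph{collinear} if $(a,b,c)$ is not in general position; by Lemma~\ref{lem.PermuteGeneric} this ternary relation is symmetric, and by hypothesis~(2) collinear triples exist. For distinct $x,y$ set
\[
\ell(x,y) \;:=\; \{x,y\}\cup\{z\in X\setminus\{x,y\}:x,y,z\text{ are collinear}\}.
\]
By hypothesis~(1), $G$ is transitive on pairs, so all lines are $G$-conjugate. Since $\ell(x,y)\setminus\{x,y\}$ is the union of the non-generic $G_{x,y}$-orbits on $X\setminus\{x,y\}$ and the generic $G_{x,y}$-orbit there has rank $2$ (by the generic $3$-transitivity implied by generic $4$-transitivity), we get $\rk\ell(x,y)\le 1$, with equality by~(2). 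A further analysis of the $(G_{x,y})^\circ$-orbits (which are connected) shows that lines are rank-$1$ connected sets, hence strongly minimal.

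The core of the argument is to show that two distinct points lie on a \emph{unique} line, equivalently, that if $z,w\in\ell(x,y)\setminus\{x,y\}$ are distinct then $(x,z,w)$ is also non-generic (so that $\ell(x,z)=\ell(x,y)$ and similarly $\ell(z,w)=\ell(x,y)$). This is where hypothesis~(3) is decisive. Suppose toward a contradiction that $(x,z,w)$ is in general position. Rank arithmetic, carried out along two routes—first by freezing $(x,z,w)$ and introducing $y$, then by freezing $(x,y,z)$ and introducing $w$—shows that the $G_{x,z,w}$-orbit of $y$ has some rank $s\in\{0,1\}$. In the case $s=0$ the orbit is finite hence (by connectedness of $(G_{x,z,w})^\circ$) trivial, placing $y$ in $\fp((G_{x,z,w})^\circ)=\{x,z,w\}$ by~(3), a contradiction. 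The case $s=1$ is the delicate one: here the rank-$1$ orbit of $(G_{x,z,w})^\circ$ containing $y$ must live geometrically inside a would-be second line through $x$, and playing the configuration against itself (using that $\ell(x,z)$ and $\ell(x,w)$ are distinct lines through $x$, each containing $y$) forces $(G_{x,z,w})^\circ$ to fix a fourth point, once more contradicting~(3). The same circle of ideas, applied with the roles of points and lines swapped, shows that two distinct lines meet in at most one point.

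Four points no three of which are collinear come for free from generic $4$-transitivity: if $(p_1,p_2,p_3,p_4)$ is in general position then every $3$-subtuple is in general position (Lemma~\ref{lem.PermuteGeneric} together with the fact that projecting a generic $4$-orbit onto any three coordinates hits the generic $3$-orbit), so no three are collinear. It remains to check the hypotheses of Fact~\ref{fact.TentVM}. Point rows are the lines of Step~1, strongly minimal. For the line pencil through $p$: the partition of $X\setminus\{p\}$ into (punctured) lines through $p$ has rank-$1$ classes inside a rank-$2$ set, and $G_p$ acts transitively on the quotient by~(1), yielding a connected rank-$1$ pencil. Transitivity on ordered noncollinear (i.e.\ general-position) triples is immediate, since such triples form the generic $G$-orbit on $X^3$ by definition. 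Fact~\ref{fact.TentVM} then yields $(X,G)\cong(\proj^2(K),\pgl_3(K))$.

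The hard part is the uniqueness-of-lines step. Hypothesis~(3) applies only to triples in general position, whereas the propagation of collinearity forces one to alternate between generic and non-generic triples. Getting all the rank bookkeeping to close—identifying up to finite index the relevant connected stabilizers $(G_{x,y,z,w})^\circ$, $(G_{x,z,w})^\circ$, and $(G_{x,y,z})^\circ$, and excluding the ``rank-$1$ orbit'' case by producing a forbidden extra fixed point—is the principal technical obstacle. Everything else (line pencils, construction of a plane, application of the Tent--Van~Maldeghem recognition theorem) is comparatively formal.
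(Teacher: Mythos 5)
Your proposal does not prove the statement in question. The statement is Fact~\ref{fact.TentVM} itself --- the Tent--Van~Maldeghem recognition theorem for projective planes of finite Morley rank with strongly minimal point rows and line pencils --- which the paper imports from \cite{TeVMa02} without proof. What you have written is instead an outline of the proof of Proposition~\ref{prop.Recognition}: you start from a generically $4$-transitive action on a rank~$2$ set satisfying hypotheses (1)--(3), build the incidence geometry, and then \emph{invoke} Fact~\ref{fact.TentVM} to conclude. As a proof of Fact~\ref{fact.TentVM} this is circular, since the fact you are asked to establish appears as the final (and decisive) ingredient of your own argument. The hypotheses also do not match: Fact~\ref{fact.TentVM} starts from an abstract projective plane with a faithful definable action transitive on ordered ordinary $3$-gons; there is no ambient rank~$2$ set, no generic $4$-transitivity, and no ``Fixed Point Assumption'' available.

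An actual proof of Fact~\ref{fact.TentVM} would require entirely different machinery: one must show that a projective plane of finite Morley rank with strongly minimal point rows and line pencils admitting such a transitive group is Desarguesian (indeed Pappian), coordinatize it by a definable field, use strong minimality and the field interpretation results to see that the field is algebraically closed, and then identify the group acting transitively on triangles inside the full collineation group as $\pgl_3(K)$. None of these steps appears in your proposal. To be fair, your outline of Proposition~\ref{prop.Recognition} is broadly in the spirit of the paper's Section~\ref{sec.TargetRecognition} --- the collinearity relation, uniqueness of lines via the Fixed Point Assumption, strong minimality of point rows and line pencils, and existence of $4$-gons are all handled there in essentially the way you describe --- but that is a proof of a different statement, and it presupposes the one at hand.
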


\subsection{Defining the geometry}
\begin{definition}
Let $(X,G)$ be a generically $3$-transitive permutation group of finite Morley rank. We define a relation $\ell$ on $X^3$ by $\ell(x,y,z)$ if and only if $x,y,z$ are \textbf{not} in general position. We read $\ell(x,y,z)$ as ``$x$, $y$, $z$  are \textdef{collinear}.'' We say  $x$, $y$, $z$, $w$ form a \textdef{$4$-gon} if no three of them are collinear. 
\end{definition}

The orbits of $G$ on $X^3$ are uniformly definable, as are their ranks, so $\ell$ is definable. Note that, by definition, an ordered $3$-gon is the same thing as a triple in $X^3$ that is in general position, so generic $3$-transitivity translates to $G$ acting transitively on ordered $3$-gons. Further, if $(X,G)$ is generically $4$-transitive, then any four points in general position form a $4$-gon, but there is no reason to believe that the points forming a $4$-gon are necessarily in general position. 

\begin{definition}
For $x,y\in X$ in general position, define the \textdef{line through $x$ and $y$} to be $\li{xy}:=\ell(x,y,X)\subset X$. A \textdef{line} will be any subset of $X$ of the form $\li{xy}$ for some $x$ and $y$ in general position, and set of lines is denoted $\lines$.
\end{definition}

Note that  $\li{xy}$ is precisely the union of the nongeneric orbits of $G_{x,y}$.

\subsection{Refining the geometry}
We carry the following setup throughout the current subsection. 

\begin{setup}
Assume that $(X,G)$ is as in Proposition~\ref{prop.Recognition}. The third item from the proposition will be referred to as the \emph{Fixed Point Assumption}. We  call the elements of $X$ points and define the set of lines $\lines$ as above.
\end{setup}

Notice that $2$-transitivity is equivalent to each pair of distinct points being in general position. 

\begin{lemma}
Suppose that $x,y,z\in X$ are three different points. Then $z\in \li{xy}$ if and only if $y\in \li{xz}$ if and only if $x \in \li{yz}$.
\end{lemma}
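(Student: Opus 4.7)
The plan is to derive the lemma as an immediate consequence of Lemma~\ref{lem.PermuteGeneric}. Unwinding the definitions, the three conditions $z \in \li{xy}$, $y \in \li{xz}$, and $x \in \li{yz}$ translate into $\ell(x,y,z)$, $\ell(x,z,y)$, and $\ell(y,z,x)$ respectively, i.e.\ into the assertions that the triples $(x,y,z)$, $(x,z,y)$, $(y,z,x)$ are not in general position. Hence the lemma is equivalent to the statement that, on triples of pairwise distinct points, the relation ``not in general position'' is invariant under arbitrary permutations of the coordinates.

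To establish this invariance, I would simply invoke Lemma~\ref{lem.PermuteGeneric}: since $(X,G)$ is generically $4$-transitive, it is in particular generically $3$-transitive, so that lemma applies to triples in $X^3$. It asserts that if a triple is in general position then so is every permutation of it; taking the contrapositive, if some permutation of a triple fails to be in general position, then every permutation does. Thus the property of not being in general position is a property of the underlying unordered triple, and the three formulations $\ell(x,y,z)$, $\ell(x,z,y)$, $\ell(y,z,x)$ are equivalent.

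Finally, the hypothesis that $x,y,z$ are pairwise distinct, together with the $2$-transitivity of $(X,G)$ assumed in the Setup, ensures that each of the pairs $(x,y)$, $(x,z)$, $(y,z)$ is in general position, so the notations $\li{xy}$, $\li{xz}$, $\li{yz}$ are all well-defined. There is essentially no obstacle to the argument; the content of the lemma is entirely absorbed by the permutation-invariance supplied by Lemma~\ref{lem.PermuteGeneric}.
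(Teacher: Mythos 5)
Your proof is correct and follows essentially the same route as the paper, which simply observes that $z\in \li{xy}$ means $x,y,z$ are collinear and that collinearity is order-independent. You merely make explicit the appeal to Lemma~\ref{lem.PermuteGeneric} (via its contrapositive) that the paper leaves implicit, which is a reasonable bit of added care.
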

\begin{proof}
We have that $z\in \li{xy}$ if and only if $x$, $y$, and $z$ are collinear, and as collinearity does not depend on the order of the points, the lemma holds. 
\end{proof}

We now aim to show that two distinct points determine a unique line. The following lemma is the essential step.

\begin{lemma}
Let $x,y\in X$ be distinct. If $z\in \li{xy}$ and $z\neq x$, then $\li{xy} = \li{xz}$.
\end{lemma}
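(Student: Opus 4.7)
The plan is to argue by contradiction. The case $z = y$ is trivial, so I will focus on the situation where $x, y, z$ are three distinct collinear points; by the previous lemma, $y \in \li{xz}$ as well, giving a symmetry between $y$ and $z$. Supposing $\li{xy} \neq \li{xz}$, I may by this symmetry pick some $w \in \li{xy} \setminus \li{xz}$. Such a $w$ is distinct from $x, y, z$ (all three lie in $\li{xz}$), $\ell(x,y,w)$ holds, and the triple $(x,z,w)$ is in general position.

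The next step is to apply the Fixed Point Assumption to $(x,z,w)$: the connected group $H := (G_{x,z,w})^\circ$ has $\fp(H)$ equal to $\{x,z,w\}$, so in particular $H$ does not fix $y$. But $H$ preserves both lines $\li{xz}$ and $\li{xw}$ setwise (since $H \le G_{x,z} \cap G_{x,w}$), and by the symmetry of $\ell$ from the previous lemma, $y$ belongs to both lines. Thus the $H$-orbit of $y$ lies in the intersection $\li{xz} \cap \li{xw}$; being a nontrivial connected orbit, it has rank at least $1$, and at most $1$ by the rank bound on each line, forcing $\rk(\li{xz} \cap \li{xw}) = 1$.

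The hard part will be extracting a contradiction from this rank-$1$ intersection, which is an abstract form of the projective-plane axiom ``two distinct lines meet in at most one point.'' I expect this to require two auxiliary facts, both established via the Fixed Point Assumption: first, that each line $\li{ab}$ has rank exactly $1$ (ruling out finite or degenerate lines by showing that any $(G_{a,b})^\circ$-fixed point of $\li{ab} \setminus \{a,b\}$ would also be fixed by $(G_{a,b,v})^\circ$ for a generic $v \notin \li{ab}$, contradicting the Fixed Point Assumption); and second, a degree-$1$ or irreducibility-type property of lines sufficient to force two distinct rank-$1$ lines through $x$ to intersect in a finite set, which can be approached by analyzing the action of $H$ on each line (noting the only $H$-fixed points on $\li{xz}$ are $\{x,z\}$ and on $\li{xw}$ are $\{x,w\}$, so non-fixed orbits have rank $1$). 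Once the intersection is known to be finite, $H \cdot y$ becomes a finite connected orbit, hence a single point, yielding $y \in \fp(H) = \{x,z,w\}$, the desired contradiction.
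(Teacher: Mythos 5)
Your setup is sound and in fact lands on exactly the paper's configuration (with labels permuted): assuming $\li{xy}\neq\li{xz}$, you pick $w\in\li{xy}\setminus\li{xz}$, note that $(x,z,w)$ is in general position, and try to show that $y$ lies in a finite orbit of $H=(G_{x,z,w})^\circ$, which would contradict the Fixed Point Assumption. The genuine gap is the step you yourself flag as ``the hard part'': having shown only that $H\cdot y$ has rank $1$ inside $\li{xz}\cap\li{xw}$, you propose to get finiteness of that intersection from a ``degree-$1$ or irreducibility-type property of lines.'' No such property is available at this stage --- that two distinct lines meet in a finite (indeed at most one-point) set is precisely a consequence of the uniqueness statement this lemma is the essential step toward, so the appeal is essentially circular. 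Nothing in the observation that the non-fixed $H$-orbits on each line have rank $1$ forces the two rank-$1$ lines to have finite intersection; a priori a line is just a union of non-generic $G_{x,z}$-orbits and could share a rank-$1$ piece with another line. (Your auxiliary claim that lines have rank exactly $1$ also needs the ``not $3$-transitive'' hypothesis, though it turns out not to be needed at all.)

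The configuration does close, but by a rank count rather than by geometry, and this is what the paper does. Since $y\in\li{xz}$, the orbit $yG_{x,z}$ has rank at most $1$; since $w$ is in general position with $x,z$, the orbit $wG_{x,z}$ has rank $2$. Writing $r=\rk G_{x,z}$ and comparing the two chains $G_{x,z}\ge G_{x,z,w}\ge G_{x,y,z,w}$ and $G_{x,z}\ge G_{x,y,z}\ge G_{x,y,z,w}$: if $\rk(yG_{x,z,w})=1$ then $\rk G_{x,y,z,w}=r-3$, while $\rk G_{x,y,z}\ge r-1$, so $\rk(wG_{x,y,z})\ge 2$. But $wG_{x,y,z}\subseteq wG_{x,y}\subseteq\li{xy}$, which has rank at most $1$ --- a contradiction, forcing $H$ to fix $y$ and the Fixed Point Assumption to apply. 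This is, up to relabelling, exactly the computation in the paper's proof; your write-up is missing it and substitutes an unproved geometric axiom in its place.
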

\begin{proof}
Assume $z\in \li{xy}$ with $z\neq x$. If $z=y$, there is nothing to prove, so we also assume that $z \neq y$. First, we consider when there exists an $a\in \li{xz}-\li{xy}$. By $2$-transitivity, we have that $\rk(G_{x,y}/G_{x,y,z,a}) = \rk(G_{x,z}/G_{x,y,z,a})$.  Since $a\in \li{xz}$, we have that $\rk(G_{x,z}/G_{x,z,a}) \le 1$. As  $y\in \li{xz}$ by the previous lemma, we see that $\rk(G_{x,z}/G_{x,z,y}) \le 1$, so $\rk(G_{x,z,a}/G_{x,y,z,a})$ is also at most $1$.  We now see that $\rk(G_{x,y}/G_{x,y,z,a}) \le 2$, but we also have that $\rk(G_{x,y}/G_{x,y,a}) = 2$ since $x,y,a$ are in general position. Thus, $\rk(G_{x,y,a}/G_{x,y,z,a}) =0$, and $z$ is included in a finite orbit of $G_{x,y,a}$ contradicting the Fixed Point Assumption. We conclude that $\li{xz} \subseteq \li{xy}$. By the previous lemma, we may swap the role of $y$ and $z$ in this argument, so we also find that $\li{xy} \subseteq \li{xz}$.
\end{proof}

\begin{proposition}
Let $x,y\in X$ be distinct. Then $\li{xy}$ is the unique line containing $x$ and $y$. Hence, any two lines intersect in at most $1$ point. 
\end{proposition}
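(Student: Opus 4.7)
The plan is to show that any line $m$ passing through both $x$ and $y$ must coincide with $\li{xy}$; both assertions of the proposition then follow. By definition, any such $m$ can be written $m = \li{ab}$ for some distinct $a,b$ in general position. As a preliminary observation, Lemma~\ref{lem.PermuteGeneric} tells us that being in general position is invariant under permutation of coordinates, so the collinearity relation $\ell$ is symmetric in its three arguments; in particular $\li{uv} = \li{vu}$ for any distinct $u,v$, which I will use tacitly.

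The argument splits on whether $\{x,y\}$ meets $\{a,b\}$. If they share a point, then after relabeling I may assume $x = a$, so $m = \li{xb}$; since $y \in m$ with $y \neq x$, a single application of the previous lemma yields $m = \li{xb} = \li{xy}$. Otherwise the four points are pairwise distinct, and I invoke the previous lemma twice: first, $x \in \li{ab}$ with $x \neq a$ gives $\li{ab} = \li{ax} = \li{xa}$; second, $y \in \li{xa}$ with $y \neq x$ gives $\li{xa} = \li{xy}$. Either way $m = \li{xy}$, establishing uniqueness.

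The final assertion is then immediate: two lines sharing distinct points $x,y$ must each equal the unique line $\li{xy}$, hence coincide. There is no real obstacle here; the only care required is verifying, before each invocation of the previous lemma, that the point being plugged in is distinct from the ``first coordinate'' of the line — which is precisely what the case split and the symmetry $\li{uv}=\li{vu}$ guarantee.
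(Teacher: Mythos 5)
Your proof is correct and follows essentially the same route as the paper: reduce to showing any line $\li{ab}$ containing $x$ and $y$ equals $\li{xy}$, then apply the preceding lemma (together with the symmetry of collinearity) at most twice. The paper compresses your case split into the single phrase ``we may assume $x\neq a$,'' but the substance is identical.
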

\begin{proof}
Clearly $x,y\in \li{xy}.$ Now assume that $x,y \in \li{ab}$ with $a$ and $b$ distinct elements of $X$.  We may assume that $x\neq a$, so by the previous lemma, $\li{ab} = \li{ax}$. Now $y\in \li{ax}$ (since $\li{ax} = \li{ab}$), and $y\neq x$. Thus, $\li{ax} = \li{yx}$. 
\end{proof}

Now, $2$-transitivity ensures that $G$ is transitive on the lines. Notice that a line is definable, so the set of lines may be identified with the definable set $G/G_{\li{}}$ where $G_{\li{}}$ is the setwise stabilizer of some fixed  $\li{}\in \lines$. Also, for a fixed $x\in X$, $G_x$ is transitive on $X-\{x\}$, and $y\sim z$ if and only if $\li{xy}=\li{xz}$ defines an equivalence relation on $X-\{x\}$. Certainly, the line pencil $\lines(x)$ may be identified with the quotient. Using Lemma~\ref{lem.QuotientGenericTrans}, we find that $G_x$ acts transitively and generically $3$-transitively on $\lines(x)$, so in particular, $\lines(x)$ is connected by Fact~\ref{fact.GenericTwoImpliesConnected}. We collect some further details about the geometry.

\begin{lemma}
Let $x,y,z \in X$ be in general position. Then 
\begin{enumerate}
\item the point-row $\points(\li{xy})$, i.e. the set $\li{xy}$, has rank $1$,
\item $\lines(x)$ is strongly minimal, 
\item $G_{x,y,z}$ acts transitively on $\lines(x)-\{\li{xy},\li{xz}\}$.
\end{enumerate}
\end{lemma}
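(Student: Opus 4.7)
Item (1) is where the Fixed Point Assumption does the essential work; the other two items are then essentially bookkeeping with the quotient map from points to lines.

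For (1), I would first note $\rk \li{xy} \le 1$: as $(X,G)$ is generically $3$-transitive, $G_{x,y}$ has a generic orbit on $X$, and $\li{xy}$ is precisely its definable complement. To rule out $\rk \li{xy} = 0$, observe that since the action is not $3$-transitive some line contains a third point, and by $2$-transitivity every line does; in particular $\li{xy}$ contains some $w \neq x,y$. Now $(G_{x,y,z})^\circ$ stabilizes $\li{xy}$ setwise, so if $\li{xy}$ were finite then, being connected, $(G_{x,y,z})^\circ$ would fix $\li{xy}$ pointwise. This would give $w \in \fp((G_{x,y,z})^\circ)$ while $w \neq z$ (since $z \notin \li{xy}$ by the general position of $x,y,z$), contradicting the Fixed Point Assumption.

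For (2), the excerpt already established that $\lines(x)$ is connected, via Lemma~\ref{lem.QuotientGenericTrans} and Fact~\ref{fact.GenericTwoImpliesConnected}. I would then apply rank additivity to the surjective map $X \setminus \{x\} \to \lines(x)$, $w \mapsto \li{xw}$, whose fibers are the rank-$1$ sets $\li{} \setminus \{x\}$ by item (1); this forces $\rk \lines(x) = 1$. Rank $1$ plus degree $1$ is strong minimality.

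For (3), the plan is to identify the generic orbit $\orbit$ of $G_{x,y,z}$ on $X$ and push it down to $\lines(x)$. Generic $4$-transitivity makes the $4$-tuples in general position a single $G$-orbit, so
\[
\orbit = \{w \in X : (x,y,z,w) \text{ is in general position}\} = X \setminus \bigl(\li{xy} \cup \li{xz} \cup \li{yz}\bigr),
\]
and $G_{x,y,z}$ is transitive on $\orbit$. The map $w \mapsto \li{xw}$ sends $\orbit$ into $\lines(x) \setminus \{\li{xy}, \li{xz}\}$; I would then verify surjectivity onto this set. For any $m$ in the target, the uniqueness of the line through two points gives $m \cap \li{xy} = \{x\} = m \cap \li{xz}$, while $m \cap \li{yz}$ has at most one point since $x \notin \li{yz}$. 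Hence $m \cap \orbit$ is cofinite in $m$ and in particular nonempty, supplying a preimage. The map is $G_{x,y,z}$-equivariant because $G_{x,y,z}$ fixes $x$, so transitivity on $\orbit$ transfers to transitivity on $\lines(x) \setminus \{\li{xy}, \li{xz}\}$. I anticipate no significant obstacle; the one subtle step is (1), where the Fixed Point Assumption is genuinely required to exclude a degenerate finite line.
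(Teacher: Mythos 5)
Your arguments for items (1) and (2) are correct and essentially the paper's own: the Fixed Point Assumption rules out a finite line (the paper phrases this as ``every orbit of $G_{x,y}$ on a rank-$0$ line is finite, hence fixed by $(G_{x,y,z})^\circ$, forcing $\li{xy}=\{x,y\}$ and hence $3$-transitivity,'' which is the same contradiction you reach), and strong minimality of $\lines(x)$ follows from the partition of $X-\{x\}$ into rank-$1$ lines plus the connectedness already established.

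Item (3), however, has a genuine gap. Your identification
\[
\orbit \;=\; X \setminus \bigl(\li{xy}\cup\li{xz}\cup\li{yz}\bigr)
\]
asserts that every $w$ completing $x,y,z$ to a $4$-gon yields a $4$-tuple in general position. Only the inclusion $\orbit\subseteq X \setminus (\li{xy}\cup\li{xz}\cup\li{yz})$ is available: general position of the $4$-tuple forces each sub-triple into general position, but not conversely. The paper flags exactly this point when the $4$-gon is defined (``there is no reason to believe that the points forming a $4$-gon are necessarily in general position''), and the subsequent proposition that $(X,\lines)$ is a projective plane is deliberately written to use only the forward inclusion. Since $X\setminus\orbit$ may have rank $1$, it could a priori swallow an entire line $m\in\lines(x)-\{\li{xy},\li{xz}\}$; your cofiniteness computation shows $m$ meets the complement of the three lines, not that it meets $\orbit$, so surjectivity of $w\mapsto\li{xw}$ onto $\lines(x)-\{\li{xy},\li{xz}\}$ does not follow. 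What your method does give is that the image of $\orbit$ is a single $G_{x,y,z}$-orbit of full rank in the strongly minimal set $\lines(x)$, i.e.\ transitivity on a \emph{cofinite} subset of $\lines(x)-\{\li{xy},\li{xz}\}$ --- strictly weaker than the claim. The paper closes this gap by a different route: by item (2) and Fact~\ref{fact.Hru}, $G_x$ induces $\psl_2$ on $\lines(x)$ (sharp $3$-transitivity modulo the kernel $K$), so the setwise stabilizer $H$ of $\{\li{xy},\li{xz}\}$ has $H/K$ connected of rank $1$ and transitive on the remaining lines; since $G_{x,y,z}\le H$ has infinite, hence full, image in $H/K$, it inherits genuine transitivity on $\lines(x)-\{\li{xy},\li{xz}\}$. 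Some such appeal to the $\psl_2$ structure (or another argument eliminating the finite exceptional set) is needed to complete your proof of (3).
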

\begin{proof}
If $\rk \points(\li{xy}) \neq 1$, then $\rk\points(\li{xy}) = 0$. In this case, every orbit of $G_{x,y}$ on $\points(\li{xy})$ is finite. By the Fixed Point Assumption, $\li{xy} = \{x,y\}$, and this contradicts our assumption that the action is not $3$-transitive. Thus, $\rk \points(\li{xy}) = 1$. Further, the lines in $\lines(x)$ partition the rank $2$ set $X-\{x\}$, so as each line has rank $1$, we find that $\lines(x)$ also has rank $1$. We already noted that $\lines(x)$ is connected, so it is strongly minimal. 

For the third point, we now have that $G_x$ acts transitively and generically $3$-transitively on the strongly minimal set $\lines(x)$, so $G_x$ induces $\psl_2$ on $\lines(x)$ by Fact~\ref{fact.Hru}. In particular, if $K$ is the kernel of the action of $G_x$ on $\lines(x)$, then $G_x/K$ acts sharply $3$-transitively on $\lines(x)$. Set $T:= G_{x,y,z}$ and $H := G_{\li{xy},\li{xz}}$.  Now, $H$ contains $K$, and by the structure of $\psl_2$, $H/K$ is connected of rank $1$. By the generic $4$-transitivity of $G$ on $X$, we know that  $T$ has a generic orbit on $X-\{x\}$, so as the action of $T$ on $\lines(x)$ can be identified with a quotient of $T$ acting on $X-\{x\}$ (see the discussion preceding the statement of this lemma), this action must be generically transitive as well. Thus, the image of $T$ in $G_x/K$ is infinite, and as $T$ is contained in $H$ with $H/K$ connected of rank $1$, we find that $T$ covers $H$ in the quotient. Since $H$ acts transitively on $\lines(x)-\{\li{xy},\li{xz}\}$, $T$ does as well.
\end{proof}

\begin{proposition}
We have that $(X,\lines)$ is a projective plane.
\end{proposition}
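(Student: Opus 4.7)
The plan is to verify that $(X,\lines)$ satisfies the three axioms of a projective plane: (P1) any two distinct points lie on a unique line, (P2) any two distinct lines meet in a unique point, and (P3) there exist four points, no three collinear. Axiom (P1) and the uniqueness half of (P2) are already in hand, so I need only establish (P3) together with the existence half of (P2).

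First I would dispatch (P3) using generic $4$-transitivity. Pick $(x,y,z,w)\in X^4$ in general position. By additivity of Morley rank, the image of the generic $G$-orbit in $X^4$ under any coordinate projection $X^4\to X^3$ has the maximal rank $\rk X^3$, and since $X$ (hence $X^3$) is connected, this image must coincide with the generic orbit in $X^3$. Thus every three-element subset of $\{x,y,z,w\}$ is in general position, i.e.\ non-collinear, so $\{x,y,z,w\}$ is a $4$-gon.

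Next I turn to the existence half of (P2), which is the real content. Let $\li{1}\neq \li{2}$ be two distinct lines. Lines are infinite (rank $1$) and meet in at most one point, so I can choose $x\in \li{1}\setminus \li{2}$; it then suffices to show that every line through $x$ meets $\li{2}$. The key map is
\[
\phi:\li{2}\to \lines(x),\qquad p\mapsto \li{xp},
\]
which is well-defined because $x\notin \li{2}$ and is injective because two distinct lines meet in at most one point. Since $\li{2}$ has rank $1$ and $\lines(x)$ is strongly minimal, the image of $\phi$ is cofinite, and its complement is exactly the (finite) set $P$ of lines through $x$ that do not meet $\li{2}$.

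The hard part will be showing $P=\emptyset$. To do this, I would fix distinct $y,z\in \li{2}$; then $(x,y,z)$ is in general position since $x\notin \li{yz}=\li{2}$, so the last lemma of the previous subsection tells us that $G_{x,y,z}$ acts transitively on the infinite set $\lines(x)\setminus\{\li{xy},\li{xz}\}$. On the one hand, $G_{x,y,z}$ fixes $\li{2}$ setwise (it is the unique line through the two fixed points $y$ and $z$) and so preserves $P$. On the other hand, $\li{xy}$ and $\li{xz}$ meet $\li{2}$ at $y$ and $z$ respectively, so $P\subseteq \lines(x)\setminus\{\li{xy},\li{xz}\}$. A finite $G_{x,y,z}$-invariant subset of an infinite transitive orbit must be empty, giving $P=\emptyset$. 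Consequently every line through $x$ meets $\li{2}$, and in particular $\li{1}\cap \li{2}\neq \emptyset$.
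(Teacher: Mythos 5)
Your proof is correct and follows essentially the same route as the paper's: both reduce the statement to showing that any two distinct lines meet, and both hinge on the lemma that $G_{x,y,z}$ acts transitively on $\lines(x)\setminus\{\li{xy},\li{xz}\}$. The only divergence is the closing step: the paper transports the witness line $\li{xw}$ (which visibly meets $\ell'$ at a third point $w$) onto $\ell$ by an element of $G_{x,y,z}$ fixing $\ell'$ setwise, whereas you show that the lines through $x$ missing $\ell_2$ form a finite (via strong minimality of $\lines(x)$ and injectivity of $p\mapsto\li{xp}$) invariant subset of that infinite transitive orbit, hence empty --- a slightly longer route whose payoff is that it directly yields the surjectivity of $p\mapsto\li{xp}$, which the paper only extracts afterwards when proving that point-rows are strongly minimal.
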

\begin{proof}
Since any $4$ points in general position form a $4$-gon, it only remains to show that every pair of lines intersect. Let $\li{}$ and $\li{}'$ be distinct lines. Fix $x \in \li{}$, and $y,z\in \li{}'$ with $y\neq z$. If $x\in \li{}'$ we are done, so assume that $x\notin \li{}'$. Then $x,y,z$ are in general position, so by the previous lemma, $G_{x,y,z}$ acts transitively on $\lines(x)-\{\li{xy},\li{xz}\}$. Thus, for $w\in \li{}'$ different from $y$ and $z$, we find that $G_{x,y,z}$ moves $\li{xw}$ to $\li{}$ while fixing $\li{}'$ (setwise). Since $\li{xw}$ intersects $\li{}'$, $\li{}$ must intersect $\li{}'$ as well.
\end{proof}

\begin{proof}[Proof of Proposition~\ref{prop.Recognition}]
We now have that $G$ acts on the projective plane $(X,\lines)$, and that the plane has strongly minimal line-pencils. Further, we have already noted that $G$ is transitive on ordered $3$-gons. Thus, in order to apply Fact~\ref{fact.TentVM}, it remains to show that each point-row is strongly minimal.

Fix a line $\li{}$ and a point $x$ not on $\li{}$. Consider the function $\varphi:\points(\li{})\rightarrow \lines(x): y \mapsto \li{xy}$. Since two points determine a unique line, this a definable injective function. Of course, every pair of lines intersect, so  $\varphi$ is surjective. Since  $\lines(x)$ is strongly minimal, $\points(\li{})$ is as well.
\end{proof}

\section{Theorem~\ref{thm.Aone}}\label{sec.Aone}
In this section, we address the case of generic \textdef{sharp$^\circ$} $n$-transitivity; this was defined in Section~\ref{sec.Intro}. 

\begin{MainSubTheorem}\label{thm.Aone}
If $(X,G)$ is a transitive and generically \textdef{sharply$^\circ$} $n$-transitive permutation group of finite Morley rank with $G$ connected and $\rk X = 2$, then
\begin{enumerate}
\item $n$ is at most $4$, and
\item if $n = 4$, then $(X,G)\cong (\proj^2(K),\pgl_{3}(K))$ for some algebraically closed field $K$.
\end{enumerate}
\end{MainSubTheorem}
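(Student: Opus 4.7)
The plan is to prove the theorem in two phases: first establish the upper bound $n \le 4$ via the $\Sigma$-group machinery, then, for $n = 4$, verify the hypotheses of the preliminary recognition result Proposition~\ref{prop.Recognition} to conclude $(X,G)\cong(\proj^2(K),\pgl_3(K))$.

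For Phase~1, the sharp$^\circ$ hypothesis forces the connected generic $(n-1)$-point stabilizer $H := (G_{x_1,\ldots,x_{n-1}})^\circ$ to have Morley rank exactly $\rk X = 2$. By Lemma~\ref{lem.SigmaGroupLemma}, $\Sigma := \Sigma(x_1,\ldots,x_{n-1};x_n)$ embeds faithfully in $\aut(H)$, while $\Sigma/G_{x_1,\ldots,x_n}\cong\sym(n-1)$ with $G_{x_1,\ldots,x_n}$ finite. The main substep is to show $H$ is abelian. Fact~\ref{fact.rankTwoGroups} makes $H$ solvable, so I must rule out the nonnilpotent case, where $H=H'\rtimes T$ with $H'$, $T$ of rank one and $H/Z(H)\cong K^+\rtimes K^\times$, and the nonabelian nilpotent case. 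In both, $\Sigma$ stabilizes characteristic rank-one connected subgroups (for instance $H'$, $Z(H)^\circ$, or successive terms of the lower central series), and Lemma~\ref{lem.symmOnRankOne} forces the alternating subgroup of $\sym(n-1)$ (modulo the finite kernel) into the kernel of each induced action; piecing these together contradicts the overall faithfulness of $\Sigma$ on $H$ once $n\ge4$. Indeed, in the nonnilpotent case, Fact~\ref{fact.rankTwoGroups}(3) says every finite-order definable automorphism of $H$ is inner, and $H/Z(H)$ has rank 2, which sharply constrains the size of $\Sigma$. Once $H$ is abelian, Fact~\ref{fact.GenNTransAbelian} upgrades the action to generically sharply $n$-transitive and makes $\Sigma$ equal to $\sym(n-1)$ outright, so Lemma~\ref{lem.symmOnRankTwo} directly yields $n-1\le 3$.

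For Phase~2, assume $n=4$. The $3$-point stabilizer is abelian of rank $2$, so Corollary~\ref{cor.FourTransVDP} yields virtual definable primitivity and Corollary~\ref{cor.GenNTransAbelian} promotes this to genuine primitivity. I then verify the three remaining hypotheses of Proposition~\ref{prop.Recognition}. Full $2$-transitivity: primitivity makes $G_x$ a maximal definable subgroup, and, as in the classical argument, a hypothetical non-generic orbit of $G_x$ on $X-\{x\}$ would give rise to a nontrivial $G$-invariant equivalence relation on $X$ (take the partition whose block at $x$ is the orbit together with $\{x\}$, then transport via $G$), contradicting primitivity. For the failure of ordinary $3$-transitivity, if $G_{x,y}$ were transitive on $X-\{x,y\}$, then the abelian rank-$2$ group $(G_{x,y,z})^\circ$ would act regularly on a generic orbit of $G_{x,y}$, and passing to the induced action on the rank-one line pencil at $x$ (via the definable quotient $X-\{x\}\to\lines(x)$) would produce a sharply $2$-transitive or $\psl_2$-like structure that Fact~\ref{fact.Moufang} identifies with $(K,\aff_1(K))$ or $(\proj^1(K),\psl_2(K))$; chasing this identification back to $G$ inflates the degree of generic transitivity above $4$, contradicting Phase~1. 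Finally, the Fixed Point Assumption follows: any fixed point $w\notin\{x,y,z\}$ of $H=(G_{x,y,z})^\circ$ would force $(G_{x,y,z,w})^\circ\supseteq H$ of rank $2$, hence $(x,y,z,w)$ non-generic, and a $\Sigma(x,y,z)\cong\sym(3)$-equivariant rank count on the (definable, finite) fixed-point set of $H$ rules out the existence of such stray fixed points.

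The main obstacle is the abelianness step in Phase~1: Lemma~\ref{lem.symmOnRankTwo} concerns $\sym(n)$ rather than a finite extension thereof, so the case analysis of a nonabelian $H$ must carefully handle the finite kernel $G_{x_1,\ldots,x_n}$ inside $\Sigma$ and exploit the inner-automorphism rigidity of Fact~\ref{fact.rankTwoGroups}(3). A secondary but delicate difficulty is ruling out ordinary $3$-transitivity in Phase~2, where the Moufang-set classification must be applied to an auxiliary action extracted from the point-stabilizer geometry; getting the sub-action precisely into the form of Fact~\ref{fact.Moufang} and then transporting the resulting contradiction back to $G$ is where the Moufang-theoretic content of Subsection~\ref{subsec.Moufang} does its essential work.
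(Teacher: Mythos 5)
Your overall architecture (bound $n\le 4$ via $\Sigma$-groups, then feed Proposition~\ref{prop.Recognition}) matches the paper, but three of your key steps do not hold up. First, in Phase~1 the $\Sigma$-group machinery does \emph{not} show that $H=(G_{1,2,3})^\circ$ is abelian when $n=4$: there $\Sigma(1,2,3;4)$ only covers $\sym(3)$, whose alternating part is abelian, so the three-subgroups-lemma trick yields nothing, and $\sym(3)$ genuinely embeds into $K^+\rtimes K^\times$ in characteristic $3$. The paper explicitly records that the $\Sigma$-argument leaves open a nonnilpotent characteristic-$3$ configuration and then spends two further lemmas to kill it: solvability of $(G_{1,2})^\circ$ (via Fact~\ref{fact.genericSharpTwoTransRankTwo}, $2$-elements and tori) and Lemma~\ref{lem.ThreePointStabAbelian}, which assembles the unipotent radicals $U_{1,2,3},U_{1,2,4},\ldots$ into an abelian group $\widetilde{A}=F^\circ(G_1)$ and contradicts virtual definable primitivity. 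Your claim that faithfulness fails ``once $n\ge4$'' is therefore false at $n=4$, and this is where most of the work in Section~\ref{sec.Aone} actually lives. (Your nilpotent-nonabelian case for $n\ge5$ is also handled in the paper by a different route, through the rank-$4$ group $(G_{1,\ldots,n-2})^\circ$ and Proposition~\ref{prop.TransBoundSolvable}, rather than by characteristic subgroups.) Relatedly, you never establish Proposition~\ref{prop.PSStructure} ($G_{1,2}=F^\circ(G_{1,2})\rtimes G_{1,2,3}$ with $F^\circ(G_{1,2})$ abelian unipotent and $G_{1,2,3}$ a self-centralizing good torus), yet your Moufang-set argument against $3$-transitivity needs exactly that the root groups $F^\circ(P_y)$ are abelian and unipotent, and the correct contradiction at the end is with \emph{sharp} $4$-transitivity via Tits--Hall, not with the degree of generic transitivity exceeding $4$.

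Second, your proof of $2$-transitivity is wrong: primitivity does not imply $2$-transitivity, and the ``classical argument'' you invoke --- turning a nongeneric suborbit of $G_x$ into a block system by adjoining $\{x\}$ --- is not valid (such a set is not a block in general; otherwise every primitive group would be doubly transitive). The paper's Proposition~\ref{prop.Transitivity} is a genuine structural argument: it first shows every $2$-point stabilizer is solvable-by-finite using the classification of connected groups of rank at most $4$, then analyses the solvable radical of a point stabilizer and the generosity of the self-centralizing torus $G_{1,2,3}$ to force $G_x$ to be solvable, contradicting Proposition~\ref{prop.TransBoundSolvable}. Third, your Fixed Point Assumption step is not an argument: a stray fixed point $w$ of $(G_{x,y,z})^\circ$ merely makes $(x,y,z,w)$ non-generic, which is no contradiction, and you give no reason why $\fp((G_{x,y,z})^\circ)$ should be finite, let alone equal to $\{x,y,z\}$; a ``$\sym(3)$-equivariant rank count'' cannot exclude, say, a single extra fixed point invariant under all of $\sym(3)$. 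The paper's Proposition~\ref{prop.FixedPoints} requires a page-long construction of a rank-$5$ subgroup $N^\circ_{G_x}(U)$ inducing $\psl_2$ on a rank-$1$ set before Fact~\ref{fact.Hru} delivers the contradiction. In short, the Phase~1 bound $n\le4$ for $n\ge5$ is essentially right (modulo the finite-kernel issue you flag), but the abelianness at $n=4$, the structure of $G_{1,2}$, the $2$-transitivity, and the fixed-point analysis --- i.e., the bulk of the proof --- are missing or incorrect.
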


\begin{setup}
Let $(X,G)$ be a transitive and generically sharply$^\circ$ $n$-transitive permutation group of finite Morley rank with $G$ connected and $\rk X = 2$. Assume that $n \ge 4$, and fix $1,2,3,4,\ldots,n\in X$ in general position. 
\end{setup}

If $\sigma \in \sym(4)$, we write $g\in \sigma$ to mean that $g$ is an element of $G$ acting on $\{1,2,3,4\}$ as $\sigma$; we are thinking of $\sigma$ as an element of $G_{\{1,2,3,4\}}/G_{1,2,3,4}$. If $G_{1,2,3,4}$ is known to be trivial, we write $g=\sigma$ as in this case there is a unique $g$ realizing $\sigma$. For example, by ``let $g \in (12)34$'' we mean ``let $g$ be an element of $G$ swapping $1$ and $2$ and fixing $3$ and $4$.''

To prove Theorem~\ref{thm.Aone}, we aim to apply Proposition~\ref{prop.Recognition}. This does not require that we know $n$ in advance, but in fact, everything will proceed much more smoothly once we know that the action is generically  \emph{sharply} $4$-transitive. Our approach is as follows.

\begin{enumerate}
\item Show that the action is generically sharply $4$-transitive.
\item\label{enum.A1Approach2} Expose the structure of generic $2$ and $3$-point stabilizers. 
\item Show that the action is $2$-transitive. 
\item Prove the Fixed Point Assumption.
\end{enumerate}

In the end, we will also show that the final three points imply that the action is not $3$-transitive.  Item \eqref{enum.A1Approach2}, which occurs as Proposition~\ref{prop.PSStructure}, is an important stepping stone for what follows, so it seems worthwhile to briefly recall the structure of point stabilizers in $\pgl_3$. Let $K$ be an algebraically closed field. Fix an ordered basis $(e_1,e_2,e_3)$ for $K^3$, and set $p_1 = \langle e_1 \rangle$, $p_2 = \langle e_2 \rangle$, $p_3 = \langle e_3 \rangle$, $p_4 = \langle e_1 + e_2 +e_3 \rangle$. If $G =  \pgl_3(K)$, then
\[G_{p_1} = \begin{pmatrix} 1 & 0 & 0\\ * & * & * \\ * & * & * \end{pmatrix}, G_{p_1,p_2} = \begin{pmatrix} 1 & 0 & 0\\ 0 & * & 0 \\ * & * & * \end{pmatrix}, G_{p_1,p_2,p_3} = \begin{pmatrix} 1 & 0 & 0\\ 0 & * & 0 \\ 0 & 0 & * \end{pmatrix}.\]
Thus, $G_{p_1,p_2,p_3}\cong K^\times \times K^\times$ is a maximal torus of $G$, and the $2$-point stabilizer $G_{p_1,p_2}$ is equal to $F(G_{p_1,p_2}) \rtimes G_{p_1,p_2,p_3}$ with  $F(G_{p_1,p_2}) \cong K^+ \oplus K^+$.  Note that $G_{p_1,p_2}$ is connected and solvable, but it is properly contained in the Borel subgroup of lower triangular matrices. 

We begin with an observation which, incidentally, does not require that $G_{1,\ldots,n}$ is finite. It is worth noting that the  proof of the following lemma makes use of \cite[Lemma~5.8]{BoCh08} which in turn uses the classification of the simple groups of finite Morley rank of even and mixed type. It is certainly possible that a more elementary argument would suffice.

\begin{lemma}\label{lem.GenericFourImpliesVDP}
The action is virtually definably primitive, and $G$ has even or odd type. 
\end{lemma}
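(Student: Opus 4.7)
The plan is to split the statement into its two parts and handle them independently. For the virtual definable primitivity, I would simply feed the hypotheses into Corollary~\ref{cor.FourTransVDP}. We have the action transitive, $G$ connected, and $\rk X = 2$ by the standing setup, and since the action is generically sharply$^\circ$ $n$-transitive with $n \ge 4$ it is in particular generically $4$-transitive, so $\gtd(X,G) \ge 4$. All four hypotheses of Corollary~\ref{cor.FourTransVDP} are thus in place, and virtual definable primitivity is a one-line consequence.

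For the statement that $G$ has even or odd type, my plan is to appeal directly to \cite[Lemma~5.8]{BoCh08}. With the virtual definable primitivity just secured, the configuration is precisely a transitive virtually definably primitive permutation group of finite Morley rank, with $G$ connected, $\rk X = 2$, and $\gtd(X,G) \ge 4$; the sharply$^\circ$ hypothesis further forces $\rk G = 2n \ge 8$, so $G$ is of a size where the cited lemma bites. That lemma, which leans on the classification of the simple groups of finite Morley rank of even and mixed type, excludes the possibility that $G$ is of mixed type or of degenerate type, leaving only even or odd type.

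The step that carries all the weight is the citation itself: \cite[Lemma~5.8]{BoCh08} is where the classification machinery is doing the real work, and the present proof is essentially a verification that the hypotheses line up. In principle, a more elementary path might be available — for example, one could try to exploit the $\Sigma$-group construction from Subsection~\ref{subsec.Sigma}, which under generic sharpness$^\circ$ gives a faithful copy of $\sym(n)$ sitting inside $\Sigma(1,\ldots,n)/G_{1,\ldots,n}$ and hence produces many involutions in $G$, and then rule out mixed type by rank or structural considerations; but the authors explicitly choose the cited route, and so would I in this proof proposal. The main obstacle, then, is not internal to this lemma at all but rather buried in the classification results invoked by \cite[Lemma~5.8]{BoCh08}.
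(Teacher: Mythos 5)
Your proposal follows essentially the same route as the paper: virtual definable primitivity comes straight from Corollary~\ref{cor.FourTransVDP}, and the type statement is delegated to \cite[Lemma~5.8]{BoCh08}. One small step is missing, though. The paper does not apply the cited lemma to $(X,G)$ itself under mere virtual definable primitivity; it first passes to a \emph{definably primitive} quotient with kernel $K$, observes that $K^\circ$ centralizes the finite classes of the quotient and hence is trivial, so that $K$ is finite, applies \cite[Lemma~5.8]{BoCh08} to $G/K$, and then transfers even/odd type back to $G$ across the finite kernel. You should either verify that the cited lemma really applies in the virtually definably primitive setting or insert this short reduction. Also, your remark that sharpness$^\circ$ forces $\rk G = 2n \ge 8$ is not needed: the paper explicitly notes that this lemma does not require $G_{1,\ldots,n}$ to be finite, and no rank hypothesis on $G$ enters the argument.
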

\begin{proof}
Virtual definable primitivity is given by Corollary~\ref{cor.FourTransVDP}. If $K$ is the kernel of a definably primitive quotient, then as the classes in the quotient are finite, $K^\circ$ centralizes every class. Thus, $K$ is finite, so as $G/K$ must have even or odd type by \cite[Lemma~5.8]{BoCh08}, the same is true of $G$. 
\end{proof}

\subsection{Generic \emph{sharp} $4$-transitivity}\label{subsec.TransBound}
The goal of the present subsection is the following proposition. 

\begin{proposition}\label{prop.transBound}
The action is generically sharply $4$-transitive with $G_{1,2}$ solvable and $G_{1,2,3}$ abelian.
\end{proposition}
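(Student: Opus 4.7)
The proof splits into three stages: (i) show that $A := (G_{1,\ldots,n-1})^{\circ}$ is abelian; (ii) upgrade to honest generic sharp $n$-transitivity via Fact~\ref{fact.GenNTransAbelian}; and (iii) use Lemma~\ref{lem.symmOnRankTwo} to bound $n \le 4$, then handle solvability of $G_{1,2}$ separately. Notably, the abelianness of $A$ has to be established \emph{before} the bound on $n$ is available, because the clean application of Lemma~\ref{lem.symmOnRankTwo} needs $\sym(n-1)$ (and not a mere cover of it) acting faithfully on a connected rank-$2$ group, and that identification requires the sharpness conclusion of Fact~\ref{fact.GenNTransAbelian}.

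For stage (i), sharply$^\circ$ $n$-transitivity fixes the ranks of generic stabilizers: $\rk G_{1,\ldots,k} = 2(n-k)$, so $A$ is connected of rank $2$ and solvable by Fact~\ref{fact.rankTwoGroups}, leaving three possibilities. Lemma~\ref{lem.SigmaGroupLemma} gives a faithful action of $\Sigma := \Sigma(1,\ldots,n-1;n)$ on $A$ with $\Sigma/G_{1,\ldots,n} \cong \sym(n-1)$. \emph{Nonnilpotent:} Fact~\ref{fact.rankTwoGroups}(3) makes every finite-order definable automorphism of $A$ inner, so the image of $\Sigma$ in $\aut(A)$ embeds in $A/Z(A) \cong K^{+} \rtimes K^{\times}$; the finite subgroups of that affine group are either contained in $K^{+}$ (abelian) or conjugate into $K^{\times}$ (cyclic), and neither can accommodate the $\sym(3) \le \sym(n-1)$ already present for $n \ge 4$. \emph{Nilpotent nonabelian:} The characteristic short exact sequence $1 \to Z(A)^{\circ} \to A \to A/Z(A)^{\circ} \to 1$ has rank-$1$ ends, and Lemma~\ref{lem.symmOnRankOne} forces the preimage of $\alt(n-1)$ in $\Sigma$ to act trivially on both factors. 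Then $\sigma \mapsto (a \mapsto [\sigma, a])$ descends to a homomorphism from the $\alt(n-1)$-preimage into the abelian group $\operatorname{Hom}(A/Z(A)^{\circ}, Z(A)^{\circ})$; a three-subgroup bookkeeping patterned on the proof of Lemma~\ref{lem.symmOnRankTwo} produces a nontrivial subgroup of $\Sigma$ acting trivially on $A$, contradicting faithfulness.

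With $A$ abelian, $G_{1,\ldots,n-1}$ is abelian-by-finite, so Fact~\ref{fact.GenNTransAbelian} delivers generic sharp $n$-transitivity (in particular $G_{1,\ldots,n} = 1$) and the connectedness of all generic $k$-point stabilizers for $k \le n - 1$. Consequently $\Sigma(1,\ldots,n-1;n) \cong \sym(n-1)$ honestly, acting faithfully on the connected abelian rank-$2$ group $G_{1,\ldots,n-1}$, so Lemma~\ref{lem.symmOnRankTwo} yields $n - 1 \le 3$; combined with $n \ge 4$, this gives $n = 4$, at which point $G_{1,2,3}$ is already connected abelian of rank $2$. For the solvability of $G_{1,2}$, restrict its action to its generic orbit $Y \subseteq X$: since $Y \cup \{1,2\}$ is generic in $X$, $G_{1,2}$ (rank $4$, connected) acts faithfully on $Y$ by Fact~\ref{fact.FixGenericSet}, transitively and generically sharply $2$-transitively. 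If nonsolvable, Fact~\ref{fact.genericSharpTwoTransRankTwo} forces $G_{1,2} = Z(G_{1,2}) \cdot Q$ with $Z(G_{1,2}) \cong K^{\times}$ and $Q \cong \pssl_2(K)$. Abelian subgroups of $\pssl_2(K)$ have rank at most $1$, so every connected abelian rank-$2$ subgroup of $G_{1,2}$ must contain $Z(G_{1,2})$; in particular $Z(G_{1,2}) \le G_{1,2,3}$. Thus $Z(G_{1,2})$ fixes $3$, and being central in $G_{1,2}$ it fixes the entire $G_{1,2}$-orbit $Y$, hence a generic subset of $X$; Fact~\ref{fact.FixGenericSet} applied to $(X,G)$ then gives $Z(G_{1,2}) = 1$, contradicting $Z(G_{1,2}) \cong K^{\times}$. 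So $G_{1,2}$ is solvable.

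The hardest part should be the nilpotent nonabelian case in stage (i): the commutator bookkeeping must succeed uniformly in $n$, with particular care required when $n - 1 = 3$ (so only $\alt(3) = \integers/3$ is available in the lift) and in small positive characteristic, where stray $p$-torsion in $\operatorname{Hom}(A/Z(A)^{\circ}, Z(A)^{\circ})$ coming from the exponent-$p$ or $p^2$ structure of $A$ (Fact~\ref{fact.rankTwoGroups}) must be ruled out before one can identify a nontrivial subgroup lying in the kernel of the action.
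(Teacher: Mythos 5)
Your architecture inverts the paper's (abelianness of $(G_{1,2,3})^\circ$ first, then sharpness, then $n\le 4$, then solvability of $G_{1,2}$), and the inversion founders on a genuine gap in stage (i). In the nonnilpotent case you claim that the finite subgroups of $A/Z(A)\cong K^+\rtimes K^\times$ are either abelian (inside $K^+$) or cyclic (conjugate into $K^\times$). That is false in positive characteristic: a finite subgroup is an extension of a cyclic subgroup of $K^\times$ by an elementary abelian $p$-subgroup of $K^+$, and such extensions can be nonabelian. In particular, when $\charac K=3$ the group $\sym(3)\cong \mathbb{F}_3^+\rtimes\{\pm1\}$ embeds into $K^+\rtimes K^\times$, so for $n=4$ the faithful action of (a cover of) $\sym(3)$ on a nonnilpotent rank-$2$ group $A$ is \emph{not} excluded. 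This is exactly the configuration the paper isolates (its first lemma concludes only that a nonabelian $(G_{1,2,3})^\circ$ must be nonnilpotent in characteristic $3$) and then spends the bulk of Subsection~5.1 eliminating: Lemma~\ref{lem.ThreePointStabAbelian} builds the unipotent radicals $U_{1,2,3},\ldots,U_{2,3,4}$, shows $\langle U_{1,2,3},U_{1,2,4}\rangle$ is abelian and normal in $(G_{1,2})^\circ$ via a rank-$1$ quotient and Fact~\ref{fact.Hru}, identifies $F^\circ(G_1)$, and contradicts virtual definable primitivity --- and it needs the solvability of $(G_{1,2})^\circ$ as an input. Your proposal replaces all of this with the false finite-subgroup claim, and since your later solvability argument presupposes that $G_{1,2,3}$ is abelian of rank $2$ (to force $Z(G_{1,2})\le G_{1,2,3}$), the whole chain collapses. (Note also that for $n\ge 5$ your intended conclusion is correct but for the paper's reason --- $\sym(4)$ has derived length $3$ and cannot embed in the metabelian $K^+\rtimes K^\times$ --- not because finite subgroups of the affine group are abelian or cyclic.)

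Two secondary points. In the nilpotent nonabelian case you apply Lemma~\ref{lem.symmOnRankOne} to the \emph{preimage} of $\alt(n-1)$ in $\Sigma$, but that lemma concerns actions of $\sym(m)$ itself; before sharpness is established, elements of $\Sigma$ covering transpositions need not be involutions, so the ``fix or invert'' dichotomy on a rank-$1$ section is not available. The paper sidesteps this entirely by working inside the rank-$4$ group $(G_{1,\ldots,n-2})^\circ$: a $p$-unipotent $H$ lies in its Fitting subgroup, forcing nilpotence of a generically $2$-transitive group, contrary to Proposition~\ref{prop.TransBoundSolvable}. On the positive side, your derivation of solvability of $G_{1,2}$ from abelianness of $G_{1,2,3}$ (central $K^\times$ would fix the generic orbit of $G_{1,2}$, contradicting Fact~\ref{fact.FixGenericSet}) is cleaner than the paper's argument with $2$-elements and maximal $2$-tori; but it lives downstream of the gap and cannot be salvaged without first doing the characteristic-$3$ work.
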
 

Notice that this proposition also implies that the action is primitive by Corollary~\ref{cor.GenNTransAbelian}. We begin by giving the desired bound on $n$.

\begin{lemma}
We have that $n = 4$. Further, if $(G_{1,2,3})^\circ$ is nonabelian, then it is nonnilpotent in characteristic $3$. 
\end{lemma}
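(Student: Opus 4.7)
The plan is to exploit the $\Sigma$-group $\Sigma := \Sigma(1,\ldots,n-1;n)$. By Lemma~\ref{lem.SigmaGroupLemma} it acts faithfully on $A := (G_{1,\ldots,n-1})^\circ$, a connected group of rank $2$, and since $G_{1,\ldots,n}$ is finite by generic sharp$^\circ$ $n$-transitivity, the quotient $\Sigma/G_{1,\ldots,n}$ is isomorphic to $\sym(n-1)$. Thus $\Sigma$ is a finite extension of $\sym(n-1)$ acting faithfully on the rank-$2$ connected solvable (by Fact~\ref{fact.rankTwoGroups}) group $A$. The argument then splits according to whether $A$ is nilpotent.

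Suppose first that $A$ is nonnilpotent. By Fact~\ref{fact.rankTwoGroups}(3), every definable automorphism of $A$ of finite order is inner, so the finite group $\Sigma$ embeds into $A/Z(A) \cong K^+ \rtimes K^\times$ for some algebraically closed field $K$. This target is metabelian, forcing the quotient $\sym(n-1)$ of $\Sigma$ to be metabelian; since $\sym(4)$ has derived length three, this yields $n \le 4$. When $n = 4$, the quotient $\sym(3) \cong \integers/3\integers \rtimes \integers/2\integers$ must arise as a subquotient of a finite subgroup of $K^+ \rtimes K^\times$, and since every such finite subgroup has the form $U \rtimes C$ with $U \le K^+$ a $\charac(K)$-elementary abelian subgroup and $C \le K^\times$ cyclic, the element of order $3$ in $\sym(3)$ can only lift into $U$, forcing $\charac(K) = 3$.

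If instead $A$ is nilpotent, I would adapt the proof of Lemma~\ref{lem.symmOnRankTwo}. When $A$ is not $\Sigma$-minimal there is a rank-$1$ $\Sigma$-invariant normal subgroup $N \lhd A$; Lemma~\ref{lem.symmOnRankOne} applied to $\Sigma$ acting on $N$ and on $A/N$, combined with the normal subgroup structure of $\sym(n-1)$, pushes enough of $\Sigma'$ into each of the two kernels that the three-subgroup lemma produces a nontrivial subgroup acting trivially on $A$, contradicting faithfulness for $n - 1 \ge 4$. When $A$ is $\Sigma$-minimal, $A$ is abelian, and the Klein four centralizer analysis from Lemma~\ref{lem.symmOnRankTwo} contradicts faithfulness for $n \ge 5$. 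For $n = 4$, the same ideas (with $\sym(3)$ in place of $\sym(4)$, exploiting the exponent $p$ or $p^2$ structure from Fact~\ref{fact.rankTwoGroups}) rule out a nilpotent nonabelian $A$, giving the second part of the conclusion.

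The main obstacle will be the presence of the finite kernel $G_{1,\ldots,n}$: it is a finite extension of $\sym(n-1)$, not $\sym(n-1)$ itself, that acts on $A$, which complicates a direct invocation of Lemma~\ref{lem.symmOnRankTwo}. In particular, the normal subgroup lattice of $\sym(n-1)$—especially the role of $V_4 \lhd \sym(4)$ when $n = 5$—must be navigated carefully so that the derived-length identities needed for the three-subgroup lemma survive the passage through the finite kernel.
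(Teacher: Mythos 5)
Your nonnilpotent case is essentially the paper's argument: there too one uses Fact~\ref{fact.rankTwoGroups} to embed the (finite) $\Sigma$-group into $H/Z(H)\cong K^+\rtimes K^\times$, observes that this group is metabelian while a cover of $\sym(4)$ is not, and extracts the characteristic-$3$ conclusion for $n=4$ exactly as you do. (The paper takes $\Sigma(1,2,3,4;5,\ldots,n)$, which covers $\sym(4)$ uniformly for all $n\ge 5$, rather than your $\Sigma(1,\ldots,n-1;n)$, but that difference is cosmetic.) Note, too, that going through the nonnilpotent case via Fact~\ref{fact.rankTwoGroups}(3) sidesteps the finite-kernel worry you raise at the end, since that statement is about finite-order automorphisms directly; in the abelian case the paper kills the kernel first via Fact~\ref{fact.GenNTransAbelian}, so that $\Sigma\cong\sym(4)$ on the nose before Lemma~\ref{lem.symmOnRankTwo} is invoked.

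The genuine gap is the nilpotent nonabelian case at $n=4$, which you must handle to prove the second assertion of the lemma. Your plan to rerun Lemma~\ref{lem.symmOnRankTwo} with $\sym(3)$ in place of $\sym(4)$ cannot work, because every tool in that proof is powered by $\sym(4)$ specifically: if $A$ is nilpotent nonabelian of rank $2$, then $A'=Z^\circ(A)$ is a canonical rank-$1$ invariant subgroup $N$, and Lemma~\ref{lem.symmOnRankOne} places a cover of $\alt(3)$ in the kernel of the actions on $N$ and on $A/N$; but $\alt(3)$ is abelian, so the three-subgroup lemma yields only $[[\Sigma',\Sigma'],A]=1$, which is vacuous, and $\sym(3)$ has no Klein four-subgroup for the centralizer analysis. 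Indeed, an order-$3$ element acting trivially on $N$ and on $A/N$ but nontrivially on $A$ is a live possibility precisely when $A$ is $3$-unipotent --- the very configuration the lemma must exclude. The paper rules out a nilpotent nonabelian $H$ by an entirely different mechanism, working one level up in the rank-$4$ group $B:=(G_{1,\ldots,n-2})^\circ$: if $B$ is nonsolvable, Fact~\ref{fact.genericSharpTwoTransRankTwo} forces the $p$-unipotent rank-$2$ group $H$ to embed, modulo a finite central intersection, into a group of the form $\pssl_2$, which is impossible; if $B$ is solvable, then $H\le F(B)$ by \cite[I,~Corollary~8.4]{ABC08}, and since the $B$-conjugates of $H$ generate $B$ by Lemma~\ref{lem.GenericTwoTransPS}, $B$ is nilpotent, contradicting Proposition~\ref{prop.TransBoundSolvable}. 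That argument applies verbatim with $B=(G_{1,2})^\circ$ when $n=4$; some such appeal to the ambient rank-$4$ group is needed, as the $\sym(3)$-representation theory alone is not enough.
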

\begin{proof}
Assume  $n \ge 5$. Set $\Sigma:=\Sigma(1,2,3,4;5,\ldots,n)$, $B:= (G_{1,\ldots,n-2})^\circ$, and $H:=(G_{1,\ldots,n-1})^\circ$. Then $\Sigma/G_{1,\ldots,n}\cong \sym(4)$, and  $\Sigma$ is finite since $G_{1,\ldots,n}$ is finite. The finiteness of $G_{1,\ldots,n}$ also implies that $\rk B = 4$  and $\rk H = 2$. 

If $H$ is abelian,  we may use Fact~\ref{fact.GenNTransAbelian} to see that $G_{1,\ldots,n} = 1$. This implies that $\Sigma = \sym(4)$, so Lemma~\ref{lem.symmOnRankTwo} provides a contradiction, via Lemma~\ref{lem.SigmaGroupLemma}. 

Next, if $H$ is nilpotent and nonabelian, then $H$ is $p$-unipotent for some prime $p$ by Fact~\ref{fact.rankTwoGroups}, and we distinguish two cases. If $B$ is nonsolvable, then $H$ embeds into a rank $3$ group of the form $\pssl_2$ by Fact~\ref{fact.genericSharpTwoTransRankTwo}, which is clearly a contradiction. However, if $B$ is solvable, then $H\le F(B)$ (see \cite[I,~Corollary~8.4]{ABC08}), so as the conjugates of $H$ in $B$ generate $B$ by Lemma~\ref{lem.GenericTwoTransPS}, we find that $B$ is nilpotent. Since $B$ acts generically $2$-transitively on $X$, this contradicts Lemma~\ref{prop.TransBoundSolvable}. 

Thus, it remains to treat the case when $H$ is nonnilpotent. We know that $\Sigma$ acts faithfully on $H$, so by Fact~\ref{fact.rankTwoGroups}, $\Sigma$ embeds into $H/Z(H) \cong K^+ \rtimes K^\times$ for some algebraically closed field $K$. Now, $H/Z(H)$ is $2$-step solvable while $\Sigma$ is not, so we conclude that $n = 4$.

For the final point, we now have that $n = 4$, so set $\Sigma:=\Sigma(1,2,3;4)$. If $H=(G_{1,2,3})^\circ$ is nonabelian, we may repeat the above arguments to see that $H$ is nonnilpotent and $\Sigma$ embeds into $H/Z(H)\cong K^+ \rtimes K^\times$. Since $\Sigma$ covers $\sym(3)$, the only way this can happen is if $K$ has characteristic $3$.
\end{proof}

\begin{lemma}
We have that $(G_{1,2})^\circ$ is solvable.
\end{lemma}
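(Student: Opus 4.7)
My plan is to argue by contradiction: suppose $B := (G_{1,2})^\circ$ is nonsolvable, and apply the structure theorem Fact~\ref{fact.genericSharpTwoTransRankTwo} to $B$, combined with the previous lemma on $H := (G_{1,2,3})^\circ$, to derive a contradiction.

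First I would set up the application of Fact~\ref{fact.genericSharpTwoTransRankTwo}. Let $Y \subseteq X$ be the generic $B$-orbit. Since $B$ fixes $1$ and $2$ and the non-generic orbits have lower rank, $Y$ is a generic subset of $X$ of rank $2$. The kernel of $B$'s action on $Y$ fixes a generic subset of the connected set $X$, and since $G$ acts transitively on $X$, Fact~\ref{fact.FixGenericSet} forces this kernel to be trivial. Thus $B$ is a connected nonsolvable group of rank $4$ acting faithfully, transitively, and generically $2$-transitively (this is inherited from the generic $4$-transitivity of $(X,G)$) on the rank-$2$ set $Y$. Fact~\ref{fact.genericSharpTwoTransRankTwo} then yields
\[
B = Z(B) \cdot Q, \qquad Z(B) \cong K^\times, \qquad Q \cong \pssl_2(K),
\]
for some algebraically closed field $K$; in particular $Z(B)$ is nontrivial.

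Next I would split on the dichotomy for $H$ coming from the previous lemma: either $H$ is abelian, or $H$ is nonnilpotent with $H/Z(H) \cong K_0^+ \rtimes K_0^\times$ in characteristic $3$. In the abelian case the $B$-action on $Y$ has abelian generic $1$-point stabilizer $H$, so Fact~\ref{fact.GenNTransAbelian} (with $n = 2$) immediately forces $B$ to be centerless, contradicting $Z(B) \ne 1$. This half of the argument is quick.

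The main obstacle is the nonabelian case. Here I would first pin down the embedding of $H$ in $B$: by Fact~\ref{fact.rankTwoGroups}, $Z(H)$ is finite, and $H \cap Z(B) \le Z(H)$, so the image $\bar H$ of $H$ in $\bar B := B/Z(B) \cong \psl_2(K)$ has rank $2$ and is therefore a Borel subgroup of $\psl_2(K)$; matching the $3$-unipotent radical $H'$ of $H$ with the unipotent radical of $\bar H$ then pins down $\charac K = 3$. To produce the contradiction, I would exploit the rich fixed-point structure of $H$ on $X$: since $Z(B)$ centralizes $H$ and $Z(B)\cap H$ is finite, the orbit $Z(B)\cdot 3$ is a rank-$1$ subset of $\mathrm{Fix}(H)$, and by applying the faithful action of $\Sigma(1,2,3;4)/G_{1,2,3,4}\cong \sym(3)$ on $H$ (Lemma~\ref{lem.SigmaGroupLemma}), one obtains analogous rank-$1$ orbits through $1$ and $2$ arising from the conjugate subgroups $B_{13} := (G_{1,3})^\circ$ and $B_{23} := (G_{2,3})^\circ$ and their respective centers. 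Confronting this large fixed-point data and the rigid $\sym(3)$-equivariant structure on $H$ (which, by Fact~\ref{fact.rankTwoGroups}(3), must be realized by inner automorphisms of $H$) with the fact that a Borel of $\psl_2(K)$ is self-normalizing and $\psl_2$ only has a $\mathbb{Z}/2$ Weyl group should produce the contradiction—most likely by identifying a Moufang set on the resulting rank-$1$ invariant set whose root groups and Hua subgroups are abelian, then applying Fact~\ref{fact.Moufang} and observing that the forced identification is inconsistent with the char-$3$ structure of $H$.
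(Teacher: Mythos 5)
Your opening moves are the same as the paper's: assume $B:=(G_{1,2})^\circ$ is nonsolvable, apply Fact~\ref{fact.genericSharpTwoTransRankTwo} to get $B=Z(B)\cdot Q$ with $Z(B)\cong K^\times$ and $Q\cong\pssl_2(K)$, observe that $Z(B)\cap H$ is trivial (the paper gets this from Lemma~\ref{lem.HKlemma}, since $Z(B)$ is normalized by $B$, fixes $3$, and $B$'s orbit on $3$ is generic), and conclude that $H$ embeds as a rank-$2$ (hence Borel) subgroup of $B/Z(B)$, which together with the previous lemma forces $\charac K=3$. Your separate treatment of the abelian case via Fact~\ref{fact.GenNTransAbelian} is correct but redundant: the embedding of $H$ into $B/Z(B)$ already shows $H$ is a nonabelian Borel, so no case split is needed.

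The problem is the endgame, which you do not actually prove: ``should produce the contradiction---most likely by identifying a Moufang set\dots'' is an admission that the decisive step is missing, and the route you gesture at does not obviously close. The rank-$1$ subsets $Z(B)\cdot 3$, $Z(B_{13})\cdot 2$, $Z(B_{23})\cdot 1$ of $\fp(H)$ give no contradiction by themselves (only a \emph{generic} fixed set is forbidden by Fact~\ref{fact.FixGenericSet}), there is no identified $2$-transitive action on a rank-$1$ set, and the candidate ``root groups'' are copies of $K^\times$ fixing \emph{two} points each, so the hypotheses of Fact~\ref{fact.Moufang} are nowhere in sight. The paper's actual contradiction runs through a quite different and more delicate mechanism: take a $2$-element $\alpha\in\Sigma(1,2;3,4)\setminus B$ with $\alpha^2\in B$; since $\alpha$ acts nontrivially on the nonnilpotent rank-$2$ group $H$ in characteristic $3$, Fact~\ref{fact.rankTwoGroups}(3) forces $\alpha$ to act as an inner automorphism and hence to centralize a good torus $T\le H$. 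One then shows $\alpha$ inverts $Z(B)$ --- this step needs external input (\cite[Lemma~3.11]{BoCh08}, odd type via Lemma~\ref{lem.GenericFourImpliesVDP}, and \cite[Corollary~5.16]{BoCh08}) to rule out $\alpha$ centralizing the rank-$2$ torus $TZ(B)$, which would put $\alpha$ inside $B$. Finally, $\alpha$ inverting $Z(B)$ gives $C_B(\alpha)\le Q$, while $T\le C_B(\alpha)$ and $T\not\le Q$ because $H\cap Q$ is the rank-$1$ normal (unipotent) subgroup of $H$. None of this toral/$2$-element analysis appears in your sketch, so as written the proof has a genuine gap at its crux.
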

\begin{proof}
Assume that  $B:= (G_{1,2})^\circ$ is nonsolvable; set $H:=(G_{1,2,3})^\circ$. By Fact~\ref{fact.genericSharpTwoTransRankTwo}, then there is an algebraically closed field $K$ for which $B =Z(B) \cdot Q$ with  $Z(B)\cong K^\times$ and $Q\cong \pssl_2(K)$. Set $Z:=Z(B)$. As $Z$ is central in $B$,  $Z$ intersects $H$ trivially, and $H$ embeds into $B/Z$. Since $H$ has rank $2$,  this implies that $H$ is nonnilpotent with tori isomorphic to $K^\times$.

Note that $\Sigma(1,2;3,4)/G_{1,2,3,4}$ has order $2$, so by  \cite[I,~Lemma~2.18]{ABC08}, $\Sigma(1,2;3,4)$ contains $2$-elements swapping $1$ and $2$. Let $\alpha$ be a $2$-element of $\Sigma(1,2;3,4) - B$ such that $\alpha^2 \in B$; we do not rule out the possibility that $\alpha\in G_{1,2,3,4}$. Since $H$  is nonnilpotent in  characteristic $3$ and $\alpha$ is a $2$-element acting nontrivially on $H$ (by Lemma~\ref{lem.SigmaGroupLemma}), we may apply Fact~\ref{fact.rankTwoGroups} to see that $\alpha$ must centralize some good torus $T$ in $H$. 

We now claim that $\alpha$ inverts $Z$. If not, then, as $Z$ is connected of rank $1$, $\alpha$ centralizes $TZ$, which is a good torus of rank $2$. Since $\rk X =2$, \cite[Lemma~3.11]{BoCh08} implies that the maximal $2$-torus $S$ of $TZ$ is in fact a maximal $2$-torus of $G$. Further, by Lemma~\ref{lem.GenericFourImpliesVDP}, it must be that $G$ has odd type, so we may apply \cite[Corollary~5.16]{BoCh08} to see that $S$ contains all $2$-elements in $C(S)$. This implies that $\alpha \in S <B$, which contradicts our choice of $\alpha$. We conclude that $\alpha$ inverts $Z$.

Now, as $Z$ covers $B/Q$, we see that $C_B(\alpha) \le Q$. However, $H\cap Q$ has rank $1$ and is normal in $H$, so $T$ is not contained in $Q$. Since $T$ is centralized by $\alpha$,  we have a contradiction.
\end{proof}

The next lemma shows that $G_{1,2,3}$ is abelian-by-finite, so in light of Fact~\ref{fact.GenNTransAbelian}, this will complete the proof of Proposition~\ref{prop.transBound}.

\begin{lemma}\label{lem.ThreePointStabAbelian}
We have that $(G_{1,2,3})^\circ$ is abelian.
\end{lemma}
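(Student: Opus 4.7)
The plan is to assume $H := (G_{1,2,3})^\circ$ is nonabelian and derive a contradiction by manufacturing a nontrivial element $c$ of $C_G(H)$ whose action on $X$ has only finite orbits on $\{1,2,3,4\}$. Since $C_G(c)^\circ$ is connected and preserves every $c$-orbit, it will then fix $1$, $2$, $3$, $4$ pointwise, forcing $C_G(c)^\circ \le G_{1,2,3,4}$; this is finite by generic sharp$^\circ$ $4$-transitivity, so $C_G(c)^\circ = 1$, and the inclusion $H \le C_G(c)^\circ = 1$ contradicts $\rk H = 2$.

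Setting the stage, by the previous lemma $H$ is nonnilpotent, so Fact~\ref{fact.rankTwoGroups} gives $H = H' \rtimes T$ with $H'$ rank-$1$ and $3$-unipotent, $T$ a good torus of rank $1$ containing the (necessarily finite) center $Z(H)$, and $H/Z(H) \cong K^+ \rtimes K^\times$ for some algebraically closed field $K$ of characteristic $3$; moreover every definable automorphism of $H$ of finite order is inner. Using Lemma~\ref{lem.SigmaGroupLemma} together with innerness, I would produce a homomorphism $\Sigma := \Sigma(1,2,3;4) \to H/Z(H)$ whose kernel, consisting of elements of $\Sigma$ that centralize $H$, is trivial by Lemma~\ref{lem.HKlemma}(1) applied at $4$ (whose $H$-orbit is generic). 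Thus $\Sigma \hookrightarrow H/Z(H)$.

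Next I would select a $2$-element $\alpha \in \Sigma$ above the transposition $(12) \in \sym(3)$ (available via \cite[I,~Lemma~2.18]{ABC08}). The image $\bar\alpha$ is an involution of $K^+ \rtimes K^\times$, hence of the form $(u,-1)$, and every such element is $K^+$-conjugate to $(0,-1)$. Replacing $T$ by a suitable $H'$-conjugate, we may therefore arrange that $\alpha$ acts on $H$ as conjugation by some $s \in T$ with $\bar s = (0,-1)$. In particular $\alpha$ centralizes $T$, and since it fixes $4$ while $T$ acts transitively on $T \cdot 4$, it follows immediately that $\alpha$ fixes the rank-$1$ set $T \cdot 4$ pointwise.

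To finish, I would pick a $2$-element $j \in T$ with $\bar j = (0,-1)$: any lift of the involution $-1 \in K^\times$ to $T$ has finite order dividing $2|Z(H)|$, and raising it to the odd part of its order yields such a $j$. Since $\bar s = \bar j$ forces $sj^{-1} \in Z(H)$, the element $c := \alpha j^{-1}$ acts trivially on $H$, i.e. $c \in C_G(H)$; it is nontrivial because $c \cdot 1 = \alpha \cdot 1 = 2$; and it is a $2$-element as a product of commuting $2$-elements (note $j \in T$ and $\alpha$ centralizes $T$). Because $\alpha$ fixes $T \cdot 4$ pointwise, $c \cdot 4 = j^{-1} \cdot 4$, and iteration shows that the $c$-orbit of $4$ is contained in the finite set $\langle j \rangle \cdot 4$. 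Combined with the obviously finite orbits $\{1,2\}$ and $\{3\}$, this completes the setup for the connectedness argument sketched above. The main delicacy, and the step requiring the most care, lies in the selection and properties of $j$ in the third paragraph: one must ensure that $j$ can be taken to be a genuine $2$-element commuting with $\alpha$, for only then is $c$ both a $2$-element and guaranteed to centralize $H$.
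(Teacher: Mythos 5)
Your construction of the element $c$ is mostly sound: the embedding $\Sigma(1,2,3;4)\hookrightarrow H/Z(H)$ via inner automorphisms, the conjugation of $\bar\alpha$ into the torus, and the choice of a $2$-element $j\in T$ with $\bar j=\bar s$ so that $c=\alpha j^{-1}$ centralizes $H$, swaps $1$ and $2$, and is nontrivial --- all of this works (one small imprecision: $\bar\alpha$ is only a $2$-element of $K^+\rtimes K^\times$, not necessarily an involution, since $\alpha^2\in G_{1,2,3,4}$ need not act trivially on $H$; so its semisimple part is a $2$-power root of unity rather than $-1$. This is repairable, e.g.\ by lifting that root of unity directly to a $2$-element of the divisible group $T$.) The fatal problem is the very first sentence of your plan: the claim that $C_G(c)^\circ$ ``preserves every $c$-orbit.'' An element $g$ centralizing $c$ satisfies $g\cdot(\langle c\rangle x)=\langle c\rangle\cdot(gx)$, so $C_G(c)$ only \emph{permutes} the $\langle c\rangle$-orbits; it does not stabilize each one. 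Your own construction exposes the failure: $H\le C_G(c)^\circ$ because $c$ centralizes the connected group $H$, and the $H$-orbit of $4$ is generic of rank $2$, so $C_G(c)^\circ$ manifestly does not fix $4$ and is nowhere near being contained in $G_{1,2,3,4}$. The ``contradiction'' you reach is therefore a consequence of this false intermediate claim, not of the assumption that $H$ is nonabelian, and the argument proves nothing. (The same false principle would, for instance, apply to a diagonal involution in $\pgl_3(K)$ fixing the three coordinate points and having a $2$-element orbit on $\langle e_1+e_2+e_3\rangle$, yet its connected centralizer contains the full maximal torus, which moves that point generically.)

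I do not see a quick repair from the existence of a nontrivial $c\in C_G(H)$ alone; one would need to exploit the $C_G(c)$-invariance of the fixed-point set of $c$ (or of the sets of points with $\langle c\rangle$-orbit of a given finite size), and those sets need not be finite, so connectedness does not force pointwise fixing. For comparison, the paper's proof takes an entirely different route: it forms the unipotent radicals $U_{1,2,3},U_{1,2,4},U_{1,3,4},U_{2,3,4}$ of the generic $3$-point stabilizers, shows $\langle U_{1,2,3},U_{1,2,4}\rangle$ is abelian and normal in $(G_{1,2})^\circ$ using rank-$1$ quotients and Fact~\ref{fact.Hru}, promotes this to $\langle U_{1,2,3},U_{1,2,4},U_{1,3,4}\rangle=F^\circ((G_1)^\circ)$, and finally contradicts virtual definable primitivity via the normalizer of that group. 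If you want to keep your construction, you would still need a genuinely new idea to convert the nontrivial element of $C_G(H)$ into a contradiction.
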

\begin{proof}
Assume not. Set $H:=(G_{1,2,3})^\circ$, $B:= (G_{1,2})^\circ$, and $P:= (G_{1})^\circ$. Then we know $B$ is solvable and $H$ is nonnilpotent in characteristic $3$. We will use the  unipotent radical of $H$ to build a large unipotent subgroup $U$ of $G$, not contained in any point stabilizer, with the property that $(U\cap P)^\circ \triangleleft P$. Further, $(U\cap P)^\circ$ will be normal in $U$, and $N((U\cap P)^\circ)$ will provide a contradiction to virtual definable primitivity.

Let $U_{1,2,3} := F^\circ(G_{1,2,3})$ be the unipotent radical of $(G_{1,2,3})^\circ$, and similarly define $U_{1,2,4}$, $U_{1,3,4}$, and $U_{2,3,4}$. Set $U:=\langle U_{1,2,3},U_{1,2,4},U_{1,3,4},U_{2,3,4}\rangle$ and $A:= \langle U_{1,2,3},U_{1,2,4}\rangle$. We now work to show that $A$ is abelian and normal in $B$. This will also imply that $U$ is abelian since every pair of subgroups in $\{U_{1,2,3},U_{1,2,4},U_{1,3,4},U_{2,3,4}\}$ generate a subgroup that is conjugate to $A$.

Set $F:=F(B)$, and let $Z$ be the connected center of $F$. Since $B$ is solvable, we have that $A\le F$ by \cite[I,~Lemma~8.36]{ABC08}.  Let $B_3$ be the stabilizer of $3$ in $B$. If $B_3Z = B$, then the orbit of $Z$ on $3$ has rank $2$, and as $Z$ centralizes $U_{1,2,3}$, we find that $U_{1,2,3}=1$ by Lemma~\ref{lem.HKlemma}. Thus $B_3Z$ has rank $3$, so $B_3Z$ determines a rank $1$ quotient of the action of $B$ on its generic orbit (see the discussion preceding Fact~\ref{fact.primQuotient}). As $B$ acts generically $2$-transitively on $X$, Lemma~\ref{lem.QuotientGenericTrans} implies that $B$ acts generically $2$-transitively on the quotient as well. By Fact~\ref{fact.Hru}, the kernel of the quotient must have rank $2$ and, hence, must intersect $H$ in an $H$-normal subgroup of rank $1$. Thus, $U_{1,2,3}$ is in the kernel. Then $U_{1,2,4}$ is as well, and we find that the connected component of the kernel must be $A$. Hence, $A$ is normal in $B$, and since $A$ is a connected rank $2$ nilpotent group with two distinct connected rank $1$ subgroups, we find that $A$ is abelian by Fact~\ref{fact.RankTwoAbelian}.

We now move up to $P$; here we show that $\widetilde{A}:=\langle A, U_{1,3,4}\rangle=F^\circ(P)$. Let $N:=N_{P}(A)$. By our work above, $N\ge\langle B, U_{1,3,4}\rangle$, and since $U_{1,3,4}$ is not in $B$, we find that $N$ has infinite index over $B$. Now, the orbit of $P$ on $2$ is generic, so as $A$ fixes $2$, $A$ is not normal in $P$ by Lemma~\ref{lem.HKlemma}. Thus, $N$ has rank $5$. Let $K$ be the kernel of the action of $P$ on the conjugates of $A$ in $P$; this can be interpreted as a rank $1$ set since $N$ has corank $1$ in $P$. As $P$ acts generically $3$-transitively on its generic orbit, which contains $2$, $3$, and $4$, $P$ realizes every permutation of  $\{2,3,4\}$. Thus, $\widetilde{A}$ normalizes three distinct $P$-conjugates of $A$, so $\widetilde{A} \le K$ by Fact~\ref{fact.Hru}. Since $N$ contains $B$ and the action of $P$ on the cosets of $B$ is generically $3$-transitive by Lemma~\ref{lem.CoverGenericTrans}, we find that the action of $P$ on the conjugates of $A$ is also generically $3$-transitive, so by Fact~\ref{fact.Hru} and rank considerations, $K^\circ = \widetilde{A} = F^\circ(P)$. 

Now we are done. Since, $U_{2,3,4}$ is not contained in $P$, we find that $N(\widetilde{A})$, which contains $G_1$ and $U_{2,3,4}$, has infinite index over $G_1$. As $\widetilde{A}$ fixes $1$, $\widetilde{A}$ is not normal in $G$ by Lemma~\ref{lem.HKlemma}, so as $G$ is connected, we have a contradiction to  virtual definable primitivity.
\end{proof}

\subsection{Structure of point stabilizers}
Next, we work to expose the structure of $G_{1,2}$ and $G_{1,2,3}$; note that Proposition~\ref{prop.transBound} implies that $G_{1,2}$ and $G_{1,2,3}$ are connected groups of rank $4$ and $2$, respectively. We  prove the following.

\begin{proposition}\label{prop.PSStructure}
We have $G_{1,2} =F^\circ(G_{1,2})\rtimes G_{1,2,3}$ with $F^\circ(G_{1,2})$ unipotent and abelian and $G_{1,2,3}$  a self-centralizing (hence maximal) good torus of $G$. 
\end{proposition}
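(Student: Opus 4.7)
Set $B := G_{1,2}$, $T := G_{1,2,3}$, and $U := F^\circ(B)$. By Proposition~\ref{prop.transBound}, $B$ is connected solvable of rank $4$ and $T\le B$ is connected abelian of rank $2$; also $G_{1,2,3,4}=1$, so $T$ acts regularly on its generic orbit $\mathcal{O}$ on $X$, the orbit containing $4$. The plan is to establish, in order: (i) $\rk U = 2$; (ii) $B = U \rtimes T$; (iii) $U$ is abelian and unipotent; (iv) $T$ is a self-centralizing good torus of $G$.

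For (i) and (ii): Fact~\ref{fact.rankFourGroups} forces $\rk U\ge 2$ (the cases $\rk F < 2$ require nonsolvability), and Proposition~\ref{prop.TransBoundSolvable} forbids $B$ from being nilpotent (since $B$ is solvable and acts generically $2$-transitively on its rank-$2$ orbit through $3$), giving $\rk U \in \{2,3\}$. To rule out $\rk U=3$: in that case $B/U$ is connected abelian of rank $1$ and $\rk(T\cap U)=1$; climbing the upper central series of the nilpotent $U$ one extracts a rank-$1$ connected subgroup $S\le Z(U)\cap T$, normalized by $T$ (since $S\le T$ abelian) and by $U$ (since $S\le Z(U)$), hence normal in $B=UT$. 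Since $S$ fixes $1,2,3$ but is not normal in $G$ by Lemma~\ref{lem.HKlemma}(1), the resulting $B$-invariant structure---combined with the virtual definable primitivity of the action from Corollary~\ref{cor.GenNTransAbelian}, Lemma~\ref{lem.QuotientGenericTrans}, and Fact~\ref{fact.Hru}---is incompatible with generic $4$-transitivity. Once $\rk U=2$, a parallel and easier argument gives $T\cap U=1$, and a dimension count yields $B=U\rtimes T$.

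For (iii): Lemma~\ref{lem.HKlemma}(2) with $H=T$ (fixing $3$) and $K=U$ (whose orbit of $3$ has rank $\rk U - \rk(U\cap T)=2$) shows that $T$ acts faithfully on $U$. If $U$ were nonabelian, Fact~\ref{fact.RankTwoAbelian}(3) would yield a unique connected rank-$1$ definable subgroup $Z(U)$, which is $T$-invariant. A two-step analysis of $T$ on $Z(U)$ and $U/Z(U)$, each of rank $1$, combined with faithfulness of $T$ on $U$, forces each kernel to have rank $1$ and intersect trivially, and the commutator map $U \to Z(U)$ induced by the kernel acting trivially on $U/Z(U)$ is then inconsistent with that structure. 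Fact~\ref{fact.RankTwoAbelian}(3) thus gives $U$ abelian. A nontrivial decent torus in $U$ would be characteristic, hence normal in $B$, and Fact~\ref{fact.RankTwoAbelian}(1) applied to $B$ normalizing it would force $B$ abelian, contradicting nonnilpotence; so $U$ is also unipotent.

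For (iv): $X$ is connected of degree $1$, so $\mathcal{O}$ is the unique rank-$2$ $T$-orbit on $X$. For $c\in C_G(T)$, $c\cdot\mathcal{O}$ is another rank-$2$ $T$-orbit and hence equals $\mathcal{O}$; identifying $\mathcal{O}$ with $T$ via the regular action, $c$ acts on $\mathcal{O}$ as left translation by a unique $t_c\in T$, so $c t_c^{-1}$ fixes $\mathcal{O}$ pointwise, and Fact~\ref{fact.FixGenericSet} then yields $c=t_c\in T$, giving $C_G(T)=T$. Since $G$ has even or odd type by Lemma~\ref{lem.GenericFourImpliesVDP}, the self-centralizing connected abelian $T$ of rank $2$ contains no nontrivial unipotent subgroup, so $T$ is the definable hull of its torsion subgroup; this property descends to every definable subgroup, making $T$ a good torus---maximal as such by self-centralization. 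The main obstacle is the exclusion of $\rk U=3$ in step (i): translating the internal rank-$1$ normal subgroup of $B$ into a global rank-$1$ contradiction with Fact~\ref{fact.Hru} is the technically delicate step.
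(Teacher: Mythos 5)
Your outline tracks the paper's (bound $\rk F^\circ(G_{1,2})$, split $G_{1,2}$, show $F^\circ(G_{1,2})$ abelian and unipotent, identify $G_{1,2,3}$ as a self-centralizing good torus), and your direct derivation of $C_G(G_{1,2,3})=G_{1,2,3}$ from the uniqueness of the rank-$2$ orbit is a correct re-proof of part of Fact~\ref{fact.GenNTransAbelian}. However, the two load-bearing steps are not actually proved. The first is the exclusion of $\rk F^\circ(G_{1,2})=3$. You claim to ``extract a rank-$1$ connected subgroup $S\le Z(U)\cap T$'' by climbing the upper central series, but $(T\cap U)^\circ$ is not normal in $U$, so there is no reason for it to meet $Z(U)$; and the ensuing contradiction is left as ``incompatible with generic $4$-transitivity.'' (Had you produced such an $S\trianglelefteq B=UT$ fixing $3$, the first part of Lemma~\ref{lem.HKlemma} would finish immediately, since the orbit of $B$ on $3$ is generic---no appeal to Fact~\ref{fact.Hru} needed; the fact that you reach for the heavier machinery suggests the construction of $S$ is not actually available.) You flag this yourself as the delicate step, and it is: the paper's argument passes to the rank-$1$ quotient of the action of $B$ determined by $HZ^\circ(F)$, shows its kernel is torsion-free abelian, deduces that $H$ contains a $3$-torus of Pr\"ufer $3$-rank $1$ whose definable hull is all of $H$, and then contradicts the faithfulness of $\Sigma(1,2,3;4)\cong\sym(3)$ using its order-$3$ elements. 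None of that is recoverable from your sketch.

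The second gap is the good-torus claim. The inference ``$G$ has even or odd type, hence the self-centralizing $T$ contains no nontrivial unipotent subgroup'' is a non sequitur: even/odd type constrains only the Sylow $2$-structure and says nothing about $p$-unipotent subgroups for odd $p$, nor about torsion-free divisible subgroups (which are unipotent in this paper's sense); note also that self-centralizing connected abelian groups can perfectly well be unipotent, as $K^+\le\agl_1(K)$ shows. The paper instead rules out the possibility that $B$ acts virtually definably primitively on its generic orbit (via Fact~\ref{fact.VprimRankAbelian} and an $L$-linearization argument against $\Sigma(1,2;3,4)$), then uses the resulting rank-$1$ quotient to exhibit $H$ as an extension of good tori and concludes by \cite[I,~Lemma~4.21]{ABC08}. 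Two smaller problems: Fact~\ref{fact.RankTwoAbelian}(1) is about connected groups of rank $2$ and cannot be ``applied to $B$'' of rank $4$ (the paper derives unipotence of $F$ from $H$ being self-centralizing, which must therefore come first); and the ``two-step analysis'' for abelianity of $U$ is too compressed to verify---the intersection of the two kernels may be finite, and the torsion bookkeeping the paper performs there is essential to make the commutator-map argument run.
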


\begin{lemma}
It can not be that $G_{1,2,3}$ has two distinct definably characteristic connected subgroups of rank $1$. 
\end{lemma}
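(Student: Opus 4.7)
The plan is to exploit the faithful action of the $\Sigma$-group on $G_{1,2,3}$ together with Lemma~\ref{lem.symmOnRankOne}. By Proposition~\ref{prop.transBound}, the action of $G$ on $X$ is generically sharply $4$-transitive, so $G_{1,2,3,4} = 1$ and $\Sigma := \Sigma(1,2,3;4) \cong \sym(3)$. Moreover, Fact~\ref{fact.GenNTransAbelian} ensures that $G_{1,2,3}$ is already connected (and abelian of rank $2$). Then Lemma~\ref{lem.SigmaGroupLemma} gives that $\Sigma$ acts faithfully on $G_{1,2,3}$ by conjugation, hence by definable automorphisms.

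Suppose toward contradiction that $G_{1,2,3}$ contains two distinct definably characteristic connected subgroups $A$ and $B$ of rank $1$. Because $A$ and $B$ are definably characteristic, both are $\Sigma$-invariant, and the $\Sigma$-action restricts definably to each of them. Since $G_{1,2,3}$ is connected of rank $2$ while $A$ and $B$ are distinct connected subgroups of rank $1$, the subgroup $\langle A, B\rangle$ is connected of rank $2$ and therefore equals $G_{1,2,3}$.

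Now apply Lemma~\ref{lem.symmOnRankOne} to the $\Sigma$-actions on $A$ and on $B$: it shows that $\alt(3)$ lies in the kernel of each. In particular, $\alt(3)$ centralizes both $A$ and $B$, and hence centralizes $\langle A, B\rangle = G_{1,2,3}$. This contradicts the faithfulness of the $\Sigma$-action on $G_{1,2,3}$ supplied by Lemma~\ref{lem.SigmaGroupLemma}. The argument is genuinely short; the only point worth double-checking is that the two invariant subgroups generate all of $G_{1,2,3}$, and this is automatic from the rank data, so I do not anticipate any real obstacle.
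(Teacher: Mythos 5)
Your proof is correct and is essentially the paper's own argument: both derive $\Sigma(1,2,3;4)\cong\sym(3)$ from generic sharp $4$-transitivity, apply Lemma~\ref{lem.symmOnRankOne} to see that $\alt(3)$ centralizes each of the two characteristic rank~$1$ subgroups and hence all of $G_{1,2,3}$, and contradict the faithfulness given by Lemma~\ref{lem.SigmaGroupLemma}. You merely spell out a few steps (connectedness and abelianity of $G_{1,2,3}$, and that the two subgroups generate) that the paper leaves implicit.
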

\begin{proof}
Suppose that $A$ and $B$ are distinct definably characteristic connected  rank $1$ subgroups of $G_{1,2,3}$. Let $\Sigma:=\Sigma(1,2,3;4)$. Then $\Sigma=\sym(3)$. By Lemma~\ref{lem.symmOnRankOne}, the commutator subgroup $\Sigma'$ centralizes both $A$ and $B$, so $\Sigma'$ centralizes  $G_{1,2,3}$. However, this contradicts Lemma~\ref{lem.SigmaGroupLemma}.
\end{proof}

\begin{lemma}\label{lem.threePointStab}
We have that $G_{1,2} =F^\circ(G_{1,2})\rtimes G_{1,2,3}$ with $F^\circ(G_{1,2})$ abelian.
\end{lemma}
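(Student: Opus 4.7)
Set $B := G_{1,2}$, $T := G_{1,2,3}$, and $F := F^\circ(B)$. By Proposition~\ref{prop.transBound} and Fact~\ref{fact.GenNTransAbelian}, $B$ is connected solvable of rank $4$, $T$ is connected abelian of rank $2$, and $T = C_G(T)$; we must establish $\rk F = 2$, $F \cap T = 1$, and that $F$ is abelian. The proof hinges on the following \emph{key observation}: the $B$-orbit $Y$ of $3$ is connected (as $Y \cong B/T$ with both groups connected) and of rank $2 = \rk X$ by generic $4$-transitivity, so $(Y,B)$ is transitive with point stabilizer $T$, and Lemma~\ref{lem.HKlemma} then forces any definable subgroup of $T$ normalized by $B$ to be trivial. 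Two consequences are immediate: $Z(B) = 1$, as $Z(B) \le C_B(T) = T$ is centralized by $B$; and $B$ is not nilpotent, since otherwise its nontrivial center would contradict the first. In particular $F \ne B$, so $\rk F \le 3$, and Fact~\ref{fact.rankFourGroups} applied to the solvable $B$ eliminates $\rk F \in \{0,1\}$, leaving $\rk F \in \{2,3\}$.

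To eliminate $\rk F = 3$: since $\rk B/F = 1$ forces $\rk(T \cap F) \ge 1$, and since $T \le F$ would give the forbidden $B$-normal subgroup $Z(F)^\circ \le C_F(T) = T \cap F = T$, we have $(T \cap F)^\circ$ of rank exactly $1$ and $Z(F)^\circ \not\le T$. The plan is then to run a case analysis on $\rk Z(F)^\circ$ along the characteristic chain $1 < Z(F)^\circ \le F$: in each configuration, the $T$-action on the relevant characteristic subgroups, combined with the rigid property $T = C_B(T)$, must produce a nontrivial connected $B$-normal subgroup of $T$, contradicting the key observation.

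With $\rk F = 2$, we show $F$ is abelian. If not, Fact~\ref{fact.rankTwoGroups} makes $F$ a $2$-step nilpotent $p$-group with $F' = Z(F)^\circ$ of rank $1$, and Fact~\ref{fact.RankTwoAbelian}(3) then makes $F'$ the unique nontrivial proper connected subgroup of $F$. But the $T$-action on $F$ is faithful (any kernel would be a $B$-normal connected subgroup of $T$, hence trivial), and propagating it through the rank-$1$ sections $F'$ and $F/F'$ must produce a second connected rank-$1$ subgroup of $F$, a contradiction. Once $F$ is abelian, $(F \cap T)^\circ$ is centralized by both $F$ and $T$; a further application of the key observation (using the abelianness of $F$ to ensure the normal closure of $(F \cap T)^\circ$ stays inside $T$) forces $(F \cap T)^\circ = 1$, and the rank identity $\rk FT = \rk F + \rk T - \rk(F \cap T) = 4 = \rk B$ gives $FT = B$; hence $F \cap T \le Z(B) = 1$, and $B = F \rtimes T$.

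The principal obstacle will be the case analysis ruling out $\rk F = 3$, where one must carefully extract a connected $B$-normal subgroup of $T$ from the interaction of $T$ with the characteristic structure of a rank-$3$ nilpotent $F$ (abelian, or $2$-step nilpotent with $\rk Z(F)^\circ \in \{1,2\}$); this requires delicate handling of commutators, centralizers, and the fact that $T$ sits inside $B$ as a self-centralizing abelian group meeting $F$ in a subgroup of rank exactly $1$.
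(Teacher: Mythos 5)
Your reductions are sound, and your ``key observation'' is precisely how the paper deploys Lemma~\ref{lem.HKlemma}; the derivation of $Z(B)=1$, of $\rk F\in\{2,3\}$, of $\rk (T\cap F)^\circ=1$ when $\rk F=3$, and of $Z(F)^\circ\not\le T$ are all correct. But the three steps that carry the actual difficulty are either deferred or rest on a mechanism that does not work. First, the elimination of $\rk F=3$ is only announced as a ``plan,'' and you yourself flag it as the principal obstacle. Your method does dispose of the abelian subcase (there $(T\cap F)^\circ$ is centralized by both $F$ and $T$, and $\rk FT=3+2-1=4$ forces $FT=B$, so $(T\cap F)^\circ\le Z(B)=1$), but when $F$ is nonabelian there is no reason its characteristic structure hands you a $B$-normal subgroup of $T$, and the paper's actual argument is of a different nature: it passes to the rank-$1$ quotient of the generic $B$-orbit determined by $HZ(F)^\circ$, controls the kernel via Fact~\ref{fact.Hru} and Lemma~\ref{lem.QuotientGenericTrans}, shows that $(H\cap F)^\circ$, the kernel, and $F/K^\circ$ are torsion-free, extracts a $3$-torus of Pr\"ufer $3$-rank $1$ generating $G_{1,2,3}$, and gets its contradiction from the order-$3$ elements of $\Sigma(1,2,3;4)\cong\sym(3)$ being forced to centralize it, against Lemma~\ref{lem.SigmaGroupLemma}. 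That torsion and Weyl-group input (together with the preceding lemma that $G_{1,2,3}$ has no two distinct definably characteristic connected rank-$1$ subgroups) is absent from your toolkit, and I do not see how to avoid it.

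Second, in the rank-$2$ nonabelian case, ``propagating the faithful $T$-action through $F'$ and $F/F'$ must produce a second connected rank-$1$ subgroup'' is unjustified: an element $t\in T$ can centralize both $F'$ and $F/F'$ while acting nontrivially on $F$ (a transvection-like action), in which case $C_F(t)^\circ$ may simply equal $F'$. The paper circumvents this by producing a specific rank-$1$ subgroup $A_3\le G_{1,2,3}$ \emph{without $p$-torsion} and lifting centralizers via \cite[I,~Proposition~9.9]{ABC08} to manufacture the second subgroup. Third, the splitting step is circular: the normal closure of $A:=(F\cap T)^\circ$ in $B$ lies in the normal subgroup $F$, not in $T$, so the key observation does not apply; and if $\rk(F\cap T)=1$ then $FT$ has rank $3$ and its orbit on $3$ has rank $1$, so Lemma~\ref{lem.HKlemma} gives nothing. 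The paper first proves $HF=B$ (hence $H\cap F$ finite and $F$ generic on $3$) by noting that otherwise $HF$ would define a rank-$1$ quotient of the generic $B$-orbit on which the abelian group $B/F$ acts generically $2$-transitively, which is impossible; only after that does Lemma~\ref{lem.HKlemma} kill $F\cap H$. You need an argument of this kind before the rank identity $\rk FT=4$ is available.
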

\begin{proof}
Set $H:=G_{1,2,3}$, $B:= G_{1,2}$, $F:=F^\circ(B)$, and $Z:=Z(F)$. Since $B$ is solvable and nonnilpotent (by Proposition~\ref{prop.TransBoundSolvable}) of rank $4$, we have that $F$ has rank $2$ or $3$ by Fact~\ref{fact.rankFourGroups}. 

We first work to show that $F$ has rank $2$; this will take several steps. Suppose $\rk F = 3$. Since $F$ does not contain $H$ by Lemma~\ref{lem.GenericTwoTransPS}, $B=FH$, and the orbit of $F$ on $3$ is generic. As such, Lemma~\ref{lem.HKlemma} implies that no nontrivial element of $Z$ fixes $3$, so in particular, $(H\cap F)^\circ \cap Z = 1$. We now claim that $(H\cap F)^\circ$ is torsion free. Since divisible torsion is central in any connected nilpotent group of finite Morley rank, $(H\cap F)^\circ$ is not a decent torus. Now, if $(H\cap F)^\circ$ is $p$-unipotent, then as $H$ is abelian and covers the divisible group $B/F$ (see \cite[I,~Lemma~8.3]{ABC08}), we find that $H$ has two distinct definably characteristic connected subgroups of rank $1$. This contradicts the previous lemma, so $(H\cap F)^\circ$ is torsion free. Additionally, since $H\cap F$ is nontrivial and centralized by $Z$, the orbit of $Z$ on $3$ cannot be generic by Lemma~\ref{lem.HKlemma}, so $Z$ has rank $1$. 

Now, $HZ$ determines a rank $1$ quotient of the action of $B$ on its generic orbit. By Lemma~\ref{lem.QuotientGenericTrans} and the solvability of $B$, the quotient must be $2$-transitive, and the kernel $K$ has rank $2$. Let $A_{3}:= (H\cap K)^\circ$ and $A_{4}:=(G_{1,2,4}\cap K)^\circ$. Then both groups have rank $1$, and $K^\circ:=\langle A_3,A_4\rangle$. Since $H$ is abelian and contains the torsion free subgroup $(H\cap F)^\circ$, the previous lemma ensures that $A_3$ is also torsion free. Of course the same is true of $A_4$, so by Fact~\ref{fact.RankTwoAbelian}, we conclude that $K^\circ$ is abelian and torsion free. This also implies that $K^\circ$ is contained in $F$, so $A_3 = (H\cap F)^\circ$.

Next, observe that $B/K^\circ$ is nonnilpotent, so it has a unipotent radical which must be $F/K^\circ$. Now, if $F/K^\circ$ is $p$-unipotent for some prime $p$, then $F$ has a $p$-unipotent subgroup $U$, and we find that $F=K^\circ * U$. This cannot happen since $F$ is nonabelian, so $F/K^\circ$ is torsion free. By Fact~\ref{fact.rankTwoGroups}, the image of $H$ in $B/K^\circ$ contains a $3$-torus, so $H$ contains some $3$-torus $S$. Since $H$ also contains $A_3$, the previous lemma says that the definable hull of $S$ is all of $H$. Also, considering the image of $S$ in $B/K^\circ$, we see that $S$ has Pr\"ufer $3$-rank $1$. Now, $\Sigma:=\Sigma(1,2,3;4)\cong\sym(3)$ has elements of order $3$, and by \cite[I,~Lemma~10.18]{ABC08}, these elements must centralize $S$. But then they centralize $H$, which contradicts Lemma~\ref{lem.SigmaGroupLemma} (and  Fact~\ref{fact.GenNTransAbelian}). Finally, we conclude that $F$ has rank $2$.

We now show that $F$ is abelian; the splitting of $B$ will then follow quickly. If $HF \neq B$, then $HF$ determines a rank $1$ quotient of the action of $B$ on its generic orbit, and the kernel must contain $F$. However, the quotient must be $2$-transitive by Lemma~\ref{lem.QuotientGenericTrans}, and this contradicts the fact that $B/F$ is abelian by \cite[I,~Lemma~8.3]{ABC08}. Thus, $HF=B$, $H\cap F$ is finite, and $F$ has a generic orbit on $3$. Suppose that $F$ is not abelian. Then, by Fact~\ref{fact.rankTwoGroups}, $F$ is $p$-unipotent for some prime $p$, and $Z$ has rank $1$. We return to the quotient determined by $HZ$ with $K$ and $A_3$ defined as before. Note that $K^\circ$ is nonnilpotent since $K^\circ\cap H$ contains $A_3$ while $F\cap H$ is finite. Thus, $Z$ is the unipotent radical of $K^\circ$, and $A_3$ is without $p$-torsion. Further, $[F,A_3] \le (F\cap K)^\circ = Z$, so $A_3$ centralizes $F/Z$. We may use \cite[I,~Proposition~9.9]{ABC08} to lift the centralizer from the quotient and see that $C_F(A_3)$ is  infinite. As the structure of $K^\circ$ shows that $C_F(A_3)$ intersects $Z$ trivially, we find that $F$ contains a nontrivial proper connected subgroup different from $Z$, so $F$ must in fact be abelian by Fact~\ref{fact.RankTwoAbelian}. Further, since $F$ has a generic orbit on $3$, Lemma~\ref{lem.HKlemma} shows that $F\cap H = 1$.
\end{proof}

\begin{proof}[Proof of Proposition~\ref{prop.PSStructure}]
Set $H:=G_{1,2,3}$, $B:= G_{1,2}$, and $F:=F^\circ(B)$.  By Lemma~\ref{lem.threePointStab}, $F$ contains $B'$, so \cite[I,~Lemma~8.3]{ABC08} implies that $H$ is divisible abelian. Also, $F$ has a generic orbit on $3$ since $FH = B$.

We now claim that $H$ is contained in a definable subgroup of $B$ of rank $3$. If not, then the action of $G_{1,2}$ on its generic orbit is virtually definably primitive. In this case, Fact~\ref{fact.VprimRankAbelian} tells us that $B$ acting on its generic orbit is equivalent to $(L,\aff_1(L))$ for some algebraically closed field $L$. Now, $H$ generates a definable field isomorphic to $L$ in $\emorph(F)$, and as $\Sigma(1,2;3,4)$ normalizes both $H$ and $F$, the image of $\Sigma(1,2;3,4)$ in $\emorph(F)$ normalizes this field. Thus, $\Sigma(1,2;3,4)$ acts $L$-linearly on $F$ by \cite[I,~Lemma~4.5]{ABC08}. However, $F$ is $1$-dimensional over $L$, so as $H$ induces all of $L^\times$, there must be a nontrivial element in $H\cdot \Sigma(1,2;3,4)$ centralizing $F$. Since, $F$ has a generic orbit on $3$ and $H\cdot \Sigma(1,2;3,4) < G_3$, this contradicts Lemma~\ref{lem.HKlemma}.

Thus $B$ has a definable rank $3$ subgroup containing $H$, and this determines a quotient of $B$ acting on its generic orbit. Let $K$ be the kernel of the quotient. Since the action of $B$ on its generic orbit is generically $2$-transitive and $B$ is solvable, this quotient must be $2$-transitive, so $HK/K$ is a rank $1$ good torus. Thus, $H/H\cap K$, and hence  $H/(H\cap K)^\circ$, is a rank $1$ good torus. Let $A:=(H\cap K)^\circ$; note that $A$ has rank $1$. Now, $K^\circ$ is certainly not contained in $F$ since $H\cap F = 1$, so $K^\circ$ is nonnilpotent. As $K^\circ$ has rank $2$, we know the structure of $K^\circ$, and it must be that $A$ is a good torus. Now we have that $H$ is a divisible abelian group which is an extension of good tori, so $H$ is a good torus by \cite[I,~Lemma~4.21]{ABC08}. Of course, $H$ is self-centralizing by Fact~\ref{fact.GenNTransAbelian}.

It only remains to show that $F$ is unipotent. If not, then $F$ contains a nontrivial decent torus, and since $F$ is abelian, there is a unique maximal decent torus of $F$. As a decent torus has only finitely many elements of each finite order, the connected group $H$ centralizes the maximal decent torus of $F$, but this contradicts the fact that $H$ is self-centralizing.
\end{proof}

\subsection{$2$-transitivity}
\begin{proposition}\label{prop.Transitivity}
The action of $G$ on $X$ is $2$-transitive.
\end{proposition}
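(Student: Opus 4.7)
Since $G$ is transitive on $X$, proving $2$-transitivity is equivalent to showing $G_1$ is transitive on $X\setminus\{1\}$. The plan is to argue by contradiction. Using $\rk G = 8$ and $\rk X = 2$, we have $\rk G_1 = 6$, so any non-generic $G_1$-orbit $Y$ on $X\setminus\{1\}$ satisfies $\rk Y \in \{0,1\}$. The starting point is Proposition~\ref{prop.transBound} together with Corollary~\ref{cor.GenNTransAbelian}, which give that the action is primitive; in particular $G_1$ is maximal, and since a normal point stabilizer would contradict faithfulness, $N_G(G_1) = G_1$.

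For the rank-$0$ case, $Y$ is finite and $G_1$ is connected, so $Y = \{y\}$ with $G_1 \le G_y$. Since $G_y$ is connected of rank $6$, in fact $G_1 = G_y$. Choosing $g\in G$ with $g\cdot 1 = y$, we get $gG_1 g^{-1} = G_y = G_1$, so $g \in N_G(G_1) = G_1$, contradicting $y\ne 1$.

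The substantive case is $\rk Y = 1$. Fix $y \in Y$ and set $H := (G_{1,y})^\circ$, which has rank $5$. A quick rank analysis of $H\cdot 2$ (where $2$ lies in the generic $G_1$-orbit) rules out $\rk H\cdot 2 = 0$ (that would force $H \le G_{1,2}$ of rank $4$) and leaves two subcases: $\rk G_{1,2,y}^\circ = 3$, meaning $G_{1,2}$ moves $y$; or $\rk G_{1,2,y}^\circ = 4$, meaning the connected $G_{1,2}$ fixes $y$. In either subcase, the plan is to exploit the structure from Proposition~\ref{prop.PSStructure}: $G_{1,2} = F^\circ\rtimes T$ with $F^\circ$ an abelian unipotent group of rank $2$ whose generic orbit $F^\circ\cdot 3$ is regular, and $T = G_{1,2,3}$ a self-centralizing maximal good torus of rank $2$. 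If $G_{1,2}$ fixes $y$, then $T$ does, placing $Y$ inside $\mathrm{Fix}(T)$; but Fact~\ref{fact.FixGenericSet} bounds $\rk\mathrm{Fix}(T) \le 1$, and combined with the faithful $\Sigma(1,2,3;4)\cong\sym(3)$ action on $T$ given by Lemma~\ref{lem.SigmaGroupLemma} (which permutes $\{1,2,3\}\subseteq\mathrm{Fix}(T)$), the orbit $Y$ of rank $1$ is forced into conflict with the rigidity of $\mathrm{Fix}(T)$. In the other subcase, $F^\circ$ cannot act freely on $y$ (its orbit of rank $\le 1$ lies in $Y$), so a nontrivial subgroup of $F^\circ$ must fix $y$; this conflicts with the regular action of $F^\circ$ on its generic orbit once one unpacks where $y$ sits relative to $F^\circ\cdot 3$.

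The main obstacle is the fine bookkeeping in the rank-$1$ case, particularly the branch where $F^\circ$ and $T$ both have rank-$1$ stabilizers at $y$. Here the involution $\tau_{12}\in\Sigma(1,2;3,4)$ (which normalizes $G_{1,2}$ and hence $F^\circ$ and $T$, and sends $Y$ to the corresponding non-generic orbit of $G_2$) should be used to symmetrize the configuration, and the regularity of $F^\circ\cdot 3$ together with Lemma~\ref{lem.HKlemma} should close out the final contradiction.
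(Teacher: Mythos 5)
Your reduction to showing that $G_1$ has no orbit of rank $1$ is correct, and your disposal of the rank-$0$ case matches the paper's (primitivity plus connectedness of point stabilizers gives a unique fixed point). But the rank-$1$ case --- which is where all the work lies --- is not closed by your sketch, and both of your subcases contain unjustified or false steps. In the subcase where $G_{1,2}$ fixes $y$, the claim that $Y\subseteq\fp(T)$ does not follow: $T=G_{1,2,3}$ fixes the single point $y$, while the other points of the $G_1$-orbit $Y$ are fixed only by $G_1$-conjugates of $T$, not by $T$ itself. Even granting $\rk\fp(T)\le 1$ from Fact~\ref{fact.FixGenericSet}, that is perfectly consistent with $\rk Y=1$, and the appeal to the $\sym(3)$-action on $T$ ``forcing a conflict'' is an assertion, not an argument. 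In the other subcase, the infinitude of $F^\circ_y$ (which you correctly deduce) does not conflict with the regularity of $F^\circ$ on its generic orbit, because $y$ does not lie in that orbit: an element of $F^\circ$ fixing $1$, $2$, and $y$ is free to move $3$ and $4$, and Lemma~\ref{lem.HKlemma} is unavailable since the orbit of $F^\circ$ on $y$ has rank at most $1$, hence is not generic. The closing appeal to $\tau_{12}$ ``symmetrizing the configuration'' again names no actual contradiction.

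The paper's proof shows why something genuinely harder is needed. It first proves that \emph{every} $2$-point stabilizer $G_{1,x}$ is solvable-by-finite, via a component analysis: using Fact~\ref{fact.Hru} to bound the quotients of $G_1$ and $G_x$ by the kernels $K_1$ and $K_x$ of their actions on the rank-$1$ orbits, and then showing that nonsolvability would produce a quasisimple component normal in both $G_1$ and $G_x$, hence in $G=\langle G_1,G_x\rangle$, which is absurd; this step leans on the classification of connected groups of rank at most $4$. It then shows $R(G_1)$ has rank exactly $3$, locates a point $2$ generic for $G_1$ but with a rank-$1$ orbit under $G_x$, and derives the final contradiction from the fact that the self-centralizing good torus $G_{1,2,3}$ is generous in $G_x$, forcing $G_x=R^\circ(G_x)\cdot G^\circ_{3,x}$ to be solvable against Proposition~\ref{prop.TransBoundSolvable}. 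None of these steps is replaced by anything in your sketch, so the proposal as written has a genuine gap precisely where the proposition is hardest.
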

\begin{proof}
Since we already know that this action is definably primitive by Corollary~\ref{cor.GenNTransAbelian}, each $1$-point stabilizer has a unique fixed point, and as the $1$-point stabilizers are also connected, they each have a unique orbit of rank $0$, namely the fixed point. Thus, by generic $2$-transitivity, we need only show that a $1$-point stabilizer has no orbits of rank $1$. Recall that by generic sharp $4$-transitivity, every $1$-point stabilizer has rank $6$.

We first show that \emph{every} $2$-point stabilizer in $G$ is solvable-by-finite, even if the points are not in general position. Assume not. Then there is some $x\in X$ for which the orbit $xG_1$ has rank $1$, so $G_{1,x}$ has rank $5$.  Let $K_1$ be the kernel of the action of $G_1$ on $xG_1$ and $K_x$  the kernel of $G_x$ on $1G_x$. By Fact~\ref{fact.Hru}, $G_1/K_1$ and $G_x/K_x$ both have rank at most $3$, so as every $1$-point stabilizer has rank $6$, $K_1$ and $K_x$ both have rank at least $3$. In particular, $G^\circ_{1,x}/K^\circ_1$ and $G^\circ_{1,x}/K^\circ_x$ both have rank at most $2$. Since connected groups of rank $2$ are solvable, we find that $G^\circ_{1,x}$ is solvable if either $K^\circ_1$ or $K^\circ_x$ is solvable. Thus, we may assume that both $K^\circ_1$ and $K^\circ_x$ are nonsolvable. Next, observe that  $K^\circ_1\neq K^\circ_x$ as otherwise  $N(K^\circ_1)$ would contain both $G_1$ and $G_x$ and force $K^\circ_1$ to be normal in $G$. Thus $K^\circ_1$ and $K^\circ_x$ do not both have rank $5$, so we may assume that $K^\circ_1$ has rank at most $4$. Now, $K^\circ_1$ is nonsolvable of rank $4$, so by the structure of groups of rank at most $4$ given in Section~\ref{sec.Background}, $K^\circ_1$ contains a unique, hence characteristic,  component (subnormal quasisimple subgroup) $Q$, which may of course be equal to $K^\circ_1$. If $K^\circ_1 < K^\circ_x$, then $Q$ would also be a component of $K^\circ_x$ since $K^\circ_x$ normalizes $K^\circ_1$, and we would find that $Q$ is normal in both $G_1$ and $G_x$, hence in $G$. This can not happen, so $A:= (K_1\cap K_x)^\circ < K^\circ_1$. As $K_1,K_x < G_{1,x}$, rank considerations force $A$ to be infinite. Since $A$ is normal in $K^\circ_1$ and $K^\circ_1$ is not solvable, we find that $\rk K^\circ_1 = 4$ and that $A$ has rank $1$ or $3$, again using that connected groups of rank $2$ are solvable. Combining the facts that $\rk K^\circ_1 = 4$, $\rk K^\circ_x \ge 3$, and  $\rk G_{1,x}=5$, we find that $\rk A > 1$. Thus $A$ has rank $3$, and the nonsolvability of $K^\circ_1$ implies that $A$ is also nonsolvable. Hence, $A = Q$, and again this forces $Q$ to be normal in $G$. We conclude that $G^\circ_{1,x}$ is solvable.

Now, towards a contradiction, we assume that the $1$-point stabilizers have rank $1$ orbits. We show that, in this case, the solvable radical of a point stabilizer has rank $3$.  Choose $x\in X$ for which $xG_1$ has rank $1$. With $K_1$ defined as before, $K^\circ_1$ is solvable and normal in $G_1$, so $K^\circ_1 \le R(G_1)$. Further, if  $R(G_1)$ has rank larger than $3$, then $G_1/R(G_1)$ has rank at most $2$, and we find that $G_1$ is solvable. Since $G_1$ is generically $3$-transitive, $G_1$ is not solvable by Proposition~\ref{prop.TransBoundSolvable}, so $R(G_1)$ has rank $3$. Additionally, we can conclude that if $y,z \in X$ are \emph{not} in general position then $G_{y,z}$ contains $R^\circ(G_y)$ and $R^\circ(G_z)$. 

Next, observe that the generic orbits of $G_1$ and $G_x$, with $G_x$ as above, cannot coincide since $G$ acts transitively on $X$ and is generated by $G_1$ and $G_x$ by Lemma~\ref{lem.GenericTwoTransPS}. Thus, there is an element of the generic orbit of $G_1$, which we may take to be $2$, for which $2G_x$ has rank less than $2$. Further, $2\neq x$, so $2G_x$ has rank $1$. Note that $(G_{1,x}\cap G_{2,x})^\circ = G_{1,2}$ by rank considerations. Set $R:=R^\circ(G_x)$. Then, $R < (G_{1,x}\cap G_{2,x})^\circ$ by our previous work. Finally, we throw $G_{3,x}$ into the mix. Observe that $R\cap G_{3,x} \le G_{1,2,3} < G_x$. Now $G_{1,2,3}$ is a self-centralizing good torus, so the $G_x$-conjugates of $G_{1,2,3}$ generate $G_x$, see \cite[IV,~Lemma~1.14]{ABC08}. In particular, $G_{1,2,3}$ is not contained in a proper normal subgroup of $G_x$, so $R\cap G_{3,x} < G_{1,2,3}$. Thus, $R\cap G_{3,x}$ has rank at most $1$, so $R\cdot G^\circ_{3,x}$ has rank at least $3+4-1= 6$. Since $R\cdot G_{3,x} \le G_x$, the only possibility is that $R\cdot G_{3,x}$ has rank $6$, but this implies that $G_x= R\cdot G^\circ_{3,x}$ is solvable. This is our final contradiction. 
\end{proof}

\subsection{Fixed points}
Finally, we prove the Fixed Point Assumption. 
\begin{proposition}\label{prop.FixedPoints}
We have that $\fp(G_{1,2,3}) = \{1,2,3\}$.
\end{proposition}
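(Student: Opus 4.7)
The plan is to suppose, toward a contradiction, that $T := G_{1,2,3}$ fixes some point $x \notin \{1,2,3\}$ and derive a structural contradiction using Proposition~\ref{prop.PSStructure} together with the Moufang set classification from Subsection~\ref{subsec.Moufang}.

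First I would observe that generic sharp $4$-transitivity forces $T_y = G_{1,2,3,y} = 1$ whenever $(1,2,3,y)$ is in general position; hence $T$ has a generic orbit on $X$ of rank $2 = \rk X$, and $F := \fp(T)$ lies in the non-generic locus and so has rank at most $1$. Next, for any $x \in F \setminus \{1,2,3\}$, I would show that $(1,2,x)$ is not in general position. Indeed, otherwise $G_{1,2,x}$ would be a generic $3$-point stabilizer, connected of rank $2$ by Fact~\ref{fact.GenNTransAbelian}; since $T \le G_{1,2,x}$ with matching rank this gives $G_{1,2,x} = T$. Writing $U := F^\circ(G_{1,2})$, Proposition~\ref{prop.PSStructure} yields $C_U(T) = 1$, which easily gives $N_{G_{1,2}}(T) = T$, so the regular action of $G_{1,2}/T$ on the generic $G_{1,2}$-orbit forces $x = 3$. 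By the $\Sigma(1,2,3;4) \cong \sym(3)$ symmetry, the same non-generic conclusion applies to $(1,3,x)$ and $(2,3,x)$.

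Now $G_{1,2,x}^\circ$ has rank $3$ or $4$, contains $T$, and sits in $G_{1,2} = U \rtimes T$, so it decomposes as $V_x \rtimes T$ with $V_x := (G_{1,2,x} \cap U)^\circ$ a nontrivial connected $T$-invariant subgroup of $U$. The case $V_x = U$, i.e.\ $G_{1,2}$ fixing $x$, should be eliminated by noting that then $G_{1,2} \le G_{1,x}$; since $G_{1,x}$ is connected of rank $4$ by Fact~\ref{fact.GenNTransAbelian}, equality holds, placing $x$ in the $N_{G_1}(G_{1,2})$-orbit of $2$ among $G_{1,2}$-fixed points. Establishing $N_{G_1}(G_{1,2}) = G_{1,2}$ — for instance by noting that $U$ is characteristic in $G_{1,2}$ so $N_{G_1}(G_{1,2}) \le N_{G_1}(U)$, and identifying $N_{G_1}(U)$ with the setwise stabilizer of a natural rank-$1$ subset of $X\setminus\{1\}$ — then forces $x = 2$, contradicting $x \notin \{1,2,3\}$.

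The main obstacle is the remaining case where $V_x$ has rank $1$. Here $V_x$ is a $T$-invariant connected rank-$1$ subgroup of $U$, and since $U$ is abelian unipotent of rank $2$ with $C_U(T) = 1$, the $T$-weight decomposition of $U$ tightly constrains such subgroups. Applying the parallel analysis in $G_{1,3}$ and $G_{2,3}$ produces rank-$1$ $T$-invariant subgroups of $F^\circ(G_{1,3})$ and $F^\circ(G_{2,3})$, and I would expect to assemble this data into a Moufang set on the rank-$1$ set $L_{12} \cup \{1,2\}$ of points $y$ with $(1,2,y)$ non-generic — with root groups coming from the rank-$1$ unipotent subgroups fixing $\{1,2\}$ pointwise and Hua subgroups induced by $T$ (both abelian). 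Invoking Fact~\ref{fact.Moufang} would identify the little projective group of this Moufang set as $\aff_1(K)$ or $\psl_2(K)$ for some algebraically closed field $K$; in either model a Cartan-like subgroup fixes only the two ``obvious'' points, so the extra $T$-fixed point $x$ in $L_{12}$ would yield the desired contradiction. The real crux is verifying that the envisioned structure on $L_{12}\cup\{1,2\}$ is genuinely definable as a Moufang set with abelian root and Hua subgroups, which will require careful rank arithmetic and a precise understanding of the $T$-action on $U$.
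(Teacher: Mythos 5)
Your opening moves are sound and parallel the paper: for $x\in\fp(G_{1,2,3})\setminus\{1,2,3\}$ the triple $(1,2,x)$ cannot be in general position (your argument via $N_{G_{1,2}}(G_{1,2,3})=G_{1,2,3}$, which follows from $C_{F^\circ(G_{1,2})}(G_{1,2,3})=1$, is essentially the paper's), and the split $G_{1,2,x}^\circ=V_x\rtimes G_{1,2,3}$ is correct. The trouble starts with your two cases. For $V_x=F^\circ(G_{1,2})$, your route through $N_{G_1}(G_{1,2})\le N_{G_1}(F^\circ(G_{1,2}))$ does not close: in the target $\pgl_3(K)$ the group $N_{G_1}(F^\circ(G_{1,2}))$ is the stabilizer of $1$ and of the line through $1$ and $2$, which has rank $5$, not $4$, so this containment cannot force $N_{G_1}(G_{1,2})=G_{1,2}$; and pinning down $N_{G_1}(G_{1,2})$ via fixed points of its maximal tori is circular, since the common fixed-point set of those tori is exactly what the proposition is computing. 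The paper sidesteps this case entirely: it first reduces to showing $\fp(G_{1,2,3})=\fp(G_{1,2})\cup\{3\}$, because any $x\in\fp(G_{1,2})\cap\fp(G_{1,3})$ is fixed by $G_1=\langle G_{1,2},G_{1,3}\rangle$ (Lemma~\ref{lem.GenericTwoTransPS}), contradicting primitivity.

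The genuine gap is the case you yourself flag as the crux, $\rk V_x=1$, and the Moufang-set strategy you propose for it cannot work as described. Every connected rank-$1$ unipotent subgroup of $G_{1,2}=F^\circ(G_{1,2})\rtimes G_{1,2,3}$ has trivial image in the good torus $G_{1,2,3}$ and hence lies inside the abelian rank-$2$ group $F^\circ(G_{1,2})$; so all of your intended root groups commute with one another and generate an abelian group, which cannot act $2$-transitively on the infinite set $L_{12}$. There is no Moufang set to be found there. The paper's argument for this case runs in a different direction: it shows the rank-$1$ unipotent $\mathcal{U}:=(G_{1,2,x}\cap F(G_{1,2}))^\circ$ is the \emph{unique} such subgroup of $G_{1,2,x}$ (because $G_{1,2,x}$ contains the maximal good torus $G_{1,2,3}$), deduces that $\mathcal{U}$ is normal in each of $G_{1,2}$, $G_{1,x}$, $G_{2,x}$, proves that $A:=N^\circ_{G_x}(\mathcal{U})$ has rank $5$ and induces $\psl_2$ on the rank-$1$ set $Y_0=\fp(\mathcal{U})\setminus\{x\}$, and finally observes that $N(\mathcal{U})\not\le G_x$ (precisely because $x\notin\fp(G_{1,2})$) acts $4$-transitively on the rank-$1$ set $\fp(\mathcal{U})$, contradicting Fact~\ref{fact.Hru}. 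That analysis is the heart of the proof and is absent from, and not recoverable by, your proposal.
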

\begin{proof}
First, assume that we can show $\fp(G_{1,2,3}) = \fp(G_{1,2})\cup\{3\}$. Then this also implies that $\fp(G_{1,2,3}) = \fp(G_{1,3})\cup\{2\}$. Now, if $x\in \fp(G_{1,2,3}) - \{1,2,3\}$, then we find that $G_{1,2}$ and $G_{1,3}$ both fix $x$, and as $G_{1,2}$ and $G_{1,3}$ generate $G_1$ by Lemma~\ref{lem.GenericTwoTransPS}, we conclude that $G_1 = G_x$. However, this is a contradiction since primitivity implies that the $1$-point stabilizers have a unique fixed point.

Thus, it suffices to show that $\fp(G_{1,2,3}) = \fp(G_{1,2})\cup\{3\}$. Towards a contradiction, suppose that there is an $x\in \fp(G_{1,2,3})-\fp(G_{1,2})$ different from $3$. We first show that the orbit $xG_{1,2}$ has rank $1$. Recall that $G_{1,2} = F^\circ(G_{1,2})\rtimes G_{1,2,3}$, so $F^\circ(G_{1,2})$ acts regularly on the generic orbit of $G_{1,2}$ on $X$. Thus, if $x$ is in the generic orbit of $G_{1,2}$, then $G_{1,2,3}$ centralizes some nontrivial $u\in F^\circ(G_{1,2})$. Since $G_{1,2,3}$ is self-centralizing, this is a contradiction, so $\rk xG_{1,2} = 1$ and $\rk G_{1,2,x}=3$.

Since $G_{1,2,x}$ contains the rank $2$ maximal good torus $G_{1,2,3}$, $G_{1,2,x}$ has a \emph{unique} rank $1$ unipotent subgroup, namely $U:=(G_{1,2,x}\cap F(G_{1,2}))^\circ$. By $2$-transitivity, $G_{1,2}\cong G_{1,x}\cong G_{2,x}$, so we also have that $U=(G_{1,2,x}\cap F(G_{1,x}))^\circ=(G_{1,2,x}\cap F(G_{2,x}))^\circ$. Now, $U$ is normal in $G_{1,2,x}$, and as $F(G_{1,2})^\circ$ is abelian, $U$ is also normal in $F(G_{1,2})^\circ$. By rank considerations, we find that $U$ is normal in $G_{1,2}$, and repeating the argument, we conclude that $N(U)$ contains $G_{1,2}$, $G_{1,x}$, and $G_{2,x}$. Let $A:= N^\circ_{G_x}(U)$. Then $A$ contains the nonequal connected rank $4$  subgroups $G_{1,x}$ and $G_{2,x}$, so $A$ has rank at least $5$. Since $U$ fixes points other than $x$, $U$ is not normal in $G_x$, so $A$ has rank equal to $5$.

Now, $A$ contains $G_{1,x}$, so $A$ determines a rank $1$ quotient $\modu{\orbit}$ of $G_x$ acting on $\orbit := X-\{x\}$. Let $K$ be the kernel. Since $G_x$ is generically $3$-transitive on $\orbit$, Lemma~\ref{lem.QuotientGenericTrans} and Fact~\ref{fact.Hru} imply that $G_x$ induces $\psl_2$ on $\modu{\orbit}$. Thus, $A$ fixes, i.e. normalizes, a unique class in $\modu{\orbit}$, namely the class of $1$.  Let $Y_0$ be the class of $1$, and set $Y:=Y_0 \cup \{x\}$. We now aim to show that $A$ acts $3$-transitively on $Y_0$ and that $Y = \fp(U)$. 

By construction of $\modu{\orbit}$ (see the comments preceding Fact~\ref{fact.primQuotient}), $Y_0$ is precisely the orbit of $A$ on $1$. Since the connected group $A$ acts transitively on $Y_0$, $Y$ is connected (of rank $1$). We claim that $2\in Y_0$. Since $K$ is normal in $G_x$ and $G_x$ acts transitively (and faithfully) on $X-\{x\}$, $K$ is not contained in $G_{2,x}$. But $K$ is contained in $A$, so by rank considerations, $A= G_{2,x}K$. Thus, the image of $G_{2,x}$ in the quotient covers $A$, and as the $1$-point stabilizers in $\psl_2$ fix a unique point, $G_{2,x}$ fixes a unique class in $\modu{\orbit}$. Since $G_{2,x} < A$, $G_{2,x}$  fixes $Y_0$, and as $G_{2,x}$ also fixes $\modu{2}$, we conclude that $Y_0 = \modu{2}$. Hence, $2\in Y_0$. We now establish the $3$-transitivity of $A$ on $Y_0$. We know that this action is transitive with $Y_0$ connected of rank $1$, so Fact~\ref{fact.Hru} applies. The stabilizer of $1$ in $A$ is $G_{1,x}$, and since $G_{1,2,x}$ has corank $1$ in $G_{1,x}$, we find that the orbit of $G_{1,x}$ on $2$ is generic in $Y_0$. Thus, the action of  $A$ on $Y_0$ is generically $2$-transitive. Hence by Fact~\ref{fact.Hru}, if the action is not $3$-transitive, then it is sharply $2$-transitive, and $G_{1,2,x}$ coincides with the kernel of this action. This would imply that $G_{1,2,x}$ is normal in $A$, but as $G_{1,2,x}$ contains the self-centralizing good torus $G_{1,2,3}$, this would contradict the fact that $G_{1,2,3}$ is generous in $A$. We conclude that the action of  $A$ on $Y_0$ is $3$-transitive, so $A$ acts on $Y_0$ as $\psl_2$.

Next, we show that $Y = \fp(U)$. Of course $A$ normalizes $U$, and $U$ fixes $1$ and $x$. Thus, since $A$ acts transitively on $Y_0$ with $1\in Y_0$, we have that $Y\subseteq \fp(U)$. Let $N$ be the kernel of the action of $A$ on $Y_0$. Now, $K$ is normal in $A$ of rank $3$, so as $A/N$ is simple (of the form $\psl_2$), it must be that $K\cdot N = A$ or $K^\circ = N^\circ$. However, the latter possibility is impossible since $\rk K = 3$ and $\rk N = 2$. Thus, $N\cap K$ is finite, so as $U \le N$, $U$ is not contained in $K$. Now if $U$ were to fix some $z\notin Y$, then $U$ would fix a class in $\modu{\orbit}$ in addition to $Y_0$. By the structure of $2$-point stabilizers in $\psl_2$, this would imply that $U$ is a good torus modulo $K$. Since $U$ is not contained in $K$ and $U$ is unipotent, this can not happen. Thus $Y = \fp(U)$.

We are now all but done. Consider the action of $N(U)$ on $Y$. By our assumption that $x\notin \fp(G_{1,2})$, $N(U)$ is not contained in $G_x$ since $N(U)$ contains $G_{1,2}$. Since $A< N(U)$ is $3$-transitive on $Y-\{x\}$, we find that the action of $N(U)$ on $Y$ is $4$-transitive. This contradicts Fact~\ref{fact.Hru}.
\end{proof}

\subsection{Assembling the proof}
\begin{proof}[Proof of Thereom~\ref{thm.Aone}]
We now verify that we may apply Proposition~\ref{prop.Recognition}, and for this, it only remains to show that the action is not $3$-transitive. Suppose it is, so every three pairwise distinct points of $X$ are in general position. Consider $P:=G_1$ acting on $Y:=X-\{1\}$. Then $(Y,P)$ is $2$-transitive, and for every pair of distinct $y,z\in Y$, we have that $P_y = F^\circ(P_y) \rtimes P_{y,z}$ with $U_y:=F^\circ(P_y)$ abelian. Thus, it is not hard to see that $\Mouf:=(Y,\{U_y:y\in Y\})$ is a Moufang set of finite Morley rank with little projective group $A:=\langle U_y:y\in Y\rangle \le P$, see Subsection~\ref{subsec.Moufang}. Indeed, the only nontrivial point to check is that each pair $U_y$ and $U_z$ are conjugate, but this follows from the fact that, as $A$ is certainly transitive on $Y$, $A$ can conjugate $P_y$ to $P_z$, hence $F^\circ(P_y)$ to $F^\circ(P_z)$. Since $U_y$ and $P_{y,z}$ are both abelian, $\Mouf$ has abelian root groups as well as abelian Hua subgroups. Thus Fact~\ref{fact.Moufang} applies. If $A$ acts on $Y$ as a $1$-dimensional affine group, then the root groups coincide with the $1$-point stabilizers of this action, which are isomorphic to $L^\times$ for some algebraically closed field $L$. Since each $U_y$ is unipotent by Proposition~\ref{prop.PSStructure}, this is impossible, so $A$ must act $3$-transitively on $Y$. Now we have that $G$ is sharply $4$-transitive, but this is also impossible since a sharply $4$-transitive group must be finite, see \cite{TiJ52} or \cite{HaM54}.
\end{proof}

\section{Theorem~\ref{thm.Atwo}}\label{sec.Atwo}
This section is devoted to the proof of the following theorem. 

\begin{MainSubTheorem}\label{thm.Atwo}
Let $(X,G)$ be a transitive and generically $4$-transitive permutation group of finite Morley rank with $G$ connected and $\rk X = 2$. If a generic $4$-point stabilizer has rank less than $2$, then it has rank $0$.
\end{MainSubTheorem}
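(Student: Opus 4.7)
I would argue by contradiction: assume $H := (G_{1,2,3,4})\degrees$ has rank $1$, where $1,2,3,4 \in X$ are in general position. Generic $4$-transitivity yields $\rk G = \rk X^4 + \rk H = 9$, and since a generic $5$-point stabilizer would need nonnegative rank, $\gtd(X,G) = 4$ exactly. The rank cascade gives $\rk G_{1,2,3} = 3$, $\rk G_{1,2} = 5$, and $\rk G_1 = 7$. By Corollary~\ref{cor.FourTransVDP}, the action is virtually definably primitive, and Lemma~\ref{lem.PrimitiveBoundG} produces $\rk G \leq 2\cdot 4 + 1 = 9$, so we sit at the extremal case of the primitivity bound. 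Using Fact~\ref{fact.primQuotient}, I would first reduce to the definably primitive setting (finite kernel can be absorbed), so $G_1$ is a maximal definable subgroup of $G$.

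\textbf{Constraints from the $\Sigma$-group and from $\fp(H)$.} By Fact~\ref{fact.rankOneGroups}, $H$ is connected abelian of rank $1$. The setwise stabilizer $\Sigma := \Sigma(1,2,3,4)$ lies in $N_G(H)$ and satisfies $\Sigma/G_{1,2,3,4} \cong \sym(4)$, with $\Sigma\degrees = H$ so $\Sigma/H$ finite. The conjugation action of $\Sigma$ on $H$ thus factors through a finite group, and an argument in the style of Lemma~\ref{lem.SigmaGroupLemma} combined with Lemma~\ref{lem.symmOnRankOne} should force $\alt(4)$ to centralize $H$; the induced $\sym(4)$-action on $H$ is then at most an inversion. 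Next examine $F := \fp(H) \supseteq \{1,2,3,4\}$. Since $X$ is connected by Fact~\ref{fact.GenericTwoImpliesConnected} and $H \neq 1$, Fact~\ref{fact.FixGenericSet} forces $\rk F \leq 1$. If $\rk F = 1$, the image of $N_G(H)$ on the connected component of $F$ has rank at most $3$ by Fact~\ref{fact.Hru}; if $F$ is finite, I would instead consider the rank-$1$ orbit space $(X \setminus F)/H$ on which $N_G(H)$ acts, again bounded by Fact~\ref{fact.Hru}. In both cases, the $\sym(4)$-section from $\Sigma$ sharply restricts the structure of $N_G(H)$.

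\textbf{Reduction to the projective plane.} The plan is to verify the hypotheses of Proposition~\ref{prop.Recognition}: $2$-transitivity, failure of $3$-transitivity, and the Fixed Point Assumption $\fp((G_{x,y,z})\degrees) = \{x,y,z\}$. Primitivity plus rank considerations analogous to Proposition~\ref{prop.Transitivity} should yield $2$-transitivity. For the Fixed Point Assumption applied to $K := (G_{1,2,3})\degrees$ (rank $3$, containing $H$), I would analyze $K$ via Fact~\ref{fact.rankThreeGroups}: it is solvable, a quasisimple bad group, or $\pssl_2(L)$. The $\sym(3)$-action from $\Sigma(1,2,3;4)$ on $K$, together with the fact that $K$ acts transitively on its rank-$2$ generic orbit in $X$ with stabilizer $H$, provides strong structural constraints; any extra fixed point of $K$ would force non-generic coincidences incompatible with these. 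Once Proposition~\ref{prop.Recognition} applies, we obtain $(X,G) \cong (\proj^2(K), \pgl_3(K))$ — but then $\rk G = 8$, contradicting $\rk G = 9$.

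\textbf{Main obstacle.} The hardest part will be controlling the rank-$3$ point stabilizer $K$, above all the quasisimple bad group case, since bad groups are notoriously poorly understood. Closing this case requires squeezing the transitive action of $K$ on a rank-$2$ orbit with rank-$1$ stabilizer $H$ against the $\sym(3)$-normalizer action to produce a contradiction; the $\pssl_2(L)$ case, where $H$ is a torus or unipotent radical, will likely need the Moufang set machinery of Subsection~\ref{subsec.Moufang} — much as at the end of the proof of Theorem~\ref{thm.Aone} — to pin down $K$'s action on its generic orbit and derive the rank conflict. If a full reduction to Proposition~\ref{prop.Recognition} proves elusive in one of the cases, a fallback is to combine the rank-$1$ restriction from $N_G(H)$ acting on $F$ or $(X \setminus F)/H$ directly with the $\sym(4)$-section to obtain an action of too large a rank on a rank-$1$ set, contradicting Fact~\ref{fact.Hru}.
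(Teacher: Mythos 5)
Your skeleton is genuinely different from the paper's: you want to verify the hypotheses of Proposition~\ref{prop.Recognition} for the hypothetical configuration and then read off the contradiction $\rk \pgl_3(K)=8\neq 9=\rk G$. The paper never establishes $2$-transitivity or the Fixed Point Assumption in this setting; instead it pins down the structure of the generic $2$- and $3$-point stabilizers directly, culminating in Proposition~\ref{prop.BmodFB} ($G_{1,2}/F^\circ(G_{1,2})\cong\ssl_2(L)$ acting naturally on $F^\circ(G_{1,2})\cong L^2$), and then kills the configuration using elements of $\Sigma(1,2,3,4)$ realizing the transpositions $(34)$ and $(12)$, split according to the characteristic of $L$. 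Your rank bookkeeping ($\rk G=9$, $\gtd(X,G)=4$, extremality in Lemma~\ref{lem.PrimitiveBoundG}) and the observation that the image of $\alt(4)$ must centralize $S:=(G_{1,2,3,4})^\circ$ are correct, and the final rank contradiction would indeed close the argument \emph{if} the hypotheses of Proposition~\ref{prop.Recognition} could be verified.

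The gap is that everything between the rank count and the application of Proposition~\ref{prop.Recognition} is an outline with acknowledged holes, and that is where all the work lives. Concretely: (i) the ``main obstacle'' you flag, the quasisimple bad group alternative for $K:=(G_{1,2,3})^\circ$, is in fact the easy case --- $\Sigma(1,2,3;4)$ acts faithfully on $K$ by Lemma~\ref{lem.SigmaGroupLemma} and contains an involution, while simple bad groups admit no involutory automorphisms (\cite[Proposition~13.4]{BoNe94}); likewise the $\pssl_2$ alternative dies because $\Sigma(1,2,3;4)$ would have to sit inside $N_K(S)$, which is a Borel subgroup or a torus normalizer and has no $\sym(3)$-section over $S$. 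You do not supply either argument. (ii) ``Rank considerations analogous to Proposition~\ref{prop.Transitivity}'' do not transfer: that proof uses that $1$-point stabilizers have rank $6$ and that generic $3$-point stabilizers are self-centralizing good tori --- facts specific to the sharp$^\circ$ case. Here $\rk G_1=7$ and the structure of $K$ is precisely what is unknown, so $2$-transitivity is not obtained. (iii) The Fixed Point Assumption for $K$ is asserted, not proved; ``non-generic coincidences incompatible with these'' is not an argument, and the fallback via $N_G(S)$ acting on $\fp(S)$ or on an orbit space is too vague to assess (in particular the kernel of $N_G(S)$ on such a rank-$1$ set is uncontrolled). As it stands the proposal is a plausible research plan, not a proof; to complete it along your lines you would still need essentially the structural analysis of Lemmas~\ref{lem.RankTwoContainingS}--\ref{lem.SelfNormalizingA2} and Proposition~\ref{prop.StructureThreePointStabilizerA2}, at which point the paper's direct contradiction is already within reach.
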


We adopt the following setup for the remainder of the section; the goal is to find a contradiction.  

\begin{setup}
Let $(X,G)$ be a transitive and generically $4$-transitive permutation group of finite Morley rank with $G$ connected and $\rk X = 2$. Fix $1,2,3,4\in X$ in general position, and assume that  $S:=(G_{1,2,3,4})^\circ$ has rank $1$. 
\end{setup}

As before, $\Sigma(1,2,3;4)$ acts faithfully on $(G_{1,2,3})^\circ$ by Fact~\ref{lem.SigmaGroupLemma}, and if $\sigma \in \sym(4)$, we write $g\in \sigma$ to mean  $g\in G$ and $g$ realizes $\sigma$. Also note that  $(X,G)$ is virtually definably primitive by Corollary~\ref{cor.FourTransVDP}.

Similar to the proof of Theorem~\ref{thm.Aone}, our analysis begins (and in this case ends) by flushing out a detailed description of $G_{1,2}$. We then use the group $\Sigma(1,2,3,4)$ to drive the resulting configuration to a contradiction.

\subsection{Initial analysis}
\begin{lemma}
We have that $(G_{1,2,3})^\circ$ is solvable. 
\end{lemma}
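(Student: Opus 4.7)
The plan is to assume $H := (G_{1,2,3})^\circ$ is nonsolvable and derive a contradiction by analyzing the action of $\Sigma := \Sigma(1,2,3;4)$ on $H$.

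First I would carry out the rank computation. Since $\gtd(X,G) = 4$, $\rk X = 2$, and $\rk S = 1$ by hypothesis, successive applications of the orbit-stabilizer identity give $\rk G = 9$, $\rk G_{1,2,3} = 3$, and hence $\rk H = 3$; moreover $S$ is a connected subgroup of $H$ of rank $1$. Applying Fact~\ref{fact.rankThreeGroups}, nonsolvability of $H$ forces $F^\circ(H) = 1$ and $H$ to be quasisimple: either a bad group of rank $3$ or $H \cong \pssl_2(K)$ for some algebraically closed field $K$. In either case $Z(H)$ is finite, so $S$ (of positive rank) is not contained in $Z(H)$, and quasisimplicity then implies that $S$ is not normal in $H$; thus $N_H(S)$ is a proper definable subgroup. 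The group $\Sigma$ normalizes $G_{1,2,3,4}$ (as it stabilizes $\{1,2,3\}$ setwise and fixes $4$) and hence normalizes $S$, $G_{1,2,3}$, and $H$; by Lemma~\ref{lem.SigmaGroupLemma} it acts faithfully on $H$, and $\Sigma/S$ has $\sym(3)$ as a finite quotient.

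For the case $H \cong \pssl_2(K)$, the connected rank-$1$ subgroups of $H$ are maximal tori or unipotent radicals of Borels, so $N_H(S)/S$ is abelian -- isomorphic to $\integers/2\integers$ or to a $1$-dimensional good torus respectively. The key next step is the observation that every definable finite-order automorphism of $\pssl_2(K)$ is inner: an outer one would arise from a definable finite-order field automorphism of $K$, whose interpretable fixed field would have to be a proper subfield of the algebraically closed $K$ of finite extension degree, which is impossible for an interpretable field in the finite Morley rank category. Hence the faithful embedding $\Sigma \hookrightarrow \aut(H)$ lands in $\mathrm{Inn}(H) = H$, and its restriction to $S$ is the identity. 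Under this identification, $\Sigma$ becomes a subgroup of $N_H(S)$ with connected component $S$, and $\Sigma/S$ embeds into the abelian group $N_H(S)/S$. This contradicts the fact that $\Sigma/S$ has the nonabelian $\sym(3)$ as a quotient.

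For the bad-group case, $N_H(S)$ is proper and hence nilpotent, and using the fact that distinct Borel subgroups of a bad group meet trivially, the Borel $B$ containing $S$ is uniquely determined and thus normalized by $\Sigma$. I would select a $2$-element $\tau$ and a $3$-element $\alpha$ in $\Sigma$ projecting to a transposition and a $3$-cycle in $\sym(3)$, using \cite[I,~Lemma~2.18]{ABC08}. By Lemma~\ref{lem.symmOnRankOne}, $\alpha$ centralizes $S$; and since $H$ is nonabelian, $C_H(\alpha)$ must be infinite (else $\alpha$ would invert $H$ by \cite[I,~Lemma~10.3]{ABC08}), so $C_H(\alpha)^\circ$ is a proper nilpotent subgroup containing $S$. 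A parallel analysis of $C_H(\tau)^\circ$, together with the absence of involutions in bad groups, pins down enough structure on $B$, $C_H(\alpha)^\circ$, and $C_H(\tau)^\circ$ to rule out the required $\sym(3)$-action.

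The main obstacle is the $\pssl_2(K)$ case, where the inner-ness of all finite-order definable automorphisms of $H$ must be justified carefully -- otherwise $\sym(3)$ could in principle fit into $\aut(H)$ outside $\mathrm{Inn}(H)$, and the contradiction would collapse. The bad-group case is structurally less transparent and requires delicate $p$-element extraction and centralizer analysis, but the underlying obstruction is the same: a nonabelian $\sym(3)$-quotient of $\Sigma/S$ has no room to live in the limited automorphism structure of a rank-$1$ subgroup $S$ or its nilpotent normalizer in $H$.
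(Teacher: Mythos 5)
Your $\pssl_2$ branch is essentially the paper's argument: both reduce via Fact~\ref{fact.rankThreeGroups} to quasisimple cases, both invoke that the faithful action of $\Sigma(1,2,3;4)$ on $H$ is by inner automorphisms, and both derive the contradiction from the fact that $\Sigma/S$, which surjects onto $\sym(3)$, must land in the abelian group $N_{H}(S)/S$ (of the form $K^\times$ or of order $2$). One point you assert rather than prove is that the connected rank-$1$ subgroup $S$ actually \emph{is} an algebraic subgroup (a maximal torus or a unipotent radical of a Borel); a priori $S$ is only definable in the ambient structure, and the paper spends several lines on this, taking the Zariski closure $\modu{S}$, arguing it is abelian of dimension $1$, and using connectedness of $K^+$ and $K^\times$ in the ambient language to get $S=\modu{S}$. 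That step should not be skipped.

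The genuine gap is the bad-group case, which you do not actually close: "a parallel analysis \ldots pins down enough structure \ldots to rule out the required $\sym(3)$-action" is a declaration of intent, not a proof, and the centralizer analysis you gesture at (extracting $2$- and $3$-elements, controlling $C_H(\tau)^\circ$ and $C_H(\alpha)^\circ$ inside a rank-$3$ bad group with only the triviality of Borel intersections in hand) is exactly the part that needs to be carried out and is not routine. The paper avoids all of this with a single known fact, \cite[Proposition~13.4]{BoNe94}: simple bad groups admit no involutory automorphisms. Since $\Sigma$ surjects onto $\sym(3)$ it contains a nontrivial $2$-element and hence an involution $i$; that fact forces $[H,i]\le Z(H)$, and since $[H,i]$ is connected while $Z(H)$ is finite, $i$ centralizes $H$, contradicting the faithfulness of $\Sigma$ on $H$ given by Lemma~\ref{lem.SigmaGroupLemma}. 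You should either cite that result or supply a complete self-contained argument; as written, the bad-group case of your proof is incomplete.
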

\begin{proof}
Set $H:=(G_{1,2,3})^\circ$ and $\Sigma:=\Sigma(1,2,3;4)$. As $H$ has rank $3$, we look to Fact~\ref{fact.rankThreeGroups}. First, assume that $H$ is a quasisimple bad group. Since $\Sigma$ has a quotient isomorphic to $\sym(3)$, $\Sigma$ has an involution, say $i$. By \cite[Proposition~13.4]{BoNe94}, simple bad groups do not have involutotory  automorphisms, so $[H,i] \le Z(H)$. Since $H$ is connected, $[H,i]$ is as well, so as $Z(H)$ is finite, we find that $[H,i]=1$. This contradicts the fact that $\Sigma$ acts faithfully on $H$, so $H$ is not a quasisimple bad group.

Thus, if $H$ is not solvable, then $H\cong\pssl_2(K)$ for some algebraically closed field $K$ by Fact~\ref{fact.rankThreeGroups}. In this case, $\Sigma$ acts on $H$ as inner automorphisms (see \cite[II,~Fact~2.25]{ABC08}), so $\Sigma$ embeds into $\psl_2(K)$. Let $\modu{S}$ denote the Zariski closure of $S$ in $\psl_2(K)$. Since $S$ is connected and of rank $1$, $S$ is abelian, so $\modu{S}$ is also abelian. Thus, by the structure of  $\psl_2(K)$, $\modu{S}$ must have Zariski dimension $1$, and since $S$ is connected, $\modu{S}$ is Zariski connected. Thus, $\modu{S}$ lies in a Borel subgroup, which is of the form $K^+\rtimes K^\times$, so as $K^+$ and $K^\times$ are connected in the ambient language, it must be that $S = \modu{S}$. Now, if $S$ is unipotent, then $N_H(S)$ is Borel subgroup. Of course $\Sigma \le N_H(S)$, so $\Sigma/S$, which has a quotient isomorphic to $\sym(3)$, embeds into $K^\times$. This is a contradiction, so $S$ is an algebraic torus. In this case, $|N_H(S)/S|=2$, so again there is no room in $N_H(S)$ for $\Sigma$. We conclude that $H$ is solvable.
\end{proof}

The next lemma is an approximation to the fact, which will be shown later, that $S$ is a decent torus. 

\begin{lemma}\label{lem.RankTwoContainingS}
Every definable rank $2$ subgroup of $(G_{1,2,3})^\circ$ that contains $S$ must contain a nontrivial decent torus.  
\end{lemma}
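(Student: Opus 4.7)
The plan is to proceed by contradiction. Let $B$ be a definable rank $2$ subgroup of $H := (G_{1,2,3})\degrees$ containing $S$. Passing to $B\degrees$ we may assume $B$ is connected; since $H$ is solvable by the preceding lemma, so is $B$. Suppose, toward a contradiction, that $B$ contains no nontrivial decent torus.

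First I would dispatch the nonnilpotent case. By Fact~\ref{fact.rankTwoGroups}, a connected nonnilpotent solvable group of rank $2$ decomposes as $B = B' \rtimes T$ with $T$ a good torus containing $Z(B)$; as $B$ is solvable it is nonperfect, forcing $T \ne 1$. But then $T$ is a nontrivial good torus inside $B$, hence a nontrivial decent torus, against our standing assumption. So $B$ is nilpotent. A connected nilpotent group with no nontrivial decent torus is unipotent in the sense of Section~\ref{sec.Background}, so $B$ is unipotent. Using Fact~\ref{fact.rankOneGroups} on $S$, I note that if $S$ were divisible abelian with nontrivial torsion then the definable hull of its torsion would be a decent torus equal to $S$, which is disallowed; therefore $S$ is itself either elementary abelian of some exponent $p$ or torsion-free divisible abelian.

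The technical heart of the argument is to contradict the existence of such a rank $2$ unipotent $B$ containing $S$. The main leverage comes from the faithful action of $\Sigma := \Sigma(1,2,3;4)$ on $H$ furnished by Lemma~\ref{lem.SigmaGroupLemma}: any $\sigma \in \Sigma$ fixes $4$ and permutes $\{1,2,3\}$, so it normalizes $G_{1,2,3,4}$ and hence $S$, and the induced $\sym(3)$-action permutes the family $\mathcal{F}$ of rank $2$ subgroups of $H$ containing $S$. I would then split on whether $B$ is $\Sigma$-invariant. If not, then two distinct $\Sigma$-translates of $B$ already generate a rank $3$---hence all of $H$---subgroup whose nilpotent/unipotent structure one constrains via Fact~\ref{fact.rankThreeGroups} and Fact~\ref{fact.RankTwoAbelian} to contradict either the solvability of $H$ or the assumption of no decent tori. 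If $B$ is $\Sigma$-invariant, then applying Lemmas~\ref{lem.symmOnRankOne} and~\ref{lem.symmOnRankTwo} to the induced $\sym(3)$-actions on $B$ and on the rank $1$ quotient $B/S$ forces $\Sigma$ to act trivially on a subquotient large enough to either produce a decent torus in $B$ or contradict the faithfulness of the $\Sigma$-action. The main obstacle will be controlling this final case analysis cleanly, particularly in the torsion-free divisible branch where the usual detection of decent tori via torsion fails and one must instead argue via the structure of the Fitting subgroup of $H$.
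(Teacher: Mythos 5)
Your opening reduction is fine and matches the paper's first step: in the nonnilpotent case Fact~\ref{fact.rankTwoGroups} hands you a nontrivial good torus, so the hypothetical subgroup $V$ is unipotent. But from that point on the proposal has a genuine gap: you try to derive the contradiction entirely from the internal structure of $H:=(G_{1,2,3})^\circ$ together with the faithful action of $\Sigma:=\Sigma(1,2,3;4)$, and that data is simply not strong enough. Lemma~\ref{lem.symmOnRankTwo} only excludes faithful actions of $\sym(n)$ for $n\ge 4$, so $\sym(3)$ gives you no traction on a rank~$2$ group; and even if you forced $\Sigma$ to act trivially on $V$ or on $V/S$, that contradicts nothing, since faithfulness is only asserted on the rank~$3$ group $H$. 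In the non-invariant branch, $H=\langle V,V^\sigma\rangle$ with both generators unipotent of rank~$2$ is perfectly compatible with $H$ solvable and with $V$ torus-free --- think of a Heisenberg-type group with $S$ its centre and $V$ the preimage of a line in $H/S$; neither Fact~\ref{fact.rankThreeGroups} nor Fact~\ref{fact.RankTwoAbelian} rules this out. So the ``final case analysis'' you defer is not a technical obstacle but the place where the argument cannot close.

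What the paper actually does is first pin down $V$ precisely --- $V$ centralizes $S$ (either $V$ is abelian or $S=Z^\circ(V)$ by Fact~\ref{fact.RankTwoAbelian}), so $V=C^\circ_H(S)$, and the action of $H$ on $V\backslash H$ together with Fact~\ref{fact.Hru} makes $V$ normal and abelian in $H$ --- and then \emph{escalates out of} $H$. Setting $U_{1,2,3}:=C^\circ_H(S)$ and defining $U_{1,2,4}$, $U_{1,3,4}$, $U_{2,3,4}$ analogously, one shows $A:=\langle U_{1,2,3},U_{1,2,4}\rangle$ is normal of rank~$3$ in $(G_{1,2})^\circ$, then that $\widetilde{A}:=\langle A,U_{1,3,4}\rangle=F^\circ((G_1)^\circ)$, and finally that $N(\widetilde{A})$ contains both $G_1$ and $U_{2,3,4}\not\le G_1$, contradicting the virtual definable primitivity of $(X,G)$ from Corollary~\ref{cor.FourTransVDP} (the same mechanism as in Lemma~\ref{lem.ThreePointStabAbelian}). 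The contradiction is global and permutation-theoretic; any proof that never leaves $H$ and $\Sigma$ is missing the essential ingredient.
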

\begin{proof}
Suppose not. Set $H:=(G_{1,2,3})^\circ$, $B:=(G_{1,2})^\circ$, $P:=(G_{1})^\circ$, and $\Sigma:=\Sigma(1,2,3;4)$. Let $V$ be a connected definable rank $2$ subgroup of $H$ containing $S$ such that $V$ contains no decent torus. Then $V$ is unipotent by Fact~\ref{fact.rankTwoGroups}, and either $V$ is abelian or $S = Z^\circ(V)$. In either case, $V$ centralizes $S$. Since $\Sigma(1,2,3;4)$ acts faithfully on $H$ (by Fact~\ref{lem.SigmaGroupLemma}) and $S < \Sigma(1,2,3;4)$, we find that $V=C^\circ_H(S)$ by rank considerations. Further, by considering the action of $H$ on the rank $1$ coset space $V\backslash H$, the fact that $V$ contains no decent torus implies that $V$ coincides with the connected component of the kernel of this action  by Fact~\ref{fact.Hru}, so $V$ is normal in $H$. If $V$ is nonabelian, then $S$ will be characteristic in $V$ and hence normal in $H$. However, Lemma~\ref{lem.HKlemma} implies that $S$ is \emph{not} normal in $H$, so $V$ is abelian. In summary, $C^\circ_H(S)=V$ is a normal abelian unipotent rank $2$ subgroup of $H$.

We now proceed in a  fashion similar to the proof of Lemma~\ref{lem.ThreePointStabAbelian}. Set $U_{1,2,3}:= C^\circ_{G_{1,2,3}}(S)= C^\circ_H(S)$ with similar definitions for $U_{1,2,4}$, $U_{1,3,4}$, and $U_{2,3,4}$.  Set $U:=\langle U_{1,2,3},U_{1,2,4},U_{1,3,4},U_{2,3,4}\rangle$ and $A:= \langle U_{1,2,3},U_{1,2,4}\rangle$. We first show that $A/S$ is abelian and that $A$ is normal in $B$; this will also imply, using the action of $\Sigma(1,2,3,4)$ on $U$, that $U/S$ is abelian.

Note that $A \le C^\circ_B(S)$ with $\rk A \ge 3$. Since $S$ is not normal in $H$, $H$ is not contained in $A$, so $HA$ has rank strictly larger than the rank of $A$. Thus, 
if $\rk A\ge 4$, then $HA$ is generic in $B$, and $A$ has a generic orbit on $3$. Since $A$ centralizes $S$, this would imply that $S = 1$ by Lemma~\ref{lem.HKlemma}, so $A$ must have rank $3$. Then $A/S$ is a rank $2$ group generated by two distinct connected unipotent subgroups, $A/S$ is abelian by Fact~\ref{fact.RankTwoAbelian}. 

We now show that $A$ is normal in $B$. Consider the action of $B$ on $U_{1,2,3}^B$ (the $B$-conjugates of $U_{1,2,3}$), and let $K$ be the kernel.  By our work above, $N_B(U_{1,2,3})$ contains $H$ and $A$, so $N_B(U_{1,2,3})$ has rank at least $4$. As $U_{1,2,3}$ fixes $3$, Lemma~\ref{lem.HKlemma} ensures that $N_B(U_{1,2,3}) \neq B$. Thus, $N_B(U_{1,2,3})$ has rank $4$, so $U_{1,2,3}^B$ is a connected set of rank $1$. By Fact~\ref{fact.Hru}, $B/K$ has rank $1$, $2$, or $3$. Note that $3$ and $4$ are in the same (generic) orbit of $B$ on $X$, so $U_{1,2,4} \in U_{1,2,3}^B$.  If $B/K$ has rank $1$, then $K$ contains $H$ and $(G_{1,2,4})^\circ$, but this contradicts the fact that $B = \langle H, (G_{1,2,4})^\circ\rangle $, see Lemma~\ref{lem.GenericTwoTransPS}. Thus $B/K$ has rank $2$ or $3$. By the structure of $B/K$ given in Fact~\ref{fact.Hru}, $(N_B(U_{1,2,3})\cap N_B(U_{1,2,4}))^\circ$ either coincides with $K^\circ$ or contains a decent torus. By rank considerations, $A = (N_B(U_{1,2,3})\cap N_B(U_{1,2,4}))^\circ$, so as $A$ is unipotent, we conclude that $A = K^\circ$ and that $A$ is normal in $B$.

Set  $\widetilde{A}:=\langle A, U_{1,3,4}\rangle$. Exactly as in the proof of Lemma~\ref{lem.ThreePointStabAbelian}, we find that $\widetilde{A} = F^\circ(P)$ and conclude that $N(\widetilde{A})$ has infinite index over $G_1$, contradicting virtual definable primitivity.
\end{proof}

\subsection{Structure of point stabilizers}
We begin by studying the generic $3$-point stabilizers.
 
\begin{proposition}\label{prop.StructureThreePointStabilizerA2}
For $H:= (G_{1,2,3})^\circ$ and $\Sigma:=\Sigma(1,2,3;4)$, we have that 
\begin{enumerate}
\item $H=F^\circ(H)\rtimes S$, 
\item $\Sigma$ acts faithfully on $F^\circ(H)$, 
\item $S\cong L^\times$ for some algebraically closed field $L$, and
\item $H\cap G_4 = S$.
\end{enumerate}
\end{proposition}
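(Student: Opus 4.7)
The plan is to pin down the structure of the rank-$3$ solvable group $H$ in successive stages, first identifying its Fitting subgroup, then extracting a field, and finally transferring the information to $\Sigma$ and to $G_4$.

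First I would establish that the connected Fitting subgroup $F := F^\circ(H)$ has rank exactly $2$ and is abelian. Note that $S$ is non-normal in $H$: the orbit of $H$ on $4$ is generic of rank $2$ (as $\rk H - \rk S = 2$), so Lemma~\ref{lem.HKlemma} applied to $H = K$ and $S = H$ in its notation forces $H \not\le N(S)$. This already precludes $H$ abelian. To rule out $H$ nilpotent (i.e.\ $\rk F = 3$), I would use that in a connected nilpotent group a maximal decent torus is central: by Lemma~\ref{lem.RankTwoContainingS} some rank-$2$ subgroup through $S$ contains a decent torus, and pushing this through the structure of $Z^\circ(H)$ and $H/Z^\circ(H)$ (via Fact~\ref{fact.rankTwoGroups}) produces a contradiction to non-normality of $S$. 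Ruling out $\rk F = 1$ is similar: $FS$ would be a rank-$2$ subgroup containing $S$, again acquiring a decent torus via Lemma~\ref{lem.RankTwoContainingS}; analyzing $H/F$ (solvable of rank $2$) together with the involutions available in $\Sigma/G_{1,2,3,4} \cong \sym(3)$ forces $S$ to be normal, a contradiction. Once $\rk F = 2$, $F$ is rank-$2$ nilpotent normal in $H$; using that $\Sigma$ produces a second connected rank-$1$ subgroup inside $F$ (beyond the characteristic one from Fact~\ref{fact.rankTwoGroups}), Fact~\ref{fact.RankTwoAbelian} forces $F$ to be abelian.

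Next, for the splitting $H = F \rtimes S$ and the identification $S \cong L^\times$: since $F$ is abelian and normal while $S$ is non-normal, $S \not\le F$, so $S \cap F$ is finite and $H = FS$ by rank. The action of $S$ on $F$ is faithful, for otherwise $C_F(S) = F$ and $H$ would be nilpotent, ruled out above. A short case analysis (either $F$ is $S$-minimal, in which case the standard definable-field construction applies, or $C_F(S)$ is a nontrivial proper $S$-invariant subgroup, in which case Fact~\ref{fact.rankTwoGroups} applied to $H/Z(H) = H/C_F(S)$ does the job) yields a definable algebraically closed field $L$ with $F \cong L^+$ and $S$ isogenous to $L^\times$ via the action. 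Since $L^\times$ is divisible and $S$ is connected, $S \cong L^\times$; in particular $S \cap F = 1$, giving a genuine semidirect decomposition.

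Finally, the faithfulness of $\Sigma$ on $F$ is a consequence of Lemma~\ref{lem.SigmaGroupLemma}: any $\alpha \in \Sigma$ centralizing $F$ normalizes $S$ (since $\Sigma$ normalizes $G_{1,2,3,4}$, hence $S$), and the compatibility of $\alpha$ with the $L$-field action of $S$ on $F$ forces $\alpha$ to centralize $S$, so $\alpha$ acts trivially on all of $H = FS$ and must be trivial. For $H \cap G_4 = S$: the containment $\supseteq$ is immediate, and since the orbit of $H$ on $4$ is generic of rank $2$ with $S$ fixing $4$, the group $F \cap G_4$ is finite and $S$-invariant; since $F \cong L^+$ and $S \cong L^\times$ acts by field multiplication, the only finite $S$-invariant subgroup of $F$ is trivial, whence $H \cap G_4 = S$.

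The main obstacle is ruling out $\rk F^\circ(H) = 1$: Lemma~\ref{lem.RankTwoContainingS} readily delivers a decent torus inside a rank-$2$ subgroup through $S$, but converting this into a contradiction with non-normality of $S$ requires a careful orchestration of the $\Sigma$-action (especially the involutions) and the structure of $H/F$. Once the Fitting subgroup has the right rank and is abelian, the rest of the proposition falls out of the rank-$2$ structure theorem and elementary module-theoretic considerations on $F \cong L^+$.
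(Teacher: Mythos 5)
There are two genuine gaps. First, your claim that $F:=F^\circ(H)$ is abelian is not justified: you say that ``$\Sigma$ produces a second connected rank-$1$ subgroup inside $F$'' so that Fact~\ref{fact.RankTwoAbelian} applies, but you give no construction of such a subgroup, and none is readily available at this stage (if $F$ is nonabelian $p$-unipotent, an element of $\Sigma$ of order prime to $p$ need not have a connected centralizer in $F$ distinct from $Z^\circ(F)$). In the paper the abelianness of $F^\circ(G_{1,2,3})$ is only established later, in Corollary~\ref{cor.FittingG123A2}, and its proof depends on Lemma~\ref{lem.SelfNormalizingA2} and on the connectedness corollary that itself follows from the present proposition --- so you cannot simply assume it here. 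The paper's proof of the proposition is written precisely so as not to need it: for instance, $F\cap S=1$ is obtained by a case split ($F$ abelian versus $F$ of exponent $p$ or $p^2$, in which case $\charac L=p$ and $S\cong L^\times$ has no $p$-torsion).

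Second, and more seriously, your proof of $H\cap G_4=S$ rests on the assertion that ``$F\cong L^+$ and $S\cong L^\times$ acts by field multiplication.'' This is impossible on rank grounds: $F$ has rank $2$ while $\rk L^+=\rk L^\times=\rk S=1$. The field $L$ arises from the action of $S$ on a rank-$1$ $S$-invariant section of $F$ (the subgroup $A$ or the quotient $F/A$), not on $F$ itself, so ``the only finite $S$-invariant subgroup of $F$ is trivial'' does not follow. Your reduction of $H\cap G_4=S$ to $F\cap G_4=1$ is fine, and that triviality can in fact be rescued by Lemma~\ref{lem.HKlemma}(1) (a finite subgroup of $F$ fixing $4$ is, once $F$ is known to be abelian, normalized by all of $H=FS$, whose orbit on $4$ is generic) --- but that again presupposes the unproved abelianness of $F$. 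The paper avoids both problems with a different argument: it shows $C^\circ_H(S)=SZ^\circ(H)$ is abelian and self-normalizing in the solvable group $H$ (Fr\'econ), deduces $H\cap G_4\le C^\circ_H(S)$, and kills $H\cap G_4\cap Z^\circ(H)$ by faithfulness via Lemma~\ref{lem.HKlemma}. A smaller flaw: your justification that $S$ acts faithfully on $F$ (``otherwise $C_F(S)=F$'') confuses $C_S(F)$ with $C_F(S)$ and ignores possible finite kernels; the clean route, once $H=FS$ is known so that $F$ has a generic orbit on $4$, is to apply Lemma~\ref{lem.HKlemma}(2) directly to $\Sigma$ acting on $F$, which is exactly how the paper obtains item (2).
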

\begin{proof}
First, suppose that $H$ is nilpotent. In this case, $S$ is not a good torus as otherwise $S$ would be central in $H$, see \cite[Corollary~6.12]{BoNe94}. However, we have already observed that Lemma~\ref{lem.HKlemma} implies that $S$ is not normal in $H$. Thus, $S$ is unipotent. As $S$ is not normal in $H$, the  Normalizer Condition for nilpotent groups of finite Morley rank implies that  $N:=N_H(S)$ has rank $2$; recall that the \textdef{Normalizer Condition} states that every definable subgroup of infinite index in a nilpotent group of finite Morley rank is of infinite index in its normalizer. Since $S < N$, Lemma~\ref{lem.RankTwoContainingS} implies that $N$ contains a nontrivial decent torus. Thus, $S$ must be the only nontrivial unipotent subgroup of $N$, so $S$ is characteristic in $N$. Now, $N$ is normal in $H$ by the Normalizer Condition, so we find that $S$ is normal in $H$. This is a contradiction, so $H$ is nonnilpotent.

Let $F:= F^\circ(H)$. Since $H$ is nonnilpotent, $F$ has rank $2$. If $S<F$, then $S$ is not a good torus as otherwise $S$ would be central in $H$. Thus, as in the previous paragraph, $S$ is characteristic in $F$. This is a contradiction, so $S$ is not in $F$. Thus, $H = F\cdot S$ (we will address $F\cap S$ later). This implies that the orbit of $F$ on $4$ is generic, so $\Sigma$ acts faithfully on $F$ by Lemma~\ref{lem.HKlemma}.

We now claim that $F$ must have some definable $S$-invariant subgroup of rank $1$. If not, we can use \cite[I,~Proposition~4.11]{ABC08} to linearize the action of $S$ on $F$, and find that $\Sigma$ acts (faithfully) on $F$ as a subgroup of the multiplicative group of a field. This is absurd. Thus, $F$  has some definable connected $S$-invariant subgroup $A$ of rank $1$. Further, if $S$ does not centralize $A$, then we can linearize the action of $S$ on $A$ to find that $S\cong L^\times$ for some algebraically closed field $L$. Otherwise, if $S$ does centralize $A$, then $A$ is central in $H$ since $F$ is either abelian or $A=Z^\circ(F)$ by Fact~\ref{fact.RankTwoAbelian}. Now, $H$ is nonnilpotent, so $H/A$ is nonnilpotent. Thus, in this case, the image of $S$ in $H/A$ is $L^\times$ for some algebraically closed field $L$, and this implies that $S\cong L^\times$ (see \cite[Corollary~2.7]{WiJ14a} for example). We conclude that, in any case, $S\cong L^\times$ and that either $A$ or $F/A$ is isomorphic to $L^+$. 

We now show that $F\cap S=1$. First, if $F$ is abelian, then $F\cap S=1$ since $S$ acts faithfully on $F$. Now, if $F$ is nonabelian, then $F$ has exponent $p$ or $p^2$ for some prime $p$ by Fact~\ref{fact.rankTwoGroups}, so the characteristic of $L$ is $p$. Thus, $S$ has no $p$-elements, so we find that $F\cap S=1$.

It only remains to address the final point. Set $H_4:= H\cap G_4$, and note that $S= H^\circ_4$. Since $S$ is not normal in $H$, $C_H(S)$ has rank at most $2$, and consequently,  $C_H(S) \cap F(H)$ has rank at most $1$. As above in the third paragraph, Fact~\ref{fact.RankTwoAbelian} implies that $(C_H(S) \cap F(H))^\circ$ is central in $F^\circ(H)$, hence in $H$. This yields that $C^\circ_H(S) = SZ^\circ(H)$, so $Q:= C^\circ_H(S)$ is abelian. Also, $S$ is a good torus, so $Q$ is of finite index in its normalizer. Since $H$ is solvable, $Q$ is self-normalizing by \cite[Th\'eor\`eme~1.2]{FrO00}, so $H_4\le Q$. Now, $H_4$ acts faithfully on $H$ by Lemma~\ref{lem.HKlemma}, so $H_4\cap Z^\circ(H) = 1$. By rank considerations, $Q = Z^\circ(H) \times H_4$, so $H_4 = S$.
\end{proof}

We obtain the following important corollary.

\begin{corollary}
The point stabilizers $G_1$, $G_{1,2}$, $G_{1,2,3}$, and $G_{1,2,3,4}$ are all connected, and in particular, $\Sigma(1,2,3,4)/S \cong \sym(4)$.
\end{corollary}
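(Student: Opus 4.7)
The plan is to apply Fact~\ref{fact.ConnectedPS} iteratively, with $F^\circ(H)$ as the witness of a regular and generic orbit, where $H=(G_{1,2,3})^\circ$.

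The first step is to show that $F^\circ(H)\cdot 4$ is regular and generic in $X$. By Proposition~\ref{prop.StructureThreePointStabilizerA2}, $H = F^\circ(H)\rtimes S$ and $H\cap G_4 = S$, so
\[
F^\circ(H)\cap G_4 \;\le\; F^\circ(H)\cap S \;=\; 1.
\]
Hence $F^\circ(H)$ acts freely on $F^\circ(H)\cdot 4$, and this orbit has rank $\rk F^\circ(H) = 2 = \rk X$, so it is regular and generic. Applying Fact~\ref{fact.ConnectedPS} to $(X,G)$ yields that every $G_x$ is connected; in particular $G_1$ is connected.

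I would then iterate along the chain of stabilizers. The subgroup $F^\circ(H)$ lies inside $G_{1,2,3}\le G_{1,2}\le G_1$, and its rank-$2$ regular orbit on $4$ sits inside the generic orbit $Y_i$ of each successive stabilizer. A quick rank computation ($\rk G_1 = 7$, $\rk G_{1,2}=5$, $\rk G_{1,2,3}=3$ against the stabilizer $\rk S = 1$ at $4$) shows that each $Y_i$ has full rank $2$ in $X$. Since $Y_i$ is a generic subset of the connected set $X$, Fact~\ref{fact.FixGenericSet} ensures the restricted action of the current stabilizer on $Y_i$ is faithful, so at each stage we have a transitive permutation group of finite Morley rank with connected acting group, and $F^\circ(H)$ still witnesses the hypothesis of Fact~\ref{fact.ConnectedPS}. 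Successive applications give $G_{1,2}$ connected, and then $G_{1,2,3}$ connected. Once $G_{1,2,3}=H$, part (iv) of Proposition~\ref{prop.StructureThreePointStabilizerA2} yields $G_{1,2,3,4} = G_{1,2,3}\cap G_4 = H\cap G_4 = S$, which is connected by the definition of $S$.

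For the final clause, generic $4$-transitivity together with Lemma~\ref{lem.PermuteGeneric} shows that every permutation of $\{1,2,3,4\}$ is realized in $G$, so the natural injection $\Sigma(1,2,3,4)/G_{1,2,3,4}\hookrightarrow\sym(\{1,2,3,4\})$ is surjective; combined with $G_{1,2,3,4}=S$, this gives $\Sigma(1,2,3,4)/S\cong\sym(4)$.

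I do not foresee any real obstacle: Proposition~\ref{prop.StructureThreePointStabilizerA2} has already supplied both the semidirect decomposition of $H$ and the crucial identity $H\cap G_4 = S$, so the entire argument is a clean iteration of Fact~\ref{fact.ConnectedPS}, with the only mild care being to check faithfulness of the restricted actions via Fact~\ref{fact.FixGenericSet}.
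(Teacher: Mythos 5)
Your proof is correct and follows exactly the paper's route: Proposition~\ref{prop.StructureThreePointStabilizerA2}(1) and (4) give that $F^\circ(G_{1,2,3})$ has a regular generic orbit on $X$, and then Fact~\ref{fact.ConnectedPS} is applied repeatedly down the stabilizer chain. You have merely spelled out the details (faithfulness of the restricted actions via Fact~\ref{fact.FixGenericSet}, the rank bookkeeping) that the paper leaves implicit.
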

\begin{proof}
The first and fourth points from Proposition~\ref{prop.StructureThreePointStabilizerA2} show that the orbit of $F^\circ(G_{1,2,3})$ on $4$ is generic and regular. Thus, everything follows from repeatedly applying Fact~\ref{fact.ConnectedPS}.
\end{proof}

The next lemma and its corollary further clarify the structure of $G_{1,2,3}$.

\begin{lemma}\label{lem.SelfNormalizingA2}
We have that $S$ is self-normalizing in $G_{1,2,3}$. 
\end{lemma}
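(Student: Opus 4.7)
The plan is to prove $N := N_H(S) = S$ in two stages: a reduction to showing that the connected center $Z := Z^\circ(H)$ is trivial, followed by a derivation of that triviality via the $\Sigma$-action. For the reduction, any $n \in N$ normalizes the centralizer $C_H(S)$, and hence its connected component $Q := C_H^\circ(S)$. From the proof of Proposition~\ref{prop.StructureThreePointStabilizerA2}, $Q = S \cdot Z$ is abelian and self-normalizing in $H$ (by Frécon's theorem invoked there). Thus $N \le N_H(Q) = Q$, and since the abelian $Q$ centralizes $S$, also $Q \le N$. Hence $N = Q = S \cdot Z$, reducing the lemma to the claim $Z = 1$.

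To rule out $Z \ne 1$, I will assume $Z$ is nontrivial and hence connected of rank $1$. Proposition~\ref{prop.StructureThreePointStabilizerA2} combined with Fact~\ref{fact.rankTwoGroups} identifies $H/Z$ as essentially $L^+ \rtimes L^\times$, with $S$ projecting isomorphically onto the torus factor and $F/Z$ carrying the natural rank-$1$ torus module structure; in particular $C_{F/Z}(S) = 1$. The group $\Sigma := \Sigma(1,2,3;4)$ normalizes $H$, so acts on the characteristic subgroup $Z$ and on the quotient $F/Z$. Since $S$ acts trivially on $Z$ and as scalars on $F/Z$, each action descends to $\Sigma/S \cong \sym(3)$, and by Lemma~\ref{lem.symmOnRankOne} the subgroup $\alt(3)$ lies in both kernels. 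Using the divisibility of $S$ to split the extension, I will pick a lift $\tau \in \Sigma$ of a $3$-cycle of order exactly $3$. Then $\tau$ centralizes both $Z$ and $F/Z$. Furthermore $\tau$ centralizes $S$ itself, since the induced automorphism of the rank-$1$ good torus $S \cong L^\times$ has order dividing $3$ while the relevant rational character lattice $\mathbb{Z}$ offers no $3$-torsion in its automorphism group $\{\pm 1\}$.

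The final step upgrades this to $\tau$ centralizing all of $F$, contradicting the faithful $\Sigma$-action on $F$ given by Proposition~\ref{prop.StructureThreePointStabilizerA2}. When $F$ is abelian, $F = Z \oplus F_1$ decomposes into $S$-weight spaces; $\tau$ preserves each summand (centralizing $S$), and triviality on $F/Z \cong F_1$ combined with $F_1 \cap Z = 0$ forces trivial action on $F_1$. The main obstacle will be the nilpotent-class-$2$ case, where the map $f \mapsto [\tau, f]$ is a definable group homomorphism $F \to Z$ whose cube (from $\tau^3 = 1$) is identically trivial, placing its image in the $3$-torsion of $Z$; connectedness of $F$ forces the image to be a connected subgroup of this $3$-torsion, and in the delicate subcase where $F$ is $3$-unipotent (so $Z$ itself is $3$-torsion) one must extract further structure—likely from the exponent constraints of Fact~\ref{fact.rankTwoGroups} or from not-yet-developed information on $G_{1,2}$—to conclude.
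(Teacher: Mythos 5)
Your reduction of the lemma to the vanishing of $Z:=Z^\circ(G_{1,2,3})$ via Fr\'econ's theorem is correct and is essentially the paper's reduction to showing that $S$ is self-centralizing; the overall strategy (use the $\sym(3)$-image of $\Sigma$ and Lemma~\ref{lem.symmOnRankOne} to force a nontrivial element of $\Sigma$ to centralize $F:=F^\circ(G_{1,2,3})$) is also the paper's. But the execution has real problems. The action of $\Sigma$ on $F/Z$ does \emph{not} descend to $\Sigma/S$, since $S$ acts there nontrivially, so Lemma~\ref{lem.symmOnRankOne} cannot be applied to that quotient directly; moreover Fact~\ref{fact.rankTwoGroups} only identifies $H/Z(H)$, so it leaves open the possibility that $S$ acts on $F/Z$ with a nontrivial finite kernel --- the paper devotes a paragraph to excluding exactly this. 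Only after that, by linearizing, does one obtain $\Sigma=S\times K$ with $K\cong\sym(3)$ equal to the kernel of the action on $F/Z$; this splitting is what actually produces your element $\tau$ of order $3$ centralizing $S$ and acting trivially on $F/Z$. Your substitute argument via the ``rational character lattice'' of $S$ is an algebraic-group argument with no counterpart in this category, and even granting a descent to $\sym(3)$, a lift of an element of $\alt(3)$ would a priori act on $F/Z$ as a nontrivial cube root of unity rather than trivially.

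Second, and decisively, the case you leave open at the end is a genuine gap, and it is not filled by further information about $G_{1,2}$. The paper closes it as follows: commutation by the order-$3$ element $k\in K$ is a homomorphism $F\to Z$ whose analysis forces both $Z$ and $F/Z$ to be elementary abelian $3$-groups, so the characteristic is $3$; then, since $K\cong\sym(3)$ is nonabelian and acts faithfully on $F$ (Lemma~\ref{lem.HKlemma}), the three-subgroup lemma shows that $K$ cannot centralize $Z$, so some involution $j\in K$ inverts $Z$, and this forces $F$ to be abelian --- eliminating your problematic nilpotent-class-$2$, $3$-unipotent subcase outright rather than analyzing it. Finally, the unique involution $i\in S$ (which exists precisely because the characteristic is $3$) yields the $K$-invariant decomposition $F=Z\cdot[F,i]$, replacing your unproved weight-space splitting $F=Z\oplus F_1$, and then $K'=\alt(3)$ centralizes both factors and hence $F$, contradicting faithfulness. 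The missing ideas are thus the involutions of $K$ and of $S$ together with the three-subgroup lemma; without them the argument does not close.
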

\begin{proof}
Set $H:= G_{1,2,3}$ and $F:= F^\circ(H)$.  The real work is to show that $S$ is self-centralizing in $H$. Suppose not. Since $C_H(S)$ is connected by \cite[Theorem~1]{AlBu08}, $C_H(S)$ must have rank at least $2$, and as $S$ is not normal in $H$, we find that $C_H(S)$ has rank equal to $2$. By Proposition~\ref{prop.StructureThreePointStabilizerA2}, $Z:=(C_H(S)\cap F)^\circ$ has rank $1$. As in the proof of Proposition~\ref{prop.StructureThreePointStabilizerA2}, we find that $Z = Z^\circ(H)$ and $C_H(S) = SZ$.

Now, $H/Z$ is a nonnilpotent connected group of rank two, and we may apply Fact~\ref{fact.rankTwoGroups}. We find an algebraically closed field $L$ for which $F/Z \cong L^+$ and $S\cong L^\times$. Let $p\ge 0$ be the characteristic of $L$. Note that Fact~\ref{fact.rankTwoGroups} does not preclude the possibility that some nontrivial finite subgroup of $S$ centralizes  $F/Z$.  We deal with this first. If $S$ does not acts faithfully on $F/Z$, then there is an $s\in S$ of prime order $q\neq p$, for which $[F,s] \le Z$. Thus, commutation by $s$ is a homomorphism from $F$ to $Z$. Since, $S$ acts faithfully on $F$, the kernel $C_F(s)$  has rank $1$ and certainly contains $Z$. Also, the image $[F,s]$ must have exponent $q$ since $s$ has order $q$, so we find that $F/C_F(s)$ is an elementary abelian $q$-group. Since $F/C_F(s)$ is a quotient of the connected rank $1$ group $F/Z$, it must be that $F/Z\cong L^+$ is also an elementary abelian $q$-group. As $q\neq p$, we have a contradiction, and we conclude that $S$ acts faithfully on $F/Z$.

Now, let $K$ be the kernel of the action of $\Sigma:=\Sigma(1,2,3;4)$ on $F/Z$. Since $S$ induces all of $L^\times$, we find that $\Sigma = S\times K$, so by the previous corollary, $K\cong \sym(3)$.  Further, $K$ acts on $Z$, so $K'$ centralizes $Z$ by Lemma~\ref{lem.symmOnRankOne}. Let $k\in K$ be of order $3$. Then commutation by $k$ is a homomorphism from $F$ to $Z$, and arguing as in the previous paragraph, we find that $F/Z$ and $Z$ are elementary abelian $3$-groups. Also, note that $Z$ is not centralized by all of $K$ as this would imply that $[[F,K],K] = 1$ and hence (by the three subgroup lemma for commutators) that $K$ is abelian. Thus, some involution $j \in K$ acts nontrivially on $Z$. Then either $j$ inverts $F$ or $F=Z\cdot C_F(k)$. In either case, as $F$ has rank $2$, $F$ is abelian. 

Let $i\in S$ be the unique involution; this exists since we now know the characteristic of $L$ is $3$. Since $F$ is abelian, $[F,i]$ is a connected subgroup of $F$ that is inverted by $i$. It must be that $[F,i]$ has rank $1$ with $F = Z \cdot [F,i]$. Most importantly, $[F,i]$ is $K$-invariant. By Lemma~\ref{lem.symmOnRankOne}, $K'$ centralizes both $Z$ and $[F,i]$, so $K'$ centralizes $F$. This contradicts the fact that $\Sigma$ acts faithfully on $F$ by Lemma~\ref{lem.HKlemma}. Thus, we find that $S$ is self-centralizing in $H$. Further, $S$ is a good torus, so $C_H(S) = S$ is of finite index in its normalizer. Since $H$ is solvable, $S$ is self-normalizing by \cite[Th\'eor\`eme~1.2]{FrO00}.
\end{proof}

\begin{corollary}\label{cor.FittingG123A2}
We have that $F^\circ(G_{1,2,3})$ is abelian and unipotent, and in particular, $F^\circ(G_{1,2,3})$ is self-centralizing in $G$.
\end{corollary}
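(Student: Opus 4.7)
The strategy is to prove unipotency of $F := F^\circ(G_{1,2,3})$, then abelianness, and deduce self-centralization. Set $H := G_{1,2,3}$ and $\Sigma := \Sigma(1,2,3;4)$, and exploit the structure already established: $H = F \rtimes S$ with $F \cap S = 1$ and $\rk F = 2$ from Proposition~\ref{prop.StructureThreePointStabilizerA2}; $C_H(S) = S$ from Lemma~\ref{lem.SelfNormalizingA2}; and $\Sigma$ acting faithfully on $F$ with $\Sigma/S \cong \sym(3)$.

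\emph{Unipotency.} First I would rule out nontrivial decent tori. The maximal decent torus $T$ of $F$ is definably characteristic in $F$, hence normal in $H$ and $S$-invariant. Since $T$ is definably generated by its torsion subgroup, which has only finitely many elements of each fixed order, the connected group $S$ fixes each such torsion element and therefore centralizes $T$. Hence $T \le C_H(S) = S$, and combined with $T \le F$ and $F \cap S = 1$, this forces $T = 1$. So $F$ is unipotent.

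\emph{Abelianness.} This will be the main obstacle. I plan to suppose $F$ nonabelian and derive a contradiction. By Fact~\ref{fact.rankTwoGroups}, $F$ then has exponent $p$ or $p^2$ for some prime $p$, so is $p$-unipotent; by Fact~\ref{fact.RankTwoAbelian}, $F$ contains a unique connected definable rank~$1$ subgroup, namely $Z := Z^\circ(F) = F'$, with $F/Z$ also connected of rank~$1$ and $p$-unipotent. Because $Z^\circ(H) \le C_H(S) \cap C_H(F) \le S \cap F = 1$ (using faithfulness of $S$ on $F$), and because $Z$ is infinite and connected, $S$ must act nontrivially on $Z$. Applying Zilber's field theorem to this action yields an algebraically closed field $K$ of characteristic $p$ with $Z \cong K^+$ and $S$ embedding into $K^\times$ modulo a finite kernel; since $K^\times$ has no $p$-torsion while $L^\times$ has infinite $p^\infty$-torsion when $\mathrm{char}(L) \ne p$, I deduce $\mathrm{char}(L) = p$, so $S$ has no $p$-torsion. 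A parallel argument, using that $S$ is a good torus with $C_F(S) \le C_H(S) \cap F = 1$ and hence $F = [F,S]$ by the torus decomposition, shows $S$ acts nontrivially on $F/Z$ as well.

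From here I would extract a contradiction via the $\Sigma$-action. A $2$-element $\tau \in \Sigma$ lifting a transposition exists: when $p \ne 2$, the unique involution of $S \cong L^\times$ is available together with a lifted transposition from a $2$-Sylow of $\Sigma$; when $p = 2$, the $2$-torsion-freeness of $S$ lets me lift a transposition to an involution in $\Sigma \setminus S$. In either case, $C_F^\circ(\tau) = Z$ (neither trivial, lest $\tau$ invert $F$ and make it abelian, nor all of $F$, lest faithfulness fail), so $\tau$ centralizes $Z$ and induces either inversion or the identity on $F/Z$. Combining this with the $S$-equivariance of the commutator pairing $(F/Z) \times (F/Z) \to Z$ (which forces the $S$-weight on $Z$ to be the ``square'' of the $S$-weight on $F/Z$) and with the action of a $3$-cycle lift $\sigma \in \Sigma$ on $F$ (whose image in the automorphism group of the rank~$1$ groups $Z$ and $F/Z$ is severely constrained in characteristic $p$), one should obtain incompatible weight identities that collapse the $\Sigma$-action on $F$, contradicting faithfulness. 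I expect the $p=2$ subcase, where $\tau$ acts trivially on both $Z$ and $F/Z$ and the argument must proceed through the induced homomorphism $[-,\tau]\colon F/Z \to Z$ and the interplay of the three transposition lifts, to be the most delicate point, likely mirroring the characteristic and involution analyses in the proof of Lemma~\ref{lem.SelfNormalizingA2}.

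\emph{Self-centralization.} Once $F$ is known to be abelian and unipotent, this is immediate. From Proposition~\ref{prop.StructureThreePointStabilizerA2}, $H \cap G_4 = S$ and $H = F \rtimes S$, so $F$ acts regularly on the orbit $F \cdot 4$, which has rank $\rk F = 2$ and is therefore generic in $X$. For any $g \in C_G(F)$, $g$ commutes with the $F$-action on $X$ and so permutes $F$-orbits while preserving rank; by uniqueness of the generic orbit (its complement is non-generic), $g \cdot 4 \in F \cdot 4$, giving $g = fh$ with $f \in F$ and $h \in G_4 \cap C_G(F)$. Then I apply Lemma~\ref{lem.HKlemma}(1) with $F$ (which has a generic orbit on $4$) playing the role of $K$ and normalizing $C_G(F) \cap G_4$ via centralization, concluding $C_G(F) \cap G_4 = 1$; hence $g = f \in F$ and $C_G(F) = F$.
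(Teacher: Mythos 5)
Your unipotency and self-centralization arguments are correct and coincide with the paper's: a decent torus of $F:=F^\circ(G_{1,2,3})$ is centralized by $S$ (equivalently, is central in $H$), hence lies in $C_H(S)=S$ by Lemma~\ref{lem.SelfNormalizingA2} and dies against $F\cap S=1$; and once $F$ is abelian and acts regularly on its generic orbit, $C_G(F)\cap G_4$ fixes that orbit pointwise and vanishes via Fact~\ref{fact.FixGenericSet}. The problem is the abelianness step, which stops being a proof exactly where the contradiction has to be produced. After correctly establishing that $S$ acts nontrivially on $Z:=Z^\circ(F)$ and on $F/Z$, and that a transposition lift $\tau$ centralizes $Z$ and acts as $\pm1$ on $F/Z$, you appeal to ``incompatible weight identities'' coming from the commutator pairing and a $3$-cycle lift, and you yourself flag the $p=2$ subcase as unresolved. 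That is the entire content of the nonabelian case, and it is missing. Moreover the proposed mechanism is shaky as stated: the commutator map $F/Z\times F/Z\to Z$ is only biadditive, not linear over the field generated by $S$, so Frobenius twists intervene and the ``weight on $Z$ is the square of the weight on $F/Z$'' identity is not available; and if the pairing really were bilinear and alternating on a one-dimensional space it would vanish outright, which is a sign that the weight bookkeeping cannot be as simple as you hope.

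The paper closes this gap by a different and much cheaper device. First one shows $S$ acts \emph{faithfully} on $F/Z$: if some nontrivial $s\in S$ centralized $F/Z$, then lifting centralizers (\cite[I,~Proposition~9.9]{ABC08}) gives $C^\circ_F(s)\not\le Z$, producing a second connected definable rank-$1$ subgroup of the nilpotent rank-$2$ group $F$ and forcing $F$ abelian by Fact~\ref{fact.RankTwoAbelian}. Linearizing the faithful action of $S$ on the rank-$1$ group $F/Z$ then shows that $\Sigma:=\Sigma(1,2,3;4)$ splits as $S\times K$ with $K\cong\sym(3)$ centralizing $F/Z$ (the image of $\Sigma$ in $\emorph(F/Z)$ is forced into the multiplicative group of the field generated by $S$, which $S$ already covers). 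Now pick $k\in K$ of prime order different from $p$ --- always possible since $\sym(3)$ contains elements of orders $2$ and $3$ --- and run the same centralizer-lifting argument: $C^\circ_F(k)\not\le Z$, giving again a second rank-$1$ connected subgroup and hence $F$ abelian, the desired contradiction. The idea you are missing is that the entire finite complement $K$ of $S$ in $\Sigma$ centralizes $F/Z$, which replaces your element-by-element analysis of lifts and weights with a single coprime-order centralizer computation; with that in hand, your remaining steps assemble into the paper's proof.
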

\begin{proof}
 We will employ many of the same ideas as in the proof of Lemma~\ref{lem.SelfNormalizingA2}. Set $H:= G_{1,2,3}$ and $F:= F^\circ(H)$. First, any decent torus of $F$ would be normal, and hence central, in $H$ by \cite[Theorems~6.8 and 6.9]{BoNe94}. Since $S$ is self-centralizing by Lemma~\ref{lem.SelfNormalizingA2}, $F$ must be unipotent.
 
 Now assume that $F$ is not abelian.  Set  $Z:=Z^\circ(F)$. By Fact~\ref{fact.rankTwoGroups}, $F$ has exponent $p$ or $p^2$ for some prime $p$. Using Lemma~\ref{lem.SelfNormalizingA2}, we may linearize the action of $S$ on $Z$, so there is an algebraically closed field $L$ for which $Z \cong L^+$ and $S\cong L^\times$, though $S$ may have a finite kernel when acting on $Z$.  We now claim that $S$ acts faithfully on $F/Z$. Indeed, assume that some nontrivial $s\in S$ centralizes $F/Z$. By \cite[I,~Proposition~9.9]{ABC08}, $C^\circ_F(s)$ is not contained in $Z$, but this forces $F$ to be abelian by Fact~\ref{fact.RankTwoAbelian}. Thus, $S$ acts faithfully on $F/Z$. Linearizing the action of $S$ on $F/Z$, we find that $\Sigma = S\times K$ with $K$ centralizing $F/Z$ and $K\cong \sym(3)$. Let $k\in K$ be of prime order different from $p$. By \cite[I,~Proposition~9.9]{ABC08}, $C^\circ_F(k)$ is not contained in $Z$, so $F$ is abelian by Fact~\ref{fact.RankTwoAbelian}. Further, $C_H(F)$, which we now know contains $F$, normalizes the generic orbit of $F$ on $X$, so as $F$ acts regularly on this orbit, $F = C(F)$.  
\end{proof}

We now address $G_{1,2}$. The following proposition will be the final ingredient needed to prove Theorem~\ref{thm.Atwo}. 

\begin{proposition}\label{prop.BmodFB}
We have that $G_{1,2}/F^\circ(G_{1,2})$ acts faithfully on $F^\circ(G_{1,2})$ with the action equivalent to the natural action of $\ssl_2(L)$ on $L^2$ for some algebraically closed field $L$. Further, if $\beta \in 12(34)$, then $\beta \notin F^\circ(G_{1,2})$
\end{proposition}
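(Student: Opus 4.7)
The plan is to determine the structure of $F := F^\circ(G_{1,2})$ in stages, then identify the quotient action, and finally handle the claim about $\beta$.

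First I would show $F$ is abelian unipotent of rank $2$. Since $B := G_{1,2}$ acts generically $2$-transitively on its generic orbit in $X$ (of rank $2$), Proposition~\ref{prop.TransBoundSolvable} gives that $B$ is non-solvable, so $\rk F \le 4$. Also, $F \cap H$ is normal in $H := G_{1,2,3}$, so its connected component lies in the abelian unipotent rank-$2$ group $F^\circ(H)$ of Corollary~\ref{cor.FittingG123A2}. Combining this upper bound with the symmetric picture at $4$ (via an element of $\Sigma(1,2;3,4)$ conjugating $H$ to $G_{1,2,4}$) and the self-centralization of $F^\circ(H)$ in $G$, I expect to pin $\rk F = 2$. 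Unipotency (via Fact~\ref{fact.rankTwoGroups}) would follow from: any maximal decent torus of $F$ is central in $F$, hence characteristic in $F$, hence normal in $B$, and a commutator analysis against $F^\circ(H)$ using Corollary~\ref{cor.FittingG123A2} yields a contradiction. Non-abelianness of $F$ would force $Z^\circ(F)$ of rank $1$ to be normal in $B$, which likewise contradicts either the generic $2$-transitivity or the self-centralization of $F^\circ(H)$.

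The main obstacle is the second stage: identifying $B/F$ as $\pssl_2$ acting naturally on $F$. After linearizing $F$ as an $L$-vector space (with $L$ recovered from the action of $S \cong L^\times$ on $F$, matching the field of Proposition~\ref{prop.StructureThreePointStabilizerA2}), I would apply Fact~\ref{fact.rankThreeGroups} to $B/F$: this connected rank-$3$ group is non-solvable (since $B$ is not) and non-nilpotent, so it is either a quasisimple bad group or $\pssl_2(K)$. The bad-group alternative is excluded because $B/F$ contains the non-trivial image of the good torus $S$ with a proper non-central complement, so $B/F \cong \pssl_2(K)$. Faithfulness of the conjugation action of $B/F$ on $F$ follows because any non-trivial kernel would be a proper normal subgroup of the quasisimple $B/F$. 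Finally, I would match $K$ with $L$ and realize the action as natural by comparing the regular action of $F$ on the generic orbit $\orbit_B(3)$ (via $u \mapsto u \cdot 3$) with the standard $\ssl_2(L)$-model, using the Borel-like structure of $H$ to fix the isomorphism and force $B/F \cong \ssl_2(L)$.

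For the last claim, under the identification $\orbit_B(3) \leftrightarrow F$ (with $3 \leftrightarrow 0$ and $4 \leftrightarrow v_4 \ne 0$), every element of $F$ acts on the orbit by translation. If $\beta \in F$ swaps $3$ and $4$, then $\beta = v_4$ and $\beta + v_4 = 0$, so $2v_4 = 0$, impossible in characteristic $\ne 2$. In characteristic $2$ I would instead note that $\beta$ projects to a Weyl-type element of $B/F \cong \ssl_2(L)$, which is non-trivial and so excludes $\beta \in F$.
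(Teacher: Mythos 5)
There are several genuine gaps here, and the most serious ones occur right at the start. First, your non-solvability claim for $B=G_{1,2}$ does not follow from Proposition~\ref{prop.TransBoundSolvable}: that result only bounds the generic transitivity degree of a solvable group by $2$, and $B$ is merely generically $2$-transitive on its generic orbit, so no contradiction arises (indeed, in the $\pgl_3$ target itself the $2$-point stabilizer \emph{is} solvable). Without non-solvability, Fact~\ref{fact.rankThreeGroups} applied to $B/F$ gives you nothing. Second, you never actually pin down $\rk F^\circ(B)=2$; the "combine the upper bound with the symmetric picture at $4$" step is exactly where the work lies, and there is no structural fact in the paper for connected groups of rank $5$ to fall back on. The paper obtains both facts simultaneously by a route your proposal omits entirely: it first proves that the action of $B$ on its generic orbit $Y$ is \emph{not} virtually definably primitive (the key tool being the Moufang set classification, Fact~\ref{fact.Moufang}, which yields a rank contradiction $\rk M = 3\rk F^\circ(B_3) = 6 > 5$), and then analyzes the resulting rank-$1$ quotient $(\modu{Y},B)$ with Fact~\ref{fact.Hru}. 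Since $S$ survives in the quotient, $B/K\cong\psl_2$, and the kernel satisfies $K^\circ = F^\circ(B)$, abelian unipotent of rank $2$, generated by $(K\cap B_3)^\circ$ and $(K\cap B_4)^\circ$.

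This last point also breaks your argument for the $\beta$ claim. In this configuration $(F^\circ(B)\cap G_{1,2,3})^\circ$ has rank $1$, so $F^\circ(B)$ does not act regularly (or even generically) on the generic orbit of $B$; the identification $\orbitx[B](3)\leftrightarrow F$ by translations is valid in the $\pgl_3$ model but false in the configuration you are trying to refute, so the characteristic-not-$2$ computation $2v_4=0$ has no foundation. Your characteristic-$2$ alternative is circular: that $\beta$ has nontrivial image in $B/F^\circ(B)$ is precisely what must be proved. In the paper this comes for free from the quotient: $\beta$ swaps the fibers $\modu{3}$ and $\modu{4}$ of $\modu{Y}$, whereas $K\supseteq F^\circ(B)$ fixes every fiber, so $\beta\notin F^\circ(B)$. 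Finally, your exclusion of the bad-group alternative for $B/F$ ("contains the image of a good torus with a proper non-central complement") is not a recognized criterion; the paper sidesteps bad groups altogether by identifying $B/K$ through Hrushovski's theorem, and then upgrades $\pssl_2$ to $\ssl_2$ and identifies the module via the cited facts from \cite{DeA09}.
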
 
\begin{proof}
Set $B:= G_{1,2}$, and let $Y$ be the generic orbit of $B$ on $X$. We want understand the action of $B$ on $F^\circ(B)$, and we begin by studying the action of $B$ on $Y$. We will use $B_3$ for $G_{1,2,3}$. Note that $\rk B = 5$ and $\rk B_3 =3$. 

We first show that  $(Y,B)$ has a rank $1$ quotient, i.e. that $(Y,B)$ is not virtually definably primitive. This will take some work. Towards a contradiction, assume that $(Y,B)$ is virtually definably primitive. We aim to push this configuration into the realm of Moufang sets (see Subsection~\ref{subsec.Moufang}), so in particular, we will show that $(Y,B)$ is $2$-transitive. We first claim that $(Y,B)$ is definably primitive. Let $(\modu{Y},B)$ be a definably primitive quotient of $(Y,B)$ with kernel $K$. Since the classes in the quotient are finite, $K^\circ$ is trivial, so $K$ is finite and central in $B$. Further, $F^\circ(B_3)$ is self-centralizing in $G$ by the previous corollary, so $K$ is contained in $B_3$. As $(Y,B)$ is transitive and faithful by Fact~\ref{fact.FixGenericSet}, it must be that $K$ is trivial. Since the classes in $\modu{Y}$ are finite and $F^\circ(B_3)$ has a regular and generic orbit on $Y$, we find that $F^\circ(B_3)$ has a regular and generic orbit on $\modu{Y}$. By Fact~\ref{fact.ConnectedPS}, the point stabilizers in $(\modu{Y},B)$ are connected, so as there is no kernel, the point stabilizers from $(\modu{Y},B)$ and $(Y,B)$ coincide. Thus, $(Y,B)$ is definably primitive. 

To show that $(Y,B)$ is $2$-transitive, it remains to show that $B_3$ has no rank $1$ orbits on $Y$. Towards a contradiction, suppose that the orbit of $B_3$ on $y \in Y$ has rank $1$. Set $A:= (B_{3,y})^\circ$. Then $A$ has rank $2$. If $A$ is unipotent, then the structure of $B_3 = G_{1,2,3}$ implies that $A = F^\circ(B_3)$, and as $B_y$ is $B$-conjugate to $B_3$, this also implies that $A = F^\circ(B_y)$. Thus, if $A$ is unipotent, $N_B(A)$ contains $\langle B_3,B_y\rangle$ and thus coincides with $B$, contradicting the faithfulness of $(Y,B)$. We conclude that $A$ contains a good torus, which we know to be self-centralizing. Hence,  $A$ is nonnilpotent, and  $A$ has a \emph{unique} normal definable connected subgroup of rank $1$. Now let $K_3$ be the kernel of the action of $B_3$ on its obit containing $y$, and let $K_y$ be the kernel of the action of $B_y$ on its orbit containing $3$. As $B_3$ is solvable, we may apply Fact~\ref{fact.Hru} to see that either $A = K^\circ_3$ or $K^\circ_3$ is a normal rank $1$ subgroup of $A$; a similar statement holds for $K_y$. Our previous observations now imply that either $K^\circ_3 = K^\circ_y$ or one is characteristic in the other. In either case, we find a nontrivial definable subgroup that fixes $3$ and whose normalizer contains $\langle B_3,B_y\rangle$. As before, this contradicts the faithfulness of $(Y,B)$. We conclude that $B_3$ has no rank $1$ orbits, so $(Y,B)$ is $2$-transitive.

For each $y\in Y$, we now have that $F^\circ(B_y)$ fixes $y$ and acts regularly on $Y-\{y\}$, so the collection $\{F^\circ(B_y) : y\in Y\}$ generates a group $M \le B$ that is associated to a Moufang set with abelian root groups. Further, since $B_{3,4} = S$ is abelian, the Moufang set has abelian Hua subgroups, so we may apply Fact~\ref{fact.Moufang}. If $(Y,M) \cong (L, \agl_1(L))$ for some algebraically closed field $L$, then $F^\circ(B_3)$ is isomorphic to a point stabilizer in $(L, \agl_1(L))$. This implies that  $F^\circ(B_3) \cong L^\times$, but this cannot happen since $F^\circ(B_3)$ is unipotent. Thus, $(Y,M) \cong (L, \psl_2(L))$ for some algebraically closed field $L$. Under this isomorphism, $F^\circ(B_3)$ goes to the unipotent radical of a Borel subgroup of $\psl_2(L)$, so $\rk M = 3\rk F^\circ(B_3)$. Since $F^\circ(B_3)$ has rank $2$ and $B$ has rank $5$, we have a contradiction. Finally, we   conclude that $(Y,B)$ is \emph{not} virtually definably primitive.

Now let $(\modu{Y},B)$ be any rank $1$ quotient of $(Y,B)$, let $K$ be the kernel, and let $\modu{3}$ be the class containing $3$ in $\modu{Y}$. As $(Y,B)$ is generically $2$-transitive, Lemma~\ref{lem.QuotientGenericTrans} implies that the same is true of $(\modu{Y},B)$, so by Fact~\ref{fact.Hru}, $B_{\modu{3}}/K$ must contain a good torus. Now, by the faithfulness of $(Y,B)$, $K^\circ$ is not contained in $B_3$, so by rank considerations, $B_3$ covers $B_{\modu{3}}/K$. Thus, the structure of $B_3 = G_{1,2,3}$ implies that $S$ is not in $K$, so as  $S \le B_{\modu{3},\modu{4}}$, we find that $(\modu{Y},B)$ must be $3$-transitive, again by Fact~\ref{fact.Hru}. Further, we see that $(K\cap B_3)^\circ$ is unipotent. Thus, $K^\circ$ has rank $2$ and is generated by distinct rank $1$ unipotent subgroups, namely  $(K\cap B_3)^\circ$ and $(K\cap B_4)^\circ$, so $K^\circ$ is abelian and unipotent by Fact~\ref{fact.RankTwoAbelian}. Consequently, as $B/K$ is of the form $\psl_2$, it must be that $K^\circ = F^\circ(B)$. Further, if $\beta \in 12(34)$, then $\beta$ swaps $\modu{3}$ and $\modu{4}$ in $\modu{Y}$, so $\beta \notin K$. 

Since $B/K$ is of the form $\psl_2$ with $K/F^\circ(B)$ finite, $B/F^\circ(B)$ is a perfect central extension of $\psl_2$, so it must be that $B/F^\circ(B)$ is of the form $\pssl_2$. Certainly $B/F^\circ(B)$ acts nontrivially on $F^\circ(B)$, so \cite[Fact~2.5]{DeA09} ensures that, regardless of the characteristic, $B/F^\circ(B)$ is of the form $\ssl_2$. Finally, \cite[Fact~2.7]{DeA09} identifies the action of $B/F^\circ(B)$ on $F^\circ(B)$ as the natural one, which of course also implies that the action is faithful.
\end{proof}

\subsection{Assembling the proof}
\begin{proof}[Proof of Theorem~\ref{thm.Atwo}]
Set $H:= G_{1,2,3}$, $B:= G_{1,2}$, and $U:=F^\circ(B)$. Invoking Proposition~\ref{prop.BmodFB}, $B/U$ acts faithfully on $U$ as $\ssl_2(L)$ acts on $L^2$ for some algebraically closed field $L$. 

We first treat the characteristic not $2$ case; assume that $S$ has an involution $i$. Then the image of $i$  in the quotient $B/U$ is nontrivial and central. Also, since the characteristic is not $2$ and $F^\circ(H)$ is unipotent, $F^\circ(H)$ has no involutions. Thus, $F^\circ(H) = V^+ \oplus V^-$ where $V^+:=C_{F^\circ(H)}(i)$ and $V^- := [F^\circ(H),i]$, see \cite[I~Lemma~10.4]{ABC08}. Since $i$ is central in $B/U$, $V^-$ is not all of $F^\circ(H)$, so it must be that $V^+$ and $V^-$ are both connected of rank $1$. Now, $i$ is the unique involution of $S$, so $\Sigma:=\Sigma(1,2,3;4)$ must centralize $i$. Thus, $\Sigma$ acts on both $V^+$ and $V^-$. We can use $S$ to linearize the action of $\Sigma$ on $V^-$; let $K$ be the kernel. As $S$ covers $\Sigma/K$, $\sym(3) \cong SK/S \cong K/K\cap S$, and $K\cap S$ has no involutions. Further, $K$ is finite, so $S$ centralizes $K$. Thus, $K$ is a central extension of $\sym(3)$ by $K\cap S$. Since $K\cap S$ has no involutions, it is not hard to build a subgroup $K_0 \le K$ such that $K = K_0 \times K\cap S$, which implies that $K_0 \cong \sym(3)$. Of course, $K_0$ also acts on $V^+$, so by Lemma~\ref{lem.symmOnRankOne}, there are nontrivial elements of $K_0$ fixing $V^+$. However, such elements centralize all of $F$, but as $F$ has a generic orbit on $4$, this contradicts Lemma~\ref{lem.HKlemma}.

Thus, we now consider when $S\cong L^\times$ for some field $L$ of characteristic $2$. Since $S$ has rank $1$, the centralizer of $S$ in $\Sigma(1,2,3,4)$ has index at most $2$ by Lemma~\ref{lem.symmOnRankOne}. We claim that the index is $2$. Indeed, choose $\beta \in 12(34)$. Note that as $S$ has no involutions, we may take $\beta$ to be an involution by \cite[I,~Lemma~2.18]{ABC08}. By Proposition~\ref{prop.BmodFB}, the image of $\beta$ in $B/U$ is nontrivial, and it certainly normalizes the image of $S$, the latter being a maximal torus of $B/U$. Since $B/U$ is of the form $\psl_2$ in characteristic $2$, we find that $\beta$ is not in $S$, so $\beta$ does not centralize $S$. We conclude that the transpositions in $\Sigma(1,2,3,4)/S$ invert $S$. In particular, any  $\alpha\in (12)34$ inverts $S$.  Again, we may take $\alpha$ to be an involution. Now, $B/U$ is of the form $\psl_2$, so the action of $\alpha$ on $B/U$ is inner. Since $\alpha$ fixes $3$ and $4$, $\alpha$ normalizes two distinct Borel subgroups of $B/U$, namely the images of $G_{1,2,3}$ and $G_{1,2,4}$. Thus, $\alpha$ acts on $B/U$ as an element of a torus. As $\alpha$ is an involution and the characteristic is $2$, $\alpha$ must act trivially on $B/U$. We conclude that $[B,\alpha]\le U$, so $S = [S,\alpha] \le U$. This is a contradiction. 
\end{proof}

\section{Theorem~\ref{thm.A}}\label{sec.A}
We now put the pieces together. We begin with a reduction to the connected case. By the \emph{connected version of Theorem~\ref{thm.A}}, we mean Theorem~\ref{thm.A} together with the additional hypothesis that $G$ is connected. 

\begin{lemma}\label{lem.ReduceToConnected}
Theorem~\ref{thm.A} follows from the connected version of Theorem~\ref{thm.A}.
\end{lemma}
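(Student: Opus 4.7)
The plan is to transfer the hypotheses to $(X,G^\circ)$, apply the connected version of Theorem~\ref{thm.A}, and then show the resulting finite extension $G/G^\circ$ is trivial via an automorphism analysis. First I would note that generic $4$-transitivity implies generic $2$-transitivity, so Fact~\ref{fact.GenericTwoImpliesConnected} gives $X$ connected. Since $G$ acts transitively on $X$ and $[G:G^\circ]$ is finite, the $G^\circ$-orbits on $X$ are definable of constant rank $\rk X$ and are transitively permuted by $G/G^\circ$; the degree-$1$ property of $X$ then forces a single $G^\circ$-orbit, so $(X,G^\circ)$ is transitive as well. By Fact~\ref{fact.ConnectedCompGenericTrans}, $\gtd(X,G^\circ)=\gtd(X,G)\ge 4$, and so the connected version of Theorem~\ref{thm.A} applies to $(X,G^\circ)$, producing an algebraically closed field $K$, definable in the ambient structure, together with an equivalence $(X,G^\circ)\cong(\proj^2(K),\pgl_3(K))$.

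Next I would show $G=G^\circ$. Fix $p_1,\ldots,p_4\in X$ in general position and set $H:=G_{p_1,\ldots,p_4}$. Since $X^4$ has degree $1$, the same argument applied to $X^4$ shows that the generic orbits of $G$ and $G^\circ$ on $X^4$ coincide; the connected version (specifically Theorem~\ref{thm.Aone}) tells us $(X,G^\circ)$ is generically sharply $4$-transitive, so $H\cap G^\circ=1$, and then $H^\circ\le H\cap G^\circ=1$ makes $H$ finite. By sharp transitivity of $G^\circ$ on the common generic orbit, every $g\in G$ decomposes as $g=g_0 h$ with $g_0\in G^\circ$ and $h\in H$, giving $G=G^\circ\rtimes H$. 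Each $h\in H$ induces a definable automorphism $\varphi_h$ of $G^\circ\cong\pgl_3(K)$ compatible with the action on $X$. In the FMR category the definable automorphism group of $\pgl_3(K)$ is generated by the inner automorphisms and the graph automorphism, since an algebraically closed field of finite Morley rank carries no nontrivial definable automorphisms; the graph automorphism, however, interchanges points and lines of $\proj^2(K)$ and so does not preserve the action on $X$. Hence $\varphi_h$ is inner, say realized by some $g_0\in G^\circ$, and then $hg_0^{-1}$ lies in $C_{\sym(X)}(G^\circ)$. This centralizer is trivial: any permutation commuting with the transitive action of $\pgl_3(K)$ on $\proj^2(K)$ is determined by its value at a single point, and that value must be fixed by the corresponding point-stabilizer in $\pgl_3(K)$, which (being transitive on the complementary set) fixes only its designated point. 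Thus $h=g_0\in G^\circ\cap H=1$, forcing $H=1$ and $G=G^\circ$.

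The main obstacle will be justifying the automorphism analysis in the second paragraph: one must verify carefully, in the finite Morley rank setting, that the only definable automorphisms of $\pgl_3(K)$ preserving its natural action on $\proj^2(K)$ are inner. The essential nontrivial ingredient is the standard FMR result that an algebraically closed field interpretable in an ambient structure of finite Morley rank admits no nontrivial definable automorphisms, ruling out ``Frobenius-type'' outer twists; once this is in hand, the graph automorphism is excluded by its geometric incompatibility with the action on points, and the remainder of the reduction is routine.
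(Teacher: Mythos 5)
Your reduction takes a genuinely different route from the paper's. The paper never attempts to prove $G=G^\circ$ by analyzing automorphisms of $\pgl_3(K)$; instead, after applying the connected version to $(X,G^\circ)$, it simply verifies that Proposition~\ref{prop.Recognition} applies to the possibly disconnected $(X,G)$: the $2$- but not $3$-transitivity transfers from $G^\circ$ to $G$ by Lemma~\ref{fact.ConnectedCompNTrans}, and the Fixed Point Assumption transfers because $(G_{x,y,z})^\circ=(G^\circ)_{x,y,z}$ is connected with fixed-point set $\{x,y,z\}$ in $\pgl_3(K)$. Since the underlying recognition theorem (Fact~\ref{fact.TentVM}) makes no connectedness hypothesis, its conclusion $(X,G)\cong(\proj^2(K),\pgl_3(K))$ forces $G=G^\circ$ for free. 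This sidesteps entirely the Steinberg-type decomposition of $\aut(\pgl_3(K))$ that your argument leans on. Your first paragraph and the semidirect decomposition $G=G^\circ\rtimes H$ with $H$ finite are fine and match the paper's setup in spirit.

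The genuine gap is in the ingredient you yourself flag as essential: it is \emph{false} that an algebraically closed field of finite Morley rank carries no nontrivial definable automorphisms. In characteristic $p>0$ the Frobenius $x\mapsto x^p$ is a definable field automorphism (surjective because $K$ is perfect), and it induces a definable outer automorphism of $\pgl_3(K)$ compatible with a semilinear permutation of $\proj^2(K)$. So the claim as stated cannot carry the argument. What saves your approach is that each $\varphi_h$ has \emph{finite order}, hence so does its image in the outer automorphism group, hence so does the associated field automorphism $\sigma$; by Artin--Schreier a nontrivial finite-order automorphism of an algebraically closed field has order $2$ with real closed fixed field, and one then needs to check that $\sigma$ is definable (by restricting a suitable inner twist of $\varphi_h$ to a root subgroup identified with $K^+$) so that its fixed field is an infinite definable non-algebraically-closed field, contradicting the Cherlin--Macintyre theorem. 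This is all doable, but it is a different and heavier argument than the one you wrote, and without the finite-order reduction your step excluding field twists does not go through. The remaining steps (excluding the graph automorphism via the two conjugacy classes of maximal parabolics, and the triviality of $C_{\sym(X)}(G^\circ)$ via $2$-transitivity of $\pgl_3(K)$ on $\proj^2(K)$) are correct.
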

\begin{proof}
Suppose that the connected version of Theorem~\ref{thm.A} has been proven. Let $(X,G)$ be a transitive and generically $4$-transitive permutation group of finite Morley rank with $\rk X = 2$. The punchline is that the connected version of Theorem~\ref{thm.A} applies to $(X,G^\circ)$, and from this, we can easily show that Proposition~\ref{prop.Recognition} applies to $(X,G)$. 

First, by Fact~\ref{fact.GenericTwoImpliesConnected}, $X$ is connected. Then, Lemma~\ref{fact.ConnectedCompNTrans} and Fact~\ref{fact.ConnectedCompGenericTrans} imply that $(X,G^\circ)$ is transitive and generically $4$-transitive.  Thus, we may apply the connected version of Theorem~\ref{thm.A} to see that $(X,G^\circ)\cong (\proj^2(K),\pgl_{3}(K))$ for some algebraically closed field $K$. We now check that Proposition~\ref{prop.Recognition} applies to $(X,G)$. Since $(X,G^\circ)\cong (\proj^2(K),\pgl_{3}(K))$, $(X,G^\circ)$ is $2$ but not $3$-transitive, so by Lemma~\ref{fact.ConnectedCompNTrans}, the same is true of $(X,G)$. It remains to verify the Fixed Point Assumption. Let $x,y,z\in X$ be in general position. Using that $(X,G^\circ)\cong (\proj^2(K),\pgl_{3}(K))$, we find that $(G^\circ)_{x,y,z}$ is connected, so $(G^\circ)_{x,y,z} = (G_{x,y,z})^\circ$. Further, $\fp((G^\circ)_{x,y,z})) = \{x,y,z\}$, so we may indeed apply  Proposition~\ref{prop.Recognition}  to $(X,G)$.
\end{proof}

We now make precise the link between the connected version of Theorem~\ref{thm.A} and Theorems~\ref{thm.Aone} and \ref{thm.Atwo}. Of course, Theorem~\ref{thm.Atwo}, says that the second case in the following lemma is impossible.

\begin{lemma}
Let $(X,G)$ be a transitive permutation group of finite Morley rank with $G$ connected and $\rk X = 2$. If $n:=\gtd(X,G)\ge 4$, then either 
\begin{enumerate}
\item $(X,G)$ is generically sharply$^\circ$ $n$-transitive, or
\item there exists a transitive permutation group of finite Morley rank $(Y,H)$ with $H$ connected and $\rk Y = 2$ for which $\gtd(Y,H)= 4$ and the generic $4$-point stabilizers have rank $1$.
\end{enumerate}
\end{lemma}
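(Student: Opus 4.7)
The plan is to first establish that the rank $s$ of a generic $n$-point stabilizer is at most $1$, after which the dichotomy becomes routine. By Corollary~\ref{cor.FourTransVDP}, $\rk X = 2$ together with $n \ge 4$ forces $(X,G)$ to be virtually definably primitive, so Lemma~\ref{lem.PrimitiveBoundG} applies with $r = 2$ and yields $\rk G \le 2n + 1$. On the other hand, generic $n$-transitivity plus additivity of Morley rank gives $\rk G = 2n + s$, so $s \le 1$. This rank pinch is the heart of the proof.

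If $s = 0$, then $(X,G)$ is already generically sharply$^\circ$ $n$-transitive, giving case~(1). Assume $s = 1$. When $n = 4$, take $(Y,H) := (X,G)$; case~(2) is immediate. The only remaining case is $s = 1$ with $n \ge 5$, which I would handle by stripping off $n - 4$ points.

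Concretely, fix $(p_1, \ldots, p_{n-4}) \in X^{n-4}$ in general position, set $H := (G_{p_1, \ldots, p_{n-4}})^\circ$, and let $Y$ be the generic $H$-orbit on $X$. Then $(Y,H)$ is transitive with $H$ connected and $\rk Y = 2$. For $(q_1, \ldots, q_4)$ generic in $Y^4$, the concatenation $(p_1, \ldots, p_{n-4}, q_1, \ldots, q_4)$ lies in the generic $G$-orbit on $X^n$, so the generic $4$-point stabilizer of $(Y,H)$ agrees with $(G_{p_1, \ldots, p_{n-4}, q_1, \ldots, q_4})^\circ$, which has rank $s = 1$. This shows $\gtd(Y,H) \ge 4$; equality must hold, since otherwise we could extend to a general-position $(n+1)$-tuple in $X^n$, contradicting $\gtd(X,G) = n$.

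I do not anticipate any real obstacle: the argument relies only on the primitivity bound in Lemma~\ref{lem.PrimitiveBoundG} and the stripping-off construction that appears repeatedly throughout the paper. The slightly delicate point is verifying that $\gtd(Y,H)$ equals $4$ exactly rather than being larger, but this is forced by the hypothesis $\gtd(X,G) = n$ via the same rank-additivity argument used to check generic $4$-transitivity.
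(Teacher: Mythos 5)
Your proof is correct and follows essentially the same route as the paper: virtual definable primitivity from Corollary~\ref{cor.FourTransVDP}, the bound $\rk G \le 2n+1$ from Lemma~\ref{lem.PrimitiveBoundG} pinched against $\rk G = 2n + s$ to force $s \le 1$, and then passing to the connected component of a generic $(n-4)$-point stabilizer acting on its generic orbit. The only difference is that you spell out the verification that $\gtd(Y,H)$ is exactly $4$ and that the generic $4$-point stabilizers of $(Y,H)$ have rank $1$, which the paper leaves implicit.
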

\begin{proof}
Since $n\ge 4$, $(X,G)$ is virtually definably primitive by Corollary~\ref{cor.FourTransVDP}. By Lemma~\ref{lem.PrimitiveBoundG}, a generic $n$-point stabilizer has rank at most $1$. If such a stabilizer has rank $0$, $(X,G)$ is generically sharply$^\circ$ $n$-transitive by definition. Otherwise, let $H$ be the connected component of a generic $(n-4)$-point stabilizer, and consider the action of $H$ on its generic orbit in $X$, using Fact~\ref{fact.ConnectedCompGenericTrans}.  
\end{proof}

\begin{proof}[Proof of Theorem~\ref{thm.A}]
By Lemma~\ref{lem.ReduceToConnected}, we may assume that $G$ is connected. Now we apply the previous lemma in conjunction with Theorem~\ref{thm.Atwo} to see that $(X,G)$ is generically sharply$^\circ$ $n$-transitive with $n\ge 4$. Theorem~\ref{thm.Aone} finishes things off.
\end{proof}

\section*{Acknowledgments}
The second author is indebted to Katrin Tent for several helpful discussions about the geometric aspects of the proof; they made everything else possible. He would also like to acknowledge the warm hospitality of Universit\"at M\"unster as well as Universit\'e Claude Bernard Lyon-1 where the results of the article where obtained.

\bibliographystyle{alpha}
\bibliography{WisconsBib}
\end{document}